\documentclass{amsart}

\usepackage{import}
\usepackage{graphicx} 
\usepackage{amssymb,amsthm,amstext,amsmath}
\usepackage{fullpage}
\usepackage{tikz-cd}
\usepackage{hyperref}
\parindent      0mm
\parskip        2mm 

\usepackage{tikz,pgfplots}
\usetikzlibrary{decorations.markings, intersections, arrows.meta, positioning, calc, math, matrix}
\pgfplotsset{compat=1.18}

\tikzset{middlearrow/.style={
        decoration={markings,
            mark= at position 0.5 with {\arrow{#1}} ,
        },
        postaction={decorate}
    }
}

\newcommand{\N}{\mathbb{N}}

\newcommand{\R}{\mathbb{R}}


\DeclareFontFamily{U}{mathb}{}
\DeclareFontSubstitution{U}{mathb}{m}{n}
\DeclareFontShape{U}{mathb}{m}{n}{
  <-5.5> mathb5
  <5.5-6.5> mathb6
  <6.5-7.5> mathb7
  <7.5-8.5> mathb8
  <8.5-9.5> mathb9
  <9.5-11> mathb10
  <11-> mathb12
}{}
\DeclareSymbolFont{mathb}{U}{mathb}{m}{n}
\DeclareMathSymbol{\boxtimes}{\mathbin}{mathb}{"62}

\DeclareMathOperator{\gr}{gr}
\DeclareMathOperator{\ex}{ex}
\DeclareMathOperator{\ind}{ind}
\DeclareMathOperator{\pr}{pr}
\DeclareMathOperator{\lcs}{lcs}
\DeclareMathOperator{\im}{im}
\DeclareMathOperator{\Ham}{Ham}

\DeclareMathOperator{\supp}{supp}

\DeclareMathOperator{\id}{id}

\DeclareMathOperator{\Cont}{Cont}

\DeclareMathOperator{\symp}{symp}

\DeclareMathOperator{\cont}{cont}

\newtheorem{thm}{Theorem}[section]
\newtheorem{prop}[thm]{Proposition}
\newtheorem{cor}[thm]{Corollary}
\newtheorem{lemma}[thm]{Lemma}

\newtheorem{example}[thm]{Example}
\newtheorem{remark}[thm]{Remark}

\theoremstyle{remark}

\newtheorem*{noTitle*}{}

\title{Non-squeezing and other global rigidity results
in locally conformal symplectic geometry}

\author{M\'elanie Bertelson}
\address{D\'epartement de Math\'ematiques, C.P. 218 Universit\'e Libre de Bruxelles, 1050 Bruxelles, Belgium}
\email{melanie.bertelson@ulb.be}

\author{Pranav Chakravarthy}
\address{D\'epartement de Math\'ematiques, C.P. 218 Universit\'e Libre de Bruxelles, 1050 Bruxelles, Belgium}
\email{pranav.vijay.chakravarthy@ulb.be}

\author{Sheila Sandon}
\address{Universit\'e de Strasbourg, CNRS, IRMA UMR 7501, F-67000 Strasbourg, France}
\email{sandon@math.unistra.fr}


\begin{document}
\maketitle
\begin{center}
\textit{We dedicate this work to Michèle Audin}
\end{center}
\begin{abstract}
\noindent Using generating functions quadratic at infinity
for Lagrangian submanifolds of twisted cotangent bundles,
we define spectral selectors
for compactly supported lcs Hamiltonian diffeomorphisms
of the locally conformal symplectizations
$S^1 \times \mathbb{R}^{2n+1}$ and $S^1 \times \mathbb{R}^{2n} \times S^1$
of $\mathbb{R}^{2n+1}$ and $\mathbb{R}^{2n} \times S^1$,
and obtain several applications:
the construction of a bi-invariant partial order
on the group of compactly supported lcs Hamiltonian diffeomorphisms
of $S^1 \times \mathbb{R}^{2n+1}$ and $S^1 \times \mathbb{R}^{2n} \times S^1$,
of an integer-valued bi-invariant metric
on the group of compactly supported lcs Hamiltonian diffeomorphisms
of $S^1 \times \mathbb{R}^{2n} \times S^1$,
and of an integer-valued lcs capacity
for domains of $S^1 \times \mathbb{R}^{2n} \times S^1$.
The lcs capacity is used to prove a lcs non-squeezing theorem
in $S^1 \times \mathbb{R}^{2n} \times S^1$
analogous to the contact non-squeezing theorem
in $\mathbb{R}^{2n} \times S^1$
discovered in 2006 by Eliashberg, Kim and Polterovich.
Along the way we introduce 
for Liouville lcs manifolds the notions
of essential Lee chords between exact Lagrangian submanifolds
and of essential translated points
of exact lcs diffeomorphisms.
We prove that essential translated points always exist
for time-$1$ maps of sufficiently $\mathcal{C}^0$-small lcs Hamiltonian isotopies
of compact Liouville lcs manifolds
and for all compactly supported lcs Hamiltonian diffeomorphisms
of $S^1 \times \mathbb{R}^{2n+1}$ and $S^1 \times \mathbb{R}^{2n} \times S^1$.
We also obtain an existence result for essential Lee chords
between the zero section of a twisted cotangent bundle with compact base
and its image by any lcs Hamiltonian isotopy,
which can be thought of as a lcs analogue
of the Lagrangian and Legendrian Arnold conjectures
on usual cotangent and $1$-jet bundles.
Finally,
we introduce the notion of orderability for lcs manifolds,
and prove that $S^1 \times \mathbb{R}^{2n+1}$, $S^1 \times \mathbb{R}^{2n} \times S^1$
and twisted cotangent bundles are orderable.

\end{abstract}

\section{Introduction}


A locally conformal symplectic (lcs) form
on a manifold $M$ of dimension $2n \geq 4$
is a non-degenerate $2$-form $\omega$ such that,
for some cover $\{\mathcal{U}_i\}$ of $M$,
$\left. \omega \right\lvert_{\mathcal{U}_i} = e^{\mu_i} \, \omega_i$
for a function $\mu_i: \mathcal{U}_i \rightarrow \mathbb{R}$
and a symplectic form $\omega_i$ on $\mathcal{U}_i$.
Equivalently,
a non-degenerate $2$-form $\omega$ is lcs
if there exists a closed $1$-form $\eta$
(determined by $\omega$, and called the Lee form)
such that $d_{\eta}\omega = 0$,
where $d_{\eta}$ is the Lichnerowicz--de Rham differential
(defined by $d_{\eta}\sigma = d\sigma - \eta \wedge \sigma$).
A lcs structure is an equivalence class of lcs forms
under the equivalence relation
$\omega \sim e^f \omega$ for $f: M \rightarrow \mathbb{R}$,
equivalently an equivalence class $[(\eta, \omega)]$
of pairs formed by a closed $1$-form $\eta$
and a non-degenerate $2$-form $\omega$ with $d_{\eta}\omega=0$,
for the equivalence relation $(\eta, \omega) \sim (\eta + df, e^f \omega)$. A lcs manifold is a manifold endowed with a lcs structure.
If $\big( M , [(\eta, \omega)] \big)$ is a lcs manifold
such that for some (hence any) representative $(\eta, \omega)$
the Lee form $\eta$ is exact,
then for any primitive $\mu$ of $\eta$
the form $e^{-\mu} \omega$ is a symplectic form
whose conformal class is determined
by the lcs structure $[(\eta, \omega)]$.
Conformal symplectic manifolds are thus examples of lcs manifolds.
Other important examples of lcs manifolds
are the twisted cotangent bundle
$T_{\beta}^{\ast}B := \big( T^{\ast}B \,,\,  [(\pi^{\ast}\beta, d_{\pi^{\ast}\beta}\lambda)] \big)$
of a manifold $B$ with respect to a closed $1$-form $\beta$ on $B$,
where $\pi: T^{\ast}B \rightarrow B$ is the projection
and $\lambda$ is the tautological $1$-form,
and the locally conformal symplectization
$\big( S^1 \times Y \,,\, [(-d\theta, d_{-d\theta}\alpha)] \big)$
of a co-oriented contact manifold $(Y, \xi)$
with respect to a contact form $\alpha$,
where $\theta$ denotes the coordinate in $S^1$.
In the present article we study in particular
the locally conformal symplectization
of the standard contact Euclidean space $(\mathbb{R}^{2n+1}, \xi_0)$
with respect to the contact form
$\alpha_0 = \sum_{j=1}^n \frac{x_idy_j - y_jdx_j}{2} - dz$,
and of the quotient
$\mathbb{R}^{2n} \times S^1 = \mathbb{R}^{2n} \times \mathbb{R}/\mathbb{Z}$,
endowed with the induced contact structure and contact form.
We denote these two lcs manifolds simply by
$S^1 \times \mathbb{R}^{2n+1}$
and $S^1 \times \mathbb{R}^{2n} \times S^1$.

Lcs manifolds have been studied since the 40s
by several authors,
among others Lee \cite{Lee}, Libermann \cite{Libermann},
Vaisman \cite{Vaisman 76, Vaisman},
Lichnerowicz \cite{Lich},
Banyaga \cite{Banyaga lcs},
Otiman and Stanciu \cite{OS},
Apostolov and Dloussky \cite{AD, AD18},
Bazzoni and Marrero \cite{BaM}, Chantraine and Murphy \cite{CM},
Eliashberg and Murphy \cite{EM},
Bertelson and Meigniez \cite{BM},
Gironella and Toussaint \cite{GT}
Allais and Arnaud \cite{AA},
Chantraine and Sackel \cite{CS},
Currier \cite{Currier, Currier 2}
and Van Overschelde \cite{VOa, VOb}.
Most of the basic notions of symplectic topology
can be generalized to the lcs case.
For instance,
Lagrangian submanifolds and Hamiltonian isotopies
on a lcs manifold $\big( M , [(\eta, \omega)] \big)$
are defined as follows.
A Lagrangian submanifold
is a half-dimensional submanifold
on which for some (hence any) representative $(\eta, \omega)$
the lcs form $\omega$ vanishes.
A vector field $X_t$ is said to be Hamiltonian
if for some (hence any) representative $(\eta, \omega)$
the form $\iota_{X_t}\omega$ is $\eta$-exact. 
The flow of $X_t$ is then said
to be a lcs Hamiltonian isotopy,
and its time-$1$ map a lcs Hamiltonian diffeomorphism.
In particular,
lcs geometry can be regarded as a generalization of symplectic geometry
that allows to study Hamiltonian dynamics in a more general context
(and on many more spaces,
since the class of manifolds admitting a lcs structure
is much bigger than the class of manifolds
admitting a global symplectic form
\cite{EM, BM}).
Another important reason for being interested in locally conformal symplectic structures comes from the hope that, in higher dimension, taut codimension one foliations can be deformed into tight contact structures, as in dimension 3 (cf.\ \cite{ET}). If the foliation is defined by a $1$-form $\alpha$, with holonomy $1$-form $\eta$ ($d\alpha = \eta \wedge \alpha$) the easiest way to deform the foliation is to deform $\alpha$, that is consider a smooth family of $1$-forms $\alpha_t$ with $\alpha_0 = \alpha$. If the first order of this deformation $\dot{\alpha}_t$ is a $\eta$-Liouville form, then the deformation is a contact structure for all sufficiently small positive $t$'s. This was the motivation of the authors of \cite{BM} for their construction of (foliated) exact lcs form with a fixed (leafwise) Lee form.

The usual local flexibility properties of symplectic and contact topology
(in particular, the Darboux and Weinstein theorems)
have natural lcs analogues.
One of the central results of symplectic geometry,
$\mathcal{C}^0$-rigidity of symplectomorphisms,
also readily implies its lcs counterpart 
(see Section~\ref{section: C0 rigidity}).
On the other hand,
the study of other global rigidity phenomena
is less explored in the lcs case.
One of the reasons is 
the absence of a Stokes theorem for the Lichnerowicz--de Rham differential,
and thus the lack of $C^0$-bounds on the energy of pseudo-holomorphic curves,
which proves to be a major hurdle to studying deformations
of holomorphic curves on these manifolds
(see Section 2.5 in \cite{CM}).

However,
as noticed by Chantraine and Murphy \cite{CM},
the theory of generating functions
adapts elegantly to the lcs case.
In particular,
Chantraine and Murphy use generating functions
to prove the following global rigidity result
(see also \cite{Currier 2} for refinements).
Consider a twisted cotangent bundle $T_{\beta}^{\ast}B$
with compact base $B$,
and a Lagrangian submanifold $L$ in $T_{\beta}^{\ast}B$
that is lcs Hamiltonian isotopic to the zero section.
Suppose that the intersections of $L$ with the zero section are transverse.
Then their number is at least equal
to the rank of the Novikov homology of $B$
with respect to $\beta$.
Notice that this result does not imply
that $L$ must always intersect the zero section,
indeed the Novikov homology can vanish
(for instance if $\beta$ has no zeros).
In fact, it is easy to see that if $\beta$ has no zeros
then there is a lcs Hamiltonian isotopy
(the standard Lee flow)
that displaces the zero section.
The naive analogue of the Lagrangian Arnold conjecture
on usual cotangent bundles,
proved by Laudenbach and Sikorav \cite{LS},
thus does not hold in the lcs case. 
However,
the situation is similar
to what already happens in the contact case
for $1$-jet bundles,
where the standard Reeb flow also displaces the zero section.
In the contact case
the following analogue
of the Lagrangian Arnold conjecture,
where intersections are replaced by Reeb chords,
has been proved by Chekanov \cite{Chekanov}
and Chaperon \cite{Chaperon}.
We endow the $1$-jet bundle $J^1B = T^{\ast}B \times \mathbb{R}$
with its standard contact structure $\xi = \ker (\lambda - dz)$,
where $z$ is the coordinate on $\mathbb{R}$.
If $B$ is compact 
then for any Legendrian submanifold $\Lambda$ of $J^1B$
that is the image of the zero section by a contact isotopy
the number of Reeb chords with respect to $\lambda - dz$
between $\Lambda$ and the zero section
is greater than or equal to the cup-length of $B$,
and greater than or equal to the sum of the Betti numbers
if all the Reeb chords are transverse.

Our first observation is that
a similar result also holds in the lcs case
for essential Lee chords
on twisted cotangent bundles.
Before precisely stating the result,
we need to recall or introduce a few more notions,
referring to Sections \ref{section: lcs manifolds}
and \ref{section: Lee chords and translated points}
for more details.
The Lee flow on a lcs manifold $\big( M , [(\eta, \omega)] \big)$
with respect to a representative $(\eta, \omega)$
is the flow $\{\varphi_t^{\omega}\}$ of the vector field $R_{\omega}$
defined by the relation $\omega (R_{\omega}, \,\cdot\,) = \eta$
(equivalently, the lcs Hamiltonian isotopy
with Hamiltonian function $1$
with respect to $(\eta, \omega)$).
For instance,
the Lee flow on a locally conformal symplectization
$\big( S^1 \times Y \,,\, [(-d\theta, d_{-d\theta}\alpha)] \big)$
with respect to $(-d\theta, d_{-d\theta}\alpha)$
is $ \{ \id \times \varphi_t^{\alpha} \}$,
where $\{\varphi_t^{\alpha}\}$ is the Reeb flow on $Y$
with respect to $\alpha$,
and the Lee flow on a twisted cotangent bundle $T^{\ast}_{\beta}B$
with respect to $(\pi^{\ast}\beta, d_{\pi^{\ast}\beta}\lambda)$
is given by $\varphi_t^{\omega} (\sigma_q) = \sigma_q + t \beta (q)$.
Let $L_1$ and $L_2$ be two Lagrangian submanifolds
of a lcs manifold $\big( M , [(\eta, \omega)] \big)$.
A Lee chord with respect to $(\eta, \omega)$ from $L_1$ to $L_2$
is a pair $(T, \gamma)$ with $T$ a real number
(called the time-shift of the Lee chord)
and $\gamma: [0, 1] \rightarrow M$
a curve such that $\gamma(0) \in L_1$,
$\gamma(1) \in L_2$
and $\dot{\gamma} (t) = T \, R_{\omega} \big(\gamma(t)\big)$
for all $t$. 
We say that a Lee chord $(T, \gamma)$
with respect to $(\eta, \omega)$
from $L_1$ to $L_2$
is transverse if
$(\varphi_T^{\omega})_{\ast} \big( T_{\gamma(0)}L_1 \big) + T_{\gamma(1)}L_2
= T_{\gamma(1)} M$.
A lcs manifold $\big( M , [(\eta, \omega)] \big)$
is said to be exact if for some (hence any) representative $(\eta, \omega)$
the lcs form $\omega$ is $\eta$-exact.
Any $1$-form $\lambda$ such that $\omega = d_{\eta}\lambda$
is then called a Liouville form for $(\eta, \omega)$.
If $\lambda$ is a Liouville form for $(\eta, \omega)$
then, for every function $f$,
$e^f \lambda$ is a Liouville form for $(\eta + df, e^f\omega)$.
A Liouville lcs manifold is a manifold $M$
endowed with an equivalence class $[(\eta, \omega,\lambda)]$
of triples formed by a closed $1$-form $\eta$,
a non-degenerate $2$-form $\omega$
and a $1$-form $\lambda$ such that $\omega = d_{\eta}\lambda$,
under the equivalence relation $(\eta, \omega, \lambda) \sim (\eta + df, e^f\omega, e^f\lambda)$.
For instance,
twisted cotangent bundles
$\big( T^{\ast}B \,,\,  [(\pi^{\ast}\beta, d_{\pi^{\ast}\beta}\lambda, \lambda)] \big)$
and locally conformal symplectizations
$\big( S^1 \times Y \,,\, [(-d\theta, d_{-d\theta}\alpha, \alpha)] \big)$
are Liouville lcs manifolds. 
A Lagrangian submanifold $L$
of a Liouville lcs manifold $\big( M , [(\eta, \omega, \lambda)] \big)$
is said to be exact if the $1$-form $i^{\ast}\lambda$,
where $i: L \rightarrow M$ is the inclusion,
is $i^{\ast}\eta$-exact.
In particular,
any Lagrangian submanifold of $T_{\beta}^{\ast}B$
that is the image of the zero section
by a lcs Hamiltonian diffeomorphism is exact
(Corollary \ref{corollary: image of zero section is exact}).
We say that an exact Lagrangian submanifold $L$
of a Liouville lcs manifold $\big( M, [(\eta, \omega, \lambda)] \big)$
has action $S_L: L \rightarrow \mathbb{R}$
with respect to $(\eta, \omega, \lambda)$
if $i^{\ast}\lambda = d_{i^{\ast}\eta} S_L$.
If $L$ is connected
and the restriction of $\eta$ to $L$ is not exact
then such action $S_L$ is uniquely defined
(see Remark \ref{remark: unique solution}).
In this case we define the action
with respect to $(\eta, \omega, \lambda)$
of a Lee chord $(T, \gamma)$
between two exact Lagrangian submanifolds $L_1$ and $L_2$
to be the real number
\[
\mathcal{A}_{L_1, L_2} (\gamma)
= S_{L_1} \big(\gamma(0)\big) - S_{L_2} \big(\gamma(1)\big) + \int_{\gamma} \lambda \,.
\]
This notion generalizes the usual symplectic action
of a Lagrangian intersection
(see Example \ref{example: action Lee chors symplectic})
and the usual contact action of a Reeb chord
between two Legendrian submanifolds
of a contact manifold
(the integral of the contact form along the Reeb chord,
see Example \ref{example: Lee chords lifts of Legendrians}).
While in the contact case the action of a Reeb chord
is always equal to its time-shift,
in the lcs case these two notions differ in general.
The Lee chords of time-shift equal to their action
play an important role in our work.
In analogy with the terminology
introduced in \cite{Currier} for Liouville chords,
we call them \emph{essential Lee chords}.
Our first result is then the following theorem.

\begin{thm}[Essential Lee chords]\label{theorem: lcs Arnold conjecture intro}
Consider a connected compact manifold $B$,
a closed non-exact $1$-form $\beta$ on $B$
and a Lagrangian submanifold $L$ 
of the twisted cotangent bundle $T^{\ast}_{\beta}B$
that is lcs Hamiltonian isotopic to the zero section.
Then the number of essential Lee chords
with respect to $(\pi^{\ast}\beta, d_{\pi^{\ast}\beta}\lambda, \lambda)$
from $L$ to the zero section
is greater than or equal to the cup-length of $B$,
and greater than or equal to the sum of the Betti numbers of $B$
if all the essential Lee chords are transverse.
\end{thm}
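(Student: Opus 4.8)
The plan is to reduce the count of essential Lee chords to a count of critical points of a generating function for $L$, and then to apply Lusternik--Schnirelmann and Morse theory. First I would make the essential Lee chords completely explicit. Since the Lee flow on $T^{\ast}_{\beta}B$ is the fibrewise translation $\varphi^{\omega}_t(\sigma_q) = \sigma_q + t\beta(q)$, a Lee chord of time-shift $T$ from $L$ to the zero section is the same datum as a point $\sigma_q \in L$ with $\sigma_q + T\beta(q) = 0$, that is $\sigma_q = -T\beta(q)$. Because $\lambda$ annihilates the (vertical) Lee direction one has $\int_{\gamma}\lambda = 0$, and since $\beta$ is non-exact the action of the zero section is $S_{0_B} = 0$; hence $\mathcal{A}_{L,0_B}(\gamma) = S_L(\sigma_q)$, and the chord is essential precisely when $T = S_L(\sigma_q)$. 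Thus essential Lee chords from $L$ to the zero section correspond bijectively to the points $\sigma_q \in L$ satisfying $\sigma_q = -S_L(\sigma_q)\,\beta(q)$, with time-shift equal to $S_L(\sigma_q)$.

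Next I would feed in a generating function. An exact Lagrangian section $\sigma = dS - S\beta$ of $T^{\ast}_{\beta}B$ has action $S$, so the natural ($d_{\beta}$-twisted) notion of a generating function quadratic at infinity $F\colon B\times\R^{N}\to\R$ for $L$ is one with $L = \{(q,\partial_qF(q,\xi) - F(q,\xi)\beta(q)) : \partial_{\xi}F(q,\xi)=0\}$ and with action $S_L = F$ along the fibre-critical set. Granting such an $F$, the essential condition $\sigma_q = -S_L(\sigma_q)\beta(q)$ reads $\partial_qF - F\beta = -F\beta$, that is $\partial_qF = 0$; together with the fibre-critical equation $\partial_{\xi}F = 0$ this is exactly $dF = 0$. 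Hence essential Lee chords from $L$ to the zero section are in bijection with the critical points of $F$, their time-shifts are the corresponding critical values, and, since fibrewise translation is affine, a chord is transverse exactly when the associated critical point is non-degenerate.

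It then remains to count the critical points of a function on $B\times\R^{N}$ that is quadratic and non-degenerate at infinity. This is the classical Lusternik--Schnirelmann and Morse theory for generating functions quadratic at infinity: subtracting the stabilising quadratic form identifies the relative homology of the pair formed by a high and a low sublevel set of $F$ with the (shifted) homology of $B$, so $F$ has at least as many critical points as the cup-length of $B$, and at least $\sum_i b_i(B)$ of them when all are non-degenerate. Combined with the bijection above, this yields the stated bounds.

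The main obstacle is the step I granted: producing the twisted generating function $F$ with the stated generation and action properties and controlling its behaviour at infinity. I would obtain it by adapting the Sikorav--Chaperon--Chekanov construction to the Lichnerowicz--de Rham differential $d_{\beta}$ --- equivalently by passing to the cover of $B$ on which $\beta$ becomes exact, where the twisted cotangent bundle is conformally symplectic and symplectomorphic to a standard cotangent bundle, building the usual generating function there, and checking that it is equivariant enough to descend with action $S_L = F$. The remaining subtlety, and the reason the essential condition (rather than mere intersection with the zero section) is what recovers the full cup-length and Betti bounds instead of only the Novikov homology of $B$, is that allowing the time-shift to vary and imposing $T = S_L$ is precisely what promotes the $d_{\beta}$-twisted critical equation to the genuine critical equation $dF = 0$.
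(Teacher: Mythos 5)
Your core reduction coincides with the paper's: the explicit description of essential Lee chords (Lee flow $\sigma_q \mapsto \sigma_q + t\beta(q)$, vanishing Lee action since $\lambda$ kills vertical vectors, zero section of action zero, so essential $\Leftrightarrow$ $T = S_L(\sigma_q)$ and $\sigma_q = -S_L(\sigma_q)\beta(q)$), the observation that with a twisted generating function the essential condition $\partial_q F - F\beta = -F\beta$ collapses to $\partial_q F = 0$ and hence, together with fibre-criticality, to $dF = 0$, and the Lusternik--Schnirelmann/Morse count for functions quadratic at infinity are exactly the content of the paper's Lemma \ref{lemma: bijection critical points F} and the proof of Theorem \ref{theorem: lcs Arnold conjecture intro}. (Your transversality claim is correct but compressed: one must check that the differential of the time-$F(e)$ Lee flow, $(Y_x, Y_p) \mapsto (Y_x, Y_p + F(e)\, d\beta \cdot Y_x)$, cancels the $-F(e)\, d\beta \cdot Y_x$ term in $(i_{F,\beta})_{\ast}$, and then combine the resulting equation with the tangency condition to $\Sigma_F$ to recover degeneracy of the full Hessian; the paper does this in coordinates.)

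Where you genuinely diverge is the step you flagged as the main obstacle, and there your sketch has a real problem. You propose building the twisted generating function by passing to the cover of $B$ on which $\beta = d\mu$ becomes exact and descending an equivariant generating function. But since $\beta$ is non-exact and $B$ compact, that cover is non-compact, so the Sikorav--Chaperon--Chekanov existence theorems do not apply off the shelf; worse, the deck transformations shift $\mu$ by nonzero constants, so under the identification of the cover's twisted cotangent bundle with a standard cotangent bundle (via $\sigma_q \mapsto e^{\mu(q)}\sigma_q$, as in Example \ref{example: twisted cotangent bundle}) they act by \emph{conformal} rescalings $e^{c}$ of the symplectic form and of the fibres, not by symplectomorphisms. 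The action $S_L$ likewise lifts to $e^{-\mu}(S_L \circ \pi)$, so the descent condition on $F$ is a quasi-equivariance $F \circ \mathrm{deck} = e^{-c} F$ incompatible with a fixed quadratic form at infinity; making the classical construction equivariant for such a scaling action would amount to rebuilding the theory. The paper (following Chantraine--Murphy) sidesteps all of this while staying on the compact base: lift $L$ to the contactization $J^1_{\beta}B$ and conjugate by the untwisting strict contactomorphism $U_{\beta}(\sigma_q, z) = (\sigma_q + z\beta(q), z)$, so that $F$ generates $L$ in the twisted sense if and only if it generates the Legendrian $U_{\beta}(\widetilde{L}) \subset J^1B$ in the usual sense (Proposition \ref{proposition: relation gf 1 jet}); existence and uniqueness then follow verbatim from Chaperon--Chekanov--Viterbo--Th\'eret. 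You should replace your covering-space step with this untwisting argument; with that substitution your proof is complete and agrees with the paper's.
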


Similarly to the theorem of Chekanov \cite{Chekanov} and Chaperon \cite{Chaperon}
in the contact case,
Theorem \ref{theorem: lcs Arnold conjecture intro}
is a direct consequence
of existence of generating functions quadratic at infinity.
As observed by Chantraine and Murphy \cite{CM},
the theory of generating functions in symplectic and contact topology
generalizes quite easily to the lcs case
(in contrast to the theory of holomorphic curves,
whose generalization seems more delicate).
Recall first that in the symplectic and contact cases
the easiest examples of generating functions
are given by the following observation:
for every function $f: B \rightarrow \mathbb{R}$,
the differential $df: B \rightarrow T^{\ast}B$
and the $1$-jet $j^1f: B \rightarrow J^1B$
are respectively a Lagrangian and a Legendrian embedding.
We say that $f$ is a generating function for the images of these embeddings.
Similarly,
the twisted differential $d_{\beta}f: B \rightarrow T_{\beta}^{\ast}B$
is a Lagrangian embedding,
and we say that $f$ is a generating function
for the image of this embedding.
As in the symplectic and contact cases,
this elementary observation
can be generalized as follows.
Let $F: E \rightarrow \mathbb{R}$ be a function
defined on the total space
of a trivial vector bundle $E = B \times \mathbb{R}^N \rightarrow B$.
Suppose that the differential $dF: E \rightarrow T^{\ast}E$
is transverse to the subbundle $N_E^{\ast}$ of $T^{\ast}E$
formed by the covectors that vanish on vertical vectors.
Then the space
$\Sigma_F = dF^{-1} \big(N_E^{\ast} \cap \im (dF)\big)$
of fibre critical points of $F$
is a submanifold of $E$ of dimension equal to the dimension of $B$,
and the map $i_{F, \beta}: \Sigma_F \rightarrow T^{\ast}_{\beta}B$
that sends $e = (q, \zeta) \in \Sigma_F$
to the covector $i_{F, \beta}(e)$ at $q$
defined by $$i_{F, \beta} (e) (X) = d_{\beta}F (\hat{X}),$$
where $\hat{X}$ is any vector in $T_eE$ projecting to $X$,
is a Lagrangian immersion.
If $i_{F, \beta}$ is an embedding
we say that $F$ is a generating function
for the Lagrangian submanifold
$L_{F, \beta} = \im (i_{F, \beta})$ of $T_{\beta}^{\ast}B$.
Chantraine and Murphy \cite{CM} proved that
if $B$ is compact then every Lagrangian submanifold of $T_{\beta}^{\ast}B$
that is lcs Hamiltonian isotopic to the zero section
has a generating function quadratic at infinity,
unique up to stabilization and fibre preserving diffeomorphism.
This result is in fact a consequence of the corresponding existence and uniqueness theorems
for generating functions quadratic at infinity for Legendrian submanifolds of $1$-jet bundles,
proved by Chekanov \cite{Chekanov}, Chaperon \cite{Chaperon},
Viterbo \cite{Viterbo} and Th\'eret \cite{Theret, Theret_thesis}.
Indeed,
let $L$ be a Lagrangian submanifold of $T_{\beta}^{\ast}B$
that is lcs Hamiltonian isotopic to the zero section.
Then we can consider the image
of the lift $\widetilde{L}$ of $L$ to the contactization
$J^1_{\beta}B := \big( J^1B \,,\, \ker (\lambda - d_{\beta}z) \big)$
of $T^{\ast}_{\beta}B$
by the strict contactomorphism
$$
U_{\beta}: J^1_{\beta}B \to J^1B \,,\;
(\sigma_q, z) \mapsto \big( \sigma_q + z \beta(q) \,,\, z \big) \,,
$$
and observe that $F$ is a generating function for $L$
if and only if it is a generating function
(in the usual sense) for the Legendrian submanifold
$U_{\beta} (\widetilde{L})$ of $J^1B$
(see \cite{CM} or Section \ref{section: gf}
for more details).
To obtain their result on intersections with the zero section
of a Lagrangian submanifold $L$ of $T_{\beta}^{\ast}B$
lcs Hamiltonian isotopic to the zero section,
Chantraine and Murphy looked at the twisted critical points
of a generating function quadratic at infinity $F$ for $L$
(the zeros of the twisted differential $d_{\beta}F$).
These points do not always exist
(their number in the non-degenerate case
being bounded below by the rank of the Novikov homology \cite{CM, Currier 2}).
On the other hand,
in order to prove Theorem \ref{theorem: lcs Arnold conjecture intro}
we look at the usual critical points of $F$,
which always exist
(since $F$ is quadratic at infinity).
We observe that the (non-degenerate) critical points of $F$
are in bijection with the (transverse) essential Lee chords
from $L$ to the zero section
(Lemma \ref{lemma: bijection critical points F}).
Theorem \ref{theorem: lcs Arnold conjecture intro}
is then a consequence of the fact that
the number of critical points of a function quadratic at infinity over $B$
is greater than or equal to the cup-length of $B$,
and greater than or equal to the sum of the Betti numbers of $B$
if all the critical points are non-degenerate.

Our second result concerns essential translated points
of lcs Hamiltonian diffeomorphisms.
Recall that a point $p$ of a co-oriented contact manifold $(Y, \xi)$
is said to be a translated point of a contactomorphism $\phi$
with respect to a contact form $\alpha$
if $p$ and $\phi(p)$ are in the same orbit
of the Reeb flow of $\alpha$
and $\alpha$ is preserved by $\phi$ at $p$.
This notion has been introduced in \cite{Bhupal, San11}
in the case of the standard contact Euclidean space,
and in \cite{S - Morse estimate for translated points, S - Iterated} in general.
In \cite{S - Morse estimate for translated points, S - Iterated},
the third author has observed
that the time-$1$ map of a sufficiently $\mathcal{C}^0$-small
contact isotopy
on a compact contact manifold
always has translated points
(at least as many as the cup-length of the manifold,
and as many as the sum of the Betti numbers
in the non-degenerate case)
and,
in analogy with the  Arnold conjecture
on fixed points of Hamiltonian diffeomorphisms,
proposed to study the question
of existence of translated points
for more general contactomorphisms
contact isotopic of the identity.
Existence of translated points
(for compactly supported contactomorphisms
when the contact manifold is not compact)
has been proved beyond the $\mathcal{C}^0$-small case
on several contact manifolds
\cite{S - Morse estimate for translated points, S - Iterated,
AM, AFM, Shelukhin, MN, MU, GKPS, Tervil, Allais lens, Allais unit tangent bundles, AArl}.
On the other hand,
examples of contactomorphisms contact isotopic to the identity
without translated points
have also been found
on the standard contact sphere \cite{Cant}
and on a more general class of contact manifolds \cite{HS}.
The question of existence of translated points
is probably related to the notion of orderability,
introduced by Eliashberg and Polterovich \cite{EP}.
We say that a contact isotopy on a co-oriented contact manifold $(Y, \xi)$
is non-negative if it is generated by a non-negative Hamiltonian function,
and thus moves every point in a direction positively transverse
or tangent to the contact distribution.
The contact manifold $(Y, \xi)$
is then said to be orderable if the partial relation $\leq$
on the universal cover of the identity component of the group
of compactly supported contactomorphisms
defined by posing  $[\{\phi_t^{1}\}] \leq [\{\phi_t^{2}\}]$
if $[\{\phi_t^{2}\}] \cdot [\{\phi_t^{1}\}]^{-1}$
can be represented by a non-negative contact isotopy
is a bi-invariant partial order.
Most contact manifolds for which translated points
are proved to exist for all contactomorphisms contact isotopic to the identity
are also known to be orderable.
On the other hand,
the standard contact sphere,
on which existence of translated points fail,
is not orderable \cite{EKP},
and the results in \cite{HS, AArl}
also seem to suggest a relation between orderability
and existence of translated points.
We also note that the standard contact sphere
does not admit any non-trivial quasimorphism
or bi-invariant metric on the universal cover
of the contactomorphism group \cite{FPR},
in contrast to several of the contact manifolds
where translated points always exist
\cite{San10, CS, Givental, GKPS, FPR, BZ}.
Existence of translated points of contactomorphisms
is a manifestation of global rigidity,
as well as orderability and the existence of quasimorphisms
and bi-invariant metrics on (the universal cover of) the contactomorphism group.

In analogy with the contact case,
we say that a point $p$ on a lcs manifold $\big( M , [(\eta, \omega)] \big)$
is a translated point of a lcs diffeomorphism $\varphi$
(a diffeomorphism such that $\varphi^{\ast}\omega = e^g \omega$
for some function $g: M \rightarrow \mathbb{R}$,
called the conformal factor)
with respect to a representative $(\eta, \omega)$
if $p$ and $\varphi(p)$ are in the same orbit
of the Lee flow of $(\eta, \omega)$
and $\omega$ is preserved by $\varphi$ at $p$
(thus $g(p) = 0$).
This notion generalizes the notions of fixed points
and leafwise intersections of symplectomorphisms
(see Example \ref{example: translated points symplectic})
and of translated points of contactomorphisms
(see Example \ref{example: translated points contact}).
A pair $(T, \gamma)$ with $T \in \mathbb{R}$
and $\gamma: [0, 1] \rightarrow M$
a curve such that $\gamma(0) = \varphi (p)$,
$\gamma(1) = p$
and $\dot{\gamma} (t) = T \, R_{\omega} \big(\gamma(t)\big)$
for all $t$
is said to be a Lee chord of time-shift $T$
for the translated point $p$ of $\varphi$.
Such a Lee chord is said to be transverse 
if there is no non-zero vector $Y \in T_pM$
such that $(\varphi_T^{\omega} \circ \varphi)_{\ast} \, (Y) = Y$
and $dg (Y) = 0$.
A lcs diffeomorphism $\varphi$
of a Liouville lcs manifold $\big( M , [(\eta, \omega)] \big)$
is said to be exact
if for some (hence any) representative $(\eta, \omega, \lambda)$
the $1$-form $\lambda - e^{-g} \varphi^{\ast}\lambda$,
where $g$ is the conformal factor of $\varphi$,
is $\eta$-exact.
In particular, lcs Hamiltonian diffeomorphisms are exact
(Proposition \ref{proposition: Hamiltonian vs exact}).
We say that an exact lcs diffeomorphism
$\varphi$ has action $S_{\varphi}: M \rightarrow \mathbb{R}$
with respect to $(\eta, \omega, \lambda)$
if $\lambda - e^{-g} \varphi^{\ast}\lambda = d_{\eta}S_{\varphi}$.
If $M$ is connected and $\eta$ is not exact
then such action $S_{\varphi}$ is uniquely defined
(see Remark \ref{remark: unique solution}).
In this case,
we define the action with respect to $(\eta, \omega, \lambda)$
of a Lee chord $(T, \gamma)$
for a translated point $p$
of a lcs Hamiltonian diffeomorphism $\varphi$
to be the real number
\[
\mathcal{A}_{\varphi}(T, \gamma)
= - S_{\varphi}(p) + \int_{\gamma} \lambda \,.
\]
This notion generalizes the usual symplectic action of fixed points
of Hamiltonian diffeomorphisms
(see Example \ref{example: action translated point symplectic})
and the usual contact action of Reeb chords for translated points
of contactomorphisms
(see Example \ref{example: action translated point in conformal symplectization}).
We say that a Lee chord for a translated point is essential
if it has time-shift equal to the action,
and that a translated point is essential
if it admits an essential Lee chord.

\begin{thm}[Essential translated points]\label{theorem: translated points - intro}
\renewcommand{\theenumi}{\roman{enumi}} \
\begin{enumerate}
\item Let $\varphi$ be the time-$1$ map
of a sufficiently $\mathcal{C}^0$-small
lcs Hamiltonian isotopy
of a compact connected Liouville lcs manifold
$\big( M , [(\eta, \omega, \lambda)]\big)$
with $\eta$ not exact.
Then the number of essential translated points of $\varphi$
with respect to any representative $(\eta, \omega, \lambda)$
is greater than or equal to the cup-length of $M$,
and greater than or equal to the sum of the Betti numbers of $M$
if all the essential Lee chords for the essential translated points
are non-degenerate.
\item Every compactly supported lcs Hamiltonian diffeomorphisms
of $S^1 \times \mathbb{R}^{2n+1}$ or $S^1 \times \mathbb{R}^{2n} \times S^1$
with non-empty support
has at least one essential translated point
with respect to $(-d\theta, d_{-d\theta}\alpha_0, \alpha_0)$
in the interior of its support.
\end{enumerate}
\end{thm}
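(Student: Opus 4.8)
The plan is to reduce the existence of essential translated points of $\varphi$ to the existence of critical points of a generating function, by an argument parallel to the proof of Theorem \ref{theorem: lcs Arnold conjecture intro}. The first step is to establish a graph correspondence that is the translated-point analogue of the bijection of Lemma \ref{lemma: bijection critical points F}. Given a lcs Hamiltonian diffeomorphism $\varphi$ of $\big(M,[(\eta,\omega,\lambda)]\big)$, I would encode its translated-point problem as a Lee-chord problem in a twisted cotangent bundle: using a Weinstein-type neighbourhood of the diagonal in the lcs category, I would realize (near the diagonal region) the graph of $\varphi$ together with its conformal factor as a Lagrangian submanifold $\Lambda_\varphi$ of the twisted cotangent bundle $T^{\ast}_{\eta}M$, with the diagonal corresponding to the zero section. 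A point $p\in M$ is then a translated point of $\varphi$ exactly when there is a Lee chord from $\Lambda_\varphi$ to the zero section projecting to $p$, and $p$ is essential precisely when this chord is essential. Here the hypothesis that $\eta$ is not exact is what guarantees, via Remark \ref{remark: unique solution} and Proposition \ref{proposition: Hamiltonian vs exact}, that the action $S_\varphi$, and hence the essentialness condition, is well defined; moreover transverse Lee chords correspond to non-degenerate data, so the two enumerative regimes match.

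For part (i) I would exploit $\mathcal{C}^0$-smallness to place myself in this model. When the generating isotopy is sufficiently $\mathcal{C}^0$-small, the Lagrangian $\Lambda_\varphi$ stays inside the Weinstein neighbourhood and is the image under the twisted differential $d_{\eta}F$ of a single genuine function $F\colon M\to\mathbb{R}$, with no auxiliary fibre variables. By the graph correspondence the essential translated points of $\varphi$ are then exactly the critical points of $F$, and the transverse ones correspond to the non-degenerate critical points. Equivalently, $\Lambda_\varphi$ is a Lagrangian submanifold of $T^{\ast}_{\eta}M$ lcs Hamiltonian isotopic to the zero section, so part (i) follows directly from Theorem \ref{theorem: lcs Arnold conjecture intro} applied with $B=M$ and $\beta=\eta$. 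Concretely, since $M$ is compact, Lusternik--Schnirelmann theory bounds the number of critical points of $F$ below by the cup-length of $M$, and the Morse inequalities give the sum-of-Betti-numbers bound when all critical points are non-degenerate.

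For part (ii) the manifolds are non-compact and $\varphi$ is only compactly supported, so I would replace the genuine function by a generating function quadratic at infinity. The same graph construction produces a Lagrangian submanifold of the relevant twisted cotangent bundle that is lcs Hamiltonian isotopic to the zero section, hence, by the generating function theory for these spaces underlying the spectral selectors of the abstract, admits a generating function $F$ quadratic at infinity whose critical points are again the essential translated points of $\varphi$. To locate one in the interior of the support I would use two spectral selectors $c_+(F)\geq c_-(F)$, coming from the max-type and min-type min-max critical values; both are genuine critical values and hence yield essential translated points. Outside the support $\varphi$ is the identity and $F$ coincides with its quadratic-at-infinity normal form, so any critical value differing from the normal-form value is attained in the interior of the support. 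The remaining input is the characterisation $c_+(F)=c_-(F)$ if and only if $\varphi=\id$: since the support is non-empty one has $\varphi\neq\id$, the two selectors differ, and at least one of them provides a non-trivial critical value, producing an essential translated point in the interior of the support.

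The step I expect to be the main obstacle is the careful verification of the graph correspondence, and in particular the claim that the \emph{essential} translated points, defined by the equality of time-shift and action, correspond precisely to the \emph{usual} critical points of the generating function. Matching the Lee-flow dynamics with the fibre-critical-point equation, and checking that the action $\mathcal{A}_{\varphi}$ transforms into the critical value of $F$ under the Weinstein identification, is exactly where the specifically lcs features, namely the conformal factor $g$ and the Lichnerowicz--de Rham differential $d_{\eta}$, genuinely enter and must be handled with care. For part (ii) the subtler additional input is the invariance of the spectral selectors and the equality characterisation $c_+=c_-\iff\varphi=\id$, which rests on the uniqueness up to stabilisation and fibre-preserving diffeomorphism of generating functions quadratic at infinity.
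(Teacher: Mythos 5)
Your proposal follows essentially the same route as the paper for both parts: for (i), the lcs graph of $\varphi$ inside the lcs product, identified via an exact strict Weinstein neighbourhood of the diagonal $\Delta$ with a Lagrangian submanifold of the twisted cotangent bundle $T^{\ast}_{\beta}\Delta$ (where $\beta$ is the restriction of $\eta \boxtimes \eta$ to $\Delta$, i.e.\ your $T^{\ast}_{\eta}M$), lcs Hamiltonian isotopic to the zero section, so that Theorem \ref{theorem: lcs Arnold conjecture intro} applies; for (ii), generating functions quadratic at infinity for the compactified graph Lagrangian, the selectors $c_{\pm}$, and the combination of spectrality, positivity and non-degeneracy ($c_+ (\varphi) = c_- (\varphi) = 0$ if and only if $\varphi = \id$) to produce an essential translated point of non-zero action, hence lying in the interior of the support. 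The graph correspondence you flag as the delicate point is indeed carried out in the paper (Propositions \ref{proposition: action invariant by strict exact} and \ref{proposition: bijection translated points Lee chords graph}, together with Lemma \ref{lemma: bijection critical points F}).

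One intermediate claim in your part (i) is wrong as stated, though your argument does not ultimately rest on it: $\mathcal{C}^0$-smallness of the isotopy guarantees only that the graph Lagrangian stays inside the Weinstein neighbourhood; it does \emph{not} make it a section of $T^{\ast}_{\eta}M$, i.e.\ the image of a twisted differential $d_{\eta}F$ of a genuine function with no fibre variables. Graphicality requires $\mathcal{C}^1$-smallness --- this is exactly the paper's Proposition \ref{proposition: existence C1 small}, proved separately before the $\mathcal{C}^0$ statement --- so your two formulations are not ``equivalent''; only the second one (apply Theorem \ref{theorem: lcs Arnold conjecture intro}, whose proof via generating functions quadratic at infinity allows arbitrarily many fibre variables) is correct under the stated hypothesis. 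Two smaller remarks on (ii): since $\varphi$ is compactly supported but not small, its graph need not lie in any neighbourhood of the diagonal, so the ``same graph construction'' requires the paper's \emph{global} identification $\tau$ of the lcs product with $T^{\ast}_{-d\theta}(S^1 \times \mathbb{R}^{2n+1})$ and the compactification to $S^1 \times S^{2n+1}$ (resp.\ $S^1 \times S^{2n} \times S^1$), not a Weinstein embedding; and invariance by conjugation of $c_{\pm}$, which you list as needed input, plays no role in (ii) --- only spectrality, positivity and non-degeneracy enter, the last resting, as you correctly say, on the uniqueness of generating functions quadratic at infinity up to stabilization and fibre-preserving diffeomorphism.
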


The first point of Theorem \ref{theorem: translated points - intro}
is a consequence of Theorem \ref{theorem: lcs Arnold conjecture intro}.
The argument is analogue to the one
for the corresponding result on translated points of contactomorphisms
\cite{S - Morse estimate for translated points, S - Iterated}.
In the contact case one observes that
the translated points of a contactomorphism $\phi$
of a contact manifold $(Y, \xi)$
with respect to a contact form $\alpha$
are in bijection with the Reeb chords
between the contact graph of $\phi$
in the contact product of $(Y, \xi)$
with respect to $\alpha$
and the contact graph of the identity.
By the $\mathcal{C}^0$-smallness assumption,
the contact graph of $\phi$ is contained in a Weinstein neighborhood
of the contact graph of the identity,
and so it can be identified to a Legendrian submanifold
of a $1$-jet bundle,
Legendrian isotopic to the zero section.
Existence of translated points then follows
from the existence of Reeb chords
between such Legendrian and the zero section.
In the lcs case,
there is a similar construction.
The lcs product of (Liouville) lcs manifolds
and the lcs graph of lcs diffeomorphisms
have been defined by Chantraine and Sackel \cite{CS}
(although we give a simpler description
of the lcs graph of divergence free lcs diffeomorphisms,
in particular lcs Hamiltonian diffeomorphisms,
see Section \ref{section: lcs product}).
We observe that the (essential) Lee chords
for the translated points
of a lcs Hamiltonian diffeomorphism $\varphi$
are in bijection with the (essential) Lee chords
between its lcs graph
and the lcs graph of the identity
(Proposition \ref{proposition: bijection translated points Lee chords graph}).
Using the Weinstein theorem in lcs geometry
to identify a neighborhood of the lcs graph of the identity
with a neighborhood of the zero section of a twisted cotangent bundle
we then deduce Theorem \ref{theorem: translated points - intro} (i)
from Theorem \ref{theorem: lcs Arnold conjecture intro}
(see Section \ref{section: gf}).

Before indicating the main ideas
of the proof of Theorem \ref{theorem: translated points - intro} (ii),
we discuss how the notion of orderability
can also be generalized to the lcs case,
referring to Section \ref{section: orderability}
for more details.
Let $\big( M, [(\eta, \omega)] \big)$ be a connected lcs manifold
such that $\eta$ is not exact.
For any lcs Hamiltonian vector field $X_t$
there is then a unique time dependent function $H_t$
such that $\iota_{X_t}\omega = - d_{\eta} H_t$
(see Remark \ref{remark: unique solution});
we say that $H_t$ is the Hamiltonian function
with respect to $(\eta, \omega)$
of the lcs Hamiltonian isotopy
generated by $X_t$.
We then define a lcs Hamiltonian isotopy
to be non-negative (respectively positive)
if its Hamiltonian function
with respect to some (hence any) representative $(\eta, \omega)$
is non-negative (respectively positive),
and consider the partial relation $\leq$
on the universal cover of the group
of compactly supported lcs Hamiltonian diffeomorphisms
defined by posing $[\{\varphi_t^{1}\}] \leq [\{\varphi_t^{2}\}]$
if $[\{\varphi_t^{2}\}] \cdot [\{\varphi_t^{1}\}]^{-1}$
can be represented by a non-negative lcs Hamiltonian isotopy.
We say that the lcs manifold $\big( M, [(\eta, \omega)] \big)$ is orderable
if $\leq$ is a bi-invariant partial order,
equivalently if there does not exist
any non-negative non-constant contractible loop
of compactly supported lcs Hamiltonian diffeomorphisms.
The locally conformal symplectization
of a contact manifold that is not orderable
(for instance the standard contact sphere)
is also not orderable,
because a non-constant non-negative contractible loop of contactomorphisms
can be lifted to a non-constant non-negative contractible loop
of lcs Hamiltonian diffeomorphisms of the locally conformal symplectization
(see Example \ref{Example:Lift of contact Hamiltonians}).
The converse implication is in general not clear,
but we show that it holds
for $S^1 \times \mathbb{R}^{2n+1}$ and $S^1 \times \mathbb{R}^{2n} \times S^1$.
We also consider the partial relation $\leq$
on the group of compactly supported lcs Hamiltonian diffeomorphisms
of a connected lcs manifold $\big( M, [(\eta, \omega)] \big)$
with $\eta$ not exact
defined by posing $\varphi_1 \leq \varphi_2$
if $\varphi_2 \circ \varphi_1^{-1}$
is the time-$1$ map of a non-negative lcs Hamiltonian isotopy,
and say that $\big( M, [(\eta, \omega)] \big)$ is strongly orderable
if $\leq$ is a bi-invariant partial order,
equivalently
if there does not exist
any non-negative non-constant loop
of compactly supported lcs Hamiltonian diffeomorphisms.
In particular,
strongly orderable lcs manifolds
are orderable.

\begin{thm}[Orderability]\label{theorem: orderability - intro}
\renewcommand{\theenumi}{\roman{enumi}}\
\begin{enumerate}
\item Let $B$ be a compact connected manifold,
and $\beta$ a closed non-exact $1$-form.
Then the twisted cotangent bundle $T_{\beta}^{\ast}B$
is strongly orderable.
\item The lcs manifolds $S^1 \times \mathbb{R}^{2n+1}$
and $S^1 \times \mathbb{R}^{2n} \times S^1$
are strongly orderable.
\end{enumerate}
\end{thm}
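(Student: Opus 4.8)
The plan is to derive strong orderability of each of these manifolds from the monotonicity and nondegeneracy of spectral selectors, following the pattern by which orderability is obtained from spectral invariants in the symplectic theory of Viterbo and the contact theory of the third author. Throughout I use the reformulation recalled in the introduction: a lcs manifold is strongly orderable if and only if it carries no non-negative non-constant loop of compactly supported lcs Hamiltonian diffeomorphisms. So let $\{\varphi_t\}_{t \in [0,1]}$ be such a loop, generated by a Hamiltonian $H_t \geq 0$ with $\varphi_1 = \varphi_0$. Replacing $\{\varphi_t\}$ by $\{\varphi_t \circ \varphi_0^{-1}\}$, which is generated by the same vector field $X_t$ and hence by the same non-negative $H_t$, I may assume $\varphi_0 = \varphi_1 = \id$; the goal is then to show $\varphi_t = \id$ for all $t$.

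To each compactly supported lcs Hamiltonian diffeomorphism $\varphi$ I associate its lcs graph $\Gamma_\varphi$ in the lcs product \cite{CS}, an exact Lagrangian submanifold coinciding with the lcs graph $\Gamma_{\id}$ of the identity outside a compact set. By the lcs Weinstein theorem a neighbourhood of $\Gamma_{\id}$ is identified with a neighbourhood of the zero section of a twisted cotangent bundle, so that (via the correspondence with Legendrians in $1$-jet bundles used by \cite{CM} and recalled above) $\Gamma_\varphi$ admits a generating function quadratic at infinity $F_\varphi$, unique up to stabilization and fibre-preserving diffeomorphism. Its two minimax critical values $c_-(\varphi) \leq c_+(\varphi)$, taken over the bottom and top cohomology classes, are therefore invariants of $\varphi$ alone, in particular well defined on the group of compactly supported lcs Hamiltonian diffeomorphisms and not merely on its universal cover --- this is exactly what distinguishes \emph{strong} orderability from orderability. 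By the bijections of Lemma \ref{lemma: bijection critical points F} and Proposition \ref{proposition: bijection translated points Lee chords graph} the critical points of $F_\varphi$ correspond to essential translated points of $\varphi$, so $c_\pm(\varphi)$ are their actions; in particular $c_\pm(\id) = 0$, and I set $\gamma(\varphi) = c_+(\varphi) - c_-(\varphi) \geq 0$.

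Two properties then conclude the argument. First, \emph{monotonicity}: along the non-negative isotopy the functions $F_{\varphi_t}$ vary continuously, and the first variation of each selected critical value is governed by the value of $H_t$ at the essential translated point realizing it, so that $c_\pm(\varphi_t)$ are non-decreasing in $t$. Second, \emph{nondegeneracy}: if $\gamma(\varphi) = 0$ then $F_\varphi$ has a single critical value, whence by the Lusternik--Schnirelmann argument of Viterbo \cite{Viterbo} the fibre-critical set maps onto all of $\Gamma_\varphi$, forcing $\Gamma_\varphi = \Gamma_{\id}$, that is $\varphi = \id$. Applying these to our loop, $c_+(\varphi_t)$ is non-decreasing with $c_+(\varphi_0) = c_+(\varphi_1) = 0$, hence $c_+(\varphi_t) \equiv 0$, and likewise $c_-(\varphi_t) \equiv 0$; thus $\gamma(\varphi_t) \equiv 0$ and $\varphi_t = \id$ for every $t$, so the loop is constant.

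It remains to supply the critical points in each case. For part (ii) the manifolds $S^1 \times \R^{2n+1}$ and $S^1 \times \R^{2n} \times S^1$ are exactly those for which Theorem \ref{theorem: translated points - intro} furnishes the essential translated points whose actions define the selectors. For part (i), with $B$ compact connected and $\beta$ non-exact, the input is Theorem \ref{theorem: lcs Arnold conjecture intro}, which guarantees critical points of the generating function over $B$; the only extra care is that the graph lives over the non-compact diagonal $\cong T^{\ast}B$, so one must use generating functions that are standard at infinity in the base, exactly as for compactly supported diffeomorphisms of $\R^{2n}$ in Viterbo's theory, for which $\gamma$ still detects the identity. I expect the main obstacle to be the monotonicity formula: unlike the symplectic case the lcs action carries the conformal factors $e^{-g}$ together with the essential normalization (time-shift equal to action), so one must verify that the first variation of the selected essential critical value is precisely $H_t$ at the corresponding translated point, with the correct sign. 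Once this is in place, the nondegeneracy property and the reduction through the graph are routine transcriptions of the symplectic and contact arguments.
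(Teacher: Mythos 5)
Your high-level scheme for part (ii) --- spectral selectors $c_\pm$ from generating functions for the lcs graph, monotonicity along non-negative isotopies, non-degeneracy detecting the identity --- is the paper's strategy, and your loop argument ($c_+(\varphi_t)$ non-decreasing with value $0$ at both endpoints, hence $\equiv 0$, likewise $c_-$, hence $\varphi_t=\id$) is a correct reformulation of the paper's route through the auxiliary order $\preceq$ (Theorem \ref{theorem: po 1} and Proposition \ref{proposition: comparison po}). But there is a genuine gap at the foundational step: you obtain the generating function $F_\varphi$ by placing $\Gamma_\varphi$ in a Weinstein neighbourhood of $\Gamma_{\id}$. That only works for $\mathcal{C}^0$-small $\varphi$; for an arbitrary compactly supported lcs Hamiltonian diffeomorphism the graph leaves every fixed neighbourhood of the diagonal, and the Weinstein theorem gives you nothing. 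The paper's essential ingredient --- and the reason part (ii) is stated only for $S^1 \times \mathbb{R}^{2n+1}$ and $S^1 \times \mathbb{R}^{2n} \times S^1$ --- is the \emph{global} strict exact lcs identification $\tau$ of equation \eqref{equation: tau}, obtained by lifting the contact identification $\tau_{\cont}$ of the contact product of $\mathbb{R}^{2n+1}$ with $J^1\mathbb{R}^{2n+1}$ to the locally conformal symplectizations, followed by compactification of $L_\varphi$ to $\overline{L_\varphi}$ in $T^*_{-d\theta}(S^1\times S^{2n+1})$ or $T^*_{-d\theta}(S^1\times S^{2n}\times S^1)$ so that Theorem \ref{theorem: gf} applies. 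Relatedly, your appeal to Theorem \ref{theorem: translated points - intro} (ii) to ``furnish the essential translated points whose actions define the selectors'' is circular: in the paper that theorem is a \emph{consequence} of the selectors (spectrality plus non-degeneracy), not an input. Finally, your monotonicity step is proved in the paper not by a first-variation formula but by Proposition \ref{proposition: partial order and gf}, which produces a pair of generating functions with $\overline{F_1}\leq\overline{F_2}$ via transition-function techniques after untwisting to the jet bundle; the first-variation heuristic as stated would still need the continuity and spectrality machinery to run.

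Part (i) is where your approach would actually fail, not merely need repair. For a general twisted cotangent bundle $T^*_\beta B$ there is no analogue of $\tau$: the diagonal in the lcs product is diffeomorphic to $T^*B$, which is non-compact, so Theorem \ref{theorem: lcs Arnold conjecture intro} (stated for compact base) supplies no critical points, and the compactification trick you invoke from Viterbo's $\mathbb{R}^{2n}$ theory has no counterpart here --- the sphere compactification exploits the linear structure at infinity of Euclidean space, which $T^*B$ lacks. The paper's proof of (i) is instead soft and two lines long: a non-negative non-constant loop of compactly supported lcs Hamiltonian diffeomorphisms of $T^*_\beta B$ lifts, by Proposition \ref{proposition: lift of lcs Hamiltonian isotopy to the contactization}, to a non-negative non-constant loop of compactly supported contactomorphisms of $J^1_\beta B$; conjugating by the untwisting map $U_\beta$ of \eqref{equation: contactomorphism 1 jet} yields such a loop on $J^1B$, which (after the deformation of \cite[Proposition 2.12]{FPR}) contradicts the non-existence of positive Legendrian loops in jet spaces due to Colin--Ferrand--Pushkar \cite{CFP} and Chernov--Nemirovsky \cite{CN}. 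So for (i) you should abandon the selector construction entirely and reduce to the known contact result via the contactization and untwisting.
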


The first point of Theorem \ref{theorem: orderability - intro} 
is an easy consequence of the corresponding result
in the contact case for $1$-jet bundles,
proved by Colin, Ferrand and Pushkar \cite{CFP}
and Chernov and Nemirovsky \cite{CN}.
Indeed,
a non-negative non-constant loop
of compactly supported lcs Hamiltonian diffeomorphisms of  $T_{\beta}^{\ast}B$
would lift to a non-negative non-constant loop
of compactly supported contactomorphisms of  $J_{\beta}^1B$,
which would induce,
after conjugating with the untwisting map
$U_{\beta}: J^1_{\beta} B\rightarrow J^1B$,
a non-negative non-constant loop of compactly supported contactomorphisms of  $J^1B$
(see Section \ref{section: orderability}).

In order to prove Theorem \ref{theorem: translated points - intro} (ii)
and Theorem \ref{theorem: orderability - intro} (ii)
we use generating functions to define spectral selectors
for compactly supported lcs Hamiltonian diffeomorphisms
of $S^1 \times \mathbb{R}^{2n+1}$ and $S^1 \times \mathbb{R}^{2n} \times S^1$,
analogous to the spectral selectors for compactly supported
Hamiltonian diffeomorphisms of $\mathbb{R}^{2n}$
defined by Viterbo \cite{Viterbo}
and the spectral selectors for compactly supported contactomorphisms
contact isotopic to the identity of $\mathbb{R}^{2n+1}$ and $\mathbb{R}^{2n} \times S^1$
defined respectively by Bhupal \cite{Bhupal} and the third author \cite{San10, San11}.
The key observation is that
for the lcs manifold $S^1 \times \mathbb{R}^{2n+1}$
we do not just have the local identifications
of neighborhoods of Lagrangian submanifolds of the lcs product
with neighborhoods of the zero section of a twisted cotangent bundle
given by the lcs Weinstein theorem,
but also a global identification of the whole lcs product
with the twisted cotangent bundle $T_{-d\theta} (S^1 \times \mathbb{R}^{2n+1})$,
where $\theta$ denotes the coordinate on $S^1$.
Indeed,
the lcs product of $S^1 \times \mathbb{R}^{2n+1}$
can be identified to the locally conformal symplectization
$S^1 \times (\mathbb{R}^{2n+1} \times \mathbb{R}^{2n+1} \times \mathbb{R})$
of the contact product of $\mathbb{R}^{2n+1}$
(Proposition \ref{proposition: lcs product}),
and the twisted cotangent bundle $T_{-d\theta} (S^1 \times \mathbb{R}^{2n+1})$
can be identified to the locally conformal symplectization $S^1 \times J^1\mathbb{R}^{2n+1}$
of $J^1\mathbb{R}^{2n+1}$
(Lemma \ref{lemma: identification twisted cotangent bundle with conformal symplectization}).
We can thus lift the strict contactomorphism
from $\mathbb{R}^{2n+1} \times \mathbb{R}^{2n+1} \times \mathbb{R}$
to $J^1\mathbb{R}^{2n+1}$
used in \cite{Bhupal, San10, San11}
to the locally conformal symplectizations
to obtain the desired identification
(see equation \eqref{equation: tau} in Section \ref{section: lcs product etc}).
By considering the image by this identification
of the lcs graph of a lcs Hamiltonian diffeomorphism $\varphi$ of $S^1 \times \mathbb{R}^{2n+1}$
we obtain a Lagrangian submanifold $L_{\varphi}$ of $T_{-d\theta} (S^1 \times \mathbb{R}^{2n+1})$.
If $\varphi$ is compactly supported
then $L_{\varphi}$ can be compactified to a Lagrangian submanifold
$\overline{L_{\varphi}}$ of $T_{-d\theta} (S^1 \times S^{2n+1})$,
lcs Hamiltonian isotopic to the zero section.
Similarly,
we can associate to a compactly supported lcs Hamiltonian diffeomorphism $\varphi$
of $S^1 \times \mathbb{R}^{2n} \times S^1$
a Lagrangian submanifold $L_{\varphi}$ of $T_{-d\theta} (S^1 \times \mathbb{R}^{2n} \times S^1)$,
which can be compactified to a Lagrangian submanifold
$\overline{L_{\varphi}}$ of $T_{-d\theta} (S^1 \times S^{2n} \times S^1)$,
lcs Hamiltonian isotopic to the zero section.
Using generating functions quadratic at infinity for $\overline{L_{\varphi}}$
we define, exactly as in the contact case
for contactomorphisms of $\mathbb{R}^{2n+1}$ and $\mathbb{R}^{2n} \times S^1$
\cite{Bhupal, San11, San10}
spectral selectors $c_+ (\varphi) \geq 0$ and $c_- (\varphi) \leq 0$
for any compactly supported lcs Hamiltonian diffeomorphism
$\varphi$ of $S^1 \times \mathbb{R}^{2n+1}$ or $S^1 \times \mathbb{R}^{2n} \times S^1$.
These spectral selectors have the same properties
as in the contact case
(Proposition \ref{proposition: properties of spectral invariants}),
and thus can be used to obtain similar applications.
To start with,
spectrality 
($c_+ (\varphi)$ and $c_- (\varphi)$ are equal to the action
of an essential translated point of $\varphi$)
and non-degeneracy
($c_+ (\varphi) = c_- (\varphi) = 0$ if and only if $\varphi$ is the identity)
imply Theorem \ref{theorem: translated points - intro} (ii).
Furthermore,
as we will see,
the properties listed in Proposition \ref{proposition: properties of spectral invariants}
imply that the partial relation $\preceq$
on the group of compactly supported lcs Hamiltonian diffeomorphisms
of $S^1 \times \mathbb{R}^{2n+1}$ or $S^1 \times \mathbb{R}^{2n} \times S^1$
defined by posing $\varphi_1 \preceq \varphi_2$
if $c_+ (\varphi_1 \circ \varphi_2^{-1}) = 0$
is a bi-invariant partial order
(Theorem \ref{theorem: po 1}).
By comparing the partial relation $\preceq$
with the one $\leq$ used to define orderability
we then obtain a proof of Theorem \ref{theorem: orderability - intro} (ii).
Moreover,
we show that the arguments in \cite{San11, San10}
can be generalized to the lcs case
to obtain an unbounded integer-valued bi-invariant metric
$d$ on the group of compactly supported lcs Hamiltonian diffeomorphisms
of $S^1 \times \mathbb{R}^{2n} \times S^1$,
compatible with the partial orders $\preceq$ and $\leq$,
by posing 
\[
d (\varphi_1, \varphi_2)
= \lceil c_+ (\varphi_1 \circ \varphi_2^{-1}) \rceil
- \lfloor c_- (\varphi_1 \circ \varphi_2^{-1}) \rfloor
\]
(Theorems \ref{theorem: metric}
and \ref{theorem: partiall ordered metric spaces}).
A lcs analogue of a known argument in contact geometry
shows that the group of compactly supported lcs diffeomorphisms
of any lcs manifold $\big( M , [(\eta, \omega)] \big)$
does not admit any bi-invariant metric
that takes values arbitrarily close to zero
(See Remark~\ref{Rmk:Finemetrics}).
This argument, however, does not directly apply
to the group of lcs Hamiltonian diffeomorphisms.
This motivates the following question.

\textbf{Question.}
\textit{Does there exist a bi-invariant metric on $\Ham(S^1 \times \mathbb{R}^{2n} \times S^1)$
that takes values arbitrarily close to zero?}

Finally,
we show that the arguments in \cite{San11, San10}
can be generalized to the lcs case
to obtain an integer-valued  lcs capacity for domains,
by posing
\[
c (\mathcal{U})
= \sup \{\, \lceil c_+ (\varphi) \rceil \;\lvert \; \varphi \in \Ham (\mathcal{U})\,\} \,,
\]
where $\Ham (\mathcal{U})$ denotes
the group of lcs diffeomorphisms of $S^1 \times \mathbb{R}^{2n} \times S^1$
that are the time-$1$ map
of a compactly supported lcs Hamiltonian isotopy supported in $\mathcal{U}$
(see Lemma \ref{lemma: displacement}
and Proposition \ref{proposition: properties lcs capacity}).
Using the lcs capacity we then obtain the following lcs non-squeezing result,
which is analogous to the contact non-squeezing theorem for integers
in $\mathbb{R}^{2n} \times S^1$
discovered by Eliashberg, Kim and Polterovich \cite{EKP}
and reproved in \cite{San11} using generating functions.

\begin{thm}[Lcs non-squeezing for integers]\label{theorem: main}
If $\pi R_2^2 \leq k \leq \pi R_1^2$ for $k \in \mathbb{N}_0$
then there is no compactly supported lcs Hamiltonian diffeomorphism $\varphi$
of $S^1 \times \mathbb{R}^{2n} \times S^1$
such that
\[
\varphi \big(\overline{S^1 \times B^{2n}(R_1) \times S^1}\big)
\subset S^1 \times B^{2n}(R_2) \times S^1 \,.
\]
\end{thm}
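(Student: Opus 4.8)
The plan is to argue by contradiction, using the integer-valued lcs capacity $c$ together with its behaviour under inclusion and conjugation (Proposition~\ref{proposition: properties lcs capacity}, itself resting on the properties of the selectors $c_{\pm}$ collected in Proposition~\ref{proposition: properties of spectral invariants}). Write $\mathcal{U}_i = S^1 \times B^{2n}(R_i) \times S^1$ and suppose, for contradiction, that a compactly supported lcs Hamiltonian diffeomorphism $\varphi$ of $S^1 \times \mathbb{R}^{2n} \times S^1$ satisfies $\varphi\big(\overline{\mathcal{U}_1}\big) \subset \mathcal{U}_2$. First I would shrink the target slightly: since $\overline{\mathcal{U}_1}$ is compact, its image $\varphi\big(\overline{\mathcal{U}_1}\big)$ is a compact subset of the open set $\mathcal{U}_2$, hence has positive distance to the complement of $\mathcal{U}_2$ and is contained in $\mathcal{U}_2' := S^1 \times B^{2n}(R_2') \times S^1$ for some $R_2' < R_2$. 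Combined with the hypothesis $\pi R_2^2 \le k$ this yields the \emph{strict} inequality $\pi (R_2')^2 < k$, which is exactly what makes the borderline case $\pi R_2^2 = k$ go through.

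Next I would convert the squeezing into a capacity comparison. Let $\psi \in \Ham(\mathcal{U}_1)$ be the time-$1$ map of a compactly supported lcs Hamiltonian isotopy $\{\psi_t\}$ supported in $\mathcal{U}_1$. Then $\{\varphi \circ \psi_t \circ \varphi^{-1}\}$ is again a compactly supported lcs Hamiltonian isotopy, supported in $\varphi(\supp \psi_t) \subset \varphi\big(\overline{\mathcal{U}_1}\big) \subset \mathcal{U}_2'$, so that $\varphi \psi \varphi^{-1} \in \Ham(\mathcal{U}_2')$. Conjugation invariance of $c_+$ gives $c_+(\varphi \psi \varphi^{-1}) = c_+(\psi)$, whence $\lceil c_+(\psi)\rceil \le c(\mathcal{U}_2')$ by the definition of the capacity; taking the supremum over $\psi$ yields $c(\mathcal{U}_1) \le c(\mathcal{U}_2')$.

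The contradiction will then come from computing the capacities of the model cylinders, via the two bounds $c\big(S^1 \times B^{2n}(R) \times S^1\big) \ge m$ whenever $m \le \pi R^2$, and $c\big(S^1 \times B^{2n}(R) \times S^1\big) \le m-1$ whenever $\pi R^2 < m$, valid for every positive integer $m$. Indeed, applying the first bound with $m = k$ and $R = R_1$ (using $k \le \pi R_1^2$) gives $c(\mathcal{U}_1) \ge k$, while the second bound with $m = k$ and $R = R_2'$ (using $\pi (R_2')^2 < k$) gives $c(\mathcal{U}_2') \le k-1$; together with the previous paragraph this produces $k \le c(\mathcal{U}_1) \le c(\mathcal{U}_2') \le k-1$, the desired contradiction. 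For the lower bound I would exhibit explicit elements of $\Ham\big(S^1 \times B^{2n}(R) \times S^1\big)$ with large positive spectral invariant, namely compactly supported cut-offs of the lcs Hamiltonian lift of the Reeb rotation generated by $\pi|w|^2$ on the $\mathbb{R}^{2n}$-factor; its essential translated points run along the Reeb ($S^1$) direction with actions equal to integer multiples of the period, and spectrality of $c_+$ (Proposition~\ref{proposition: properties of spectral invariants}, in the spirit of Theorem~\ref{theorem: translated points - intro}) realizes these integers as values of $c_+$, forcing $\lceil c_+\rceil \ge m$ as soon as $m \le \pi R^2$. For the upper bound I would use a displacement estimate: a cylinder with $\pi R^2 < m$ is displaced after fewer than $m$ Reeb turns, and Lemma~\ref{lemma: displacement} then bounds $c_+$ of every $\psi$ supported in it by $m-1$.

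The reduction above (shrinking the target, conjugation, and the final arithmetic) is routine once the abstract properties of $c_{\pm}$ and of the capacity are in hand. The main obstacle is the precise computation underlying the third step: one must evaluate the spectral invariants of the model rotation Hamiltonians through their generating functions quadratic at infinity on the compactified Lagrangian $\overline{L_{\varphi}}$ in $T_{-d\theta}(S^1 \times S^{2n} \times S^1)$, match the critical values exactly to integer Reeb periods, and then carry out the floor/ceiling and open-versus-closed bookkeeping carefully enough to locate the capacity precisely at the threshold integer $k$. The sharp displacement estimate feeding the upper bound, and the verification that these constructions are compatible with the compactification used to define $c_{\pm}$, are the delicate points.
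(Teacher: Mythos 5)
Your overall architecture (integer-valued capacity, monotonicity, lcs invariance, boundary-case reduction, final arithmetic) matches the paper's proof, but there is a genuine error in your capacity bounds, and it sits exactly at the borderline case the theorem is about. Writing $\mathcal{U}_i = S^1 \times B^{2n}(R_i) \times S^1$ as you do, your claimed upper bound --- $c\big(S^1 \times B^{2n}(R) \times S^1\big) \leq m-1$ whenever $\pi R^2 < m$ --- is false: by Proposition \ref{proposition: comparison capacities lcs contact} and \eqref{equation: comparison capacities symplectic and contact} the capacity equals $\lceil \pi R^2 \rceil$, which is $m$, not $m-1$, whenever $m-1 < \pi R^2 < m$. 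No displacement argument can rescue it: the energy--capacity inequality $c(\mathcal{U}) \leq E(\mathcal{U})$ forces $E \geq \lceil \pi R^2 \rceil > m-1$ in that range, and in any case the Reeb flow of $\mathbb{R}^{2n} \times S^1$ preserves $B^{2n}(R) \times S^1$, so ``displacement after fewer than $m$ Reeb turns'' is not available --- displacement must occur in the $\mathbb{R}^{2n}$ factor and, by Lemma \ref{lemma: displacement}, costs at least $\lceil \pi R^2 \rceil$. As a consequence your boundary reduction goes the wrong way: shrinking the target to $R_2' < R_2$ does give $\pi (R_2')^2 < k$, but $c(\mathcal{U}_2') = \lceil \pi(R_2')^2 \rceil$ can still equal $k$ (you cannot exclude $\pi(R_2')^2 \in (k-1, k)$, since $R_2'$ may be forced arbitrarily close to $R_2$), so your chain only yields $k \leq c(\mathcal{U}_1) \leq c(\mathcal{U}_2') \leq k$ --- no contradiction.

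The fix is the paper's: put the strictness on the \emph{source} side. Since $\varphi$ maps the closed cylinder $\overline{S^1 \times B^{2n}(R_1) \times S^1}$ into the open target, it also squeezes a slightly larger cylinder, so one may assume $\pi R_2^2 \leq k < \pi R_1^2$; then $c(\mathcal{U}_1) = \lceil \pi R_1^2 \rceil \geq k+1$ while $c(\mathcal{U}_2) = \lceil \pi R_2^2 \rceil \leq k$, and monotonicity plus lcs invariance of $c$ (Proposition \ref{proposition: properties lcs capacity}) give the contradiction immediately. Two smaller points: on $S^1 \times \mathbb{R}^{2n} \times S^1$ the conjugation invariance in Proposition \ref{proposition: properties of spectral invariants} holds only for $\lceil c_+ \rceil$ and $\lfloor c_+ \rfloor$, not for $c_+$ itself, though this suffices for the integer-valued capacity; and your plan to establish the lower bound by explicit rotation Hamiltonians and spectrality, while in the right spirit, is unnecessary --- the paper obtains $c\big(S^1 \times B^{2n}(R) \times S^1\big) = \lceil \pi R^2 \rceil$ by comparison with the contact capacity (Proposition \ref{proposition: comparison capacities lcs contact}), reducing to the known computations of \cite{San11} and \cite{Viterbo}.
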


In \cite{EKP} it is also proved that
if $\pi R_1^2 < 1$ then $B^{2n}(R_1) \times S^1$
can be squeezed by a contact isotopy into $B^{2n}(R_2) \times S^1$
for $R_2$ arbitrarily small.
The analogous result is true also in the lcs case,
indeed (using Example \ref{Example:Lift of contact Hamiltonians})
the contact isotopy squeezing $B^{2n}(R_1) \times S^1$ into $B^{2n}(R_2) \times S^1$
can be lifted to a lcs Hamiltonian isotopy of $S^1 \times \mathbb{R}^{2n} \times S^1$
squeezing $S^1 \times B^{2n}(R_1) \times S^1$ into $S^1 \times B^{2n}(R_2) \times S^1$.
In \cite{Chiu, Fraser, FSZ}
the contact non-squeezing theorem for integers
of \cite{EKP} and \cite{San10}
has been generalized to prove that
if $1 \leq \pi R_2^2 \leq \pi R_1^2$
then there is no compactly supported contact isotopy
of $\mathbb{R}^{2n} \times S^1$
squeezing the closure of  $B^{2n}(R_1) \times S^1$ into $B^{2n}(R_2) \times S^1$.
We expect that it should be possible to extend to the lcs case
the equivariant generating function homology defined in \cite{FSZ},
in order to prove that if $1 \leq \pi R_2^2 \leq \pi R_1^2$
then there is no compactly supported lcs Hamiltonian diffeomorphism $\varphi$
of $S^1 \times \mathbb{R}^{2n} \times S^1$
such that
\[
\varphi \big(\overline{S^1 \times B^{2n}(R_1) \times S^1}\big)
\subset S^1 \times B^{2n}(R_2) \times S^1 \,.
\]
This will be the object of a forthcoming work.
The above discussion also motivates the following question.

\textbf{Question.} \textit{Does there exist a lcs diffeomorphism $\varphi$ of $S^1 \times \R^{2n} \times S^1 $ such that  $\varphi(\overline{S^1 \times B^{2n}(R_1) \times S^1}) \subset S^1 \times B^{2n}(R_2) \times S^1 $ with $1 \leq R_2 < R_1$?} 

When the lcs manifold is $\big(\R^{2n}, [(0,\omega_0)]\big)$ then such a lcs diffeomorphism is provided by the Liouville flow. However, we currently do not know whether such a lcs squeezing exists for $S^1 \times \R^{2n} \times S^1$. 

Finally we remark that,
as in the work of Serraille and Stojisavljevi\'c \cite{SS},
Theorem \ref{theorem: main} implies that
the group of compactly supported lcs Hamiltonian diffeomorphisms
of $S^1 \times \mathbb{R}^{2n} \times S^1$
does not have the Rokhlin property
(see Section \ref{section: Rokhlin}).

The article is organized as follows.
In Section \ref{section: lcs manifolds}
we gather the basic notions on lcs manifolds
that are needed in the rest of the article.
Moreover, we discuss in \ref{section: orderability}
the notion of orderability
and prove Theorem \ref{theorem: orderability - intro} (i).
In Section \ref{section: Lee chords and translated points}
we define essential Lee chords and translated points.
In Section \ref{section: gf}
we recall from \cite{CM}
the theory of generating functions
for Lagrangian submanifolds of twisted cotangent bundles,
and prove Theorem \ref{theorem: lcs Arnold conjecture intro}
and Theorem \ref{theorem: translated points - intro} (i).
In Section \ref{section: lcs product etc}
we define generating functions for compactly supported
lcs Hamiltonian diffeomorphisms of $S^1 \times \mathbb{R}^{2n+1}$
and $S^1 \times \mathbb{R}^{2n} \times S^1$,
and in Section \ref{section: spectral invariants}
we use them to define the spectral selectors $c_+$ and $c_-$.
A first application of these spectral selectors,
Theorem \ref{theorem: translated points - intro} (ii),
is also discussed in Section \ref{section: spectral invariants}.
In Section \ref{section: order and metric}
we use the spectral selectors
to define a partial order on the group of compactly supported
lcs Hamiltonian diffeomorphisms of $S^1 \times \mathbb{R}^{2n+1}$
and $S^1 \times \mathbb{R}^{2n} \times S^1$,
and a bi-invariant metric on the group of compactly supported
lcs Hamiltonian diffeomorphisms of $S^1 \times \mathbb{R}^{2n} \times S^1$,
and obtain a proof of Theorem \ref{theorem: orderability - intro} (ii).
In Section \ref{section: capacity}
we define a lcs capacity for domains of $S^1 \times \mathbb{R}^{2n} \times S^1$,
and use it to prove the lcs non-squeezing theorem for integers
(Theorem \ref{theorem: main}).
Finally, in Section \ref{section: Rokhlin}
we discuss, following \cite{SS},
how this non-squeezing result
implies that the group of compactly supported lcs Hamiltonian diffeomorphisms
of $S^1 \times \mathbb{R}^{2n} \times S^1$
does not have the Rokhlin property.

\subsection*{Acknowledgments} 
We thank the organizers of the conference
\textit{Workshop on conservative dynamics and symplectic geometry} 
at IMPA in July 2025 and those of the \textit{Workshop on locally conformal symplectic topology} at Nantes University in November 2025
for the opportunity to present this work.
We also thank Baptiste Chantraine, Yasha Eliashberg and Pacôme Van Overschelde for stimulating discussions. MB and PC are supported by the EoS grant ``Geometry and Beyond" (EOS project 40007524).

\section{Locally conformal symplectic manifolds}\label{section: lcs manifolds}

In this section we present
the basic notions and results on lcs geometry
that are needed in the rest of the article,
and take the opportunity
to also discuss $\mathcal{C}^0$-rigidity
of the group of lcs diffeomorphisms
\ref{section: C0 rigidity}.
Although the material is classic
(except for $\mathcal{C}^0$-rigidity
\ref{section: C0 rigidity},
for the lcs product and graph \ref{section: lcs product},
introduced by Chantraine and Sackel \cite{CS},
and for orderability \ref{section: orderability}),
we include detailed proofs
whenever we could not find them in the literature.
Among the basic results discussed in this section,
Corollary \ref{corollary: symplectic lift}
is particularly important for us,
because it allows to define
the (unique) lift to the symplectic cover
of a divergence free lcs diffeomorphism.
Using this notion we give in Section \ref{section: lcs product}
a description of the lcs graph of a divergence free lcs diffeomorphism
that seems simpler than the one in \cite{CS}.
In \ref{section: orderability}
we introduce the notion of orderability in lcs geometry,
and prove Theorem \ref{theorem: orderability - intro} (i).

\subsection{The Lichnerowicz--de Rham differential}

The Lichnerowicz--de Rham differential
with respect to a closed 1-form $\eta$ on a manifold $M$
is the map $\Omega^d (M) \rightarrow \Omega^{d+1}(M)$
defined by
\[
d_{\eta} \sigma = d\sigma - \eta \wedge \sigma \,.
\]
A form $\sigma \in \Omega^d(M)$ is said to be $\eta$-closed
if $d_{\eta}\sigma = 0$,
and $\eta$-exact if $\sigma = d_{\eta}\gamma$
for some $\gamma \in \Omega^{d-1}(M)$.
The Lichnerowicz--de Rham differential
satisfies the relations
\[
d_{\eta}^2 = 0 \,,
\]
\[
d_{\eta + df} \sigma = e^f \, d_{\eta} (e^{-f} \sigma)
\]
for any function $f: M \rightarrow \mathbb{R}$,
and
$$
(\iota_X \circ d_{\eta} + d_{\eta} \circ \iota_X) \sigma
= \mathcal{L}_X \sigma - \eta(X) \sigma
$$
for any vector field $X$.

\begin{remark}\label{remark: unique solution}
If $M$ is connected and $\eta$ is not exact
then the equation $d_\eta f = 0$
for $f: M \rightarrow \mathbb{R}$
has a unique solution $f \equiv 0$.
Indeed,
suppose by contradiction that the open subset $\mathcal{U}$
on which $f$ does not vanish is not empty.
Then $\ln |f|_\mathcal{U}|$ is a primitive of $\eta$ on $\mathcal{U}$,
thus $\mathcal{U}$ is a proper subset of $M$
and its closure $\overline{\mathcal{U}}$
has a non-empty boundary $\partial \overline{\mathcal{U}}$.
Let $p$ be a point of $\partial \overline{\mathcal{U}}$,
and $\mathcal{V}_p$ a contractible neighborhood of $p$.
On $\mathcal{V}_p$ the $1$-form $\eta$ has a primitive, say $h_p$.
On each connected component of $\mathcal{U} \cap \mathcal{V}_p$
the functions $\ln|f|$ and $h_p$ differ by a constant.
The point $p$ belongs to the closure
of (at least) one of these connected components,
which we denote by ${\mathcal U}_p$.
Then $\ln|f|_{{\mathcal U}_p}| = h_p + c$,
and so
$$
|f|_{{\mathcal U}_p}| = e^{h_p + c} \,.
$$
In particular,
$f$ does not vanish at $p$
and so $p$ cannot belong to $\partial \overline{\mathcal{U}}$.
This contradiction
forces $\mathcal{U}$ to be empty,
and so $f$ to be constantly zero.
\end{remark}

\subsection{Lcs structures}

A locally conformal symplectic (lcs) form
on a (necessarily even dimensional) manifold $M$
is a non-degenerate $2$-form $\omega$
such that, for some open cover $\{\mathcal{U}_i\}$ of $M$,
\[
\left. \omega \right\lvert_{\mathcal{U}_i}
= e^{\mu_i} \, \omega_i
\]
for a function $\mu_i: \mathcal{U}_i \rightarrow \mathbb{R}$
and a symplectic form $\omega_i$ on $\mathcal{U}_i$.
If $M$ is $2$-dimensional then $\omega$ is necessarily closed,
hence a symplectic form.
From now on we thus assume that
the dimension of $M$ is at least $4$.
The fact that on each intersection
$\mathcal{U}_i \cap \mathcal{U}_j$
we have $e^{\mu_i} \, \omega_i = e^{\mu_j} \, \omega_j$,
and so
\[
0 = d\omega_i = d (e^{\mu_j - \mu_i} \omega_j)
= d (e^{\mu_j - \mu_i}) \wedge \omega_j \,,
\]
implies then that $d (e^{\mu_j - \mu_i}) = 0$
and so $d\mu_i = d\mu_j$.
The $1$-forms $d\mu_i$
thus glue to a well-defined closed $1$-form $\eta$ on $M$.
Since
\[
d (\left. \omega \right\lvert_{\mathcal{U}_i})
= e^{\mu_i} \, d\mu_i \wedge \omega_i
= d\mu_i \wedge \left. \omega \right\lvert_{\mathcal{U}_i}
\]
for every $i$,
we have $d\omega = \eta \wedge \omega$
and so $d_{\eta}\omega = 0$.
Conversely,
if $\omega$ is a non-degenerate $2$-form on $M$
such that $d_{\eta}\omega = 0$
for some closed $1$-form $\eta$
then $\omega$ is a lcs form.
Indeed, there exists an open cover $\{\mathcal{U}_i\}$
such that $\eta$ is exact on each $\mathcal{U}_i$,
and so $\left. \eta \right\lvert_{\mathcal{U}_i} = d\mu_i$
for functions $\mu_i: \mathcal{U}_i \rightarrow \mathbb{R}$.
Posing $\omega_i = e^{-\mu_i} \, \left. \omega \right\lvert_{\mathcal{U}_i}$,
since $d\omega = \eta \wedge \omega$ we have
\[
d\omega_i
= e^{-\mu_i} \, \big( d(\left. \omega \right\lvert_{\mathcal{U}_i})
- \left. \eta \right\lvert_{\mathcal{U}_i} \wedge \left. \omega \right\lvert_{\mathcal{U}_i} \big)
= 0 \,,
\]
and so $\omega_i$ is a symplectic form.
A lcs form on $M$ can thus equivalently be defined
as a non-degenerate $2$-form $\omega$
such that $d_{\eta}\omega = 0$
for some closed $1$-form $\eta$.
Since $\omega$ is non-degenerate,
the relation $d_{\eta}\omega = 0$
uniquely determines the closed $1$-form $\eta$.
We say that $\eta$ is the Lee form
of the lcs form $\omega$.

A locally conformal symplectic (lcs) structure on $M$
is an equivalence class of lcs forms
under the equivalence relation
\[
\omega \sim e^f \omega
\quad \text{ for } f: M \rightarrow \mathbb{R} \,.
\]
If $\eta$ is the Lee form of $\omega$
then the Lee form of $e^f \omega$ is $\eta + df$.
We can thus equivalently define a lcs structure
as an equivalence class $[(\eta, \omega)]$
of pairs formed by a closed $1$-form $\eta$
and a non-degenerate $\eta$-closed $2$-form $\omega$,
under the equivalence relation
\[
(\eta, \omega) \sim (\eta + df \,,\, e^f \omega)
\quad \text{ for } f: M \rightarrow \mathbb{R} \,.
\]
A locally conformal symplectic (lcs) manifold
is a manifold endowed with a lcs structure.

A lcs manifold $\big(M, [(\eta, \omega)] \big)$
is said to be exact
if for some (hence any) representative $(\eta, \omega)$
the $2$-form $\omega$ is $\eta$-exact.
In this case,
any $1$-form $\lambda$ such that $\omega = d_{\eta} \lambda$
is said to be a Liouville form for $(\eta, \omega)$.
For any $f: M \rightarrow \mathbb{R}$,
if $\lambda$ is a Liouville form for $(\eta, \omega)$
then $e^f \lambda$ is a Liouville form
for $(\eta + df \,,\, e^f \omega)$.
We denote by $[(\eta, \omega, \lambda)]$
the equivalence class of triples $(\eta, \omega, \lambda)$
under the equivalence relation
\[
(\eta, \omega, \lambda) \sim (\eta + df \,,\, e^f \omega \,,\, e^f \lambda)
\quad \text{ for } f: M \rightarrow \mathbb{R} \,,
\]
and we say that $\big(M, [(\eta, \omega, \lambda)] \big)$
is a Liouville lcs manifold.
The Liouville vector field
of a Liouville lcs manifold $\big(M, [(\eta, \omega, \lambda)] \big)$
is the vector field $Z$ on $M$
defined by $\iota_{Z} \omega = \lambda$
for some (hence any) representative $(\eta, \omega, \lambda)$.
It only depends on the Liouville lcs structure $[(\eta, \omega, \lambda)]$,
and not on a specific representative.

\begin{example}[Conformal symplectic manifolds]\label{example: symplectic manifolds}
If $\omega$ and $e^f \omega$
are symplectic forms on a manifold $M$
for some function $f: M \rightarrow \mathbb{R}$,
then $f$ is necessarily constant.
A conformal symplectic manifold
is thus defined to be a manifold $M$
endowed with an equivalence class $[\omega]$
of symplectic forms
under the equivalence relation
$\omega \sim e^c \omega$ for $c \in \mathbb{R}$.
If $\big( M , [(\eta, \omega)] \big)$
is a lcs manifold such that
for some (hence any) representative $(\eta, \omega)$
the Lee form $\eta$ is exact,
then for any primitive $\mu$ of $\eta$ we have
\[
(\eta, \omega) = (d\mu, \omega) \sim (0, e^{-\mu}\omega) \,,
\]
and so $e^{-\mu}\omega$ is a symplectic form.
The conformal class $[e^{-\mu}\omega]$
of this symplectic form
only depends on the lcs structure $[(\eta, \omega)]$.
We can thus associate to the lcs manifold $\big( M , [(\eta, \omega)] \big)$
the conformal symplectic manifold $(M, [e^{-\mu}\omega])$.
Conversely,
we can identify a conformal symplectic manifold $(M, [\omega])$
with the lcs manifold $\big( M , [(0, \omega)] \big)$.
We say that a conformal symplectic manifold $(M, [\omega])$ is exact
if some (hence any) representative $\omega$ is exact.
In this case,
any $1$-form $\lambda$
such that $\omega = d\lambda$
is said to be a Liouville form for $\omega$.
If $\lambda$ is a Liouville form for $\omega$
then $e^c \lambda$ is a Liouville form for $e^c \omega$.
We denote by $[(\omega, \lambda)]$
the equivalence class of pairs $(\omega, \lambda)$
under the equivalence relation
$(\omega, \lambda) \sim (e^c \omega, e^c \lambda)$,
and say that $\big( M, [(\omega, \lambda)] \big)$
is a conformal Liouville manifold.
If $\big( M, [(\eta, \omega)] \big)$
is an exact lcs manifold
with $\eta = d\mu$
then the associated conformal symplectic manifold
$(M, [e^{-\mu} \omega])$ is exact,
and if $\lambda$ is a Liouville form for $(\eta, \omega)$
then $e^{-\mu} \lambda$ is a Liouville form
for $e^{-\mu} \omega$.
We say that $\big( M, [( e^{-\mu} \omega, e^{-\mu} \lambda )] \big)$
is the conformal Liouville manifold
associated to the Liouville lcs manifold
$\big( M, [(\eta, \omega, \lambda)] \big)$.
\end{example}

\begin{example}[Twisted cotangent bundles]\label{example: twisted cotangent bundle}
Let $\beta$ be a closed 1-form on a manifold $B$,
and consider the cotangent bundle $\pi: T^{\ast}B \rightarrow B$,
endowed with the tautological $1$-form $\lambda$.
The twisted cotangent bundle of $B$ with respect to $\beta$
is the Liouville lcs manifold
$$
T^*_\beta B :=
\big( \, T^{\ast}B \,,\,
[(\pi^{\ast}\beta \,,\, d_{\pi^{\ast}\beta} \lambda \,,\, \lambda) ] \, \big).
$$
The Liouville vector field of $T^*_\beta B$
coincides with the standard Liouville vector field,
which in local coordinates $(q_1, \cdots, q_n, p_1, \cdots, p_n)$
is given by
$Z = \sum_{i} p_i \, \frac{\partial}{\partial p_i}$.
If $\beta$ is exact with primitive $\mu$ then
\[
(\pi^{\ast}\beta \,,\, d_{\pi^{\ast}\beta} \lambda \,,\, \lambda)
= \big( d (\mu \circ \pi) \,,\, d_{d(\mu \circ \pi)} \lambda \,,\, \lambda \big)
\sim (0 \,,\, e^{-\mu \circ \pi} d_{d(\mu \circ \pi)} \lambda \,,\, e^{-\mu \circ \pi} \lambda )
= \big( 0 \,,\, d( e^{-\mu \circ \mu} \lambda ) \,,\, e^{-\mu \circ \mu} \lambda \big) \,,
\]
thus the conformal Liouville manifold associated to $T^*_\beta B$
is $\big( T^{\ast}B \,,\,
[( d( e^{-\mu \circ \mu} \lambda ), e^{-\mu \circ \mu} \lambda )] \big)$.
The map $\sigma_q \mapsto e^{\mu(q)} \sigma_q$
is a diffeomorphism of $T^{\ast}B$
that pulls back $e^{-\mu \circ \pi} \lambda$ to $\lambda$,
hence the conformal Liouville structure
$[( d( e^{-\mu \circ \mu} \lambda ), e^{-\mu \circ \mu} \lambda )]$
to the conformal class $[( d\lambda , \lambda )]$
of the standard Liouville structure.
\end{example}

\begin{example}[Locally conformal symplectizations]
The locally conformal symplectization
of a co-oriented contact manifold $( Y, \xi)$
with respect to a contact form $\alpha$ for $\xi$ is the Liouville lcs manifold
\[
\big(\, S^1 \times Y \,,\, [(-d\theta \,,\, d_{-d\theta}\alpha \,,\, \alpha )] \,\big),
\]
where $\theta$ denotes the standard coordinate on $S^1$
and where we still denote by $\alpha$
its pullback by the projection $S^1 \times Y \rightarrow Y$.
Its Liouville vector field is $\frac{\partial}{\partial \theta}$.
\end{example}

\subsection{Symplectic covers}

Let $\big(M, [(\eta, \omega)]\big)$ be a lcs manifold.
Consider the cover $\pi: \bar{M} \rightarrow M$
corresponding to the kernel of the homomorphism
\[
\langle [\eta] , \,\cdot\, \,\rangle: \pi_1(M) \rightarrow \mathbb{R} \,,\;
\langle [\eta] , [\gamma] \rangle
= \int_0^1 \eta \Big(\frac{d \gamma}{d t}\Big) \, dt \,,
\]
thus $\bar{M}$ is the space
of equivalence classes of paths in $M$
starting at a fixed base point
for the equivalence relation
$$
\gamma_1 \sim \gamma_2
\quad \text{ if } \gamma_1 (1) = \gamma_2 (1)
\text{ and }
\int_0^1 \eta \big(\frac{d \gamma_1}{d t}\big) \, dt =
\int_0^1 \eta \big(\frac{d \gamma_2}{d t}\big) \, dt \,,
$$
and the projection $\pi: \bar{M} \rightarrow M$
is given by $\pi ([\gamma]) = \gamma(1)$.
The $1$-form $\pi^{\ast}\eta$ is exact,
with primitive
\[
\bar{M} \rightarrow \mathbb{R} \,,\;
[\gamma] \mapsto \int_0^1 \eta \Big(\frac{d \gamma}{d t}\Big) \, dt \,.
\]
The next lemma follows directly from the definitions.

\begin{lemma}\label{lemma: symplectic cover}
Let $\mu: \bar{M} \rightarrow \mathbb{R}$
be a primitive of $\pi^{\ast}\eta$.
If two points $x$ and $y$ of $\bar{M}$ satisfy
$\pi (x) = \pi (y)$ and $\mu (x) = \mu (y)$
then $x = y$.
\end{lemma}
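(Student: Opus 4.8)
The plan is to reduce the statement for an arbitrary primitive $\mu$ to the one for the canonical primitive $\mu_0: \bar{M} \to \mathbb{R}$, $[\gamma] \mapsto \int_0^1 \eta\big(\frac{d\gamma}{dt}\big)\, dt$ exhibited just above the lemma, for which the conclusion is immediate from the defining equivalence relation. First I would observe that $\bar{M}$ is connected: given any class $[\gamma] \in \bar{M}$, lifting the path $s \mapsto \gamma(s)$ starting at the base point produces a continuous path in $\bar{M}$ from the class of the constant path to $[\gamma]$, so $\bar{M}$ is in fact path-connected.

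Since $\pi^{\ast}\eta$ is a fixed closed $1$-form and $\bar{M}$ is connected, any two of its primitives differ by a global constant; in particular $\mu = \mu_0 + c$ for some $c \in \mathbb{R}$. Hence the hypothesis $\mu(x) = \mu(y)$ is equivalent to $\mu_0(x) = \mu_0(y)$, and it suffices to prove the claim for $\mu_0$.

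For $\mu_0$, write $x = [\gamma_1]$ and $y = [\gamma_2]$. The hypothesis $\pi(x) = \pi(y)$ means $\gamma_1(1) = \gamma_2(1)$, while $\mu_0(x) = \mu_0(y)$ means $\int_0^1 \eta\big(\frac{d\gamma_1}{dt}\big)\,dt = \int_0^1 \eta\big(\frac{d\gamma_2}{dt}\big)\,dt$. These are precisely the two conditions defining the equivalence relation $\gamma_1 \sim \gamma_2$ used to construct $\bar{M}$, so $[\gamma_1] = [\gamma_2]$, that is, $x = y$.

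The only point requiring care — and the reason the statement is not a completely formal tautology for \emph{every} $\mu$ — is the passage from the canonical primitive to an arbitrary one, which rests on the connectedness of $\bar{M}$; everything else is a direct unwinding of the definition of the symplectic cover.
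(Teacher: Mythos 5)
Your proof is correct and is essentially the argument the paper intends: the paper simply asserts the lemma ``follows directly from the definitions,'' and your unwinding of the equivalence relation via the canonical primitive $[\gamma] \mapsto \int_0^1 \eta\big(\frac{d\gamma}{dt}\big)\,dt$ is exactly that. Your added care in reducing an arbitrary primitive $\mu$ to the canonical one, using path-connectedness of $\bar{M}$ so that two primitives of $\pi^{\ast}\eta$ differ by a global constant, is a legitimate (and needed) step that the paper leaves implicit.
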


The form $\pi^{\ast}\omega$ is a lcs form on $\bar{M}$,
with Lee form $\pi^{\ast}\eta$.
Since $\pi^{\ast}\eta$ is exact,
by Example \ref{example: symplectic manifolds}
we can associate to the lcs manifold
$\big( \bar{M} , [(\pi^{\ast}\eta, \pi^{\ast}\omega)] \big)$
the conformal symplectic manifold
$( \bar{M}, [e^{-\mu} \pi^{\ast}\omega])$,
where $\mu$ is any primitive of $\pi^{\ast}\eta$.
We say that $(\bar{M}, [e^{-\mu} \pi^{\ast} \omega])$
is the symplectic cover
of the lcs manifold $\big(M, [(\eta, \omega)]\big)$.
If the lcs manifold $\big(M, [(\eta, \omega)]\big)$ is exact
and $\lambda$ a Liouville form for $(\eta, \omega)$
then the symplectic cover $(\bar{M}, [e^{-\mu} \pi^{\ast} \omega])$
is exact and $e^{-\mu} \pi^{\ast} \lambda$
is a Liouville form for $e^{-\mu} \pi^{\ast} \omega$.
We say that the conformal Liouville manifold
$\big(\bar{M}, [(e^{-\mu} \pi^{\ast} \omega, e^{-\mu} \pi^{\ast} \lambda)]\big)$
is the symplectic cover
of the Liouville lcs manifold $\big(M, [(\eta, \omega, \lambda)]\big)$.

\begin{example}
The symplectic cover
of the locally conformal symplectization
$\big(\, S^1 \times Y \,,\, [(-d\theta \,,\, d_{-d\theta}\alpha \,,\, \alpha )] \,\big)$
of a co-oriented contact manifold $\big( Y, \xi = \ker (\alpha)\big)$
is the conformal class
$\big(\mathbb{R} \times Y \,,\, [(d (e^{\theta}\alpha) \,,\, e^{\theta} \alpha)]\big)$
of the usual symplectization.
\end{example}

\subsection{Lcs diffeomorphisms}

A diffeomorphism $\varphi: M_1 \rightarrow M_2$
between two lcs manifolds $\big(M_1, [(\eta_1, \omega_1 )]\big)$
and $\big(M_2, [(\eta_2, \omega_2 )]\big)$
is said to be a locally conformal symplectic (lcs) diffeomorphism
if for any representatives $(\eta_1, \omega_1)$ and $(\eta_2, \omega_2)$
there is a function $g: M_1 \rightarrow \mathbb{R}$,
called the conformal factor of $\varphi$
with respect to $(\eta_1, \omega_1)$ and $(\eta_2, \omega_2)$,
such that $\varphi^{\ast} \omega_2 = e^g \omega_1$.
The relation $\varphi^{\ast} \omega_2 = e^g \omega_1$
implies that $\varphi^{\ast} \eta_2 = \eta_1 + dg$.
We say that the lcs diffeomorphism $\varphi$ is strict
with respect to $(\eta_1, \omega_1)$ and $(\eta_2, \omega_2)$
(or just that it is strict
when the representatives we consider are clear from the context)
if $\varphi^{\ast} \omega_2 = \omega_1$,
and so $\varphi^{\ast} \eta_2 = \eta_1$.

\begin{example}
A conformal symplectomorphism
between two conformal symplectic manifolds
$(M_1, [\omega_1])$ and $(M_2, [\omega_2])$
is a diffeomorphism $\varphi: M_1 \rightarrow M_2$
such that for any representatives
$\omega_1$ and $\omega_2$
there is a constant $c \in \mathbb{R}$,
called the conformal factor of $\varphi$
with respect to $\omega_1$ and $\omega_2$,
such that $\varphi^{\ast} \omega_2 = e^c \, \omega_1$.
Seeing $(M_j, [\omega_j])$ for $j = 1, 2$
as lcs manifolds $\big( M_j , [(0, \omega_j)] \big)$,
any conformal symplectomorphism is a lcs diffeomorphism.
Conversely,
any lcs diffeomorphism $\varphi$
between two lcs manifolds $\big( M_1, [(\eta_1, \omega_1)] \big)$
and $\big( M_2, [(\eta_2, \omega_2)] \big)$
with $\eta_1 = d\mu_1$ and $\eta_2 = d\mu_2$
is a conformal symplectomorphism
between the associated conformal symplectic manifolds
$(M_1, [e^{-\mu_1}\omega_1])$ and $(M_2, [e^{-\mu_2}\omega_2])$.
Indeed,
\[
\varphi^{\ast} (e^{-\mu_2}\omega_2)
= e^{- \mu_2 \circ \varphi + g + \mu_1} \, e^{-\mu_1} \omega_1 \,,
\]
where $g$ is the conformal factor of $\varphi$
with respect to $(\eta_1, \omega_1)$ and $(\eta_2, \omega_2)$,
and
\[
d (\mu_2 \circ \varphi) = \varphi^{\ast}\eta_2
= \eta_1 + dg = d (\mu_1 + g) \,,
\]
thus $- \mu_2 \circ \varphi + g + \mu_1 = c$
for some $c \in \mathbb{R}$.
\end{example}

Similarly, we have the following result.

\begin{prop}\label{proposition: lift of lcs diffeo to symplectic cover}
Consider two lcs manifolds $\big(M_1 , [(\eta_1, \omega_1)]\big)$
and $\big(M_2 , [(\eta_2, \omega_2)]\big)$,
and their symplectic covers
$(\bar{M}_1, [e^{-\mu_1}\pi_1^* \omega_1])$
and $(\bar{M}_2, [e^{-\mu_2}\pi_2^* \omega_2])$.
Suppose that $\varphi: M_1 \rightarrow M_2$
and $\bar{\varphi}: \bar{M}_1 \rightarrow \bar{M}_2$
are diffeomorphisms such that
$\pi_2 \circ \bar{\varphi} = \varphi \circ \pi_1$.
Then $\varphi$ is a lcs diffeomorphism
if and only if $\bar{\varphi}$ is a conformal symplectomorphism.
In this case,
if $g$ is the conformal factor of $\varphi$
with respect to $(\eta_1, \omega_1)$ and $(\eta_2, \omega_2)$
then
$$
- \mu_2 \circ \bar{\varphi} + g \circ \pi_1 + \mu_1 = c
$$
for some $c \in \mathbb{R}$,
and $c$ is the conformal factor of $\bar{\varphi}$
with respect to $e^{-\mu_1}\pi_1^* \omega_1$ and $e^{-\mu_2}\pi_2^* \omega_2$.
\end{prop}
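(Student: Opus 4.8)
The plan is to reduce everything to the defining relations of the symplectic covers. Recall that by construction $e^{-\mu_1}\pi_1^*\omega_1$ and $e^{-\mu_2}\pi_2^*\omega_2$ are the symplectic forms on $\bar M_1$ and $\bar M_2$, where $\mu_j$ is a chosen primitive of $\pi_j^*\eta_j$. The key computation is to pull back $e^{-\mu_2}\pi_2^*\omega_2$ by $\bar\varphi$ and rewrite it in terms of $e^{-\mu_1}\pi_1^*\omega_1$, using the intertwining relation $\pi_2\circ\bar\varphi=\varphi\circ\pi_1$.

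First I would assume $\varphi$ is a lcs diffeomorphism, so that for the chosen representatives there is a conformal factor $g:M_1\to\mathbb R$ with $\varphi^*\omega_2=e^g\omega_1$. Pulling back along $\bar\varphi$ and using $\pi_2\circ\bar\varphi=\varphi\circ\pi_1$, I compute
\[
\bar\varphi^*(e^{-\mu_2}\pi_2^*\omega_2)
= e^{-\mu_2\circ\bar\varphi}\,\bar\varphi^*\pi_2^*\omega_2
= e^{-\mu_2\circ\bar\varphi}\,\pi_1^*\varphi^*\omega_2
= e^{-\mu_2\circ\bar\varphi + g\circ\pi_1}\,\pi_1^*\omega_1
= e^{-\mu_2\circ\bar\varphi + g\circ\pi_1 + \mu_1}\,e^{-\mu_1}\pi_1^*\omega_1.
\]
So $\bar\varphi$ pulls the symplectic form on $\bar M_2$ back to a positive function times the symplectic form on $\bar M_1$; since both are symplectic (closed, non-degenerate), that function must be a constant $c$, which is exactly the claimed identity $-\mu_2\circ\bar\varphi + g\circ\pi_1 + \mu_1 = c$, and $c$ is by definition the conformal factor of $\bar\varphi$. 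The step where this constancy is forced is the heart of the argument; it follows because $\omega\sim e^f\omega$ for symplectic forms forces $f$ constant (as recalled in Example \ref{example: symplectic manifolds}), applied here on the connected components of $\bar M_1$. This gives that $\bar\varphi$ is a conformal symplectomorphism.

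For the converse, I would run the same computation in reverse: assuming $\bar\varphi$ is a conformal symplectomorphism with constant factor $c$, the displayed chain of equalities shows $\pi_1^*(\varphi^*\omega_2) = e^{(\mu_2\circ\bar\varphi - \mu_1 + c)\circ\pi_1^{-1}}\,\pi_1^*\omega_1$ on the image, and since $\pi_1$ is a submersion (a covering map) one can descend this to the identity $\varphi^*\omega_2 = e^g\omega_1$ on $M_1$, defining $g$ so that $g\circ\pi_1 = \mu_2\circ\bar\varphi - \mu_1 + c$. The only subtlety is checking that the right-hand side is the pullback of a well-defined function on $M_1$; this is guaranteed by Lemma \ref{lemma: symplectic cover}, which says a point of $\bar M_j$ is determined by its image under $\pi_j$ together with its $\mu_j$-value, so the deck-transformation invariance needed to descend $g$ holds. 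I expect this descent—verifying that $\mu_2\circ\bar\varphi - \mu_1$ is constant along fibers of $\pi_1$ up to the additive constant $c$—to be the main technical point, but it is forced precisely by how $\mu_1,\mu_2$ transform under deck transformations together with the equivariance $\pi_2\circ\bar\varphi=\varphi\circ\pi_1$.
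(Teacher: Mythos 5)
Your proposal follows essentially the same route as the paper: both directions rest on the chain
$\pi_1^{\ast}(\varphi^{\ast}\omega_2)=\bar{\varphi}^{\ast}(\pi_2^{\ast}\omega_2)$
rewritten via $e^{-\mu_j}\pi_j^{\ast}\omega_j$, and your displayed computation is exactly the paper's. In the forward direction you differ only in how constancy of $-\mu_2\circ\bar{\varphi}+g\circ\pi_1+\mu_1$ is deduced: you invoke the general fact that conformally related symplectic forms have locally constant conformal factor (Example \ref{example: symplectic manifolds}, valid here since $\dim \geq 4$, so $df\wedge\bar{\omega}=0$ forces $df=0$; and $\bar{M}_1$ is connected by construction, being built from path classes from a basepoint, so the constant is global), whereas the paper computes directly $d\,(-\mu_2\circ\bar{\varphi}+g\circ\pi_1+\mu_1)=\pi_1^{\ast}(-\varphi^{\ast}\eta_2+dg+\eta_1)=0$ using $\varphi^{\ast}\eta_2=\eta_1+dg$. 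Both are fine and essentially equivalent.

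In the converse, however, your justification of the descent is not right as stated. Lemma \ref{lemma: symplectic cover} is an injectivity statement and does not address descent, and ``how $\mu_1,\mu_2$ transform under deck transformations together with the equivariance'' is genuinely insufficient: for a deck transformation $\delta$ of $\pi_1$ one has $\bar{\varphi}\circ\delta=\delta'\circ\bar{\varphi}$ with $\delta'$ a deck transformation of $\pi_2$, and $\mu_1\circ\delta=\mu_1+c_\delta$, $\mu_2\circ\delta'=\mu_2+c_{\delta'}$, where $c_\delta$ is a period of $\eta_1$ and $c_{\delta'}$ a period of $\eta_2$ --- and nothing purely topological forces $c_{\delta'}=c_\delta$ (that would follow from $\varphi^{\ast}[\eta_2]=[\eta_1]$ on the relevant classes, which is precisely what is not yet known in this direction). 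What does force it is the hypothesis itself: in the identity $\pi_1^{\ast}(\varphi^{\ast}\omega_2)=e^{\mu_2\circ\bar{\varphi}+c-\mu_1}\,\pi_1^{\ast}\omega_1$ both forms are deck-invariant and $\pi_1^{\ast}\omega_1$ is non-degenerate, so the exponent is deck-invariant and hence equals $g\circ\pi_1$ for a well-defined $g:M_1\rightarrow\mathbb{R}$ (equivalently, pull back $\bar{\varphi}^{\ast}(e^{-\mu_2}\pi_2^{\ast}\omega_2)=e^{c}\,e^{-\mu_1}\pi_1^{\ast}\omega_1$ by $\delta$ to obtain $c_{\delta'}=c_\delta$). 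This is exactly the paper's one-line move at this point; with that substitution your argument is complete.
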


\begin{proof}
We denote $\bar{\omega}_j = e^{-\mu_j} \, \pi_j^{\ast}\omega_j$.
Suppose that $\varphi$ is a lcs diffeomorphism,
with conformal factor $g$
with respect to $(\eta_1, \omega_1)$ and $(\eta_2, \omega_2)$.
Then $\varphi^{\ast}\omega_2 = e^g\omega_1$,
thus $\pi_1^{\ast} (\varphi^{\ast} \omega_2) = \pi_1^{\ast} (e^g \omega_1)$.
But
\[
\pi_1^{\ast} (\varphi^{\ast} \omega_2)
= \bar{\varphi}^{\ast} (\pi_2^{\ast} \omega_2)
= \bar{\varphi}^{\ast} ( e^{\mu_2} \bar{\omega}_2)
= e^{\mu_2 \circ \bar{\varphi}} \, \bar{\varphi}^{\ast} \bar{\omega}_2
\]
and
\[
\pi_1^{\ast} (e^g \omega_1)
= e^{g \circ \pi_1} \, \pi_1^{\ast} \omega_1
= e^{g \circ \pi_1 + \mu_1} \, \bar{\omega}_1 \,,
\]
thus
\[
\bar{\varphi}^{\ast} \bar{\omega}_2
= e^{- \mu_2 \circ \bar{\varphi} + g \circ \pi_1 + \mu_1} \, \bar{\omega}_1 \,.
\]
Since
\[
d \, (- \mu_2 \circ \bar{\varphi} + g \circ \pi_1 + \mu_1)
= \pi_1^{\ast} (- \varphi^{\ast} \eta_2 + dg + \eta_1) = 0 \,,
\]
we have $- \mu_2 \circ \bar{\varphi} + g \circ \pi_1 + \mu_1 = c$
for some $c \in \mathbb{R}$.
We conclude that
$\bar{\varphi}^{\ast} \bar{\omega}_2 = e^c \, \bar{\omega}_1$,
thus $\bar{\varphi}$ is a conformal symplectomorphism
with conformal factor $c$
with respect to $\bar{\omega}_1$ and $\bar{\omega}_2$.
Conversely,
suppose that $\bar{\varphi}$ is a conformal symplectomorphism
with conformal factor $c$
with respect to $\bar{\omega}_1$ and $\bar{\omega}_2$.
Then
$$
\pi_1^{\ast} (\varphi^{\ast} \omega_2)
= \bar{\varphi}^{\ast} (\pi_2^{\ast} \omega_2)
= \bar{\varphi}^{\ast} (e^{\mu_2} \bar{\omega}_2)
= e^{\mu_2 \circ \bar{\varphi} + c - \mu_1} \, \pi_1^{\ast} \omega_1 \,.
$$
This implies that
$\mu_2 \circ \bar{\varphi} + c - \mu_1 = g \circ \pi_1$
for some $g: M_1 \rightarrow \mathbb{R}$.
We thus have
\[
\pi_1^{\ast} (\varphi^{\ast} \omega_2) = e^{g \circ \pi_1} \, \pi_1^{\ast} \omega_1
= \pi_1^{\ast} (e^g \omega_1) \,,
\]
and so $\varphi^{\ast}\omega_2 = e^g \omega_1$.
This shows that $\varphi$ is a lcs diffeomorphism
with conformal factor $g$
with respect to $(\eta_1, \omega_1)$ and $(\eta_2, \omega_2)$.
\end{proof}

We say that a conformal symplectomorphism $\varphi$
of a conformal symplectic manifold $(M, [\omega])$
is a symplectomorphism
if $\varphi^{\ast}\omega = \omega$
for some (hence any) representative $\omega$.
Proposition \ref{proposition: lift of lcs diffeo to symplectic cover}
implies the following result.

\begin{cor}\label{corollary: symplectic lift}
Let $\varphi$ be a lcs diffeomorphism
of a lcs manifold $\big(M, [(\eta, \omega)]\big)$,
and suppose that $\bar{\varphi}$ is a conformal symplectomorphism
of the symplectic cover $(\bar{M}, [e^{-\mu} \pi^{\ast}\omega])$
such that $\pi \circ \bar{\varphi} = \varphi \circ \pi$.
Then $\bar{\varphi}$ is a symplectomorphism if and only if for some (hence any)
representative $(\eta, \omega)$
and primitive $\mu$ of $\pi^{\ast}\eta$
we have
\[
\mu \circ \bar{\varphi} - \mu = g \circ \pi \,,
\]
where $g$ is the conformal factor of $\varphi$
with respect to $(\eta, \omega)$.
\end{cor}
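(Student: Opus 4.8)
The plan is to deduce the statement directly from Proposition~\ref{proposition: lift of lcs diffeo to symplectic cover}, specializing it to the situation where the source and target lcs manifolds coincide. First I would set $M_1 = M_2 = M$ and $(\eta_1, \omega_1) = (\eta_2, \omega_2) = (\eta, \omega)$, so that the two symplectic covers are the same and I may take a single primitive $\mu = \mu_1 = \mu_2$ of $\pi^{\ast}\eta$. Under these identifications the compatibility hypothesis $\pi_2 \circ \bar{\varphi} = \varphi \circ \pi_1$ of the Proposition becomes precisely the assumption $\pi \circ \bar{\varphi} = \varphi \circ \pi$ made here.

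Since $\varphi$ is a lcs diffeomorphism with conformal factor $g$ and $\bar{\varphi}$ covers it, Proposition~\ref{proposition: lift of lcs diffeo to symplectic cover} applies and asserts that $\bar{\varphi}$ is a conformal symplectomorphism of $(\bar{M}, [e^{-\mu}\pi^{\ast}\omega])$ whose conformal factor is the constant $c$ determined by
\[
- \mu \circ \bar{\varphi} + g \circ \pi + \mu = c \,.
\]
Writing $\bar{\omega} = e^{-\mu}\pi^{\ast}\omega$, the map $\bar{\varphi}$ is a symplectomorphism exactly when $\bar{\varphi}^{\ast}\bar{\omega} = \bar{\omega}$, that is, when its conformal factor vanishes. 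Thus I would simply set $c = 0$ in the displayed identity: this gives $\mu \circ \bar{\varphi} - \mu = g \circ \pi$, and conversely this equality forces $c = 0$, yielding the desired equivalence.

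The only point requiring a brief verification is the ``for some (hence any)'' clause. I would check that replacing $(\eta, \omega)$ by $(\eta + df, e^f \omega)$, the primitive $\mu$ by $\mu + f \circ \pi$ (up to an additive constant), and correspondingly $g$ by $g + f \circ \varphi - f$, leaves the identity $\mu \circ \bar{\varphi} - \mu = g \circ \pi$ unchanged, since $(f \circ \pi) \circ \bar{\varphi} = (f \circ \varphi) \circ \pi$ by the intertwining relation $\pi \circ \bar{\varphi} = \varphi \circ \pi$. I do not expect any genuine obstacle: the entire content sits in Proposition~\ref{proposition: lift of lcs diffeo to symplectic cover}, and the Corollary is a direct specialization of it together with the observation that being a symplectomorphism amounts to a vanishing conformal factor.
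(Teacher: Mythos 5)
Your proposal is correct and matches the paper's own argument: the paper likewise specializes Proposition~\ref{proposition: lift of lcs diffeo to symplectic cover} to $M_1 = M_2 = M$, identifies being a symplectomorphism with the vanishing of the constant $c$ in $-\mu \circ \bar{\varphi} + g \circ \pi + \mu = c$, and then verifies the ``for some (hence any)'' clause by exactly your computation, replacing $\mu$ by $\mu + f \circ \pi$ (up to a constant) and $g$ by $f \circ \varphi - f + g$ and using $\pi \circ \bar{\varphi} = \varphi \circ \pi$.
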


\begin{proof}
Proposition \ref{proposition: lift of lcs diffeo to symplectic cover}
implies that $\bar{\varphi}$ is a symplectomorphism
if and only if for any representative $(\eta, \omega)$
and primitive $\mu$ of $\pi^{\ast}\eta$
we have $\mu \circ \bar{\varphi} - \mu = g \circ \pi$.
If this relation holds for one primitive $\mu$ of $\pi^{\ast}\eta$
then it also holds for any other primitive $\mu + c$.
Moreover,
if the relation holds for one representative $(\eta, \omega)$
then it also holds for any other representative
$(\eta + d f , e^f \omega)$.
Indeed,
since $\pi^{\ast} (\eta + df) = d (\mu + f \circ \pi)$
and since the conformal factor of $\varphi$
with respect to $(\eta + df, e^f \omega)$
is $f \circ \varphi - f + g$,
the desired relation for $(\eta + df , e^f \omega)$
is
\[
(\mu + f \circ \pi ) \circ \bar{\varphi}
- \mu - f \circ \pi
= (f \circ \varphi - f + g) \circ \pi \,,
\]
which is equivalent to
$\mu \circ \bar{\varphi} - \mu = g \circ \pi$.
\end{proof}

We say that a lcs diffeomorphism $\varphi$
of a lcs manifold $\big(M, [(\eta, \omega)]\big)$
is divergence free
if there is a symplectomorphism $\bar{\varphi}$
of the symplectic cover $(\bar{M} , [e^{-\mu}\pi^{\ast}\omega])$
such that $\pi \circ \bar{\varphi} = \varphi \circ \pi$.
By Lemma \ref{lemma: symplectic cover}
and Corollary \ref{corollary: symplectic lift},
such a symplectomorphism is then unique.
We call it the lift to the symplectic cover
of the divergence free lcs diffeomorphism $\varphi$.
In particular,
the lift of the identity of $M$
is the identity of $\bar{M}$.

Let $\varphi: M_1 \rightarrow M_2$
be a lcs diffeomorphism
between two Liouville lcs manifolds
$\big(M_1, [(\eta_1, \omega_1, \lambda_1)]\big)$
and $\big(M_2, [(\eta_2, \omega_2, \lambda_2)]\big)$.
We say that $\varphi$ is exact
if for some (hence any) representatives
$(\eta_1, \omega_1, \lambda_1)$ and $(\eta_2, \omega_2, \lambda_2)$
the $\eta_1$-closed $1$-form $\lambda_1 - e^{-g} \varphi^{\ast}\lambda_2$,
where $g$ is the conformal factor of $\varphi$
with respect to $(\eta_1, \omega_1)$ and $(\eta_2, \omega_2)$,
is $\eta_1$-exact.
In this case,
we say that $\varphi$ has action $S: M_1 \rightarrow \mathbb{R}$
with respect to $(\eta_1, \omega_1, \lambda_1)$ and $(\eta_2, \omega_2, \lambda_2)$
(or just action $S$
when the representatives we consider are clear from the context)
if
\[
\lambda_1 - e^{-g} \varphi^{\ast}\lambda_2 = d_{\eta_1}S \,.
\]
By Remark \ref{remark: unique solution},
if $M$ is connected and $\eta$ is not exact
then the action $S$ is uniquely defined
(otherwise it is just determined up to an additive constant
on each connected component).

\begin{example}\label{example: exact lcs on conformal}
Let $\varphi: M_1 \rightarrow M_2$
be a conformal symplectomorphism
between two conformal Liouville manifolds
$\big( M_1, [(\omega_1, \lambda_1)] \big)$
and $\big( M_2, [(\omega_2, \lambda_2)] \big)$.
We say that $\varphi$ is exact
if for some (hence any) representatives
$(\omega_1, \lambda_1)$ and $(\omega_2, \lambda_2)$
the closed $1$-form $\lambda_1 - e^{-c} \varphi^{\ast}\lambda_2$,
where $c$ is the conformal factor of $\varphi$
with respect to $\omega_1$ and $\omega_2$,
is exact.
We then say that $\varphi$ has action $S$
with respect to $(\omega_1, \lambda_1)$ and $(\omega_2, \lambda_2)$
if $\lambda_1 - e^{-c} \varphi^{\ast}\lambda_2 = dS$.
Let $\big(M_1, [(\eta_1, \omega_1, \lambda_1)]\big)$
and $\big(M_2, [(\eta_2, \omega_2, \lambda_2)]\big)$
be Liouville lcs manifolds
with exact Lee forms $\eta_j = d\mu_j$,
and consider the associated conformal Liouville manifolds
$\big(M_j, [(e^{-\mu_j}\omega_j, e^{-\mu_j}\lambda_j)]\big)$.
Then $\varphi: M_1 \rightarrow M_2$ is an exact lcs diffeomorphism
with action $S$ with respect to $(\eta_1, \omega_1, \lambda_1)$
and $(\eta_2, \omega_2, \lambda_2)$
if and only if it is an exact conformal symplectomorphism
with action $e^{-\mu_1} \, S$
with respect to $(e^{-\mu_1}\omega_1, e^{-\mu_1} \lambda_1)$
and $(e^{-\mu_2}\omega_2, e^{-\mu_2} \lambda_2)$.
\end{example}

Similarly,
we have the following result.

\begin{prop}\label{proposition: lift of exact isotopy to symplectic cover}
Consider two Liouville lcs manifolds
$\big(M_1 , [(\eta_1, \omega_1, \lambda_1)]\big)$
and $\big(M_2 , [(\eta_2, \omega_2, \lambda_2)]\big)$,
and their symplectic covers
$\big(\bar{M}_1, [(e^{-\mu_1}\pi_1^* \omega_1, e^{-\mu_1}\pi_1^* \lambda_1)]\big)$
and $\big(\bar{M}_2, [(e^{-\mu_2}\pi_2^* \omega_2, e^{-\mu_2}\pi_2^* \lambda_2)]\big)$.
Suppose that $\varphi: M_1 \rightarrow M_2$
and $\bar{\varphi}: \bar{M}_1 \rightarrow \bar{M}_2$
are a lcs diffeomorphism
and a conformal symplectomorphism
such that $\pi_2 \circ \bar{\varphi} = \varphi \circ \pi_1$.
Then $\varphi$ is exact with action $S$
with respect to $(\eta_1, \omega_1, \lambda_1)$
and $(\eta_2, \omega_2, \lambda_2)$
if and only if $\bar{\varphi}$ is exact
with action $e^{-\mu_1} \, (S \circ \pi_1)$
with respect to $(e^{-\mu_1} \, \pi_1^{\ast}\omega_1, e^{-\mu_1} \, \pi_1^{\ast}\lambda_1)$
and $(e^{-\mu_2} \, \pi_2^{\ast}\omega_2, e^{-\mu_2} \, \pi_2^{\ast}\lambda_2)$.
\end{prop}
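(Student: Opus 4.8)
The plan is to mirror the proof of Proposition \ref{proposition: lift of lcs diffeo to symplectic cover}, reducing the entire biconditional to a single identity between $1$-forms together with the injectivity of $\pi_1^{\ast}$. Write $\bar{\omega}_j = e^{-\mu_j} \pi_j^{\ast}\omega_j$ and $\bar{\lambda}_j = e^{-\mu_j} \pi_j^{\ast}\lambda_j$ for the conformal Liouville data on the covers, and let $g$ and $c$ denote the conformal factors of $\varphi$ and $\bar{\varphi}$ respectively. By Proposition \ref{proposition: lift of lcs diffeo to symplectic cover} these are related by $-\mu_2 \circ \bar{\varphi} + g \circ \pi_1 + \mu_1 = c$.

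The first step is to establish the key identity
\[
\bar{\lambda}_1 - e^{-c} \, \bar{\varphi}^{\ast}\bar{\lambda}_2
= e^{-\mu_1} \, \pi_1^{\ast} \big( \lambda_1 - e^{-g} \varphi^{\ast}\lambda_2 \big) \,.
\]
To prove it I would first compute, using $\pi_2 \circ \bar{\varphi} = \varphi \circ \pi_1$, that $\bar{\varphi}^{\ast}\bar{\lambda}_2 = e^{-\mu_2 \circ \bar{\varphi}} \, \pi_1^{\ast}\varphi^{\ast}\lambda_2$, and then substitute $-c - \mu_2 \circ \bar{\varphi} = - g \circ \pi_1 - \mu_1$ (from the conformal factor relation) to obtain $e^{-c}\bar{\varphi}^{\ast}\bar{\lambda}_2 = e^{-\mu_1}\pi_1^{\ast}(e^{-g}\varphi^{\ast}\lambda_2)$. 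Subtracting this from $\bar{\lambda}_1 = e^{-\mu_1}\pi_1^{\ast}\lambda_1$ yields the identity.

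The second step records how the two notions of action are transported. Using the Lichnerowicz--de Rham relation $d_{\eta + df}\sigma = e^f d_{\eta}(e^{-f}\sigma)$ with $\eta = 0$ and $f = \mu_1$ (recall $\pi_1^{\ast}\eta_1 = d\mu_1$), for any $S: M_1 \to \mathbb{R}$ one has $\pi_1^{\ast}(d_{\eta_1}S) = d_{d\mu_1}(S \circ \pi_1) = e^{\mu_1} d\big( e^{-\mu_1}(S \circ \pi_1) \big)$, and hence
\[
e^{-\mu_1} \, \pi_1^{\ast}(d_{\eta_1}S) = d\big( e^{-\mu_1}(S \circ \pi_1) \big) \,.
\]

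The conclusion is then a chain of equivalences. Since $\pi_1$ is a covering map, $\pi_1^{\ast}$ is injective on forms, so the equality $\lambda_1 - e^{-g}\varphi^{\ast}\lambda_2 = d_{\eta_1}S$ on $M_1$ holds if and only if its image under the operation $\omega \mapsto e^{-\mu_1}\pi_1^{\ast}\omega$ (injective, as $e^{-\mu_1}$ is nowhere vanishing) holds on $\bar{M}_1$; by the two displayed identities this image is precisely $\bar{\lambda}_1 - e^{-c}\bar{\varphi}^{\ast}\bar{\lambda}_2 = d\big( e^{-\mu_1}(S \circ \pi_1)\big)$, which is exactly the statement that $\bar{\varphi}$ is exact with action $e^{-\mu_1}(S \circ \pi_1)$. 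Reading this chain in both directions proves the proposition. I expect the only genuinely delicate point to be the bookkeeping of the conformal factors and exponential prefactors in the first step; once the key identity is in place, both the equivalence of exactness and the precise form of the action follow formally from injectivity of $\pi_1^{\ast}$ and the Lichnerowicz--de Rham identity, with no further analysis required.
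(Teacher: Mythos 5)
Your proof is correct and follows essentially the same route as the paper: the key identity $\bar{\lambda}_1 - e^{-c}\bar{\varphi}^{\ast}\bar{\lambda}_2 = e^{-\mu_1}\pi_1^{\ast}(\lambda_1 - e^{-g}\varphi^{\ast}\lambda_2)$, derived from the conformal factor relation of Proposition \ref{proposition: lift of lcs diffeo to symplectic cover}, and the computation $e^{-\mu_1}\pi_1^{\ast}(d_{\eta_1}S) = d\big(e^{-\mu_1}(S\circ\pi_1)\big)$ are exactly the paper's two ingredients. Your only (harmless) repackaging is to run both directions at once as a chain of equivalences via injectivity of $\pi_1^{\ast}$, which the paper uses implicitly in its converse direction anyway.
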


\begin{proof}
We denote $\bar{\omega}_j = e^{-\mu_j} \, \pi_j^{\ast}\omega_j$
and $\bar{\lambda}_j = e^{-\mu_j} \, \pi_j^{\ast}\lambda_j$.
Let $c$ be the conformal factor of $\bar{\varphi}$
with respect to $\bar{\omega}_1$
and $\bar{\omega}_2$,
and $g$ the conformal factor of $\varphi$
with respect to $(\eta_1, \omega_1)$
and $(\eta_2, \omega_2)$.
By Proposition \ref{proposition: lift of lcs diffeo to symplectic cover}
we have $- \mu_2 \circ \bar{\varphi} + g \circ \pi_1 + \mu_1 = c$,
thus
\[
\bar{\lambda}_1
- e^{-c} \, \bar{\varphi}^{\ast} \bar{\lambda}_2
= e^{-\mu_1} \, \pi_1^{\ast}\lambda_1 
- e^{-c -\mu_2 \circ \bar{\varphi}} \, \pi_1^{\ast} (\varphi^{\ast} \lambda_2)
= e^{-\mu_1} \, \pi_1^{\ast} (\lambda_1 - e^{-g} \varphi^{\ast}\lambda_2 ) \,.
\]
If $\varphi$ is exact with action $S$
with respect to $(\eta_1, \omega_1, \lambda_1)$
and $(\eta_2, \omega_2, \lambda_2)$
then
\[
\bar{\lambda}_1
- e^{-c} \, \bar{\varphi}^{\ast} \bar{\lambda}_2
= e^{-\mu_1} \, \pi_1^{\ast} (d_{\eta_1}S )
= e^{-\mu_1} \, d_{d\mu_1} (S \circ \pi_1)
= d \big(e^{-\mu_1} (S \circ \pi_1)\big) \,,
\]
thus $\bar{\varphi}$ is exact
with action $e^{-\mu_1} (S \circ \pi_1)$
with respect to $(\bar{\omega}_1, \bar{\lambda}_1)$
and $(\bar{\omega}_2, \bar{\lambda}_2)$.
Conversely,
if $\bar{\varphi}$ is exact
with action $e^{-\mu_1} (S \circ \pi_1)$
with respect to $(\bar{\omega}_1, \bar{\lambda}_1)$
and $(\bar{\omega}_2, \bar{\lambda}_2)$
then
\[
e^{-\mu_1} \, \pi_1^{\ast} (d_{\eta_1}S )
= e^{-\mu_1} \, d_{d\mu_1} (S \circ \pi_1)
= d \big(e^{-\mu_1} (S \circ \pi_1)\big)
= \bar{\lambda}_1
- e^{-c} \, \bar{\varphi}^{\ast} \bar{\lambda}_2
= e^{-\mu_1} \, \pi_1^{\ast} (\lambda_1 - e^{-g} \varphi^{\ast}\lambda_2 ) \,.
\]
This implies that 
$\lambda_1 - e^{-g} \varphi^{\ast} \lambda_2 = d_{\eta_1} S$,
thus $\varphi$ is exact
with action $S$ with respect to $(\eta_1, \omega_1, \lambda_1)$
and $(\eta_2, \omega_2, \lambda_2)$.
\end{proof}

\begin{example}\label{example: lift of maps to symplectization}
Let $\phi: Y_1 \rightarrow Y_2$ be a contactomorphism
between two contact manifolds $\big(Y_1, \xi_1 = \ker (\alpha_1)\big)$
and $\big(Y_2, \xi_2 = \ker (\alpha_2)\big)$,
with $\phi^{\ast}\alpha_2 = e^g\alpha_1$.
Then
\[
\widetilde{\phi}: S^1 \times Y_1 \rightarrow S^1 \times Y_2 \,,\;
\widetilde{\phi}(\theta, p) = \big( \theta - g(p) \,,\, \phi(p) \big)  
\]
is an exact lcs diffeomorphism
between the locally conformal symplectizations
$\big( S^1 \times Y_1 \,,\, [(-d\theta_1 , d_{-d\theta_1}\alpha_1 , \alpha_1)] \big)$
and $\big( S^1 \times Y_2 \,,\, [(-d\theta_2 , d_{-d\theta_2}\alpha_2 , \alpha_2)] \big)$,
with conformal factor $g \circ \pr_2$,
where $\pr_2: S^1 \times Y_1 \rightarrow Y_1$
denotes the projection to the second factor,
and action zero
with respect to $(-d\theta_1 , d_{-d\theta_1} \alpha_1 , \alpha_1)$
and $(-d\theta_2 , d_{-d\theta_2} \alpha_2 , \alpha_2)$.
\end{example}

The following lemma,
whose proof is immediate,
will be needed later.
Recall that the $1$-jet bundle of a manifold $B$
is the manifold $J^1B = T^{\ast}B \times \mathbb{R}$.
We endow it with the contact structure
given by the kernel of the contact form $\lambda - dz$,
where $\lambda$ is the tautological $1$-form on $T^{\ast}B$
and $z$ the coordinate on $\mathbb{R}$.

\begin{lemma}\label{lemma: identification twisted cotangent bundle with conformal symplectization}
For any manifold $B$,
the map
\[
T_{-d\theta}^{\ast} (S^1 \times B) \rightarrow S^1 \times J^1B \,,\;
(\theta, q, z, p) \mapsto (\theta, q, p, z)
\]
is a strict and exact lcs diffeomorphism
from the twisted cotangent bundle
$T^{\ast}_{- d\theta}(S^1 \times B)$
to the locally conformal symplectization 
\[
\big( S^1 \times J^1B \,,\,
[(-d\theta \,,\, d_{-d\theta} (\lambda - dz) \,,\, \lambda - dz)] \big)
\]
of the 1-jet bundle $J^1B$,
of action $S(\theta, q, z, p) = z$.
\end{lemma}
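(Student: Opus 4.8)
The plan is to verify the statement by a direct computation in the standard fiber coordinates, since both sides are explicitly given bundles over $S^1 \times B$. First I would fix coordinates $(\theta, q, z, p)$ on the source $T^{\ast}_{-d\theta}(S^1 \times B)$, where a covector is written $z\,d\theta + p\,dq$, so that the tautological $1$-form is $\lambda = z\,d\theta + p\,dq$ and the Lee form is $-d\theta$. On the target $S^1 \times J^1B$ I would use coordinates $(\theta, q, p, z)$, where $(q, p)$ are the base and fiber coordinates on $T^{\ast}B$ and $z$ is the coordinate on the $\mathbb{R}$-factor of $J^1B = T^{\ast}B \times \mathbb{R}$, so that the Liouville form is $\alpha = p\,dq - dz$ and again the Lee form is $-d\theta$. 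In these coordinates the map, which I denote $\Phi$, is the coordinate swap $(\theta, q, z, p) \mapsto (\theta, q, p, z)$, which is manifestly a diffeomorphism; hence the only content is to match the two lcs structures and identify the action.

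Next I would compute the two lcs forms using $d_{-d\theta}\sigma = d\sigma + d\theta \wedge \sigma$. On the source,
\[
\omega_1 = d_{-d\theta}\lambda = dz \wedge d\theta + dp \wedge dq + p\, d\theta \wedge dq,
\]
and on the target,
\[
\omega_2 = d_{-d\theta}\alpha = dp \wedge dq + p\, d\theta \wedge dq - d\theta \wedge dz.
\]
Pulling $\omega_2$ back along the swap and using $-d\theta \wedge dz = dz \wedge d\theta$ gives $\Phi^{\ast}\omega_2 = \omega_1$, which shows that $\Phi$ is strict; strictness of the Lee forms, $\Phi^{\ast}(-d\theta) = -d\theta$, is immediate since $\theta$ is fixed, so in particular the conformal factor is $g = 0$.

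Finally, for exactness I would compute $\lambda - \Phi^{\ast}\alpha$. Since $\Phi^{\ast}\alpha = p\,dq - dz$, we get
\[
\lambda - \Phi^{\ast}\alpha = (z\,d\theta + p\,dq) - (p\,dq - dz) = z\,d\theta + dz = d_{-d\theta} z,
\]
using $d_{-d\theta} z = dz + z\, d\theta$. As the conformal factor vanishes, this is exactly the defining relation $\lambda_1 - e^{-g}\Phi^{\ast}\lambda_2 = d_{\eta_1} S$ with $S(\theta, q, z, p) = z$, establishing that $\Phi$ is exact with the stated action. There is no genuine obstacle here: the whole statement is a bookkeeping check, and the only thing to get right is the placement of the sign conventions in $\alpha = p\,dq - dz$ and in $d_{-d\theta}$, which is precisely what makes the two twisted differentials agree under the swap.
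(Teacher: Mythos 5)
Your proof is correct and is exactly the direct verification the paper intends: the paper states that the proof of this lemma ``follows directly from the definitions'' and omits it, and your coordinate computations --- $\Phi^{\ast}\omega_2 = \omega_1$, $\Phi^{\ast}(-d\theta) = -d\theta$, and $\lambda - \Phi^{\ast}(\lambda - dz) = z\,d\theta + dz = d_{-d\theta}z$ --- are consistent with the conventions $d_{-d\theta}\sigma = d\sigma + d\theta \wedge \sigma$ and the paper's definitions of strict and exact lcs diffeomorphisms with action $S$.
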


\subsection{$\mathcal{C}^0$-rigidity of lcs diffeomorphisms}\label{section: C0 rigidity}

The group of lcs diffeomorphisms
of a lcs manifold $(M, [(\eta, \omega)])$
is $\mathcal C^0$-closed
in the group of diffeomorphisms of $M$.
As explained hereafter,
this is a direct consequence of the Eliashberg--Gromov
$\mathcal C^0$-rigidity of symplectomorphisms \cite{Eliashberg}.
Let us fix an auxiliary Riemannian metric $g$ on $M$.

\begin{prop}\label{proposition:rigidity}
Let $(\varphi_k)_{k \in \N}$ be a sequence of lcs diffeomorphisms
of a lcs manifold $(M, [(\eta, \omega)])$
that converges in the $\mathcal C^0$-topology
to a diffeomorphism $\varphi$ of $M$.
Then $\varphi$ is a lcs diffeomorphism of $(M, [(\eta, \omega)])$. 
\end{prop}

\begin{proof}
Let $p$ be a point of $M$,
and suppose first that $\varphi(p) = p$ and $\varphi_k(p) = p$ for all $k \in \N$.
Let $\mathcal{U}$ be a contractible open neighborhood of $p$ in $M$
whose closure is compact
and whose boundary is a smooth submanifold of $M$.
Set $\mathcal{V}_k = \varphi_k (\mathcal{U})$
and $\mathcal{V} = \varphi (\mathcal{U})$.
Let $\mathcal{W}$ denote a contractible open neighborhood of $p$
that contains $\mathcal{V} \cup_{k} \mathcal{V}_k$
and whose closure is compact as well.
Let $f \in \mathcal{C}^\infty (\mathcal{W})$
be such that $\eta|_{\mathcal{W}} = df$
and set $\overline{\omega} = e^{-f}\omega$.
We may assume without loss of generality
that $\mathcal{W}$ is the domain of a Darboux chart
$(\mathcal{W}, \Psi)$
with $\Psi (\mathcal{W}) = B^{2n}(R)$,
$\Psi (\mathcal{U}) = B^{2n}(r)$
(and $\Psi^*\omega_0 = \overline{\omega}$).
Now, 
we have $\varphi_k^*(\overline{\omega}) = e^{c_k} \overline{\omega}$
for some $c_k \in \R$.
Notice that the $\mathcal{C}^0$-convergence of $(\varphi_k)_{k \in \N}$ towards $\varphi$
implies that the symplectic volume of $\varphi_k (\mathcal{U})$
converges towards the symplectic volume of $\varphi(\mathcal{U})$,
or in other words that 
\begin{equation}\label{equation}
\lim_{k \to \infty} \int_{\varphi_k (\mathcal{U})} \overline{\omega}^n
= \int_{\varphi (\mathcal{U})}\overline{\omega}^n.
\end{equation}
Indeed, for $\varepsilon > 0$,
let $N_\varepsilon \subset \mathcal{W}$ denote
the image of a tubular neighborhood of $\partial \varphi(\mathcal{U})$
such that 
$$
\int_{N_\varepsilon} \overline{\omega}^n < \varepsilon \,.
$$
This exists because $\partial \varphi(\mathcal{U})$
is a closed embedded submanifold of $\mathcal{W}$,
whose Lebesgue measure is $0$.
Besides, there exists $K \in \N$ such that for all $k \geq K$,
$\varphi_k (\partial \mathcal{U}) \subset N_\varepsilon$,
which implies that 
$$
\left|\int_{\varphi_k (\mathcal{U})} \overline{\omega}^n
- \int_{\varphi (\mathcal{U})} \overline{\omega}^n\right|
< \varepsilon \,.
$$
The relation (\ref{equation}) is equivalent to 
$$
\int_{\mathcal{U}} \varphi^*(\overline{\omega}^n)
= \lim_{k\to \infty} \int_{\mathcal{U}} \varphi_k^*(\overline{\omega}^n)
= \lim_{k\to \infty}  e^{nc_k} \int_{\mathcal{U}} \overline{\omega}^n \,.
$$
Hence the sequence $(c_k)_{k \in \N}$ has a limit $c$ in $\R$
(if the group of periods of $\eta$ had rank one, or, more generally, was discrete,
we could argue that there exists a $K \in \N$ such that $c_k = c_K$ for all $k \geq K$,
but this is not necessarily the case).
Set 
$$
\psi_k : \R^{2n} \to \R^{2n} : x \mapsto e^{-\frac12 c_k} x
$$
and 
$$
\psi : \mathcal{W} \to \mathcal{W} : x \mapsto e^{-\frac12 c} x \,.
$$ 
This says that
$\psi_k \circ \varphi_k : (\mathcal{U}, \overline{\omega}|_{\mathcal{U}})
\to (\mathcal{W}, \overline{\omega})$
is a sequence of symplectic embeddings
that $C^0$-converges to the embedding
$\psi \circ \varphi : (\mathcal{U}, \overline{\omega})
\to (\mathcal{W}, \overline{\omega})$.
The $C^0$-rigidity of symplectic embeddings implies that
$\psi \circ \varphi$ is a symplectic embedding,
hence that $\varphi$ is a conformally symplectic embedding.

In the general case,
the idea is to first consider a compactly supported Hamiltonian diffeomorphism
$\psi$ of $\big(M, [(\eta, \omega)]\big)$
such that $\psi(\varphi(p)) = p$
and to replace $\varphi$ with $\psi \circ \varphi$
and each $\varphi_k$ with $\psi \circ \varphi_k$.
We can now assume that $\varphi$ fixes $p$.
Next, we replace each $\varphi_k$ by $\psi_k \circ \varphi_k$,
where $\psi_k$ is a compactly supported Hamiltonian diffeomorphisms
of $\big(M, [(\eta, \omega)]\big)$
such that $\psi_k (\varphi_k(p)) = p$
and where the sequence $(\psi_k)_{k \in \N}$
$\mathcal C^0$-converges to the identity.
In order to prove existence of $\psi$
and of the sequence $(\psi_k)$,
we argue that the group $\Ham^c \big(M, [(\eta, \omega)]\big)$
acts transitively on $M$ and, moreover,
that it is possible to choose,
for any two points $p$ and $q$ in $M$,
a diffeomorphism $\psi \in \Ham^c \big(M, [(\eta, \omega)]\big)$
with $\psi(p) = q$ and whose $\mathcal C^0$-norm is bounded above by $2d(p,q)$. 
Indeed, let us pick a simple path $\gamma : [0,1] \to M$
starting at $p$ and ending at $q$
and whose length is bounded above by $\frac32 d(p,q)$,
and a contractible open neighborhood $\mathcal{U}$ of $\gamma([0,1])$.
On $\mathcal{U}$, the Lee form is exact
and we can construct a compactly supported Hamiltonian vector field
of the symplectic manifold $(\mathcal{U}, \overline{\omega})$,
one of whose trajectories is $\gamma$,
and whose norm is bounded above by $2 \sup\{||\dot{\gamma}(t)||\}$.
The time-one map of its flow extends to a compactly supported lcs Hamiltonian diffeomorphism
of $(M, [(\eta, \omega)])$ which maps $p$ to $q$
and whose $\mathcal C^0$-norm is bounded above by $2d(p,q)$. 
\end{proof}

\subsection{Lcs vector fields and isotopies}

A locally conformal symplectic (lcs) vector field
on a lcs manifold $\big(M, [(\eta, \omega)]\big)$
is a vector field $X$ such that for any representative $(\eta, \omega)$
there is a function $h: M \rightarrow \mathbb{R}$,
called the conformal factor of $X$ with respect to $(\eta, \omega)$,
satisfying $\mathcal{L}_X \omega = h \, \omega$.
This relation implies that
$\mathcal{L}_X \eta = dh$.

\begin{lemma}\label{lemma: conformal vector field 1}
A vector field $X$ on a lcs manifold $\big(M, [(\eta, \omega)]\big)$
is lcs if and only if there is a real number $c$
such that
$$
d_{\eta} (\iota_X\omega) = c \, \omega
$$
for some (hence any) representative $(\eta, \omega)$.
In this case,
the conformal factor of $X$ with respect to $(\eta, \omega)$
is $h = c + \eta(X)$.
\end{lemma}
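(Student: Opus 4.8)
The plan is to compute $d_\eta(\iota_X\omega)$ directly using the Cartan-type formula for the Lichnerowicz–de Rham differential, which was recorded in the excerpt:
$$
(\iota_X\circ d_\eta + d_\eta\circ\iota_X)\sigma = \mathcal{L}_X\sigma - \eta(X)\sigma .
$$
Applying this to $\sigma=\omega$ and using that $\omega$ is $\eta$-closed, i.e. $d_\eta\omega=0$, the first term $\iota_X(d_\eta\omega)$ vanishes, so I obtain the clean identity
$$
d_\eta(\iota_X\omega) = \mathcal{L}_X\omega - \eta(X)\,\omega .
$$
This single identity is really the heart of the matter, and the two directions of the equivalence will both fall out of it together with the definition of an lcs vector field. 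Let me outline each direction.

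For the forward direction, suppose $X$ is lcs, so by definition $\mathcal{L}_X\omega = h\,\omega$ for the conformal factor $h$. Substituting into the boxed identity gives $d_\eta(\iota_X\omega) = (h-\eta(X))\,\omega$. So I must show the coefficient $h-\eta(X)$ is a constant $c$. To see this I differentiate: since $\omega$ is $\eta$-closed, $d_\eta\big((h-\eta(X))\omega\big) = d_\eta\big(d_\eta(\iota_X\omega)\big) = 0$ by $d_\eta^2=0$. Expanding the left side via the Leibniz rule for $d_\eta$ on a product of a function and a $2$-form—concretely $d_\eta(f\omega)=df\wedge\omega + f\,d_\eta\omega = df\wedge\omega$—yields $d(h-\eta(X))\wedge\omega = 0$. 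Because $\omega$ is nondegenerate (and $\dim M\ge 4$, so wedging with $\omega$ is injective on $1$-forms), this forces $d(h-\eta(X))=0$, hence $h-\eta(X)=c$ is locally constant, a genuine constant on each connected component. Thus $d_\eta(\iota_X\omega)=c\,\omega$ and moreover $h=c+\eta(X)$, as claimed.

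For the converse, assume $d_\eta(\iota_X\omega)=c\,\omega$ for some constant $c$. Feeding this back into the boxed identity gives $c\,\omega = \mathcal{L}_X\omega - \eta(X)\,\omega$, i.e. $\mathcal{L}_X\omega = (c+\eta(X))\,\omega$, which is exactly the defining relation of an lcs vector field with conformal factor $h=c+\eta(X)$. Finally, the phrase ``for some (hence any) representative'' should be checked: passing from $(\eta,\omega)$ to $(\eta+df,e^f\omega)$, I use the intertwining rule $d_{\eta+df}\tau = e^f d_\eta(e^{-f}\tau)$ recorded earlier, applied to $\tau=\iota_X(e^f\omega)=e^f\iota_X\omega$, to verify that the relation $d_{\eta+df}(\iota_X(e^f\omega)) = c\,e^f\omega$ holds with the \emph{same} constant $c$.

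I expect the main obstacle to be purely bookkeeping rather than conceptual: the key step is the nondegeneracy argument showing that $d(h-\eta(X))\wedge\omega=0$ implies $d(h-\eta(X))=0$. This uses that wedging a $1$-form with the nondegenerate $2$-form $\omega$ is injective, which is where the hypothesis $\dim M\ge 4$ is silently needed (in dimension $2$ every $2$-form is closed and the statement degenerates). Everything else is a direct substitution into the Cartan formula, so once that identity is in hand the proof is essentially immediate.
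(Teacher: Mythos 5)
Your proof is correct, and its skeleton coincides with the paper's: both directions rest on the Cartan-type identity $\iota_X d_\eta + d_\eta \iota_X = \mathcal{L}_X - \eta(X)$ applied to the $\eta$-closed form $\omega$, giving $d_\eta(\iota_X\omega) = \mathcal{L}_X\omega - \eta(X)\,\omega$, and the converse is the same substitution in both texts. The one step where you genuinely diverge is the constancy of $h - \eta(X)$ in the forward direction. The paper uses the relation $\mathcal{L}_X\eta = dh$ (recorded right after the definition of an lcs vector field) together with $d\eta = 0$ to compute $dh = \mathcal{L}_X\eta = d(\iota_X\eta)$, hence $d\big(h - \eta(X)\big) = 0$ directly. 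You instead apply $d_\eta$ once more, use $d_\eta^2 = 0$ and the Leibniz rule to get $d\big(h-\eta(X)\big)\wedge\omega = 0$, and conclude by injectivity of wedging $1$-forms with the nondegenerate $\omega$ in dimension $\geq 4$. Both are sound, and the dependency is really the same: the paper's unproved assertion that $\mathcal{L}_X\omega = h\omega$ forces $\mathcal{L}_X\eta = dh$ is itself established by exactly the wedge-injectivity you invoke (the paper uses this linear-algebra fact earlier, when showing the Lee form is well defined), so your version has the merit of making that ingredient explicit and self-contained. Two small remarks: both arguments only give that $h - \eta(X)$ is locally constant, so a single $c \in \mathbb{R}$ requires $M$ connected — you flag this and the paper glosses over it identically; and your verification that the relation $d_{\eta+df}\big(\iota_X(e^f\omega)\big) = c\,e^f\omega$ holds with the same constant $c$, via $d_{\eta+df}\tau = e^f d_\eta(e^{-f}\tau)$, is a correct check of the ``for some (hence any) representative'' clause that the paper leaves to the reader.
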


\begin{proof}
Suppose that $X$ is lcs,
thus $\mathcal{L}_X \omega = h \, \omega$ and $\mathcal{L}_X \eta = dh$
for some $h: M \rightarrow \mathbb{R}$.
Then
\[
d_{\eta} (\iota_X\omega) = \mathcal{L}_X \omega - \iota_X (d_{\eta}\omega) - \eta(X) \, \omega
= \big( h - \eta(X) \big) \, \omega
\]
and
\[
dh = \mathcal{L}_X \eta = d (\iota_X\eta) + \iota_X d\eta = d (\iota_X\eta) \,.
\]
Thus $h - \eta(X) = c$ for some $c \in \mathbb{R}$,
and $d_{\eta} (\iota_X\omega) = c \, \omega$.
Conversely,
if $d_{\eta} (\iota_X\omega) = c \, \omega$
for some $c \in \mathbb{R}$
then
\[
\mathcal{L}_X \omega = d_{\eta} (\iota_X\omega) + \iota_X (d_{\eta}\omega) + \eta(X) \, \omega
= \big( c + \eta(X) \big) \, \omega \,,
\]
thus $X$ is lcs
with conformal factor $h = c + \eta(X)$.
\end{proof}

A locally conformal symplectic (lcs) isotopy
on a lcs manifold $\big(M, [(\eta, \omega)]\big)$
is an isotopy $\{\varphi_t\}$ such that
each $\varphi_t$ is a lcs diffeomorphism.
We say that a time dependent vector field $X_t$ is lcs
if each $X_t$ is lcs.

\begin{lemma}\label{lemma: conformal vector field}
Let $X_t$ be a time dependent vector field
on a lcs manifold $\big(M, [(\eta, \omega)]\big)$,
with flow $\{\varphi_t\}$.
Then $X_t$ is a lcs vector field
if and only if $\{\varphi_t\}$ is a lcs isotopy.
In this case, if $g_t$ and $h_t$ are the conformal factors of $\varphi_t$ and $X_t$
with respect to $(\eta, \omega)$
then
\[
h_t = \frac{dg_t}{dt} \circ \varphi_t^{-1} \,.
\]
\end{lemma}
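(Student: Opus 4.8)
The plan is to reduce everything to the standard formula for the derivative of a pull-back along a time-dependent flow, namely $\frac{d}{dt}\varphi_t^{\ast}\omega = \varphi_t^{\ast}(\mathcal{L}_{X_t}\omega)$, and then to read off both the equivalence and the formula for $h_t$ by comparing it with the defining relations $\varphi_t^{\ast}\omega = e^{g_t}\omega$ and $\mathcal{L}_{X_t}\omega = h_t\,\omega$ of the two conformal factors.

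First I would treat the direction from isotopy to vector field. Assuming $\{\varphi_t\}$ is a lcs isotopy, each $\varphi_t$ satisfies $\varphi_t^{\ast}\omega = e^{g_t}\omega$; since $\omega$ is non-degenerate, hence nowhere vanishing, the factor $e^{g_t}$, and thus $g_t$, is uniquely determined pointwise and depends smoothly on $(t,p)$, with $g_0 \equiv 0$. Differentiating $\varphi_t^{\ast}\omega = e^{g_t}\omega$ in $t$ gives $\varphi_t^{\ast}(\mathcal{L}_{X_t}\omega) = \frac{dg_t}{dt}\,e^{g_t}\omega = \frac{dg_t}{dt}\,\varphi_t^{\ast}\omega$, and pulling back by $\varphi_t^{-1}$ yields $\mathcal{L}_{X_t}\omega = \big(\frac{dg_t}{dt}\circ\varphi_t^{-1}\big)\,\omega$. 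This simultaneously shows that $X_t$ is a lcs vector field and identifies its conformal factor as $h_t = \frac{dg_t}{dt}\circ\varphi_t^{-1}$.

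For the converse I would set $\sigma_t = \varphi_t^{\ast}\omega$ and use the same formula together with $\mathcal{L}_{X_t}\omega = h_t\,\omega$ to obtain the linear ODE $\frac{d}{dt}\sigma_t = \varphi_t^{\ast}(h_t\,\omega) = (h_t\circ\varphi_t)\,\sigma_t$ with initial condition $\sigma_0 = \omega$. Integrating this scalar ODE for the nowhere-vanishing $2$-form $\sigma_t$ gives $\sigma_t = e^{g_t}\omega$ with $g_t = \int_0^t (h_s\circ\varphi_s)\,ds$, so each $\varphi_t$ is a lcs diffeomorphism with conformal factor $g_t$ and $\{\varphi_t\}$ is a lcs isotopy. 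Differentiating recovers $\frac{dg_t}{dt} = h_t\circ\varphi_t$, i.e. $h_t = \frac{dg_t}{dt}\circ\varphi_t^{-1}$, as required.

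The computation is routine; the only points needing a little care are the use of non-degeneracy of $\omega$ to guarantee that $g_t$ is a well-defined smooth function of $(t,p)$ (so that $\frac{dg_t}{dt}$ makes sense and the normalization $g_0\equiv 0$ is available), and the bookkeeping of the composition with $\varphi_t$ when transferring the conformal factors back and forth between $X_t$ and $\varphi_t$. I do not expect any deeper obstacle.
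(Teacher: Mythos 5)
Your proof is correct and follows essentially the same route as the paper: differentiate $\varphi_t^{\ast}\omega = e^{g_t}\omega$ via $\frac{d}{dt}\varphi_t^{\ast}\omega = \varphi_t^{\ast}(\mathcal{L}_{X_t}\omega)$ in one direction, and in the other define $g_t = \int_0^t h_s \circ \varphi_s\,ds$ and integrate the resulting linear equation for $\varphi_t^{\ast}\omega$. Your added remarks on the smooth pointwise determination of $g_t$ from the non-degeneracy of $\omega$ are sound but not a substantive departure from the paper's argument.
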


\begin{proof}
Suppose that $\{\varphi_t\}$ is a lcs isotopy
with conformal factors $g_t$.
Then
\[
\varphi_t^{\ast} (\mathcal{L}_{X_t} \omega)
= \frac{d}{dt} \, \varphi_t^{\ast} \omega
= \frac{d}{dt} \, (e^{g_t} \omega)
= \frac{dg_t}{dt} \, e^{g_t} \omega
= \frac{dg_t}{dt} \, \varphi_t^{\ast} \omega \,,
\]
thus
\[
\mathcal{L}_{X_t} \omega
= \Big( \frac{dg_t}{dt} \circ \varphi_t^{-1} \Big) \, \omega
\]
and so $X_t$ is a lcs vector field
with conformal factors $h_t = \frac{dg_t}{dt} \circ \varphi_t^{-1}$.
Conversely, suppose that $X_t$ is a lcs vector field
with conformal factors $h_t$.
Set $g_t = \int_0^t h_s \circ \varphi_s \, ds$.
Then 
\[
\frac{d}{dt} \, \varphi_t^{\ast} \omega
= \varphi_t^{\ast} (\mathcal{L}_{X_t} \omega)
= \varphi_t^{\ast} (h_t \, \omega)
= (h_t \circ \varphi_t) \, \varphi_t^{\ast} \omega
= \frac{dg_t}{dt} \, \varphi_t^{\ast} \omega \,.
\]
This implies that $\varphi_t^{\ast} \omega = e^{g_t} \omega$,
thus $\{\varphi_t\}$ is a lcs isotopy with conformal factors $g_t$.
\end{proof}

We say that a lcs vector field $X_t$
on a lcs manifold $\big(M, [(\eta, \omega)]\big)$
is divergence free if $d_{\eta} (\iota_{X_t} \omega) = 0$ for all $t$,
for some (hence any) representative $(\eta, \omega)$.
We say that a lcs isotopy $\{\varphi_t\}$
is divergence free if each $\varphi_t$ is divergence free.

\begin{lemma}\label{lemma: lift of isotopy to symplectic cover}
Let $X_t$ be a lcs vector field
on a lcs manifold $\big(M, [(\eta, \omega)]\big)$,
and $\{\varphi_t\}$ the lcs isotopy generated by $X_t$.
Then $X_t$ is divergence free if and only if
$\{\varphi_t\}$ is divergence free.
\end{lemma}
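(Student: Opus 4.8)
The plan is to transfer the question to the symplectic cover $(\bar{M},[e^{-\mu}\pi^{\ast}\omega])$ and read off divergence-freeness from the conformal factor of a lift. Since $X_t$ is lcs, Lemma \ref{lemma: conformal vector field} ensures $\{\varphi_t\}$ is a lcs isotopy; let $g_t$ be the conformal factor of $\varphi_t$. Let $\bar{X}_t$ be the unique lift of $X_t$ to $\bar{M}$ and let $\{\bar{\varphi}_t\}$ be its flow, so that $\pi \circ \bar{\varphi}_t = \varphi_t \circ \pi$ and $\bar{\varphi}_0 = \id$. By Proposition \ref{proposition: lift of lcs diffeo to symplectic cover}, $\bar{\varphi}_t$ is a conformal symplectomorphism whose constant conformal factor $c_t$ is
\[
c_t = - \mu \circ \bar{\varphi}_t + g_t \circ \pi + \mu \,,
\]
and in particular $c_0 = 0$. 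By Corollary \ref{corollary: symplectic lift}, this specific lift $\bar{\varphi}_t$ is a symplectomorphism exactly when $c_t = 0$.

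The heart of the argument is to differentiate $c_t$ in $t$. Using $d\mu = \pi^{\ast}\eta$, the fact that $\bar{\varphi}_t$ is the flow of $\bar{X}_t$, and the identity $\pi^{\ast}\eta (\bar{X}_t) = \eta(X_t) \circ \pi$, one computes $\frac{d}{dt}(\mu \circ \bar{\varphi}_t) = \big( \eta(X_t) \circ \varphi_t \big) \circ \pi$. On the other hand, Lemma \ref{lemma: conformal vector field} gives $\frac{dg_t}{dt} = h_t \circ \varphi_t$, where $h_t$ is the conformal factor of $X_t$. Combining these yields
\[
\frac{dc_t}{dt} = \big( (h_t - \eta(X_t)) \circ \varphi_t \big) \circ \pi \,.
\]
By Lemma \ref{lemma: conformal vector field 1}, the quantity $h_t - \eta(X_t)$ is exactly the real number $a_t$ determined by $d_{\eta}(\iota_{X_t}\omega) = a_t \, \omega$, so $\frac{dc_t}{dt} = a_t$ and hence $c_t = \int_0^t a_s \, ds$.

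From here the two characterizations begin to drop out. The vector field $X_t$ is divergence free iff $a_t \equiv 0$, iff $c_t \equiv 0$. In that case $\bar{\varphi}_t$ is a symplectomorphism for every $t$ (Corollary \ref{corollary: symplectic lift}), so each $\varphi_t$, and therefore the isotopy $\{\varphi_t\}$, is divergence free. For the converse I must show that divergence-freeness of $\{\varphi_t\}$ forces $c_t \equiv 0$, and this is the step I expect to require the most care. The definition of a divergence free diffeomorphism only guarantees the existence of \emph{some} symplectomorphism lift, which differs from the flow lift $\bar{\varphi}_t$ by a deck transformation of $\pi$. Since such a deck transformation rescales $e^{-\mu}\pi^{\ast}\omega$ by a constant lying in the period group $\Gamma = \im \langle [\eta], \cdot \rangle \subseteq \mathbb{R}$, divergence-freeness of $\varphi_t$ is equivalent to $c_t \in \Gamma$ rather than to $c_t = 0$, so one cannot conclude pointwise that $c_t$ vanishes.

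The remaining point is to upgrade ``$c_t \in \Gamma$ for every $t$'' to ``$c_t \equiv 0$''. The function $t \mapsto c_t$ is continuous with $c_0 = 0$, while $\Gamma$ is countable, being a homomorphic image of $\pi_1(M)$, hence a proper subgroup of $\mathbb{R}$ with empty interior. A continuous path into $\mathbb{R}$ taking values in $\Gamma$ has connected image, which is therefore a single point, so $c_t \equiv c_0 = 0$. Differentiating gives $a_t \equiv 0$, i.e.\ $X_t$ is divergence free, completing the equivalence. As a by-product, this shows that when $\{\varphi_t\}$ is divergence free the flow lift $\bar{\varphi}_t$ is itself the unique symplectomorphism lift, so the individual symplectic lifts assemble into the isotopy $\{\bar{\varphi}_t\}$.
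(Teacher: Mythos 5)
Your proposal is correct and follows essentially the same route as the paper: both lift the isotopy to the symplectic cover, observe that the discrepancy $c_t = -\mu \circ \bar{\varphi}_t + g_t \circ \pi + \mu$ (the paper works with $a_t = -c_t$) satisfies $\frac{dc_t}{dt} = a_t$ where $d_{\eta}(\iota_{X_t}\omega) = a_t\,\omega$, and conclude via Corollary \ref{corollary: symplectic lift} using $c_0 = 0$. The one genuine difference is to your credit: the paper's final sentence passes from ``each $\varphi_t$ is divergence free'' to ``the flow lift $\bar{\varphi}_t$ is symplectic'' without comment, whereas you correctly note that divergence-freeness only provides \emph{some} symplectomorphism lift, differing from $\bar{\varphi}_t$ by a deck transformation whose conformal factor lies in the period group $\Gamma = \im \langle [\eta], \cdot \rangle$, and you close this by observing that $t \mapsto c_t$ is a continuous path in the countable set $\Gamma$ starting at $0$, hence identically zero.
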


\begin{proof}
Let $\{\bar{\varphi}_t\}$ be the lift of $\{\varphi_t\}$
to the symplectic cover $(\bar{M}, [e^{-\mu} \pi^{\ast}\eta])$
starting at the identity,
and $g_t$ the conformal factors of $\{\varphi_t\}$.
Then
\[
d (\mu \circ \bar{\varphi}_t)
= \bar{\varphi}_t^{\ast} \, \pi^{\ast} \eta
= \pi^{\ast} \varphi_t^{\ast} \eta
= \pi^{\ast} (\eta + dg_t)
= d (\mu + g_t \circ \pi) \,,
\]
thus
\[
\mu \circ \bar{\varphi}_t = \mu + g_t \circ \pi + a_t
\]
for a $1$-parameter family of real numbers $a_t$.
Denote by $\bar{X}_t$
the vector field generating $\{\bar{\varphi}_t\}$.
Using Lemmas \ref{lemma: conformal vector field 1}
and \ref{lemma: conformal vector field}
we obtain
\[
\frac{da_t}{dt}
= \frac{d}{dt} \, \big( \mu \circ \bar{\varphi}_t - g_t \circ \pi \big)
= d\mu \, (\bar{X}_t \circ \bar{\varphi}_t) - \frac{dg_t}{dt} \circ \pi
\]
\[
= \Big( \eta (X_t \circ \varphi_t) - \frac{dg_t}{dt} \Big) \circ \pi
= \Big(h_t \circ \varphi_t - c_t - \frac{dg_t}{dt} \Big) \circ \pi
= -c_t \,,
\]
where $c_t$ is the $1$-parameter family of real numbers
such that $d_{\eta} (\iota_{X_t}\omega) = c_t \, \omega$.
The vector field $X_t$ is divergence free
if and only if $c_t = 0$ for all $t$,
thus if and only if $\frac{d a_t}{dt} = 0$ for all $t$.
Since $\bar{\varphi}_0$ is the identity,
we have $a_0 = 0$.
We thus conclude that $X_t$ is divergence free
if and only if $a_t = 0$ for all $t$,
and so
\[
\mu \circ \bar{\varphi}_t - \mu = g_t \circ \pi
\]
for all $t$.
By Corollary \ref{corollary: symplectic lift},
this happens if and only if
$\{\bar{\varphi}_t\}$ is a symplectic isotopy,
and so if and only if $\{\varphi_t\}$ is divergence free.
\end{proof}

\begin{example}\label{example: lift of isotopies to symplectization}
If $X_t$ is a contact vector field
on a contact manifold $\big(Y, \xi = \ker (\alpha)\big)$
with $\mathcal{L}_{X_t} \alpha = h_t \, \alpha$
then
\[
\widetilde{X_t} (\theta, p) = X_t(p) - h_t(p) \, \frac{\partial}{\partial \theta}
\]
is a divergence free lcs vector field
on the locally conformal symplectization
$\big(S^1 \times Y \,,\, [(-d\theta \,,\, d_{-d\theta} \alpha)] \big)$.
The isotopy generated by $\widetilde{X_t}$
is the lift,
as defined in Example \ref{example: lift of maps to symplectization},
of the contact isotopy generated by $X_t$.
\end{example}

\subsection{Lcs Hamiltonian isotopies}

A divergence free lcs vector field $X_t$ on a lcs manifold
$\big(M, [(\eta, \omega)]\big)$
is said to be Hamiltonian
if for some (hence any) representative $(\eta, \omega)$
the $\eta$-closed $1$-form $\iota_{X_t}\omega$ is $\eta$-exact.
We then say that $X_t$ has Hamiltonian function $H_t: M \rightarrow \mathbb{R}$
with respect to $(\eta, \omega)$ if
$$
\iota_{X_t}\omega = - d_{\eta} H_t \,.
$$
By Remark \ref{remark: unique solution},
if $M$ is connected and $\eta$ is not exact
then such $H_t$ is unique
(otherwise it is just determined
up to an additive constant
on each connected component).
By Lemma \ref{lemma: lift of isotopy to symplectic cover},
the flow of $X_t$ is a divergence free lcs isotopy.
We say that it is a lcs Hamiltonian isotopy,
with Hamiltonian function $H_t$ with respect to $(\eta, \omega)$.
A lcs diffeomorphism
is said to be Hamiltonian
if it is the time-1 map of a lcs Hamiltonian isotopy.

\begin{example}\label{example: Hamiltonian conformal}
We say that a symplectic isotopy $\{\varphi_t\}$
of a conformal symplectic manifold $(M, [\omega])$
is Hamiltonian
if the closed $1$-form $\iota_{X_t}\omega$ is exact
for some (hence any) representative $\omega$,
where $X_t$ is the vector field generating $\{\varphi_t\}$.
We then say that $\{\varphi_t\}$
has Hamiltonian function $H_t$ with respect to $\omega$
if $\iota_{X_t}\omega = -dH_t$.
Identifying $(M, [\omega])$
with the lcs manifold $\big(M , [(0, \omega)]\big)$,
$\{\varphi_t\}$ is a lcs Hamiltonian isotopy
with Hamiltonian function $H_t$ with respect to $(0, \omega)$
if and only if it is a Hamiltonian isotopy
with Hamiltonian function $H_t$ with respect to $\omega$.
\end{example}

The description of Example \ref{example: Hamiltonian conformal}
always applies locally.
Let $\big( M , [(\eta, \omega)] \big)$ be a lcs manifold,
and $\{\mathcal{U}_i\}$ an open cover
such that $\left. \omega \right\lvert_{\mathcal{U}_i} = e^{\mu_i} \, \omega_i$
with $\omega_i$ symplectic.
If $X_t$ is a lcs Hamiltonian vector field
with Hamiltonian function $H_t$
with respect $(\eta, \omega)$
then $\left. X \right\lvert_{\mathcal{U}_i}$
is a Hamiltonian vector field on $(\mathcal{U}_i, \omega_i)$
with Hamiltonian function $e^{-\mu_i}H_t$.
Conversely,
given a time dependent function $H_t: M \rightarrow \mathbb{R}$
we can define the associated lcs vector field as follows.
Consider on every $\mathcal{U}_i$
the time dependent function
$H_t^{(i)} = e^{-\mu_i} \left. H_t \right\lvert_{\mathcal{U}_i}$,
and let $X_t^{(i)}$ be the associated Hamiltonian vector field.
Since the equation $\omega_i (X_t^{(i)} ,\,\cdot\,) = - dH_t^{(i)}$
is equivalent to
$\omega (X_t^{(i)} ,\,\cdot\,) = - d_{\eta} \left. H_t \right\lvert_{\mathcal{U}_i}$,
the vector fields $X_t^{(i)}$ glue to a global vector field $X_t$,
satisfying $\omega (X_t ,\,\cdot\,) = - d_{\eta} H_t$.

\begin{example}\label{Example:Lift of contact Hamiltonians}
Recall that for any contact isotopy $\{\phi_t\}$
of a contact manifold $\big(Y, \xi = \ker (\alpha)\big)$
there is a unique time dependent function $H_t$,
called the Hamiltonian function of $\{\phi_t\}$
with respect to $\alpha$,
such that $\alpha(X_t) = H_t$
and $\iota_{X_t} d\alpha = dH_t (R_{\alpha}) \alpha - dH_t$, where $X_t$ is the the vector field generating the isotopy.
Recall also that $\mathcal{L}_{X_t}\alpha = h_t \alpha$
for $h_t = dH_t (R_{\alpha})$.
The lift $\{\widetilde{\phi_t}\}$ of $\{\phi_t\}$
to the locally conformal symplectization
$\big( S^1 \times Y \,,\, [(-d\theta \,,\, d_{-d\theta}\alpha )]\big)$
is a lcs Hamiltonian isotopy
with Hamiltonian function
\[
\widetilde{H_t}: S^1 \times Y \rightarrow \mathbb{R} \,,\;
\widetilde{H_t} (\theta, p) = H_t (p)
\]
with respect to $(-d\theta \,,\, d_{-d\theta}\alpha)$.
Indeed,
by Example \ref{example: lift of isotopies to symplectization}
the lcs isotopy $\{\widetilde{\phi_t}\}$ is generated by the vector field
$\widetilde{X_t} = X_t - h_t \frac{\partial}{\partial \theta}$.
Thus
\[
\iota_{\widetilde{X_t}} (d_{-d\theta}\alpha)
= \iota_{X_t}d\alpha - h_t\alpha - \alpha(X_t)d\theta
= dH_t (R_{\alpha}) \alpha - dH_t - h_t\alpha - H_t d\theta
= - d_{-d\theta} \widetilde{H_t} \,.
\]
\end{example}

We denote by $\Ham \big(M, [(\eta, \omega)]\big)$
the space of lcs Hamiltonian diffeomorphisms
of $\big(M, [(\eta, \omega)]\big)$.
By the following lemma, whose proof is left to the reader,
$\Ham \big(M, [(\eta, \omega)]\big)$ is a group.

\begin{lemma}\label{lemma: composition law Hamiltonians}
Let $\{\varphi_t\}$, $\{\varphi_t^{(1)}\}$ and $\{\varphi_t^{(2)}\}$
be lcs Hamiltonian isotopies on a lcs manifold
$\big( M , [(\eta, \omega)]\big)$,
with Hamiltonian functions $H_t$, $H_t^{(1)}$ and $H_t^{(2)}$
and conformal factors $g_t, g_t^{(1)}$ and $g_t^{(2)}$
with respect to $(\eta, \omega)$.
Then
\renewcommand{\theenumi}{\roman{enumi}}
\begin{enumerate}
\item $\{\varphi_t^{-1}\}$ is a lcs Hamiltonian isotopy
with Hamiltonian function $- e^{-g_t} (H_t \circ \varphi_t)$
with respect to $(\eta, \omega)$;
\item $\{ \varphi_t^{(1)} \circ \, \varphi_t^{(2)} \}$
is a lcs Hamiltonian isotopy
with Hamiltonian function
$$
H_t^{(1)} + e^{g_t^{(1)} \circ (\varphi_t^{(1)})^{-1}} \,
H_t^{(2)} \circ (\varphi_t^{(1)})^{-1}
$$
with respect to $(\eta, \omega)$.
\end{enumerate}
\end{lemma}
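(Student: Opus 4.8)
\section*{Proof proposal}

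The plan is to compute, in each case, the vector field generating the isotopy under consideration and then to verify directly that its contraction with $\omega$ equals $-d_{\eta}$ of the claimed Hamiltonian function. Exhibiting such a primitive suffices: if $\iota_{Z_t}\omega = -d_{\eta}F_t$ then $d_{\eta}(\iota_{Z_t}\omega) = -d_{\eta}^2 F_t = 0$, so by Lemma \ref{lemma: conformal vector field 1} (with $c = 0$) the field $Z_t$ is automatically lcs and divergence free, and the corresponding isotopy is lcs Hamiltonian with Hamiltonian function $F_t$. Throughout I would use the two defining relations of a lcs isotopy, $\varphi_t^{\ast}\omega = e^{g_t}\omega$ and its consequence $\varphi_t^{\ast}\eta = \eta + dg_t$, together with the naturality identity $\iota_{\varphi^{\ast}X}(\varphi^{\ast}\alpha) = \varphi^{\ast}(\iota_X\alpha)$ for the pullback vector field, and the Leibniz rule $d_{\eta}(e^f H) = e^f(dH + H\,df) - e^f H\eta$ for the twisted differential.

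For (i), differentiating $\varphi_t \circ \varphi_t^{-1} = \id$ shows that $\{\varphi_t^{-1}\}$ is generated by $Y_t = -\varphi_t^{\ast}X_t$, where $X_t$ generates $\{\varphi_t\}$. Feeding $\iota_{X_t}\omega = -d_{\eta}H_t$ into the naturality identity and using $\varphi_t^{\ast}\omega = e^{g_t}\omega$ gives $\iota_{Y_t}\omega = e^{-g_t}\,\varphi_t^{\ast}(d_{\eta}H_t)$. It then remains to recognise the right-hand side as $-d_{\eta}$ of the proposed Hamiltonian $-e^{-g_t}(H_t \circ \varphi_t)$. Expanding $\varphi_t^{\ast}(d_{\eta}H_t) = d(H_t\circ\varphi_t) - (H_t\circ\varphi_t)\,\varphi_t^{\ast}\eta$ and substituting $\varphi_t^{\ast}\eta = \eta + dg_t$, one checks that the factor $e^{-g_t}$ produces precisely the $-dg_t$ contribution needed to match the Leibniz expansion of $d_{\eta}\big(e^{-g_t}(H_t\circ\varphi_t)\big)$.

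For (ii), differentiating the composition shows that $\{\varphi_t^{(1)}\circ\varphi_t^{(2)}\}$ is generated by $Z_t = X_t^{(1)} + (\varphi_t^{(1)})_{\ast}X_t^{(2)}$. The first summand contributes $-d_{\eta}H_t^{(1)}$ directly. For the second, the dual naturality identity $\iota_{\varphi_{\ast}X}\omega = (\varphi^{-1})^{\ast}(\iota_X\,\varphi^{\ast}\omega)$ together with $\varphi^{\ast}\omega = e^{g}\omega$ yields $\iota_{(\varphi_t^{(1)})_{\ast}X_t^{(2)}}\omega = -\big((\varphi_t^{(1)})^{-1}\big)^{\ast}\big(e^{g_t^{(1)}}\,d_{\eta}H_t^{(2)}\big)$. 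Writing $\psi = (\varphi_t^{(1)})^{-1}$ and noting that $\varphi_t^{(1)\,\ast}\eta = \eta + dg_t^{(1)}$ forces $\psi^{\ast}\eta = \eta - d(g_t^{(1)}\circ\psi)$, a computation parallel to the one in (i) identifies this pullback with $d_{\eta}\big(e^{g_t^{(1)}\circ\psi}\,(H_t^{(2)}\circ\psi)\big)$. Summing gives $\iota_{Z_t}\omega = -d_{\eta}\big(H_t^{(1)} + e^{g_t^{(1)}\circ\psi}\,H_t^{(2)}\circ\psi\big)$, which is the stated formula.

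The only mildly delicate point, and the step I would watch most carefully, is the bookkeeping with the twisted differential: unlike for $d$, the product rule for $d_{\eta}$ on $e^{f}H$ carries the extra $-e^f H\eta$ term, and the conformal factors must be inserted at the correct argument (composed with $\varphi_t$ in (i), with $(\varphi_t^{(1)})^{-1}$ in (ii)) and differentiated accordingly. Once these $d_{\eta}$-Leibniz expansions are carried out consistently and the pullbacks of $\eta$ are substituted, all cross terms cancel and the expressions collapse to the claimed Hamiltonians. Since every identity involved is pointwise, no connectedness or non-exactness hypothesis on $(M,\eta)$ is needed; the stated formulas simply provide explicit valid Hamiltonian functions.
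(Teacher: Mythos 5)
Your proof is correct: the generating vector fields $-\varphi_t^{\ast}X_t$ and $X_t^{(1)}+(\varphi_t^{(1)})_{\ast}X_t^{(2)}$ are the right ones, the naturality identities and the substitutions $\varphi_t^{\ast}\eta = \eta + dg_t$, $\psi^{\ast}\eta = \eta - d(g_t^{(1)}\circ\psi)$ are applied correctly, and your observation that exhibiting an $\eta$-primitive of $\iota_{Z_t}\omega$ already forces $Z_t$ to be divergence free lcs (via $d_\eta^2=0$ and Lemma \ref{lemma: conformal vector field 1} with $c=0$) properly closes the loop with the paper's definition of a lcs Hamiltonian vector field. The paper leaves this proof to the reader, and your direct computation is exactly the intended argument.
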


We will also need the following lemma,
whose proof is again left to the reader.

\begin{lemma}\label{lemma: Hamiltonian conjugation}
Let $\{\varphi_t\}$ be a lcs Hamiltonian isotopy
of a lcs manifold $\big(M, [(\eta, \omega)]\big)$
with Hamiltonian function $H_t$ with respect to $(\eta, \omega)$,
and $\psi: M' \rightarrow M$ a lcs diffeomorphism
from another lcs manifold $\big(M', [(\eta', \omega')]\big)$
to $\big(M, [(\eta, \omega)]\big)$.
Then $\{ \psi^{-1} \circ \varphi_t \circ \psi \}$
is a lcs Hamiltonian isotopy of $\big(M', [(\eta', \omega')]\big)$
with Hamiltonian function $e^{-f} (H_t \circ \psi)$
with respect to $(\eta', \omega')$,
where $f$ is the conformal factor of $\psi$
with respect to $(\eta', \omega')$ and $(\eta, \omega)$.
\end{lemma}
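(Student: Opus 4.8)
The plan is to compute directly the vector field generating the conjugated isotopy $\{\psi^{-1} \circ \varphi_t \circ \psi\}$ and show that it is lcs Hamiltonian with the claimed Hamiltonian function. Let $X_t$ denote the vector field generating $\{\varphi_t\}$, so that $\iota_{X_t}\omega = -d_\eta H_t$ by hypothesis, and let $Y_t$ denote the vector field generating $\{\psi^{-1}\circ\varphi_t\circ\psi\}$. The standard formula for the generator of a conjugated isotopy gives $Y_t = (\psi^{-1})_* X_t = \psi^* X_t$, the pullback of $X_t$ by $\psi$; concretely $Y_t = (\psi^{-1})_*(X_t \circ \psi)$. First I would write down this relation carefully and then contract $Y_t$ against $\omega'$.

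The key computation is to express $\iota_{Y_t}\omega'$ using the fact that $\psi$ is a lcs diffeomorphism with conformal factor $f$, i.e. $\psi^*\omega = e^f\omega'$, equivalently $\omega' = e^{-f}\psi^*\omega$. Pulling back the defining relation for $X_t$ by $\psi$ and using naturality of interior product under diffeomorphisms, I would compute
\[
\iota_{Y_t}\omega' = \iota_{(\psi^{-1})_* X_t}\,(e^{-f}\psi^*\omega)
= e^{-f}\,\psi^*\big(\iota_{X_t}\omega\big)
= e^{-f}\,\psi^*\big(-d_\eta H_t\big).
\]
Here the middle equality uses that $\psi^*(\iota_{X_t}\omega) = \iota_{(\psi^{-1})_*X_t}(\psi^*\omega)$, which is the elementary naturality identity $\psi^*(\iota_V\sigma) = \iota_{(\psi^{-1})_*V}(\psi^*\sigma)$, together with $e^{-f}$ being a scalar factor that pulls out. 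The remaining task is to rewrite $e^{-f}\psi^*(d_\eta H_t)$ as $-d_{\eta'}$ of the claimed function $e^{-f}(H_t\circ\psi)$.

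The main obstacle — really the only substantive point — is handling the twisted differential correctly under the conformal pullback, since $\psi^*\eta = \eta' + df$ rather than $\psi^*\eta = \eta'$. For this I would invoke the identity $d_{\eta+df}\sigma = e^f\,d_\eta(e^{-f}\sigma)$ recorded at the start of the excerpt, applied on $M'$. Since $\psi$ intertwines the Lichnerowicz--de Rham differentials via the relation $\psi^*(d_\eta H_t) = d_{\psi^*\eta}(H_t\circ\psi) = d_{\eta'+df}(H_t\circ\psi)$, the computation gives
\[
e^{-f}\,\psi^*(d_\eta H_t)
= e^{-f}\,d_{\eta'+df}(H_t\circ\psi)
= e^{-f}\,e^{f}\,d_{\eta'}\big(e^{-f}(H_t\circ\psi)\big)
= d_{\eta'}\big(e^{-f}(H_t\circ\psi)\big).
\]
Combining with the previous display yields $\iota_{Y_t}\omega' = -d_{\eta'}\big(e^{-f}(H_t\circ\psi)\big)$, which is exactly the assertion that $\{\psi^{-1}\circ\varphi_t\circ\psi\}$ is lcs Hamiltonian with Hamiltonian function $e^{-f}(H_t\circ\psi)$ with respect to $(\eta',\omega')$. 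I expect the verification of the intertwining relation $\psi^*\circ d_\eta = d_{\psi^*\eta}\circ\psi^*$ for the Lichnerowicz--de Rham differential to be the one step requiring care, but it follows immediately from naturality of the ordinary $d$ and of the wedge product under pullback, since $d_\eta\sigma = d\sigma - \eta\wedge\sigma$.
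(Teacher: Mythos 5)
Your proposal is correct, and in fact the paper offers no proof to compare against: Lemma \ref{lemma: Hamiltonian conjugation} is explicitly left to the reader, and your computation (conjugate the generator, use naturality of $\iota$ and the pullback formula $\psi^{\ast}\omega = e^f \omega'$, then untwist via $\psi^{\ast}(d_{\eta}H_t) = d_{\eta' + df}(H_t \circ \psi)$ and the identity $d_{\eta'+df}\sigma = e^f d_{\eta'}(e^{-f}\sigma)$) is exactly the intended argument. The only point worth making explicit is that the final identity $\iota_{Y_t}\omega' = -d_{\eta'}\big(e^{-f}(H_t \circ \psi)\big)$ also settles the definitional requirements in the paper's sense: $\eta'$-exactness gives $d_{\eta'}(\iota_{Y_t}\omega') = 0$, so by Lemma \ref{lemma: conformal vector field 1} the generator $Y_t$ is a divergence free lcs vector field, and the conjugated isotopy is therefore a lcs Hamiltonian isotopy as defined.
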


Example \ref{example: Hamiltonian conformal}
can be reformulated as follows.
Let $\big( M, [(\eta, \omega)] \big)$
be a lcs manifold with exact Lee form $\eta = d\mu$,
and consider the associated conformal symplectic manifold
$\big( M, [ e^{-\mu} \omega ] \big)$.
Then $\{\varphi_t\}$ is a lcs Hamiltonian isotopy
with Hamiltonian function $H_t$ with respect to $(\eta, \omega)$
if and only if it is a Hamiltonian isotopy
with Hamiltonian function $e^{-\mu}H_t$
with respect to $e^{-\mu}\omega$.
Similarly,
we have the following result.

\begin{prop}\label{proposition: lift of Hamiltonian isotopy to symplectic cover}
Let $\{\varphi_t\}$ be a divergence free lcs isotopy
of a lcs manifold $\big(M, [(\eta, \omega)]\big)$
starting at the identity,
and $\{ \bar{\varphi}_t \}$ its lift
to the symplectic cover $(\bar{M}, [e^{-\mu} \pi^{\ast}\omega])$.
Then $\{\varphi_t\}$ is a lcs Hamiltonian isotopy
if and only if $\{ \bar{\varphi}_t \}$ is a Hamiltonian isotopy.
In this case,
the Hamiltonian function of $\{ \bar{\varphi}_t \}$
with respect to $e^{-\mu} \pi^{\ast}\omega$
is $e^{-\mu} (H_t \circ \pi)$,
where $H_t$ is the Hamiltonian function of $\{\varphi_t\}$
with respect to $(\eta, \omega)$.
\end{prop}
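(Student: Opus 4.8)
The plan is to transport both notions to the symplectic cover by means of a single bridging identity. Let $X_t$ and $\bar{X}_t$ be the vector fields generating $\{\varphi_t\}$ and $\{\bar{\varphi}_t\}$, and write $\bar{\omega} = e^{-\mu}\pi^{\ast}\omega$. First I would record that $\bar{X}_t$ is $\pi$-related to $X_t$: differentiating $\pi \circ \bar{\varphi}_t = \varphi_t \circ \pi$ in $t$ gives $d\pi \circ \bar{X}_t = X_t \circ \pi$, whence $\iota_{\bar{X}_t}(\pi^{\ast}\omega) = \pi^{\ast}(\iota_{X_t}\omega)$ and therefore
\[
\iota_{\bar{X}_t}\bar{\omega} = e^{-\mu}\,\pi^{\ast}(\iota_{X_t}\omega) \,.
\]
This is the only geometric input needed: by definition $\{\varphi_t\}$ is lcs Hamiltonian exactly when $\iota_{X_t}\omega$ is $\eta$-exact, while $\{\bar{\varphi}_t\}$ is Hamiltonian (Example \ref{example: Hamiltonian conformal}) exactly when $\iota_{\bar{X}_t}\bar{\omega}$ is exact.

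For the forward implication and the formula, suppose $\iota_{X_t}\omega = -d_{\eta}H_t$. Since $\pi^{\ast}\eta = d\mu$, pulling back gives $\pi^{\ast}(d_{\eta}H_t) = d_{d\mu}(H_t \circ \pi)$, and the relation $d_{d\mu}\sigma = e^{\mu}d(e^{-\mu}\sigma)$ (the identity $d_{\eta + df}\sigma = e^{f}d_{\eta}(e^{-f}\sigma)$ with $\eta = 0$ and $f = \mu$) turns the displayed identity into
\[
\iota_{\bar{X}_t}\bar{\omega} = -e^{-\mu}d_{d\mu}(H_t \circ \pi) = -d\big(e^{-\mu}(H_t \circ \pi)\big) \,.
\]
Thus $\{\bar{\varphi}_t\}$ is Hamiltonian with Hamiltonian function $e^{-\mu}(H_t \circ \pi)$ with respect to $\bar{\omega}$, which is precisely the asserted formula.

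The hard part will be the converse, namely descending a primitive $\bar{H}_t$ of $-\iota_{\bar{X}_t}\bar{\omega}$ on $\bar{M}$ to a Hamiltonian $H_t$ on $M$, since $\bar{H}_t$ need not be a pullback. I would exploit equivariance under the deck group $\Gamma$ of $\pi$. Writing $c \colon \Gamma \rightarrow \mathbb{R}$ for the injective homomorphism with $\mu \circ \tau = \mu + c(\tau)$, one has $\tau^{\ast}\bar{\omega} = e^{-c(\tau)}\bar{\omega}$, while $\bar{X}_t$ is $\Gamma$-invariant because the lift commutes with deck transformations; hence $\iota_{\bar{X}_t}\bar{\omega}$ is semi-invariant and its primitive obeys $\bar{H}_t \circ \tau = e^{-c(\tau)}\bar{H}_t + b(\tau, t)$ for constants $b(\tau,t)$. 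The key observation is that, $\Gamma$ being abelian, comparing the relation for $\tau_1 \tau_2$ and $\tau_2 \tau_1$ forces $b(\tau, t) = k(t)\big(e^{-c(\tau)} - 1\big)$ for a single $t$-dependent constant $k(t)$; replacing $\bar{H}_t$ by $\bar{H}_t + k(t)$, which leaves $\iota_{\bar{X}_t}\bar{\omega} = -d\bar{H}_t$ unchanged, yields an exactly equivariant primitive. Then $e^{\mu}\bar{H}_t$ is $\Gamma$-invariant, so it descends to a function $H_t$ on $M$, and running the computation of the previous paragraph backwards gives $\pi^{\ast}(\iota_{X_t}\omega) = -d_{d\mu}(e^{\mu}\bar{H}_t) = -\pi^{\ast}(d_{\eta}H_t)$; injectivity of $\pi^{\ast}$ on forms yields $\iota_{X_t}\omega = -d_{\eta}H_t$, so $\{\varphi_t\}$ is lcs Hamiltonian. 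I would isolate this cocycle computation as the crux, everything else being a direct consequence of the bridging identity.
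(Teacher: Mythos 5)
Your proof is correct and follows essentially the same route as the paper: the same bridging identity $\iota_{\bar{X}_t}\bar{\omega} = e^{-\mu}\,\pi^{\ast}(\iota_{X_t}\omega)$ combined with $d_{d\mu}\sigma = e^{\mu}d(e^{-\mu}\sigma)$ gives the forward direction and the formula, while the converse reduces to descending $e^{\mu}\bar{H}_t$ to $M$. Your deck-group cocycle computation correctly fills in a step the paper asserts in a single line (``this implies that $\bar{H}_t = e^{-\mu}(H_t \circ \pi)$ for some $H_t$''), exploiting exactly the available freedom to adjust $\bar{H}_t$ by a time-dependent constant so that $e^{\mu}\bar{H}_t$ becomes genuinely $\Gamma$-invariant.
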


\begin{proof}
Let $\bar{X}_t$ be the vector field generating $\{ \bar{\varphi}_t \}$.
Then $\pi_{\ast} (\bar{X}_t) = X_t$,
where $X_t$ is the vector field generating $\{\varphi_t\}$.
Suppose that $\{\varphi_t\}$ is a lcs Hamiltonian isotopy
with Hamiltonian function $H_t$
with respect to $(\eta, \omega)$.
For every vector field $Y$ on $\bar{M}$
we then have
\[
e^{-\mu} \pi^{\ast}\omega (\bar{X}_t, Y)
= e^{-\mu} \, \omega \big( X_t \,,\, \pi_{\ast}(Y) \big)
= - e^{-\mu} \, d_{\eta} H_t \big(\pi_{\ast}(Y)\big)
\]
\[
= - e^{-\mu} \, d_{d\mu} (H_t \circ \pi) (Y)
= - d \big( e^{-\mu} (H_t \circ \pi ) \big) (Y) \,,
\]
thus $\{ \bar{\varphi}_t \}$ is a Hamiltonian isotopy
with Hamiltonian function $e^{-\mu} (H_t \circ \pi)$
with respect to $e^{-\mu} \pi^{\ast}\omega$.
Conversely,
suppose that $\{ \bar{\varphi}_t \}$ is a Hamiltonian isotopy
with Hamiltonian function $\bar{H}_t$
with respect to $e^{-\mu} \pi^{\ast}\omega$.
Then
\[
e^{-\mu} \, \omega \big(X_t, \pi_{\ast}(Y)\big)
= e^{-\mu} \, \pi^{\ast} \omega (\bar{X}_t, Y)
= - d \bar{H}_t (Y)
\]
for every vector field $Y$ on $\bar{M}$,
thus
\[
\pi^{\ast} (\iota_{X_t}\omega) = - e^{\mu} \, d \bar{H}_t
= - d_{d\mu} (e^{\mu} \bar{H}_t)
= - d_{\pi^{\ast}\eta} (e^{\mu} \bar{H}_t) \,.
\]
This implies that $\bar{H}_t = e^{-\mu} (H_t \circ \pi)$
for some $H_t: M \rightarrow \mathbb{R}$
and $\iota_{X_t} \omega = - d_{\eta}H_t$,
thus $\{\varphi_t\}$ is a lcs Hamiltonian isotopy
with Hamiltonian function $H_t$
with respect to $(\eta, \omega)$.
\end{proof}

Proposition \ref{proposition: lift of Hamiltonian isotopy to symplectic cover}
allows to derive the following result
from the corresponding one in the symplectic case,
which is due to Banyaga \cite{Banyaga}
(see also \cite[Proposition 10.2.12]{McDS}).

\begin{thm}\label{theorem: Banyaga}
Let $\{\varphi_t\}$ be a lcs isotopy
on a lcs manifold $\big( M , [(\eta, \omega)] \big)$.
Suppose that each $\varphi_t$ is a lcs Hamiltonian diffeomorphism.
Then $\{\varphi_t\}$ is a lcs Hamiltonian isotopy.
\end{thm}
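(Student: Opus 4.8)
The plan is to push everything up to the symplectic cover $(\bar M, [e^{-\mu}\pi^{\ast}\omega])$, reduce to the corresponding statement for symplectic isotopies due to Banyaga, and then push the conclusion back down, using Proposition \ref{proposition: lift of Hamiltonian isotopy to symplectic cover} together with the uniqueness of symplectic lifts of divergence free lcs diffeomorphisms. Throughout I assume, as is implicit in the notion of an isotopy, that $\varphi_0 = \id$.

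First I would check that the hypotheses put us in the divergence free setting. Each $\varphi_t$ is a lcs Hamiltonian diffeomorphism, hence the time-$1$ map of a lcs Hamiltonian isotopy; since such an isotopy is generated by a divergence free lcs vector field, it is a divergence free lcs isotopy by Lemma \ref{lemma: lift of isotopy to symplectic cover}, and in particular its time-$1$ map $\varphi_t$ is divergence free. By the definition of a divergence free lcs diffeomorphism, each $\varphi_t$ therefore has a unique symplectomorphism lift $\bar\varphi_t$ of the symplectic cover (Lemma \ref{lemma: symplectic cover} and Corollary \ref{corollary: symplectic lift}); by uniqueness these depend smoothly on $t$ and satisfy $\bar\varphi_0 = \id$, so $\{\bar\varphi_t\}$ is a symplectic isotopy of a fixed representative $\omega_0 = e^{-\mu}\pi^{\ast}\omega$, namely the lift of $\{\varphi_t\}$.

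The heart of the argument is to upgrade ``each $\varphi_t$ is lcs Hamiltonian'' to ``each $\bar\varphi_t$ is Hamiltonian on $(\bar M,\omega_0)$''. For a fixed $t$, choose a lcs Hamiltonian isotopy $\{\psi_s\}_{s\in[0,1]}$ with $\psi_0 = \id$ and $\psi_1 = \varphi_t$. By Proposition \ref{proposition: lift of Hamiltonian isotopy to symplectic cover} its lift $\{\bar\psi_s\}$ is a genuine Hamiltonian isotopy of $(\bar M,\omega_0)$ starting at the identity. The decisive point is that the endpoint $\bar\psi_1$ must coincide with $\bar\varphi_t$: both are symplectomorphism lifts of the single divergence free lcs diffeomorphism $\varphi_t$, and such a lift is unique by Lemma \ref{lemma: symplectic cover} and Corollary \ref{corollary: symplectic lift}. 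Hence $\bar\varphi_t = \bar\psi_1$ is a Hamiltonian diffeomorphism of $(\bar M,\omega_0)$. I expect this matching step — identifying the lift $\bar\varphi_t$ coming from the global isotopy $\{\varphi_t\}$ with the lift of an \emph{auxiliary} Hamiltonian isotopy that happens to realize $\varphi_t$ — to be the main obstacle, and it is precisely the uniqueness of lifts that makes it go through.

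With this in hand, $\{\bar\varphi_t\}$ is a symplectic isotopy of $(\bar M,\omega_0)$ all of whose time-$t$ maps lie in the Hamiltonian group, so the symplectic theorem of Banyaga \cite{Banyaga} (see also \cite[Proposition 10.2.12]{McDS}) yields that $\{\bar\varphi_t\}$ is itself a Hamiltonian isotopy. Finally I would invoke Proposition \ref{proposition: lift of Hamiltonian isotopy to symplectic cover} in the converse direction: $\{\varphi_t\}$ is a divergence free lcs isotopy starting at the identity whose lift $\{\bar\varphi_t\}$ is Hamiltonian, so $\{\varphi_t\}$ is a lcs Hamiltonian isotopy, as claimed. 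Any hypotheses needed to apply the symplectic statement (such as compact supports, or discreteness of the flux group) are absorbed into the cited result and need not be reproved here.
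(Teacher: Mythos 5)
Your proposal is correct and follows the same route as the paper: pass to the symplectic cover, observe that each time-$t$ map of the lifted isotopy is Hamiltonian, apply Banyaga's symplectic theorem \cite{Banyaga} (cf.\ \cite[Proposition 10.2.12]{McDS}), and descend via Proposition \ref{proposition: lift of Hamiltonian isotopy to symplectic cover}. The only difference is that you make explicit the matching step — identifying $\bar\varphi_t$ with the lift of an auxiliary lcs Hamiltonian isotopy ending at $\varphi_t$ via the uniqueness of symplectomorphism lifts (Lemma \ref{lemma: symplectic cover}, Corollary \ref{corollary: symplectic lift}) — which the paper's proof uses implicitly when it asserts that each $\bar\varphi_t$ is Hamiltonian.
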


\begin{proof}
Since each $\varphi_t$ is Hamiltonian,
hence divergence free,
$\{\varphi_t\}$ is a divergence free lcs isotopy.
By Proposition \ref{proposition: lift of Hamiltonian isotopy to symplectic cover},
its lift $\{\bar{\varphi}_t\}$ to the symplectic cover
is a symplectic isotopy such that each $\bar{\varphi}_t$ is Hamiltonian.
Thus $\{\bar{\varphi}_t\}$ is a Hamiltonian isotopy.
By Proposition \ref{proposition: lift of Hamiltonian isotopy to symplectic cover},
$\{\varphi_t\}$ is thus a lcs Hamiltonian isotopy.
\end{proof}

We end this subsection by showing that
lcs Hamiltonian diffeomorphisms
on Liouville lcs manifolds
are exact.

\begin{prop}\label{proposition: Hamiltonian vs exact}
Let $\{\varphi_t\}$ be a divergence free lcs isotopy
of a Liouville lcs manifold $\big(M, [(\eta, \omega, \lambda)]\big)$.
Then $\{\varphi_t\}$ is a lcs Hamiltonian isotopy
if and only if $\varphi_t$ is exact for every $t$.
In this case,
if $H_t$ and $g_t$ are the Hamiltonian function
and the conformal factors of $\{\varphi_t\}$
with respect to $(\eta, \omega)$,
then the action of $\{\varphi_t\}$
with respect to $(\eta, \omega, \lambda)$
is given by
\begin{equation}\label{equation: Hamiltonian function vs action}
S_t = \int_0^t e^{-g_s} \Big( \big(H_s - \lambda (X_s) \big) \circ \varphi_s \Big) \, ds \,,
\end{equation}
where $X_t$ is the vector field generating $\{\varphi_t\}$.
\end{prop}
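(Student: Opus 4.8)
The plan is to differentiate the $1$-form $\psi_t := e^{-g_t}\varphi_t^{\ast}\lambda$ along the isotopy, extract a single master identity, and read off both the equivalence and the action formula from it. First I would compute $\frac{d}{dt}\psi_t$ using $\frac{d}{dt}\varphi_t^{\ast}\lambda = \varphi_t^{\ast}(\mathcal{L}_{X_t}\lambda)$ together with the Cartan-type formula $\mathcal{L}_{X_t}\lambda = \iota_{X_t} d_{\eta}\lambda + d_{\eta}(\iota_{X_t}\lambda) + \eta(X_t)\,\lambda$ recorded in the first subsection, and the relation $d_{\eta}\lambda = \omega$, to obtain
\[
\frac{d}{dt}\psi_t = -\tfrac{dg_t}{dt}\,\psi_t + e^{-g_t}\varphi_t^{\ast}(\iota_{X_t}\omega) + e^{-g_t}\varphi_t^{\ast}\big(d_{\eta}(\lambda(X_t))\big) + e^{-g_t}\varphi_t^{\ast}\big(\eta(X_t)\,\lambda\big).
\]
The decisive point is that $\{\varphi_t\}$ is divergence free: by Lemma \ref{lemma: conformal vector field 1} the constant $c$ vanishes, so the conformal factor of $X_t$ is exactly $\eta(X_t)$, and by Lemma \ref{lemma: conformal vector field} this gives $\tfrac{dg_t}{dt} = \eta(X_t)\circ\varphi_t$. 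Since $e^{-g_t}\varphi_t^{\ast}(\eta(X_t)\,\lambda) = (\eta(X_t)\circ\varphi_t)\,\psi_t$, the first and last terms cancel.

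For the two surviving terms I would record the intertwining relation $e^{-g_t}\varphi_t^{\ast}(d_{\eta}\sigma) = d_{\eta}\big(e^{-g_t}\varphi_t^{\ast}\sigma\big)$, valid for every form $\sigma$ because $\varphi_t^{\ast}\eta = \eta + dg_t$; this is a short direct check. It converts the remaining $d_{\eta}$-term into $d_{\eta}\big(e^{-g_t}(\lambda(X_t)\circ\varphi_t)\big)$ and yields the master identity
\[
\frac{d}{dt}\psi_t = e^{-g_t}\varphi_t^{\ast}(\iota_{X_t}\omega) + d_{\eta}\big(e^{-g_t}(\lambda(X_t)\circ\varphi_t)\big),
\]
valid for any divergence free lcs isotopy. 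I also note $\psi_0 = \lambda$ and, using the same intertwining together with $\varphi_t^{\ast}\omega = e^{g_t}\omega$, that $d_{\eta}\psi_t = \omega$; hence $\sigma_t := \lambda - \psi_t$ is $\eta$-closed with $\sigma_0 = 0$.

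In the Hamiltonian case I substitute $\iota_{X_t}\omega = -d_{\eta}H_t$ and apply the intertwining once more to write $e^{-g_t}\varphi_t^{\ast}(-d_{\eta}H_t) = -d_{\eta}\big(e^{-g_t}(H_t\circ\varphi_t)\big)$; the master identity then shows $\frac{d}{dt}\psi_t$ is $d_{\eta}$ of a function, so integrating from $\sigma_0 = 0$ gives
\[
\sigma_t = \lambda - e^{-g_t}\varphi_t^{\ast}\lambda = d_{\eta}\left(\int_0^t e^{-g_s}\big((H_s - \lambda(X_s))\circ\varphi_s\big)\,ds\right),
\]
which is precisely exactness of $\varphi_t$ with the claimed formula for $S_t$. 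For the converse I rearrange the master identity as $e^{-g_t}\varphi_t^{\ast}(\iota_{X_t}\omega) = -\frac{d}{dt}\sigma_t - d_{\eta}\big(e^{-g_t}(\lambda(X_t)\circ\varphi_t)\big)$: if every $\varphi_t$ is exact then the $\eta$-closed forms $\sigma_t$ are $\eta$-exact, hence so is $\frac{d}{dt}\sigma_t$, so the whole right-hand side is $\eta$-exact; inverting the intertwining (the operator $\sigma\mapsto e^{-g_t}\varphi_t^{\ast}\sigma$ is invertible and commutes with $d_{\eta}$) shows $\iota_{X_t}\omega$ is $\eta$-exact, i.e.\ $\{\varphi_t\}$ is Hamiltonian.

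The main obstacle I anticipate is twofold. First, the cancellation in the opening computation rests entirely on identifying the conformal factor of a divergence free field with $\eta(X_t)$, which must be invoked carefully from Lemmas \ref{lemma: conformal vector field 1} and \ref{lemma: conformal vector field}; getting the signs and pullbacks exactly right here is where a computation can go astray. Second, the implication ``$\sigma_t$ is $\eta$-exact for all $t$, therefore $\frac{d}{dt}\sigma_t$ is $\eta$-exact'' requires a smoothly varying primitive: when $M$ is connected and $\eta$ is not exact this is immediate from the uniqueness in Remark \ref{remark: unique solution}, and in general it follows from the fact that the Lichnerowicz cohomology class $[\sigma_t]$ is constantly zero, so that its $t$-derivative $[\frac{d}{dt}\sigma_t]$ vanishes as well.
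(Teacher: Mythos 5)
Your proposal is correct and follows essentially the same route as the paper's proof: both differentiate $e^{-g_t}\varphi_t^{\ast}\lambda$, use the divergence-free hypothesis via Lemmas \ref{lemma: conformal vector field 1} and \ref{lemma: conformal vector field} to obtain $\frac{dg_t}{dt} = \eta(X_t)\circ\varphi_t$, and exploit the intertwining of $d_{\eta}$ with the weighted pullback $\sigma \mapsto e^{-g_t}\varphi_t^{\ast}\sigma$ to reach the same master identity and hence the action formula \eqref{equation: Hamiltonian function vs action}. Your converse is only a mild repackaging of the paper's (the paper defines $H_t$ by differentiating the posited family of actions $S_t$, which amounts to the same smoothly-varying-primitive point that you flag explicitly and settle via Remark \ref{remark: unique solution} in the case where $M$ is connected and $\eta$ is not exact).
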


\begin{proof}
By Lemma \ref{lemma: lift of isotopy to symplectic cover},
$X_t$ is divergence free
and so $d_{\eta} (\iota_{X_t}\omega) = 0$ for all $t$.
By Lemmas \ref{lemma: conformal vector field 1}
and \ref{lemma: conformal vector field}
we thus have
$$
\eta (X_t) = \frac{dg_t}{dt} \circ \varphi_t^{-1} \,,
$$
and so
\[
\frac{d}{dt} \, \big(\lambda - e^{-g_t} \, \varphi_t^{\ast}\lambda \big)
= \frac{dg_t}{dt} \, e^{-g_t} \, \varphi_t^{\ast}\lambda - e^{-g_t} \, \frac{d}{dt} \, \varphi_t^{\ast}\lambda
= \varphi_t^{\ast} \, \Big( \Big(\frac{dg_t}{dt} \circ \varphi_t^{-1}\Big) \,
e^{-g_t \circ \varphi_t^{-1}} \lambda
- e^{-g_t \circ \varphi_t^{-1}} \mathcal{L}_{X_t}\lambda \Big)
\]
\[
= \varphi_t^{\ast} \, \Big( \eta (X_t) \, e^{-g_t \circ \varphi_t^{-1}} \lambda
- e^{-g_t \circ \varphi_t^{-1}}
\big( d_{\eta} (\iota_{X_t}\lambda) + \iota_{X_t}d_{\eta}\lambda + \eta(X_t) \lambda \big) \Big)
\]
\[
= \varphi_t^{\ast} \, \Big( - e^{-g_t \circ \varphi_t^{-1}}
\big( d_{\eta} (\iota_{X_t}\lambda) + \iota_{X_t}\omega \big) \Big)
\]
\[
= - e^{-g_t} \,
\varphi_t^{\ast} \big( d_{\eta} (\iota_{X_t}\lambda) + \iota_{X_t}\omega \big) \,.
\]
Suppose that $\{\varphi_t\}$ is a lcs Hamiltonian isotopy
with Hamiltonian function $H_t$ with respect to $(\eta, \omega)$,
and define $S_t$ by \eqref{equation: Hamiltonian function vs action}.
Then
\[
\frac{d}{dt} \, \big(\lambda - e^{-g_t} \, \varphi_t^{\ast}\lambda \big)
= - e^{-g_t} \, \varphi_t^{\ast} \big( d_{\eta} (\iota_{X_t}\lambda - H_t) \big)
= - e^{-g_t} \, d_{\varphi_t^{\ast}\eta} \big((\iota_{X_t}\lambda - H_t) \circ \varphi_t \big)
\]
\[
= - e^{-g_t} \, d_{\eta + dg_t} \big((\iota_{X_t}\lambda - H_t) \circ \varphi_t \big)
= d_{\eta} \Big( e^{-g_t} \big((H_t - \iota_{X_t}\lambda) \circ \varphi_t \big) \Big)
\]
\[
= d_{\eta} \, \frac{dS_t}{dt}
= \frac{d}{dt} \, d_{\eta}S_t \,.
\]
Since $\lambda - e^{-g_0} \, \varphi_0^{\ast}\lambda = 0$
and $d_{\eta}S_0 = 0$,
this implies that
$\lambda - e^{-g_t} \, \varphi_t^{\ast}\lambda = d_{\eta}S_t$,
and so each $\varphi_t$ is exact with action $S_t$
with respect to $(\eta, \omega, \lambda)$.
Conversely,
suppose that each $\varphi_t$ is exact with action $S_t$
with respect to $(\eta, \omega, \lambda)$,
and define $H_t$ by \eqref{equation: Hamiltonian function vs action}.
Then
\[
d_{\eta} \Big( e^{-g_t} \big( \big(H_t - \lambda (X_t)\big) \circ \varphi_t \big) \Big)
= d_{\eta} \, \frac{dS_t}{dt}
= \frac{d}{dt} \, d_{\eta}S_t
= \frac{d}{dt} \, \big(\lambda - e^{-g_t} \, \varphi_t^{\ast}\lambda \big)
= - e^{-g_t} \,
\varphi_t^{\ast} \big( d_{\eta} (\iota_{X_t}\lambda) + \iota_{X_t}\omega \big) \,,
\]
thus
\[
\varphi_t^{\ast} \big( d_{\eta} (\iota_{X_t}\lambda) + \iota_{X_t}\omega \big)
= - e^{g_t} \, d_{\eta} \Big(e^{-g_t} \big((H_t - \iota_{X_t}\lambda) \circ \varphi_t \big)\Big)
= - d_{\eta + dg_t} \big((H_t - \iota_{X_t}\lambda) \circ \varphi_t \big)
\]
\[
= - d_{\varphi_t^{\ast}\eta} \big((H_t - \iota_{X_t}\lambda) \circ \varphi_t \big)
= \varphi_t^{\ast} \, d_{\eta} (\iota_{X_t}\lambda - H_t) \,.
\]
This implies that $\iota_{X_t}\omega = - d_{\eta}H_t$,
thus $\{\varphi_t\}$ is a lcs Hamiltonian isotopy
with Hamiltonian function $H_t$ with respect to $(\eta, \omega)$.
Alternatively,
we can apply the corresponding result in symplectic topology
\cite[Proposition 9.3.1]{McDS}
to the lift of $\{\varphi_t\}$
to the symplectic cover,
and use Corollary \ref{corollary: symplectic lift},
Proposition \ref{proposition: lift of exact isotopy to symplectic cover}
and Proposition \ref{proposition: lift of Hamiltonian isotopy to symplectic cover}.
\end{proof}

\subsection{Lee flow}

The Lee vector field of a lcs manifold $\big(M, [(\eta, \omega)]\big)$
with respect to $(\eta, \omega)$
is the vector field $R_{\omega}$ that satisfies
$$
\omega (R_{\omega} \,,\, \cdot\,) = \eta \,,
$$
in other words the lcs Hamiltonian vector field
with Hamiltonian function $H_t \equiv 1$
with respect to $(\eta, \omega)$.
In particular,
$R_{\omega}$ is divergence free.
By Lemma \ref{lemma: conformal vector field 1}
it thus has conformal factor
$h = \eta (R_{\omega}) = 0$ with respect to $(\eta, \omega)$,
and so it satisfies
$\mathcal{L}_{R_{\omega}} \omega = 0$
and $\mathcal{L}_{R_{\omega}} \eta = 0$.
The Lee flow $\{\varphi_t^{\omega}\}$ of $(\eta, \omega)$
is the flow of $R_{\omega}$.
It satisfies
$(\varphi_t^{\omega})^{\ast}\omega = \omega$
and $(\varphi_t^{\omega})^{\ast}\eta = \eta$.

\begin{example}\label{example: Lee flow symplectic}
The Lee flow of a conformal symplectic manifold
$(M, [\omega]) = \big(M , [(0, \omega)]\big)$
with respect to $(0, \omega)$ is the identity.
More generally,
the Lee flow with respect to $(d\mu, e^{\mu}\omega)$
is the Hamiltonian flow of $e^{-\mu}$
with respect to the symplectic form $\omega$.
In particular,
the Lee flow with respect to $(d\mu, e^{\mu}\omega)$
preserves $\mu$.
\end{example}

Similarly,
we have the following result,
whose proof is immediate
using Example \ref{example: Lee flow symplectic}.

\begin{lemma}\label{lemma: Lee flow symplectic cover}
Let $\big( M , [(\eta, \omega)] \big)$ be a lcs manifold,
and $(\bar{M}, [e^{-\mu} \pi^{\ast}\omega])$
its symplectic cover.
The Lee flow on $\bar{M}$ with respect to $(\pi^*\eta, \pi^*\omega)$
is the Hamiltonian flow of $e^{-\mu}$
with respect to $e^{-\mu}\pi^{\ast}\omega$.
It preserves $\mu$ and commutes with the projection $\pi: \bar{M} \rightarrow M$.
The induced isotopy on $M$
is the Lee flow with respect to $(\eta, \omega)$.
\end{lemma}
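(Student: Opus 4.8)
The plan is to reduce everything to the exact case already treated in Example \ref{example: Lee flow symplectic}, and then to transfer the resulting information from $\bar{M}$ down to $M$ by a $\pi$-relatedness argument. Since the Lee form $\pi^{\ast}\eta = d\mu$ on the cover is exact, Example \ref{example: Lee flow symplectic} does almost all the work; the only genuinely new input is the compatibility of the Lee vector fields with the projection.

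First I would record the setup on the cover. On $\bar{M}$ the form $\pi^{\ast}\omega$ is lcs with Lee form $\pi^{\ast}\eta = d\mu$, which is exact; writing $\bar{\omega} = e^{-\mu} \pi^{\ast}\omega$ for the associated symplectic form, the chosen representative reads $(\pi^{\ast}\eta, \pi^{\ast}\omega) = (d\mu, e^{\mu}\bar{\omega})$. This is exactly the form to which Example \ref{example: Lee flow symplectic} applies, and it yields at once that the Lee flow on $\bar{M}$ with respect to $(\pi^{\ast}\eta, \pi^{\ast}\omega)$ is the Hamiltonian flow of $e^{-\mu}$ with respect to $\bar{\omega} = e^{-\mu} \pi^{\ast}\omega$, and that this flow preserves $\mu$. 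This settles the first two assertions of the lemma.

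Next I would prove that the Lee vector field $R_{\pi^{\ast}\omega}$ on $\bar{M}$ is $\pi$-related to the Lee vector field $R_{\omega}$ on $M$. This is the only step that is not a direct quotation of the example, and hence the one I regard as the crux, though it is short. Since $\pi$ is a local diffeomorphism, for any tangent vector $v$ at a point of $\bar{M}$ we have $\pi^{\ast}\omega \big( R_{\pi^{\ast}\omega}, v \big) = \omega \big( \pi_{\ast} R_{\pi^{\ast}\omega}, \pi_{\ast} v \big)$ and $\pi^{\ast}\eta (v) = \eta (\pi_{\ast} v)$. The defining relation $\iota_{R_{\pi^{\ast}\omega}} \pi^{\ast}\omega = \pi^{\ast}\eta$ therefore becomes $\omega \big( \pi_{\ast} R_{\pi^{\ast}\omega}, \pi_{\ast} v \big) = \eta (\pi_{\ast} v)$ for all $v$; as $\pi_{\ast} v$ ranges over all tangent vectors at the image point, non-degeneracy of $\omega$ forces $\pi_{\ast} R_{\pi^{\ast}\omega} = R_{\omega}$.

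Finally I would conclude. Because $R_{\pi^{\ast}\omega}$ is $\pi$-related to $R_{\omega}$, its flow $\{\varphi_t^{\pi^{\ast}\omega}\}$ satisfies $\pi \circ \varphi_t^{\pi^{\ast}\omega} = \varphi_t^{\omega} \circ \pi$, where $\{\varphi_t^{\omega}\}$ is the Lee flow of $(\eta, \omega)$; this is precisely the assertion that the Lee flow on $\bar{M}$ commutes with $\pi$ and that the isotopy it induces on $M$ is the Lee flow with respect to $(\eta, \omega)$. As a consistency check, preservation of $\mu$ may also be re-derived directly here, since $R_{\pi^{\ast}\omega}(\mu) = d\mu \big( R_{\pi^{\ast}\omega} \big) = \pi^{\ast}\eta \big( R_{\pi^{\ast}\omega} \big) = \pi^{\ast}\omega \big( R_{\pi^{\ast}\omega}, R_{\pi^{\ast}\omega} \big) = 0$ by antisymmetry, recovering the claim independently of the example.
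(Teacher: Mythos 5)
Your proof is correct and follows exactly the route the paper intends: the paper gives no written proof, declaring the lemma ``immediate using Example~\ref{example: Lee flow symplectic}'', and your argument simply fills in the two implicit steps — applying that example to the representative $(\pi^{\ast}\eta, \pi^{\ast}\omega) = (d\mu, e^{\mu}\bar{\omega})$ on the cover, and checking $\pi$-relatedness of the Lee vector fields via non-degeneracy of $\omega$ and the fact that $\pi$ is a local diffeomorphism. Nothing is missing, and your independent re-derivation of $R_{\pi^{\ast}\omega}(\mu) = \pi^{\ast}\omega\big(R_{\pi^{\ast}\omega}, R_{\pi^{\ast}\omega}\big) = 0$ is a valid consistency check.
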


\begin{example}
The Lee vector field of the locally conformal symplectization
$\big( S^1 \times Y \,,\, [(- d\theta \,,\, d_{-d\theta}\alpha)]\big)$
of a contact manifold $\big(Y, \xi = \ker (\alpha)\big)$
with respect to $(- d\theta \,,\, d_{-d\theta}\alpha)$
is $(0, R_{\alpha})$,
where $R_{\alpha}$ is the Reeb vector field
with respect to $\alpha$.
\end{example}

\begin{example}\label{example: Lee flow on twisted cotangent bundle}
The Lee vector field of the twisted cotangent bundle $T^{\ast}_{\beta}B$
with respect to $(\pi^{\ast}\beta \,,\, d_{\pi^{\ast}\beta} \lambda)$
is the vector field that is dual to $\pi^{\ast}\beta$
with respect to $d_{\pi^{\ast}\beta} \lambda$.
The Lee flow $\{\varphi_t\}$ is given by
$$
\varphi_t (\sigma_q) = \sigma_q + t \beta (q) \,.
$$
\end{example}

\subsection{Lagrangian submanifolds}

An immersion $i: L \rightarrow M$
into a lcs manifold $\big(M, [(\eta, \omega)] \big)$
is said to be Lagrangian
if the dimension of $L$ is half the dimension of $M$
and $i^{\ast} \omega = 0$
for some (hence any) representative $(\eta, \omega)$.
A submanifold $L$ of $M$ is said to be Lagrangian
if the inclusion $L \hookrightarrow M$ is Lagrangian.
If $i: L \rightarrow M$ is a Lagrangian immersion
into a Liouville lcs manifold $\big(M, [(\eta, \omega, \lambda)] \big)$
then for any representative $(\eta, \omega, \lambda)$
the $1$-form $i^{\ast}\lambda$ is $i^{\ast}\eta$-closed.
We say that $i: L \rightarrow M$ is an exact Lagrangian immersion
if for some (hence any) representative $(\eta, \omega, \lambda)$
the $1$-form $i^{\ast}\lambda$ is $i^{\ast}\eta$-exact.
We then say that the exact Lagrangian immersion $i: L \rightarrow M$
has action $S: L \rightarrow \mathbb{R}$ with respect to $(\eta, \omega, \lambda)$
(or just action $S$ when the representative we consider
is clear from the context)
if $i^{\ast}\lambda = d_{i^{\ast}\eta} S$.
By Remark \ref{remark: unique solution},
if $L$ is connected and $i^{\ast} \eta$ is not exact
then the action $S$ is unique
(otherwise it is just determined up to an additive constant
on each connected component).
A Lagrangian submanifold $L$ of $M$ is said to be exact
and have action $S$
if the inclusion $L \hookrightarrow M$ is exact
and has action $S$.

\begin{example}\label{example: Lagrangians in twisted cotangent bundle}
For any function $f: B \rightarrow \mathbb{R}$,
the $\beta$-twisted differential
\[
d_{\beta}f: B \rightarrow T^{\ast}B \,,\;
d_{\beta}f(q)
= df(q) - f(q) \beta(q)
\]
is an exact Lagrangian embedding
into the twisted cotangent bundle $T_{\beta}^{\ast}B$,
of action $f$ with respect to
$(\pi^{\ast}\beta \,,\, d_{\pi^{\ast}\beta} \lambda \,,\, \lambda )$.
Indeed,
by the tautological property of $\lambda$
we have
\[
(d_{\beta} f)^{\ast} \lambda = d_{\beta} f \,.
\]
We denote by $L_{f, \beta}$ the image of $d_{f, \beta}$.
It is an exact Lagrangian submanifold of $T^{\ast}_{\beta}B$
of action
\[
S_{f, \beta}: L_{f, \beta} \rightarrow \mathbb{R} \,,\;
S_{f, \beta} \big(d_{\beta}f(q)\big) = f(q) \,.
\]
In particular,
the zero section $L_0 = L_{0, \beta}$ of $T_{\beta}^{\ast}B$
is an exact Lagrangian submanifold of action zero.
Any exact Lagrangian submanifold of $T^{\ast}_{\beta}B$
that is a section with respect to the projection
$T^{\ast}_{\beta}B \rightarrow B$
is equal to $L_{f, \beta}$
for some function $f: B \rightarrow \mathbb{R}$.
\end{example}

\begin{example}\label{example: lift of Legendrians to symplectization}
If $\Lambda$ is a Legendrian submanifold
of a contact manifold $\big(Y, \xi = \ker (\alpha) \big)$
then $S^1 \times \Lambda$ is an exact Lagrangian submanifold
of the locally conformal symplectization
$\big(S^1 \times Y \,,\, [(-d\theta \,,\, d_{-d\theta}\alpha \,,\, \alpha)]\big)$,
of action zero with respect to $(-d\theta \,,\, d_{-d\theta}\alpha \,,\, \alpha)$.
We say that $S^1 \times \Lambda$
is the lift of $\Lambda$
to the locally conformal symplectization.
Similarly,
the lift of a Legendrian immersion
$i: \Lambda \rightarrow Y$
is the exact Lagrangian immersion
\[
\widetilde{i}: S^1 \times \Lambda \rightarrow S^1 \times Y \,,\;
(\theta, x) \mapsto \big( \theta, i(x) \big) \,.
\]
\end{example}

The next proposition follows easily from the definitions.

\begin{prop}\label{proposition: image of exact by exact}
Let $\big( M, [(\eta, \omega, \lambda)]\big)$ be a Liouville lcs manifold,
$i: L \rightarrow M$ an exact Lagrangian immersion
and $\varphi: M \rightarrow M$ an exact lcs diffeomorphism.
Then $\varphi \circ i: L \rightarrow M$
is an exact Lagrangian immersion.
\end{prop}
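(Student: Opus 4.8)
The plan is to fix a representative $(\eta, \omega, \lambda)$ and simply compute $(\varphi \circ i)^{\ast}\lambda$ explicitly, exhibiting a primitive and hence an action for $\varphi \circ i$; by the ``some (hence any)'' independence built into the definitions it suffices to work with one representative. First I would check that $\varphi \circ i$ is a Lagrangian immersion: it is an immersion as the composition of the immersion $i$ with the diffeomorphism $\varphi$, it has the correct dimension, and $(\varphi \circ i)^{\ast}\omega = i^{\ast}(\varphi^{\ast}\omega) = i^{\ast}(e^g \omega) = e^{g \circ i}\, i^{\ast}\omega = 0$ since $i$ is Lagrangian, where $g$ is the conformal factor of $\varphi$ with respect to $(\eta, \omega)$.

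For exactness, I would start from the defining relation of an exact lcs diffeomorphism, $\lambda - e^{-g}\varphi^{\ast}\lambda = d_{\eta}S_{\varphi}$, solve it as $\varphi^{\ast}\lambda = e^{g}(\lambda - d_{\eta}S_{\varphi})$, and pull back by $i$. Using that pullback intertwines the Lichnerowicz--de Rham differentials, $i^{\ast}d_{\eta} = d_{i^{\ast}\eta}\,i^{\ast}$, together with the exactness of $i$ in the form $i^{\ast}\lambda = d_{i^{\ast}\eta}S$, this gives
\[
(\varphi \circ i)^{\ast}\lambda = e^{g \circ i}\,\big(i^{\ast}\lambda - d_{i^{\ast}\eta}(S_{\varphi}\circ i)\big) = e^{g \circ i}\, d_{i^{\ast}\eta}\big(S - S_{\varphi}\circ i\big).
\]
Finally I would absorb the conformal factor using the conjugation identity $d_{\eta + df}\sigma = e^{f}d_{\eta}(e^{-f}\sigma)$, applied on $L$ with $f = g \circ i$, together with $i^{\ast}\eta + d(g \circ i) = i^{\ast}(\eta + dg) = (\varphi \circ i)^{\ast}\eta$, which comes from $\varphi^{\ast}\eta = \eta + dg$. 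This yields
\[
(\varphi \circ i)^{\ast}\lambda = d_{(\varphi \circ i)^{\ast}\eta}\big(e^{g \circ i}(S - S_{\varphi}\circ i)\big),
\]
so $\varphi \circ i$ is exact with action $e^{g \circ i}(S - S_{\varphi}\circ i)$.

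There is no genuine obstacle here; the proposition really does follow from unwinding the definitions, and the only delicate point is the bookkeeping of the conformal factor $g$ and the twisting of the various differentials, making sure the identity $d_{\eta+df} = e^f d_\eta e^{-f}$ and the naturality $i^{\ast} d_\eta = d_{i^{\ast}\eta} i^{\ast}$ are applied with the correct closed form and exponent. As a sanity check one can alternatively derive the result by passing to the symplectic cover: by Proposition~\ref{proposition: lift of exact isotopy to symplectic cover} and Example~\ref{example: exact lcs on conformal} the statement reduces to the elementary fact that in the exact conformal symplectic setting the image of an exact Lagrangian under an exact (conformal) symplectomorphism is again exact. The direct computation above is shorter, however, and has the advantage of producing the explicit action.
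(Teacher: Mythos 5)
Your proof is correct: the paper gives no argument beyond the remark that the proposition ``follows easily from the definitions,'' and your computation is precisely that unwinding, correctly using the relations $\varphi^{\ast}\lambda = e^{g}(\lambda - d_{\eta}S_{\varphi})$, the naturality $i^{\ast}d_{\eta} = d_{i^{\ast}\eta}\,i^{\ast}$, and the conjugation identity $d_{\eta+df}\sigma = e^{f}d_{\eta}(e^{-f}\sigma)$ with $(\varphi\circ i)^{\ast}\eta = i^{\ast}\eta + d(g\circ i)$. As a bonus your argument records the explicit action $e^{g\circ i}\big(S - S_{\varphi}\circ i\big)$ of $\varphi\circ i$, which the paper leaves implicit.
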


In particular,
we obtain the following corollary.

\begin{cor}\label{corollary: image of zero section is exact}
Any Lagrangian submanifold of a twisted cotangent bundle $T^{\ast}_{\beta}B$
that is the image of the zero section by a lcs Hamiltonian diffeomorphism
is exact.
\end{cor}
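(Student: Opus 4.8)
The plan is to deduce the statement formally from the two propositions immediately preceding it. First I would record that $T^{\ast}_{\beta}B$ is a Liouville lcs manifold, with the canonical representative $(\pi^{\ast}\beta, d_{\pi^{\ast}\beta}\lambda, \lambda)$ of Example \ref{example: twisted cotangent bundle}, so that the notions of exact Lagrangian submanifold and exact lcs diffeomorphism are both available here. Within this setup, the zero section $L_0 = L_{0, \beta}$ is an exact Lagrangian submanifold of action zero: this is Example \ref{example: Lagrangians in twisted cotangent bundle} applied to the function $f \equiv 0$, for which $d_{\beta}f \equiv 0$ and $(d_{\beta}f)^{\ast}\lambda = d_{\beta}f = 0 = d_{\pi^{\ast}\beta}0$.

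The one genuine input I would then invoke is that a lcs Hamiltonian diffeomorphism is itself exact. By definition such a $\varphi$ is the time-$1$ map of a lcs Hamiltonian isotopy $\{\varphi_t\}$, which is divergence free, being generated by a lcs Hamiltonian (hence divergence free) vector field. Proposition \ref{proposition: Hamiltonian vs exact} then guarantees that every $\varphi_t$, and in particular $\varphi = \varphi_1$, is an exact lcs diffeomorphism of $T^{\ast}_{\beta}B$.

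Finally I would apply Proposition \ref{proposition: image of exact by exact} with $M = T^{\ast}_{\beta}B$, the exact Lagrangian immersion $i \colon L_0 \hookrightarrow M$ given by the inclusion of the zero section, and the exact lcs diffeomorphism $\varphi$: the composition $\varphi \circ i$ is an exact Lagrangian immersion, and its image is precisely $\varphi(L_0)$. Hence $\varphi(L_0)$ is an exact Lagrangian submanifold, as claimed. There is no real obstacle here, the corollary being a direct formal consequence of Propositions \ref{proposition: Hamiltonian vs exact} and \ref{proposition: image of exact by exact}; the only points that warrant a line of verification are the exactness of the zero section and the fact that the generating lcs Hamiltonian isotopy is divergence free, both of which are immediate.
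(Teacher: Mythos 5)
Your proof is correct and follows essentially the same route as the paper: the zero section is exact (Example \ref{example: Lagrangians in twisted cotangent bundle} with $f \equiv 0$), lcs Hamiltonian diffeomorphisms are exact by Proposition \ref{proposition: Hamiltonian vs exact}, and Proposition \ref{proposition: image of exact by exact} concludes. The extra lines you flag for verification (divergence freeness of the generating isotopy, exactness of the zero section) are implicit in the paper's one-line proof and are correctly handled.
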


\begin{proof}
By Proposition \ref{proposition: Hamiltonian vs exact},
any lcs Hamiltonian diffeomorphism is exact.
Since the zero section is an exact Lagrangian submanifold of $T^{\ast}_{\beta}B$,
the result thus follows from Proposition \ref{proposition: image of exact by exact}.
\end{proof}

We have the following lcs analogue
of the Weinstein neighborhood theorem for Lagrangians submanifolds.
We refer to \cite[Theorem 3.2]{OS} or \cite[Theorem 2.11]{CM}
for a proof.

\begin{thm}\label{theorem: Weinstein}
Let $L$ be a compact Lagrangian submanifold
of a lcs manifold $\big( M, [(\eta, \omega)] \big)$.
Denote by $\beta$ the pullback of $\eta$
by the inclusion $L \hookrightarrow M$.
Then there exist a neighborhood $\mathcal{U}$ of $L$ in $M$,
a neighborhood $\mathcal{V}$ of the zero section of $T^{\ast}_{\beta}L$
and a strict lcs diffeomorphism $\psi: \mathcal{U} \rightarrow \mathcal{V}$
with respect to 
$( \left. \eta \right\lvert_{\mathcal{U}} , \left. \omega \right\lvert_{\mathcal{U}} )$
and $( \left. (\pi^{\ast}\beta) \right\lvert_{\mathcal{V}} ,
\left. (d_{\pi^{\ast}\beta}\lambda) \right\lvert_{\mathcal{V}} )$
such that $\psi(L)$ is the zero section.
\end{thm}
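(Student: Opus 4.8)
The plan is to adapt the classical two-step proof of the Weinstein neighbourhood theorem, replacing the exterior differential throughout by the Lichnerowicz--de Rham differential $d_\eta$ and using the Cartan-type identity $\mathcal L_X\sigma = \iota_X d_\eta\sigma + d_\eta\iota_X\sigma + \eta(X)\sigma$ recorded after the definition of $d_\eta$. First I would fix the representative $(\eta,\omega)$ and an auxiliary metric, and produce a diffeomorphism matching the two structures to first order along $L$. Since $i^*\omega=0$ and $\omega$ is nondegenerate, the assignment $v\mapsto \iota_v\omega|_{TL}$ identifies the normal bundle $TM|_L/TL$ with $T^*L$; composing with a tubular-neighbourhood map gives a diffeomorphism $\phi_0$ from a neighbourhood $\mathcal U_0$ of $L$ onto a neighbourhood of the zero section of $T^*_\beta L$, carrying $L$ to the zero section. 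Because the tautological form, hence $d_{\pi^*\beta}\lambda$, vanishes along the zero section and $\beta=i^*\eta$, one can arrange that $(\phi_0^{-1})^*\omega$ and $d_{\pi^*\beta}\lambda$ agree as bilinear forms at every point of the zero section, with matching Lee forms there.

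Next I would normalise the Lee form. Writing $p$ for the tubular projection, the closed $1$-form $\eta-p^*\beta$ restricts to $0$ on $L$ and is cohomologically trivial in $H^1(\mathcal U_0)\cong H^1(L)$, so it equals $df$ with $f|_L=0$. Replacing $(\eta,\omega)$ by the conformally equivalent representative $(\eta-df,e^{-f}\omega)=(p^*\beta,e^{-f}\omega)$, I may assume the Lee form is the horizontal form $\pi^*\beta$, i.e. the Lee form of the model $\omega_0:=d_{\pi^*\beta}\lambda$. Transporting by $\phi_0$ I then have two nondegenerate $d_{\pi^*\beta}$-closed $2$-forms $\omega_0$ and $\omega_1:=(\phi_0^{-1})^*(e^{-f}\omega)$ on a neighbourhood of the zero section, agreeing along it, so that the affine path $\omega_s=(1-s)\omega_0+s\omega_1$ stays nondegenerate and $d_{\pi^*\beta}$-closed near $L$. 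A relative Poincaré lemma for $d_{\pi^*\beta}$, obtained from the fibrewise-radial homotopy operator, yields a primitive $\tau$ with $\omega_1-\omega_0=d_{\pi^*\beta}\tau$ and $\tau$ vanishing to first order on the zero section. Defining $X_s$ by $\iota_{X_s}\omega_s=-\tau$ gives a field vanishing on the zero section whose flow $\{\vartheta_s\}$ fixes it, and the Cartan identity for $d_{\pi^*\beta}$ gives $\frac{d}{ds}\vartheta_s^*\omega_s=\big((\pi^*\beta)(X_s)\circ\vartheta_s\big)\,\vartheta_s^*\omega_s$, whence $\vartheta_1^*\omega_1=e^{\rho}\omega_0$ with $\rho=\int_0^1 (\pi^*\beta)(X_s)\circ\vartheta_s\,ds$.

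The hard part is exactly the appearance of this factor $\rho$: unlike in the symplectic case the term $(\pi^*\beta)(X_s)\,\omega_s$ in the Cartan identity does not vanish automatically, so the naive Moser argument yields only a \emph{conformal} equivalence rather than a strict one. The key observation is that, denoting by $R_{\omega_s}$ the Lee vector field of $\omega_s$, the defining relation $\iota_{R_{\omega_s}}\omega_s=\pi^*\beta$ gives $(\pi^*\beta)(X_s)=\tau(R_{\omega_s})$. Thus to obtain a strict identification one must exploit the freedom in the primitive $\tau$ (which is determined only up to a $d_{\pi^*\beta}$-closed $1$-form) to arrange $\tau(R_{\omega_s})\equiv 0$ along the whole path; this is the crux of the argument, and it is precisely what makes the preliminary horizontal normalisation of the Lee form in the previous step indispensable. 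Once $\tau$ is so chosen, $\rho\equiv 0$ and $\vartheta_1^*\omega_1=\omega_0$, and composing $\phi_0$ with the Moser isotopy $\{\vartheta_s\}$ produces the desired strict lcs diffeomorphism $\psi$ sending $L$ to the zero section.

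Alternatively, one could reduce everything to the classical symplectic theorem via the symplectic covers. On $(\bar M,e^{-\mu}\pi^*\omega)$ the preimage $\pi^{-1}(L)$ is a Lagrangian submanifold, and its cover structure over $L$ matches that of the symplectic cover of $T^*_\beta L$ (both correspond to $\ker\langle[\beta],\cdot\rangle\subseteq\pi_1(L)$). Applying the classical Weinstein theorem equivariantly on the cover and descending by Corollary \ref{corollary: symplectic lift} and Proposition \ref{proposition: lift of lcs diffeo to symplectic cover} would then give $\psi$; there the same obstacle reappears as the requirement $\mu\circ\bar\psi-\mu\equiv 0$ singled out in Corollary \ref{corollary: symplectic lift}, which is what strictness (as opposed to a merely conformal descent) demands.
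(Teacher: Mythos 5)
The paper does not in fact prove Theorem \ref{theorem: Weinstein}: it cites \cite[Theorem 3.2]{OS} and \cite[Theorem 2.11]{CM}, and those references are devoted essentially to the point your proposal leaves open. Your setup of the twisted Moser scheme is correct: the first-order matching along $L$, the normalization making both forms $d_{\pi^{\ast}\beta}$-closed, the relative twisted Poincar\'e lemma (a relative refinement of Lemma \ref{lemma: Poincare}), and the computation giving $\vartheta_1^{\ast}\omega_1 = e^{\rho}\omega_0$ with $(\pi^{\ast}\beta)(X_s) = \tau(R_{\omega_s})$ are all right. But the decisive step --- choosing the primitive so that $\tau(R_{\omega_s}) \equiv 0$ along the whole path --- is asserted, not proved, and it is not routine. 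Since $(\pi^{\ast}\beta)(R_{\omega_s}) = \omega_s(R_{\omega_s}, R_{\omega_s}) = 0$, replacing $\tau$ by $\tau_s = \tau + d_{\pi^{\ast}\beta}f_s$ changes $\tau(R_{\omega_s})$ by $df_s(R_{\omega_s})$, so you must solve the transport equations $df_s(R_{\omega_s}) = -\tau(R_{\omega_s})$, smoothly in $s$, with $f_s$ and $df_s$ vanishing along the zero section so that the corrected Moser field still vanishes on $L$. The vector field $R_{\omega_s}$ vanishes on all of $\pi^{-1}(\{\beta = 0\})$, so this is a ``division by a vector field'' problem whose smooth solvability is exactly what needs proof; nothing in your preliminary horizontal normalization of the Lee form supplies it, contrary to your claim that this normalization is what makes the choice of $\tau$ possible. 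As written, your argument delivers only a conformal equivalence $\vartheta_1^{\ast}\omega_1 = e^{\rho}\omega_0$ with $\rho$ vanishing on $L$, which is strictly weaker than the statement.

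A second, compounding gap: the ``WLOG'' replacing $(\eta, \omega)$ by $(\eta - df, e^{-f}\omega)$ is not harmless here, because the theorem demands strictness with respect to the \emph{original} representative, i.e.\ $\psi^{\ast}(d_{\pi^{\ast}\beta}\lambda) = \omega$ on $\mathcal{U}$ (which forces $\psi^{\ast}(\pi^{\ast}\beta) = \eta$, the Lee form being determined by the lcs form). Since $f$ vanishes on $L$ but not on a neighborhood, even a complete execution of your Moser step would yield $\psi^{\ast}(d_{\pi^{\ast}\beta}\lambda) = e^{-f}\omega$, and removing the residual factor $e^{-f}$ is a strictness problem of precisely the same nature as the unresolved crux --- so the normalization recycles the difficulty rather than removing it. Your cover-based alternative founders on the same rock, as you yourself note: the classical equivariant Weinstein theorem on $(\bar{M}, e^{-\mu}\pi^{\ast}\omega)$ gives no control on the condition $\mu \circ \bar{\psi} = \mu$ singled out by Corollary \ref{corollary: symplectic lift}, which is exactly what strict descent requires. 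In short, the proposal correctly isolates where the lcs case differs from the symplectic one but does not close that step; it is carried out in the cited proofs of \cite{OS} and \cite{CM}.
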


If $L$ is an exact compact Lagrangian submanifold
of a Liouville lcs manifold $\big( M, [(\eta, \omega, \lambda)] \big)$
then the strict lcs diffeomorphism $\psi: \mathcal{U} \rightarrow \mathcal{V}$
of Theorem \ref{theorem: Weinstein}
is actually exact.
To see this,
we need the following lemma.

\begin{lemma}\label{lemma: Poincare}
Consider a vector bundle $\pi: E \rightarrow B$,
and a closed $1$-form $\beta$ on $B$
that is not exact.
Identify $B$ with the zero section,
and denote by $i: B \hookrightarrow E$ the inclusion.
Let $\mathcal{U} \subset E$ be a neighborhood of $B$
that is fibrewise starshaped,
i.e.\ $\mathcal{U} \cap E_x \subset E_x$ is starshaped
for all fibres $E_x = \pi^{-1} (x)$.
If $\sigma$ is a $\pi^{\ast}\beta$-closed form on $\mathcal{U}$
such that $i^{\ast} \sigma$ is $\beta$-exact
then $\sigma$ is $\pi^{\ast}\beta$-exact.
\end{lemma}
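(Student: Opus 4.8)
The plan is to establish a twisted fibrewise Poincaré (homotopy) formula relative to the Lichnerowicz--de Rham differential $d_{\pi^{\ast}\beta}$, modelled on the classical deformation-retraction argument for the de Rham differential. Write $\eta = \pi^{\ast}\beta$, let $r_t \colon \mathcal{U} \to \mathcal{U}$ be the fibrewise scaling $r_t(e) = t\,e$ (well defined for $t \in [0,1]$ since $\mathcal{U}$ is fibrewise starshaped), and let $V$ be the Euler (radial) vector field on $E$ generating it, so that $\tfrac{d}{dt}\, r_t = \tfrac{1}{t}\, V \circ r_t$, with $r_1 = \id$ and $r_0 = i \circ \pi$. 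Two elementary observations drive the argument. First, $\pi \circ r_t = \pi$, so $r_t^{\ast}\eta = \eta$ and hence $r_t^{\ast}$ commutes with $d_{\eta}$. Second, $V$ is vertical, so $\eta(V) = \beta(\pi_{\ast}V) = 0$.

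The key step is to differentiate $r_t^{\ast}\sigma$ using the Cartan-type identity $\iota_X d_{\eta} + d_{\eta}\iota_X = \mathcal{L}_X - \eta(X)$ recalled above. Applied to $X = \tfrac{1}{t}V$ and combined with $\eta(V) = 0$, this gives
\[
\frac{d}{dt}\, r_t^{\ast}\sigma
= r_t^{\ast}\mathcal{L}_{\frac{1}{t}V}\sigma
= \frac{1}{t}\, r_t^{\ast}\big( \iota_V d_{\eta}\sigma + d_{\eta}\iota_V\sigma \big) .
\]
Because $r_t^{\ast}$ commutes with $d_{\eta}$, integrating from $0$ to $1$ and setting $K\sigma = \int_0^1 \tfrac{1}{t}\, r_t^{\ast}(\iota_V\sigma)\,dt$ yields the twisted homotopy formula
\[
\sigma - \pi^{\ast} i^{\ast}\sigma = K\big(d_{\eta}\sigma\big) + d_{\eta}\big(K\sigma\big),
\]
exactly parallel to the untwisted case, the anomalous term $\eta(X)\sigma$ having disappeared precisely because $V$ is vertical.

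With the formula in hand the conclusion is immediate: the hypothesis $d_{\pi^{\ast}\beta}\sigma = 0$ kills the first term, so $\sigma = \pi^{\ast} i^{\ast}\sigma + d_{\eta}(K\sigma)$. Writing the $\beta$-exact form $i^{\ast}\sigma$ as $d_{\beta}\tau$ and using that $\pi^{\ast}$ intertwines $d_{\beta}$ and $d_{\pi^{\ast}\beta}$ (since $\pi^{\ast}(d\tau - \beta \wedge \tau) = d_{\pi^{\ast}\beta}\pi^{\ast}\tau$), we get $\pi^{\ast} i^{\ast}\sigma = d_{\pi^{\ast}\beta}\pi^{\ast}\tau$, whence $\sigma = d_{\pi^{\ast}\beta}(\pi^{\ast}\tau + K\sigma)$, as desired. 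I note that non-exactness of $\beta$ is not actually used here; it is the ambient hypothesis under which the surrounding exactness statements are phrased.

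The main obstacle is the analytic justification of the operator $K$: the integrand carries a $\tfrac{1}{t}$ factor, and one must check convergence at $t = 0$ together with the legitimacy of differentiating and of pulling $d_{\eta}$ out under the integral sign. I expect to handle this by the standard local computation in fibre coordinates $(x, v)$: since $\iota_V\sigma$ only retains terms containing at least one vertical covector $dv_a$, the pullback $r_t^{\ast}(\iota_V\sigma)$ acquires a compensating factor $t^{|J|}$ with $|J| \geq 1$, so that $\tfrac{1}{t}\, r_t^{\ast}(\iota_V\sigma)$ extends smoothly to $t = 0$ and all the integral manipulations are valid.
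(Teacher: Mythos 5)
Your proof is correct and follows essentially the same route as the paper: both establish a twisted homotopy formula $\sigma - \pi^{\ast}i^{\ast}\sigma = d_{\pi^{\ast}\beta}(K\sigma) + K(d_{\pi^{\ast}\beta}\sigma)$ via radial fibrewise contraction, using the twisted Cartan identity with the anomalous term $\eta(X)\sigma$ killed by verticality ($\eta(V)=0$ in your version, $\pi^{\ast}\beta(\partial/\partial t)=0$ in the paper's), and then conclude by pulling back a $\beta$-primitive of $i^{\ast}\sigma$. The only difference is cosmetic: the paper phrases the homotopy operator on the cylinder $\mathcal{U}\times\mathbb{R}$, which sidesteps the $1/t$ singularity that you instead handle (correctly) by the local coordinate count showing $\tfrac{1}{t}\,r_t^{\ast}(\iota_V\sigma)$ extends smoothly to $t=0$.
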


\begin{proof}
Let $h_t: \mathcal{U} \rightarrow \mathcal{U}$
be a homotopy between $h_0 := \left. \id \right\lvert_{\mathcal{U}}$
and $h_1 := i \circ \pi$,
obtained by contracting radially along the fibres.
Consider the total map
$$
h: \mathcal{U} \times \mathbb{R} \rightarrow \mathcal{U} \,,\; h (x,t) = h_t (x) \,,
$$
and the operator
$$
I: \Omega^k (\mathcal{U} \times \mathbb{R}) \rightarrow \Omega^{k-1} (\mathcal{U}) \,,\;
I (\zeta) = \int_0^1 j_t^{\,\ast} (\iota_{\partial / \partial t} \zeta) \, dt \,,
$$
where, for every $t$,
$j_t: \mathcal{U} \rightarrow \mathcal{U} \times \mathbb{R}$
is the map $j_t (x) = (x, t)$.
Then $H := I \circ h^{\ast}:
\Omega^k (\mathcal{U}) \rightarrow \Omega^{k-1} (\mathcal{U})$
satisfies
$$
h_1^{\,\ast} - h_0^{\,\ast} = d_{\pi^{\ast}\beta} \circ H + H \circ d_{\pi^{\ast}\beta} \,.
$$
Indeed, 
consider the maps
$$
\phi_t: \mathcal{U} \times \mathbb{R} \rightarrow \mathcal{U} \times \mathbb{R} \,,\;
\phi_t (x, s) = (x, s + t) \,.
$$
Then $h_t = h \circ j_t = h \circ \phi_t \circ j_0$,
thus
\begin{align*}
h_1^{\,\ast} \zeta - h_0^{\,\ast} \zeta
&= \int_0^1 \frac{d}{dt} \; h_t^{\,\ast} \zeta \, dt
= \int_0^1 j_0^{\,\ast} \frac{d}{dt} \; \phi_t^{\,\ast}  (h^{\ast}\zeta) \, dt
=  \int_0^1 j_0^{\,\ast} \phi_t^{\,\ast} (\mathcal{L}_{\partial/\partial t} h^{\ast}\zeta) \, dt \\
&=  \int_0^1 j_0^{\,\ast} \phi_t^{\,\ast}
\Big( d_{\pi^{\ast}\beta}(\iota_{\partial/\partial t} h^{\ast}\zeta)
+ \iota_{\partial/\partial t} d_{\pi^{\ast}\beta} h^{\ast}\zeta
+ \pi^{\ast} \beta \big(\frac{\partial}{\partial t}\big) \, h^{\ast}\zeta
\Big) \, dt \\
&=  \int_0^1 j_0^{\,\ast} \phi_t^{\,\ast}
\big( d_{\pi^{\ast}\beta}(\iota_{\partial/\partial t} h^{\ast}\zeta)
+ \iota_{\partial/\partial t} d_{\pi^{\ast}\beta} h^{\ast}\zeta
\big) \, dt \\
&=  d_{\pi^{\ast}\beta} \int_0^1 j_t^{\,\ast} (\iota_{\partial/\partial t} h^{\ast}\zeta) \, dt
+  \int_0^1 j_t^{\ast} \big(\iota_{\partial/\partial t} d_{\pi^{\ast}\beta} h^{\ast}\zeta \big)  \, dt \\
&= d_{\pi^{\ast}\beta} \circ I \circ h^{\ast} (\zeta)
+ I \circ h^{\ast} \circ d_{\pi^{\ast}\beta} (\zeta) \\
&= d_{\pi^{\ast}\beta} \circ H \, (\zeta) + H \circ d_{\pi^{\ast}\beta} \, (\zeta) \,,
\end{align*}
where in the fifth equality we have used
that $d_{\pi^{\ast}\beta} h^{\ast} \zeta = h^{\ast} \circ d_{\pi^{\ast}\beta} (\zeta)$,
since $\pi \circ h = \pi$.

Since $i^{\ast} \sigma$ is $\beta$-exact,
the pullback $\pi^{\ast} (i^{\ast} \sigma)$
is $\pi^{\ast}\beta$-exact.
Since moreover $\sigma$ is $\pi^{\ast}\beta$-closed,
we conclude that
$$
\sigma = h_0^{\, \ast} (\sigma)
= h_1^{\, \ast} (\sigma) - d_{\pi^{\ast}\beta} \big(H(\sigma)\big) - H (d_{\pi^{\ast}\beta}\sigma)
= \pi^{\ast} (i^{\ast} \sigma) - d_{\pi^{\ast}\beta} \big(H(\sigma)\big)
$$
is $\pi^{\ast}\beta$-exact.
\end{proof}

\begin{remark}\label{remark: exact Weinstein}
Without loss of generality
we can assume that the neighborhood $\mathcal{V}$
of the zero section of $T^{\ast}_{\beta}L$
in Theorem \ref{theorem: Weinstein} is fibrewise starshaped.
If $L$ is an exact compact Lagrangian submanifold
of a Liouville lcs manifold $\big( M, [(\eta, \omega, \lambda)] \big)$,
the strict lcs diffeomorphism $\psi: \mathcal{U} \rightarrow \mathcal{V}$
of Theorem \ref{theorem: Weinstein}
is then exact.
Indeed,
denoting now the tautological $1$-form on $T^{\ast}L$ by $\lambda_0$,
$(\psi^{-1})^{\ast}\lambda - \lambda_0$ is a $\pi^{\ast}\beta$-closed $1$-form
whose pullback by the inclusion of $L$ as the zero section
is $\pi^{\ast}\beta$-exact.
Using Lemma \ref{lemma: Poincare}
we conclude that $(\psi^{-1})^{\ast}\lambda - \lambda_0$
is $\pi^{\ast}\beta$-exact,
and so $\psi$ is exact.
\end{remark}

\subsection{Contactization}

The contactization of a Liouville lcs manifold
$\big(M \,,\, [(\eta, \omega, \lambda)]\big)$
with respect to $(\eta, \omega, \lambda)$
is the contact manifold
$\big( M \times \mathbb{R} \,,\, \xi_{\lambda} = \ker (\alpha_{\lambda}) \big)$
with $\alpha_{\lambda} = \lambda - d_{\eta}z$,
where $z$ denotes the coordinate on $\mathbb{R}$
and where we still denote by $\lambda$ and $\eta$
their pullbacks by the projection
$M \times \mathbb{R} \rightarrow M$.

Let $i: L \rightarrow M$ be an exact Lagrangian immersion
into a Liouville lcs manifold $\big(M, [(\eta, \omega, \lambda)]\big)$,
of action $S$ with respect to $(\eta, \omega,\lambda)$.
Then
\[
\widetilde{i}: L \rightarrow M \times \mathbb{R} \,,\;
\widetilde{i} (q) = \big( i(q) , S(q) \big)
\]
is a Legendrian immersion
into $(M \times \mathbb{R} \,,\, \xi_{\lambda})$.
We say that $\widetilde{i}$
is the lift of $i$
to the contactization.

Let $\varphi$ be an exact lcs diffeomorphism
of a Liouville lcs manifold $\big( M , [(\eta, \omega, \lambda)] \big)$,
with conformal factor $g$ and action $S$
with respect to $(\eta, \omega, \lambda)$.
Then the diffeomorphism $\widetilde{\varphi}$
of the contactization $(M \times \mathbb{R} \,,\, \xi_{\lambda})$
defined by
\[
\widetilde{\varphi} \, (p, z)
= \Big( \varphi(p) \,,\, e^{g(p)} \big( z - S(p) \big) \Big)
\]
is a contactomorphism,
with conformal factor $(p, z) \mapsto g(p)$
with respect to $\alpha_{\lambda}$.
We say that $\widetilde{\varphi}$ is the lift of $\varphi$
to the contactization.

\begin{prop}\label{proposition: lift of lcs Hamiltonian isotopy to the contactization}
Let $\{\varphi_t\}$ be a lcs Hamiltonian isotopy
of a Liouville lcs manifold $\big( M , [(\eta, \omega, \lambda)] \big)$
with Hamiltonian function $H_t$
with respect to $(\eta, \omega)$.
Then its lift $\{\widetilde{\varphi_t}\}$
to the contactization $( M \times \mathbb{R} \,,\, \xi_{\lambda})$
is a contact isotopy with Hamiltonian function
$(p, z) \mapsto H_t(p)$
with respect to $\alpha_{\lambda}$.
\end{prop}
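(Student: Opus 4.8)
The plan is to reduce the statement to a single vector-field computation. By Proposition \ref{proposition: Hamiltonian vs exact} each $\varphi_t$ is exact, with conformal factor $g_t$ and action $S_t$ with respect to $(\eta, \omega, \lambda)$, so that $\widetilde{\varphi_t}$ is precisely the lift of $\varphi_t$ described just before the statement,
\[
\widetilde{\varphi_t}(p,z) = \big(\varphi_t(p),\, e^{g_t(p)}(z - S_t(p))\big).
\]
Since $g_t$ and $S_t$ depend smoothly on $t$, while $g_0 = 0$ and $S_0 = 0$ force $\widetilde{\varphi_0} = \id$, and since each $\widetilde{\varphi_t}$ is a contactomorphism (with conformal factor $(p,z)\mapsto g_t(p)$) by the preceding construction, the family $\{\widetilde{\varphi_t}\}$ is automatically a contact isotopy. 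The only thing left to prove is that its contact Hamiltonian equals $(p,z)\mapsto H_t(p)$. Recalling from Example \ref{Example:Lift of contact Hamiltonians} that the contact Hamiltonian of a contact isotopy generated by a vector field $\widetilde{X_t}$ is $\alpha_{\lambda}(\widetilde{X_t})$, the goal becomes the identity $\alpha_{\lambda}(\widetilde{X_t}) = H_t \circ \pr$, where $\pr: M \times \mathbb{R} \to M$ is the projection.

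First I would compute $\widetilde{X_t}$ by differentiating $\widetilde{\varphi_t}$ in $t$ and re-expressing the result at the image point $(q,w) = \widetilde{\varphi_t}(p,z)$, so that $q = \varphi_t(p)$ and $w = e^{g_t(p)}(z - S_t(p))$. The $M$-component is simply $X_t(q)$, since $\pr \circ \widetilde{\varphi_t} = \varphi_t \circ \pr$. For the fibre component I would use two identities. From Lemmas \ref{lemma: conformal vector field 1} and \ref{lemma: conformal vector field}, together with the fact that a lcs Hamiltonian vector field is divergence free (so the constant $c$ in Lemma \ref{lemma: conformal vector field 1} vanishes), one gets $\frac{dg_t}{dt} = \eta(X_t) \circ \varphi_t$; and differentiating \eqref{equation: Hamiltonian function vs action} gives $\frac{dS_t}{dt} = e^{-g_t}\big((H_t - \lambda(X_t)) \circ \varphi_t\big)$. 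Substituting these, together with the relation $z - S_t(p) = e^{-g_t(p)} w$, into $\frac{d}{dt}\big(e^{g_t(p)}(z - S_t(p))\big)$ should yield
\[
\widetilde{X_t}(q,w) = X_t(q) + \big(\eta(X_t)(q)\, w - H_t(q) + \lambda(X_t)(q)\big) \tfrac{\partial}{\partial z}.
\]

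Finally I would evaluate $\alpha_{\lambda} = \lambda - d_{\eta} z = \lambda - dz + z\,\eta$ on this vector field at $(q,w)$. Since $\lambda$ and $\eta$ are pullbacks from $M$ they detect only the $M$-component $X_t(q)$, while $dz$ reads off the fibre component and $z$ evaluates to $w$; the three contributions then combine as $\lambda(X_t)(q) - \big(\eta(X_t)(q)\, w - H_t(q) + \lambda(X_t)(q)\big) + w\,\eta(X_t)(q) = H_t(q)$, which is exactly $(H_t \circ \pr)(q,w)$, as desired.

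I expect the whole argument to be routine once the two time-derivative identities are recorded; the only point requiring care is the bookkeeping in the fibre component, namely re-expressing $\frac{d}{dt}$ of the $\mathbb{R}$-coordinate at the image point $(q,w)$ rather than at the source $(p,z)$. It is there that the substitution $z - S_t(p) = e^{-g_t(p)} w$ enters and that the $\lambda(X_t)$ terms and the $w\,\eta(X_t)$ terms cancel, leaving precisely $H_t$. An alternative, slightly longer route would bypass the precomputed lift by checking directly that $\mathcal{L}_{\widetilde{X_t}} \alpha_{\lambda}$ is a multiple of $\alpha_{\lambda}$, thereby reproving the contactomorphism property, but invoking the already-established contactomorphism property of the lift makes computing $\alpha_{\lambda}(\widetilde{X_t})$ the most economical path.
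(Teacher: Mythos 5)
Your proposal is correct and follows essentially the same route as the paper's proof: both compute the generating vector field $\widetilde{X_t}$ of the lifted isotopy using the two identities $\frac{dg_t}{dt} = \eta(X_t) \circ \varphi_t$ (from Lemmas \ref{lemma: conformal vector field 1} and \ref{lemma: conformal vector field}) and $\frac{dS_t}{dt} = e^{-g_t}\big((H_t - \lambda(X_t)) \circ \varphi_t\big)$ (from Proposition \ref{proposition: Hamiltonian vs exact}), and then evaluate $\alpha_{\lambda} = \lambda - dz + z\eta$ on $\widetilde{X_t}$ to obtain $H_t$. Your explicit substitution $z - S_t(p) = e^{-g_t(p)}w$ at the image point is exactly the bookkeeping the paper performs implicitly when evaluating the $z\eta$ term at $\widetilde{\varphi_t}(p,z)$.
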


\begin{proof}
The contact isotopy $\{\widetilde{\varphi_t}\}$
is generated by the vector field $\widetilde{X_t}$
defined by
$$
\widetilde{X_t} \big( \widetilde{\varphi_t}(p) \big)
= \Big( X_t \big( \varphi_t (p)\big) \,,\,
\frac{d}{dt} \Big( e^{g_t(p)} \big( z - S_t(p) \big) \Big)  \Big) \,,
$$
where $S_t$ is the action of $\{\varphi_t\}$
and $X_t$ the vector field generating $\{\varphi_t\}$.
By Proposition \ref{proposition: Hamiltonian vs exact}
we have
$$
\frac{dS_t}{dt} =
e^{-g_t} \Big( \big( H_t - \lambda (X_t) \big) \circ \varphi_t \Big) \,.
$$
Moreover,
by Lemmas \ref{lemma: conformal vector field 1}
and \ref{lemma: conformal vector field}
we have $\frac{dg_t}{dt} = \eta (X_t) \circ \varphi_t$.
We thus obtain
\[
\frac{d}{dt} \Big( e^{g_t(p)} \big( z - S_t(p) \big) \Big)
= - e^{g_t(p)} \, \frac{d S_t}{dt} (p) + e^{g_t(p)} \, \frac{d g_t}{dt} (p) \, \big( z - S_t(p) \big)
\]
\[
= \lambda \Big( X_t \big( \varphi_t (p) \big) \Big) - H_t \big( \varphi_t (p) \big)
+ e^{g_t(p)} \, \eta \Big( X_t \big( \varphi_t (p) \big) \Big) \, \big( z - S_t(p) \big) \,,
\]
and so
\[
\alpha_{\lambda} \Big( \widetilde{X_t} \big( \widetilde{\varphi_t}(p) \big) \Big)
= (\lambda - dz + z \eta)
\Big( \widetilde{X_t} \big( \widetilde{\varphi_t}(p) \big) \Big)
\]
\[
= \lambda \Big( X_t \big( \varphi_t (p) \big) \Big) - \lambda \Big( X_t \big( \varphi_t (p) \big) \Big)
+ H_t \big( \varphi_t (p) \big)
- e^{g_t(p)} \, \eta \Big( X_t \big( \varphi_t (p) \big) \Big) \, \big( z - S_t(p) \big)
\]
\[
+ e^{g_t(p)} \, \big( z - S_t(p) \big) \, \eta \Big( X_t \big( \varphi_t (p) \big) \Big)
= H_t \big( \varphi_t (p) \big) \,.
\]
This shows that the contact isotopy $\{\widetilde{\varphi_t}\}$
has Hamiltonian function $(p, z) \mapsto H_t(p)$
with respect to $\alpha_{\lambda}$.
\end{proof}

\begin{example}\label{example: twisted 1-jet}
The contactization of the twisted cotangent bundle $T_{\beta}^{\ast}B$
with respect to $(\pi^{\ast}\beta \,,\, d_{\pi^{\ast}\beta}\lambda \,,\, \lambda)$
is the twisted $1$-jet bundle
\[
J^1_{\beta} B := \big( \, T^{\ast}B \times \mathbb{R} \,,\,
\ker (\lambda - d_{\pi^{\ast}\beta}z) \, \big) \,.
\]
For any function $f: B \rightarrow \mathbb{R}$,
the twisted $1$-jet
\[
j^1_{\beta}f: B \rightarrow J^1_{\beta}B \,,\;
j^1_{\beta}f (q) = \big( d_{\beta}f (q) \,,\, f(q) \big)
\]
is a Legendrian embedding,
which is the lift of the exact Lagrangian embedding
$d_{\beta}f: B \rightarrow T_{\beta}^{\ast}B$.
Following \cite{CM},
we consider the strict contactomorphism
\begin{equation}\label{equation: contactomorphism 1 jet}
U_{\beta}: J^1_{\beta}B \rightarrow J^1B \,,\;
U_{\beta}(\sigma_q, z) = ( \sigma_q + z \beta(q), z ) \,.
\end{equation}
This map sends $\beta$-twisted 1-jets to usual 1-jets:
\[
U_{\beta} \circ j^1_{\beta}f = j^1f \,.
\] 
In particular,
$U_{\beta}$ sends the zero section to the zero section.
We call it the untwisting map from $J^1_{\beta}B$ to $J^1B$.
\end{example}

The untwisting map $U_{\beta}$ of Example \ref{example: twisted 1-jet}
will play an important role in Sections \ref{section: gf}
and \ref{section: lcs product etc}.
In particular,
we will need the following lemma,
whose proof follows directly from the definitions
and Example \ref{example: lift of Legendrians to symplectization}.

\begin{lemma}\label{lemma: untwisting of lift}
Let $\Lambda$ be a Legendriann submanifold
of a $1$-jet bundle $J^1B$,
and $S^1 \times \Lambda$ its lift
to the locally conformal symplectization $S^1 \times J^1B$.
Identify $S^1 \times J^1B$ with $T^{\ast}_{-d\theta} (S^1 \times B)$
by the strict lcs diffeomorphism
of Lemma \ref{lemma: identification twisted cotangent bundle with conformal symplectization},
and consider the lift $\widetilde{S^1 \times \Lambda}$
of $S^1 \times \Lambda \subset T^{\ast}_{-d\theta} (S^1 \times B)$
to the contactization $J^1_{-d\theta} (S^1 \times B)$.
Then
\[
U_{-d\theta} (\widetilde{S^1 \times \Lambda})
= 0_{S^1} \times \Lambda \,,
\]
where $U_{-d\theta}$ is the untwisting map \eqref{equation: contactomorphism 1 jet}
from $J^1_{-d\theta} (S^1 \times B)$ to $J^1 (S^1 \times B)$
and where we identify $J^1 (S^1 \times B)$ with $T^{\ast}S^1 \times J^1B$. 
\end{lemma}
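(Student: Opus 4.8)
The plan is to prove the identity by fixing coordinates and tracking the $S^1$-fibre coordinate through the chain of identifications; the proof is computational and the only real content is the coincidence of three a priori different quantities. Write $(q, p, z)$ for coordinates on $J^1B = T^{\ast}B \times \mathbb{R}$, so that the contact form is $\lambda - dz = p\,dq - dz$ and a point $\ell \in \Lambda$ has coordinates $\big( q(\ell), p(\ell), z(\ell) \big)$ with $(p\,dq - dz)$ vanishing along $\Lambda$. Write $(\theta, q, p, z)$ for the coordinates on $S^1 \times J^1B$, so $S^1 \times \Lambda = \{ (\theta, q(\ell), p(\ell), z(\ell)) \}$. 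Denoting by $\Phi \colon T^{\ast}_{-d\theta}(S^1 \times B) \to S^1 \times J^1B$, $(\theta, q, z, p) \mapsto (\theta, q, p, z)$, the strict exact lcs diffeomorphism of Lemma \ref{lemma: identification twisted cotangent bundle with conformal symplectization}, the subset $S^1 \times \Lambda$ is transported to the Lagrangian $\Phi^{-1}(S^1 \times \Lambda) = \{ (\theta, q(\ell), z(\ell), p(\ell)) \}$, i.e.\ the covector $z(\ell)\,d\theta + p(\ell)\,dq$ over the point $(\theta, q(\ell))$ of the base $S^1 \times B$.

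The first key step is to compute the action of $\Phi^{-1}(S^1 \times \Lambda)$ as an exact Lagrangian of $T^{\ast}_{-d\theta}(S^1 \times B)$. By Example \ref{example: lift of Legendrians to symplectization} the lift $S^1 \times \Lambda$ has action zero with respect to $(-d\theta, d_{-d\theta}\alpha, \alpha)$ with $\alpha = \lambda - dz$, so $\alpha$ restricts to zero on it. Since $\Phi$ is strict with action $S_{\Phi}(\theta, q, z, p) = z$, we have $\lambda_1 = \Phi^{\ast}\alpha + d_{-d\theta} S_{\Phi}$ for the tautological form $\lambda_1 = z\,d\theta + p\,dq$; denoting by $i$ the inclusion of the Lagrangian and using $\Phi \circ i = j \circ (\Phi|_{\Phi^{-1}(S^1\times\Lambda)})$, where $j$ is the inclusion of $S^1 \times \Lambda$, the pullback $i^{\ast}\Phi^{\ast}\alpha$ vanishes. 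Hence $i^{\ast}\lambda_1 = d_{i^{\ast}(-d\theta)}(S_{\Phi} \circ i)$, so the action of $\Phi^{-1}(S^1 \times \Lambda)$ is the restriction of the $S^1$-fibre coordinate $z$, equal to $z(\ell)$ at the point indexed by $(\theta, \ell)$.

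It then remains to lift and untwist. The lift $\widetilde{S^1 \times \Lambda}$ to the contactization $J^1_{-d\theta}(S^1 \times B)$ is obtained by setting the new $\mathbb{R}$-coordinate, say $w$, equal to the action, so $\widetilde{S^1 \times \Lambda} = \{ (\theta, q(\ell), z(\ell), p(\ell), w = z(\ell)) \}$. Applying the untwisting map \eqref{equation: contactomorphism 1 jet}, which sends $(\sigma, w) \mapsto (\sigma - w\,d\theta, w)$, replaces the $\theta$-momentum $z(\ell)$ by $z(\ell) - w = 0$ while leaving $(q(\ell), p(\ell))$ and the jet coordinate $w = z(\ell)$ unchanged. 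Reading the result through the identification $J^1(S^1 \times B) \cong T^{\ast}S^1 \times J^1B$, the $T^{\ast}S^1$-component is the zero covector over $\theta$, hence lies in $0_{S^1}$, while the $J^1B$-component is $\big( q(\ell), p(\ell), z(\ell) \big) = \ell$. Thus $U_{-d\theta}(\widetilde{S^1 \times \Lambda}) = 0_{S^1} \times \Lambda$, as claimed.

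The computation is routine, and I expect the only genuine difficulty to be notational bookkeeping: the letter $z$ plays three roles at once — the $\mathbb{R}$-coordinate of $J^1B$, the $S^1$-fibre (i.e.\ $\theta$-momentum) coordinate of $T^{\ast}_{-d\theta}(S^1\times B)$, and, through the action, the contactization coordinate $w$. The substance of the lemma is precisely that these coincide along the lift: the action of $S^1 \times \Lambda$ equals its $\theta$-momentum coordinate, which is exactly the quantity the untwisting map subtracts, so the twist is cancelled and the $S^1$-direction is flattened onto the zero section.
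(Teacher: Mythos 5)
Your proof is correct and is exactly the computation the paper has in mind: the paper states that the lemma ``follows directly from the definitions and Example \ref{example: lift of Legendrians to symplectization}'', and your argument is precisely that unwinding --- the lift $S^1 \times \Lambda$ has action zero, so transporting through the strict exact identification of Lemma \ref{lemma: identification twisted cotangent bundle with conformal symplectization} (with action $z$) gives action equal to the $\theta$-momentum $z(\ell)$, which is exactly what $U_{-d\theta}$ subtracts. Your bookkeeping of the three roles of $z$, including the check that $i^{\ast}\lambda_1 = d_{i^{\ast}(-d\theta)}(z \circ i)$ via $\lambda_1 - \Phi^{\ast}\alpha = d_{-d\theta}S_{\Phi}$, is accurate throughout.
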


We will also use the untwisting map in the next section
in order to derive (strong) orderability of twisted cotangent bundles
from the corresponding result in the contact case
for $1$-jet bundles.

\subsection{Orderability}\label{section: orderability}

Let $\big( M, [(\eta, \omega)] \big)$ be a connected lcs manifold
such that $\eta$ is not exact.
By Remark \ref{remark: unique solution},
the Hamiltonian function $H_t$
of a lcs Hamiltonian isotopy $\{\varphi_t\}$
with respect to a representative $(\eta, \omega)$
is then uniquely defined.
Moreover,
it is non-negative (respectively positive)
if and only if the same is true
for the Hamiltonian function with respect to any other representative
(indeed,
the Hamiltonian function of $\{\varphi_t\}$
with respect to another representative $(\eta + df, e^f\omega)$ is $e^f H_t$).
We say that a lcs Hamiltonian isotopy $\{\varphi_t\}$
is non-negative (respectively positive)
if its Hamiltonian function
with respect to some (hence any) representative $(\eta, \omega)$
is non-negative (respectively positive)\footnote{If $\eta$ is exact
then, by Example \ref{example: Hamiltonian conformal},
the lcs Hamiltonian isotopies of $\big( M, [(\eta, \omega)] \big)$
coincide with the Hamiltonian isotopies
of the symplectic manifold $(M, e^{-\mu}\omega)$,
where $\mu$ is any primitive of $\eta$.
The Hamiltonian function of a Hamiltonian isotopy
is in this case only defined up to addition of a constant.
If $M$ is not compact
then the Hamiltonian functions of compactly supported Hamiltonian isotopies
can be normalized by requiring them to have compact support,
and we can define a compactly supported Hamiltonian isotopy
to be non-negative (respectively positive)
if its normalized Hamiltonian function
is non-negative (respectively positive).
However,
we do not consider this case in our discussion
of orderability
because it does not add anything
to what is already known for symplectic manifolds.}.

\begin{example}\label{example: orderabiliity cf contact}
Recall from \cite{EP}
that a contact isotopy $\{\phi_t\}$
on a co-oriented contact manifold $(Y, \xi)$
is said to be non-negative (respectively positive)
if its contact Hamiltonian function $H_t$
with respect to some (hence any) contact form $\alpha$
is non-negative (respectively positive),
equivalently if it moves every point
in a direction positively transverse or tangent to $\xi$
(respectively positively transverse to $\xi$).
By Example \ref{Example:Lift of contact Hamiltonians},
$\{\phi_t\}$ is a non-negative (respectively positive) contact isotopy
if and only if its lift $\{\widetilde{\phi_t}\}$
to the locally conformal symplectization
is a non-negative (respectively positive) lcs Hamiltonian isotopy.
\end{example}

Recall that a bi-invariant partial order on a group $G$
is a partial relation $\leq$ that satisfies the following properties:
\renewcommand{\theenumi}{\roman{enumi}}
\begin{enumerate}
\item \label{properties po: reflexivity} \emph{Reflexivity:}
$g \leq g$.
\item \label{properties po: transitivity} \emph{Transitivity:}
if $g_1 \leq g_2$ and $g_2 \leq g_3$ then $g_1 \leq g_3$.
\item \label{properties po: anti-symmetry} \emph{Anti-symmetry:}
if $g_1 \leq g_2$ and $g_2 \leq g_1$ then $g_1 = g_2$.
\item \label{properties po: bi-invariance} \emph{Bi-invariance:}
if $g_1 \leq g_2$ then $h g_1 \leq h g_2$ and $g_1 h \leq g_2 h$.
\end{enumerate}

Recall also from \cite{EP}
that a compact co-oriented contact manifold $(Y, \xi)$
is said to be orderable
if the partial relation $\leq$
on the universal cover of the identity component
of the contactomorphism group
defined by posing $[\{\phi_t^{(1)}\}] \leq [\{\phi_t^{(2)}\}]$
if $[\{\phi_t^{(2)}\}] \cdot [\{\phi_t^{(1)}\}]^{-1}$
can be represented by a non-negative contact isotopy
is a bi-invariant partial order.
If $(Y, \xi)$ is not compact,
we consider the partial relation $\leq$
on the universal cover of the identity component
of the group of compactly supported contactomorphisms
defined in the same way,
and say that $(Y, \xi)$ is orderable
if $\leq$ is a partial order
(see \cite{FPR}).
Moreover,
a contact manifold $(Y, \xi)$
is said to be strongly orderable
if the partial relation $\leq$
on the group of compactly supported contactomorphisms
defined by posing $\phi_1 \leq \phi_2$
if $\phi_2 \circ \phi_1^{-1}$
is the time-$1$ map of a non-negative
compactly supported contact isotopy
is a partial order.

Similarly,
for a connected lcs manifold $\big( M, [(\eta, \omega)] \big)$
such that $\eta$ is not exact
we consider the partial relation $\leq$
on the universal cover of the group
of compactly supported lcs Hamiltonian diffeomorphisms
defined by posing $[\{\varphi_t^{(1)}\}] \leq [\{\varphi_t^{(2)}\}]$
if $[\{\varphi_t^{(2)}\}] \cdot [\{\varphi_t^{(1)}\}]^{-1}$
can be represented by a non-negative lcs Hamiltonian isotopy.
We say that $\big( M, [(\eta, \omega)] \big)$ is orderable
if $\leq$ is a bi-invariant partial order.
As in the contact case,
reflexivity, transitivity and bi-invariance of $\leq$
are always true,
and can be easily proved
using Lemmas \ref{lemma: composition law Hamiltonians}
and \ref{lemma: Hamiltonian conjugation}.
On the other hand,
anti-symmetry is equivalent to the non-existence
of a non-negative non-constant contractible loop
of compactly supported lcs Hamiltonian diffeomorphisms,
and can fail in general.
For instance,
by Example \ref{Example:Lift of contact Hamiltonians}
any non-negative non-constant contractile loop of contactomorphisms
on a contact manifold lifts to a non-negative non-constant contractile loop
of lcs Hamiltonian diffeomorphisms of the locally conformal symplectization.
Thus,
the locally conformal symplectization
of a contact manifold that is not orderable
(for instance the standard contact sphere \cite{EKP})
is also not orderable.
The converse implication is in general not clear,
but we will see in Section \ref{section: order and metric}
that it holds for the locally conformal symplectizations
of $(\mathbb{R}^{2n+1}, \xi_0)$ and $(\mathbb{R}^{2n} \times S^1, \xi_0)$.

Consider now the partial relation $\leq$
on the group of compactly supported
lcs Hamiltonian diffeomorphisms of $\big( M, [(\eta, \omega)] \big)$
defined by posing $\varphi_1 \leq \varphi_2$
if $\varphi_2 \circ \varphi_1^{-1}$
is the time-$1$ map of a non-negative lcs Hamiltonian isotopy.
We say that $\big( M, [(\eta, \omega)] \big)$ is strongly orderable
if $\leq$ is a bi-invariant partial order
(equivalently,
if it does not admit
any non-negative non-constant loop
of compactly supported lcs Hamiltonian diffeomorphisms).
In particular,
if $\big( M, [(\eta, \omega)] \big)$ is strongly orderable
then it is orderable.
Again,
the locally conformal symplectization of a contact manifold
that is not strongly orderable
(for instance any contact manifold
with periodic Reeb flow)
is also not strongly orderable.
We will see in Section \ref{section: order and metric}
that the locally conformal symplectizations
of $(\mathbb{R}^{2n+1}, \xi_0)$ and $(\mathbb{R}^{2n} \times S^1, \xi_0)$
are in fact strongly orderable.
Using the untwisting map \eqref{equation: contactomorphism 1 jet}
of Example \ref{example: twisted 1-jet},
we now derive strong orderability
of twisted cotangent bundles with compact base
from the corresponding result for $1$-jet bundles
\cite{CFP, CN}. 

\begin{proof}[Proof of Theorem \ref{theorem: orderability - intro} (i)]
Let $B$ be a compact connected manifold,
and $\beta$ a closed non-exact $1$-form.
We want to show that the twisted cotangent bundle $T_{\beta}^{\ast}B$
is strongly orderable,
thus that it does not admit any
non-constant non-negative loop $\{\varphi_t\}$
of compactly supported lcs Hamiltonian diffeomorphisms.
Suppose by contradiction that such $\{\varphi_t\}$ exists.
By Proposition \ref{proposition: lift of lcs Hamiltonian isotopy to the contactization},
the lift $\{\widetilde{\varphi}_t\}$ to the contactization $J^1_{\beta}B$
is a non-constant non-negative loop of compactly supported contactomorphisms.
By Lemma \ref{lemma: Hamiltonian conjugation},
the conjugation $\{U_{\beta} \circ \widetilde{\varphi}_t \circ U_{\beta}^{-1}\}$
by the untwisting map \eqref{equation: contactomorphism 1 jet}
is thus a non-constant non-negative loop
of compactly supported contactomorphisms of $J^1B$.
By \cite[Proposition 2.12]{FPR},
$\{U_{\beta} \circ \widetilde{\varphi}_t \circ U_{\beta}^{-1}\}$
can be deformed to a loop of contactomorphisms
whose restriction to the zero section is positive.
But this contradicts \cite[Theorem 1]{CFP}
and \cite[Corollary 5.5]{CN}.
\end{proof}

\subsection{Lcs product and graph}\label{section: lcs product}

In this section we describe the lcs product
of a lcs manifold
and the lcs graph of a divergence free lcs diffeomorphism,
referring to \cite{CS}
for a more formal point of view on these notions.

Let $\big( M, [(\eta, \omega)] \big)$
be a lcs manifold.
Consider the homomorphism
\[
\Delta_{\eta}: \pi_1 (M) \times \pi_1 (M) \rightarrow \mathbb{R} \,,\;
( [\gamma_1], [\gamma_2] ) \mapsto
\langle [\eta], [\gamma_1] \rangle - \langle [\eta], [\gamma_2] \rangle \,,
\]
and let
\[
M \boxtimes M \rightarrow M \times M
\]
be the cover corresponding to the subgroup
$\ker (\Delta_{\eta})$
of $\pi_1 (M \times M) \cong \pi_1 (M) \times \pi_1 (M)$.
In other words,
if we denote by $\pi: \bar{M} \rightarrow M$ the symplectic cover
then the manifold $M \boxtimes M$ is the quotient of $\bar{M} \times \bar{M}$
by the equivalence relation
\begin{equation}\label{equation: equivalence relation lcs product}
(x_1, x_2) \sim (y_1, y_2) \quad \text{ if }
\pi (x_1) = \pi (y_1) \,,\; \pi (x_2) = \pi (y_2)
\text{ and } \mu (x_1) - \mu (y_1) = \mu (x_2) - \mu (y_2) \,,
\end{equation}
where $\mu$ is any primitive of $\pi^{\ast}\eta$.
Consider the symplectic form
$\bar{\omega} = e^{-\mu} \, \pi^{\ast} \omega$
on $\bar{M}$.
Then the symplectic form
$- \pr_1^{\ast} \bar{\omega} + \pr_2^{\ast} \bar{\omega}$
on $\bar{M} \times \bar{M}$,
where $\pr_1$ and $\pr_2$ denote the projections of $\bar{M} \times \bar{M}$
to the first and second factors,
can be written as
\[
- \pr_1^{\ast} \bar{\omega} + \pr_2^{\ast} \bar{\omega}
= - e^{-\mu \circ \pr_1} \, (\pi\circ \pr_1)^{\ast} \omega
+ e^{-\mu \circ \pr_2} \, (\pi \circ \pr_2)^{\ast} \omega
\]
\[
= e^{-\mu \circ \pr_1} \big( - (\pi \circ \pr_1)^{\ast} \omega
+ e^{\mu \circ \pr_1 - \mu \circ \pr_2} \, (\pi \circ \pr_2)^{\ast} \omega \big) \,.
\]
Therefore
\[
\big(0 \,,\, - \pr_1^{\ast} \bar{\omega} + \pr_2^{\ast} \bar{\omega} \big) \sim
\big( d(\mu \circ \pr_1) \,,\, - (\pi \circ \pr_1)^{\ast} \omega
+ e^{\mu \circ \pr_1 - \mu \circ \pr_2} \, (\pi \circ \pr_2)^{\ast} \omega \big)
\]
\[
= \big( (\pi \circ \pr_1)^{\ast} \eta \,,\,
- (\pi \circ \pr_1)^{\ast} \omega
+ e^{\mu \circ \pr_1 - \mu \circ \pr_2} \, (\pi \circ \pr_2)^{\ast} \omega \big) \,.
\]
Since the function $\mu \circ \pr_1 - \mu \circ \pr_2$
is invariant by the equivalence relation
\eqref{equation: equivalence relation lcs product}, the forms 
$(\pi \circ \pr_1)^{\ast} \eta$
and
$$
- (\pi \circ \pr_1)^{\ast} \omega
+ e^{\mu \circ \pr_1 - \mu \circ \pr_2} \, (\pi \circ \pr_2)^{\ast} \omega
$$
descend to forms $\eta \boxtimes \eta$
and $\omega \boxtimes \omega$ on $M \boxtimes M$,
defining a lcs structure.
We say that
$$
\big(M \boxtimes M \,,\,
[(\eta \boxtimes \eta, \omega \boxtimes \omega)] \big)
$$
is the lcs product of the lcs manifold
$\big( M, [(\eta, \omega)] \big)$
with respect to $(\eta, \omega)$.
By construction,
its symplectic cover is the conformal class
$(\bar{M} \times \bar{M} \,,\,
[- \pr_1^{\ast} \bar{\omega} + \pr_2^{\ast}\bar{\omega}] )$
of the symplectic product of the symplectic cover.
We denote by
\[
\pi \boxtimes \pi: \bar{M} \times \bar{M}
\rightarrow M \boxtimes M
\]
the symplectic cover projection.
The following diagram of covers
summarizes the situation:
\[
\begin{tikzcd}
\widetilde{M} \times \widetilde{M}
\arrow[rdd, "\ker(\eta) \times \ker(\eta)"]
\arrow[rddd, "\ker(\Delta_{\eta})"' near end]
\arrow[dddd, "\pi_1 (M) \times \pi_1 (M)" ' near end]
& \\
& \\
& \bar{M} \times \bar{M} \arrow{d} \\
& M \boxtimes M \arrow{ld} \\
M \times M \,.
\end{tikzcd}
\]

For a Liouville lcs manifold
$\big( M, [(\eta, \omega, \lambda)] \big)$
we have
\[
- (\pi \circ \pr_1)^{\ast} \omega
+ e^{\mu \circ \pr_1 - \mu \circ \pr_2} \, (\pi \circ \pr_2)^{\ast} \omega
= d_{(\pi \circ \pr_1)^{\ast} \eta}
( - (\pi \circ \pr_1)^{\ast} \lambda
+ e^{\mu \circ \pr_1 - \mu \circ \pr_2} \, (\pi \circ \pr_2)^{\ast} \lambda ) \,.
\]
The $1$-form $- (\pi \circ \pr_1)^{\ast}\lambda
+ e^{\mu \circ \pr_1 - \mu \circ \pr_2} \, (\pi \circ \pr_2)^{\ast} \lambda$
on $\bar{M} \times \bar{M}$
descends to a $1$-form $\lambda \boxtimes \lambda$
on $M \boxtimes M$
with $\omega \boxtimes \omega
= d_{\eta \boxtimes \eta} \lambda \boxtimes \lambda$.
The lcs structure $[(\eta \boxtimes \eta, \omega \boxtimes \omega)]$
on $M \boxtimes M$ is thus exact,
with Liouville form $\lambda \boxtimes \lambda$.
We say that
$$
\big(M \boxtimes M \,,\,
[(\eta \boxtimes \eta, \omega \boxtimes \omega, \lambda \boxtimes \lambda)] \big)
$$
is the lcs product of the Liouville lcs manifold
$\big( M, [(\eta, \omega, \lambda)] \big)$
with respect to $(\eta, \omega, \lambda)$.

\begin{lemma}\label{lemma: Lee flow lcs product}
Let $\{\bar{\varphi}_t^{\,\omega}\}$
be the lift to $\bar{M}$ of the Lee flow on $M$
with respect to $(\eta, \omega)$.
The isotopy
\[
(x_1, x_2) \mapsto \big( \bar{\varphi}_{-t}^{\,\omega} (x_1) \,,\, x_2 \big) 
\]
on $\bar{M} \times \bar{M}$
commutes with $\pi \boxtimes \pi: \bar{M} \times \bar{M} \rightarrow M \boxtimes M$,
and the induced isotopy on $M \boxtimes M$
is the Lee flow with respect to
$(\eta \boxtimes \eta, \omega \boxtimes \omega)$.
\end{lemma}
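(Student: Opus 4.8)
The plan is to route the whole argument through the symplectic cover: I will identify the given isotopy on $\bar{M} \times \bar{M}$ with the Hamiltonian flow of an explicit function, and then invoke Lemma \ref{lemma: Lee flow symplectic cover}, applied this time to the lcs product $M \boxtimes M$, to obtain both assertions at once. Recall that the symplectic cover of $\big(M \boxtimes M, [(\eta\boxtimes\eta, \omega\boxtimes\omega)]\big)$ is $\big(\bar{M} \times \bar{M}, [\Omega]\big)$ with $\Omega = -\pr_1^{\ast}\bar{\omega} + \pr_2^{\ast}\bar{\omega}$, and that the cover projection is $\pi\boxtimes\pi$. First I would pull back the Lee form $\eta\boxtimes\eta$ along $\pi\boxtimes\pi$: by construction it equals $(\pi\circ\pr_1)^{\ast}\eta = d(\mu\circ\pr_1)$, so $\mu\circ\pr_1$ is a primitive of $(\pi\boxtimes\pi)^{\ast}(\eta\boxtimes\eta)$. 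The computation already carried out in the description of $M\boxtimes M$ then shows that $e^{-\mu\circ\pr_1}\,(\pi\boxtimes\pi)^{\ast}(\omega\boxtimes\omega) = \Omega$, confirming that $\Omega$ is indeed the symplectic form on the cover associated with this primitive.

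By Lemma \ref{lemma: Lee flow symplectic cover} applied to $M\boxtimes M$, the Lee flow on $\bar{M}\times\bar{M}$ is the Hamiltonian flow of $e^{-\mu\circ\pr_1}$ with respect to $\Omega$; it commutes with $\pi\boxtimes\pi$ and the isotopy it induces on $M\boxtimes M$ is the Lee flow with respect to $(\eta\boxtimes\eta, \omega\boxtimes\omega)$. It therefore suffices to check that the isotopy $(x_1, x_2) \mapsto \big(\bar{\varphi}_{-t}^{\,\omega}(x_1), x_2\big)$ is precisely this Hamiltonian flow. To see this I would compute the Hamiltonian vector field $X$ of $H := e^{-\mu\circ\pr_1}$, defined by $\iota_X\Omega = -dH$. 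Since $H$ depends only on the first factor and $\Omega$ is a difference of pullbacks, $X$ has the form $(Y,0)$, and contracting gives $\iota_Y\bar{\omega} = -e^{-\mu}\pi^{\ast}\eta$. Comparing with Lemma \ref{lemma: Lee flow symplectic cover} applied to $M$ itself, which identifies $\{\bar{\varphi}_t^{\,\omega}\}$ as the Hamiltonian flow of $e^{-\mu}$ on $(\bar{M}, \bar{\omega})$, whose generator $R$ satisfies $\iota_R\bar{\omega} = e^{-\mu}\pi^{\ast}\eta$, I obtain $Y = -R$. Hence the flow of $X$ is exactly $(x_1, x_2)\mapsto \big(\bar{\varphi}_{-t}^{\,\omega}(x_1), x_2\big)$, as required.

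The argument is essentially bookkeeping, and its only delicate point is keeping the signs and the choice of primitive consistent: the minus sign appearing in $\bar{\varphi}_{-t}^{\,\omega}$ is precisely what is forced by the convention $\iota_X\omega = -dH$ together with the minus sign in front of $\pr_1^{\ast}\bar{\omega}$ in $\Omega$. Once the single identity $X = (-R, 0)$ is established, the conclusion follows immediately from Lemma \ref{lemma: Lee flow symplectic cover}. Alternatively, one could verify the descent directly, using that $\{\bar{\varphi}_t^{\,\omega}\}$ preserves $\mu$ and commutes with $\pi$ to respect the equivalence relation \eqref{equation: equivalence relation lcs product}, and then compare generators downstairs on $M\boxtimes M$; but passing through the symplectic cover makes both the descent and the identification with the Lee flow fall out of the same computation.
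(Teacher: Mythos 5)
Your proof is correct and takes essentially the same route as the paper: both identify the isotopy $(x_1,x_2)\mapsto\big(\bar{\varphi}_{-t}^{\,\omega}(x_1),x_2\big)$ with the Hamiltonian flow of $e^{-\mu\circ\pr_1}$ with respect to $-\pr_1^{\ast}\bar{\omega}+\pr_2^{\ast}\bar{\omega}$, and then apply Lemma \ref{lemma: Lee flow symplectic cover} to $M\boxtimes M$ (using that $\mu\circ\pr_1$ is a primitive of the pullback of $\eta\boxtimes\eta$) to obtain both the descent and the identification with the Lee flow. The only difference is that you verify the sign identity $X=(-R,0)$ by an explicit contraction, which the paper leaves implicit.
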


\begin{proof}
By Lemma \ref{lemma: Lee flow symplectic cover},
$\{\bar{\varphi}_t^{\,\omega}\}$ is the Hamiltonian flow
of $e^{-\mu}$ with respect to $\bar{\omega}$.
The isotopy
$(x_1, x_2) \mapsto \big( \bar{\varphi}_{-t}^{\,\omega} (x_1) , x_2 \big)$
is thus the Hamiltonian flow of $e^{-\mu \circ \pr_1}$
with respect to 
$- \pr_1^{\ast}\bar{\omega} + \pr_2^{\ast}\bar{\omega}$.
Since $\mu \circ \pr_1$ is a primitive
of the pullback of $\eta \boxtimes \eta$
by $\pi \boxtimes \pi$,
and $- \pr_1^{\ast}\bar{\omega} + \pr_2^{\ast}\bar{\omega}$
is the pullback of $\omega \boxtimes \omega$
by $\pi \boxtimes \pi$,
by Lemma \ref{lemma: Lee flow symplectic cover}
the isotopy $(x_1, x_2)
\mapsto \big( \bar{\varphi}_{-t}^{\,\omega} (x_1) , x_2 \big)$
commutes with $\pi \boxtimes \pi$
and induces the Lee flow on $M \boxtimes M$
with respect to
$(\eta \boxtimes \eta, \omega \boxtimes \omega)$.
\end{proof}

We define the contact product\footnote{
In \cite{Bhupal,San10, San11, FSZ}
the contact product of $\big(Y, \xi = \ker(\alpha)\big)$
with respect to $\alpha$
is defined to be
$\big( Y \times Y \times \mathbb{R} \,,\,
\ker ( \pr_2^{\ast} \alpha - e^{\rho} \pr_1^{\ast} \alpha) \big)$.
The formula we use in the present article
works better for our purposes,
and could also have been used
in \cite{Bhupal,San10, San11, FSZ}
(with a consequently adapted formula
for the indentification
of $\mathbb{R}^{2n+1} \times \mathbb{R}^{2n+1} \times \mathbb{R}$
with $J^1\mathbb{R}^{2n+1}$,
as in Section \ref{section: lcs product etc})
without any inconvenience.}
of a contact manifold
$\big(Y, \xi = \ker(\alpha)\big)$
with respect to the contact form $\alpha$
to be the contact manifold
\[
\big( Y \times Y \times \mathbb{R} \,,\,
\ker ( e^{-\rho} \pr_2^{\ast} \alpha - \pr_1^{\ast} \alpha) \big) \,,
\]
where $\rho$ denotes the coordinate on $\mathbb{R}$
and $\pr_1$ and $\pr_2$ the projections
to the first and second factors.
The lcs product and the contact product
are related as follows.
Consider the symplectic cover
$\pi: \mathbb{R} \times Y \rightarrow S^1 \times Y$
of the conformal symplectization,
and the product $(\mathbb{R} \times Y) \times (\mathbb{R} \times Y)$.
Denote by $\theta_1$ and $\theta_2$
the coordinates on the first and second $\mathbb{R}$-factors
and by $\pr_j: (\mathbb{R} \times Y) \times (\mathbb{R} \times Y)
\rightarrow \mathbb{R} \times Y$
for $j = 1, 2$
the projections on the first and second factors.
The lcs product of the locally conformal symplectization
$\big(S^1 \times Y \,,\, [(-d\theta \,,\, d_{-d\theta}\alpha \,,\, \alpha)] \big)$
is
\[
\Big( (\mathbb{R} \times Y \times \mathbb{R} \times Y)/\sim \,,\, 
\big[\big(-d\theta_1 \,,\,
d_{-d\theta_1} ( e^{\theta_2 - \theta_1} \pr_2^{\ast}\alpha - \pr_1^{\ast}\alpha ) \,,\,
e^{\theta_2 - \theta_1} \pr_2^{\ast}\alpha - \pr_1^{\ast}\alpha \big)\big]
\Big) \,,
\]
where $\sim$ is the equivalence relation generated by
\[
(\theta_1, p_1, \theta_2, p_2) \sim (\theta_1 + 1, p_1, \theta_2 + 1, p_2) \,.
\]
The next result is then immediate.

\begin{prop}\label{proposition: lcs product}
For any contact manifold
$\big(Y, \xi = \ker(\alpha)\big)$,
the map
\[
I: (S^1 \times Y) \boxtimes (S^1 \times Y)
\rightarrow S^1 \times (Y \times Y \times \mathbb{R})
\]
defined by
\[
I \, \big( [(\theta_1, p_1, \theta_2, p_2)] \big) = ([\theta_1], p_1, p_2, \theta_1 - \theta_2) \,,
\]
where we identify as above $(S^1 \times Y) \boxtimes (S^1 \times Y)$
with $(\mathbb{R} \times Y \times \mathbb{R} \times Y)/\sim$,
is a strict lcs diffeomorphism
between the lcs product of the locally conformal symplectization
$\big(S^1 \times Y \,,\, [(-d\theta \,,\, d_{-d\theta}\alpha \,,\, \alpha)] \big)$
and the locally conformal symplectization
\[
\big( \, S^1 \times (Y \times Y \times \mathbb{R}) \,,\,
[( -d\theta \,,\,
d_{-d\theta} ( e^{-\rho} \pr_2^{\ast} \alpha - \pr_1^{\ast} \alpha) \,,\,
e^{-\rho} \pr_2^{\ast} \alpha - \pr_1^{\ast} \alpha
)] \, \big)
\]
of the contact product
$\big( Y \times Y \times \mathbb{R} \,,\,
\ker ( e^{-\rho} \pr_2^{\ast} \alpha - \pr_1^{\ast} \alpha) \big)$,
which is exact with action zero.
\end{prop}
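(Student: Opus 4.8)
The plan is to verify directly, working in the explicit coordinate description $(\R \times Y \times \R \times Y)/\!\sim$ of the source recorded just before the statement, that $I$ is a well-defined diffeomorphism pulling back the Lee and Liouville forms of the target exactly onto those of the source; strictness and exactness with action zero then follow formally.

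First I would check that $I$ descends to the quotient. On the source the equivalence relation is generated by $(\theta_1, p_1, \theta_2, p_2) \sim (\theta_1 + 1, p_1, \theta_2 + 1, p_2)$, and applying the defining formula of $I$ to the two representatives produces $([\theta_1], p_1, p_2, \theta_1 - \theta_2)$ and $([\theta_1 + 1], p_1, p_2, \theta_1 - \theta_2)$, which coincide in $S^1 \times (Y \times Y \times \R)$ since $[\theta_1] = [\theta_1 + 1]$; thus $I$ is well defined. For the inverse I would set $I^{-1}([\theta], p_1, p_2, \rho) = [(\theta, p_1, \theta - \rho, p_2)]$ for any lift $\theta \in \R$ of $[\theta] \in S^1$, which is independent of the chosen lift precisely because shifting $\theta$ by an integer shifts both $\R$-entries of the representative by the same integer, i.e.\ realizes the source equivalence relation. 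Since $I$ and $I^{-1}$ are given by smooth coordinate formulas, $I$ is a diffeomorphism.

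Next I would compute the two relevant pullbacks. From $I^{\ast}\theta = \theta_1$ one gets $I^{\ast}(-d\theta) = -d\theta_1$, matching the Lee form of the source. The decisive substitution is $I^{\ast}\rho = \theta_1 - \theta_2$, so that $I^{\ast}(e^{-\rho}) = e^{\theta_2 - \theta_1}$; since moreover $I$ carries the first and second $Y$-factors of the contact product to $p_1$ and $p_2$ and $\alpha$ depends only on the $Y$-coordinate, this yields
\[
I^{\ast}\big(e^{-\rho}\,\pr_2^{\ast}\alpha - \pr_1^{\ast}\alpha\big)
= e^{\theta_2 - \theta_1}\,\pr_2^{\ast}\alpha - \pr_1^{\ast}\alpha,
\]
which is exactly the Liouville form $\lambda \boxtimes \lambda$ of the source.

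Finally, strictness and exactness would follow formally. Having established $I^{\ast}(-d\theta) = -d\theta_1$ and $I^{\ast}\lambda_{\mathrm{target}} = \lambda \boxtimes \lambda$, I would invoke naturality of the Lichnerowicz--de Rham differential, $I^{\ast} \circ d_{\eta} = d_{I^{\ast}\eta} \circ I^{\ast}$, to conclude $I^{\ast}\big(d_{-d\theta}(e^{-\rho}\pr_2^{\ast}\alpha - \pr_1^{\ast}\alpha)\big) = d_{-d\theta_1}(\lambda \boxtimes \lambda) = \omega \boxtimes \omega$, so that $I$ is strict with conformal factor zero; and since the two Liouville forms agree on the nose rather than merely up to an $\eta$-exact term, $I$ is exact with action $S \equiv 0$ in the sense defined earlier. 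The only point requiring care is the bookkeeping between the symplectic cover $(\R \times Y) \times (\R \times Y)$ and its quotient $M \boxtimes M$, but since every form in play is the descent of the coordinate expression already recorded and $I$ is written directly in those coordinates, I expect no genuine obstacle here beyond keeping the two meanings of $\pr_1, \pr_2$ (source projections versus contact-product projections) distinct.
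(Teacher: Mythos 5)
Your proposal is correct and takes essentially the same route as the paper: the paper derives the proposition as ``immediate'' from the coordinate description of the lcs product as $(\mathbb{R} \times Y \times \mathbb{R} \times Y)/\!\sim$ with Liouville form $e^{\theta_2 - \theta_1}\pr_2^{\ast}\alpha - \pr_1^{\ast}\alpha$ established in the preceding paragraph, and your verification (well-definedness and inverse via the simultaneous integer shift, $I^{\ast}\rho = \theta_1 - \theta_2$ matching the Liouville forms on the nose, strictness via naturality of $d_{\eta}$, and exactness with $S \equiv 0$) is precisely the computation the paper leaves to the reader.
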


Let $\varphi$ be a divergence free lcs diffeomorphism
of a lcs manifold $\big( M, [(\eta, \omega)] \big)$,
and $\bar{\varphi}: \bar{M} \rightarrow \bar{M}$
its lift to the symplectic cover
$(\bar{M}, [e^{-\mu} \pi^{\ast}\omega])$.
By Corollary \ref{corollary: symplectic lift},
for every primitive $\mu$ of $\pi^{\ast}\eta$
and $x \in \bar{M}$
we have
\[
\mu \big(\bar{\varphi}(x)\big) - \mu (x) = g \big(\pi(x)\big) \,,
\]
where $g$ is the conformal factor of $\varphi$
with respect to $(\eta, \omega)$.
This implies that the symplectic graph
\[
\gr (\bar{\varphi}): \bar{M} \rightarrow \bar{M} \times \bar{M} \,,\;
x \mapsto \big( x , \bar{\varphi}(x)\big)
\]
descends to a map
\[
\gr_{\lcs} (\varphi): M \rightarrow M \boxtimes M \,.
\]
Indeed,
for $x, y \in \bar{M}$ with $\pi (x) = \pi (y)$
we have
\[
\mu \big(\bar{\varphi}(x)\big) - \mu (x)
= g \big(\pi (x)\big)
= g \big(\pi (y)\big)
= \mu \big(\bar{\varphi}(y)\big) - \mu (y) \,,
\]
and so $\big(x, \bar{\varphi}(x)\big) \sim \big(y, \bar{\varphi}(y)\big)$.
The map $\gr_{\lcs} (\varphi): M \rightarrow M \boxtimes M$
is a Lagrangian embedding,
which we call the lcs graph
of the divergence free lcs diffeomorphism $\varphi$.

\begin{lemma}\label{lemma: graph exact}
If $\varphi$ is an exact divergence free
lcs diffeomorphism of a Liouville lcs manifold
$\big( M,$ $ [(\eta, \omega, \lambda)] \big)$
with action $S$
with respect to $(\eta, \omega, \lambda)$
then $\gr_{\lcs} (\varphi): M \rightarrow M \boxtimes M$
is an exact Lagrangian embedding
with action $- S$
with respect to
$(\eta \boxtimes \eta, \omega \boxtimes \omega, \lambda \boxtimes \lambda)$.
\end{lemma}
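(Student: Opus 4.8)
The plan is to verify directly the defining identity for exactness, namely
$\gr_{\lcs}(\varphi)^{\ast}(\lambda \boxtimes \lambda) = d_{\gr_{\lcs}(\varphi)^{\ast}(\eta \boxtimes \eta)}(-S)$,
by pulling everything back along the symplectic cover projection $\pi: \bar{M} \rightarrow M$. Since $\pi$ is a surjective local diffeomorphism, the map $\pi^{\ast}$ is injective on differential forms, so it is enough to check the pulled-back identity on $\bar{M}$. Recall from the construction of the lcs graph that $\gr_{\lcs}(\varphi)$ is the descent of the symplectic graph $G := \gr(\bar{\varphi}): x \mapsto \big(x, \bar{\varphi}(x)\big)$ of the symplectic lift $\bar{\varphi}$ of $\varphi$, in the sense that $\gr_{\lcs}(\varphi) \circ \pi = (\pi \boxtimes \pi) \circ G$. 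As $\varphi$ is divergence free, $\bar{\varphi}$ is a genuine symplectomorphism, and Corollary \ref{corollary: symplectic lift} gives $\mu \circ \bar{\varphi} - \mu = g \circ \pi$, where $g$ is the conformal factor of $\varphi$.

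The main computation is then the pullback of $\lambda \boxtimes \lambda$ under $G$. By definition, $(\pi \boxtimes \pi)^{\ast}(\lambda \boxtimes \lambda)$ is the $1$-form
$\Theta = -(\pi \circ \pr_1)^{\ast}\lambda + e^{\mu \circ \pr_1 - \mu \circ \pr_2}\,(\pi \circ \pr_2)^{\ast}\lambda$
from which $\lambda \boxtimes \lambda$ descends. Using $\pr_1 \circ G = \id$, $\pr_2 \circ G = \bar{\varphi}$ and $\pi \circ \bar{\varphi} = \varphi \circ \pi$, together with the relation $\mu \circ \bar{\varphi} - \mu = g \circ \pi$ to rewrite the conformal weight as $e^{\mu - \mu \circ \bar{\varphi}} = e^{-g \circ \pi}$, I expect to obtain
$G^{\ast}\Theta = -\pi^{\ast}\lambda + e^{-g \circ \pi}\,\pi^{\ast}(\varphi^{\ast}\lambda) = -\pi^{\ast}\big(\lambda - e^{-g}\varphi^{\ast}\lambda\big)$.

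To finish, I would invoke the exactness of $\varphi$, which gives $\lambda - e^{-g}\varphi^{\ast}\lambda = d_{\eta}S$, so that $G^{\ast}\Theta = -\pi^{\ast}(d_{\eta}S) = -d_{d\mu}(S \circ \pi)$. On the other hand $\pi^{\ast}\big(d_{\eta}(-S)\big) = d_{\pi^{\ast}\eta}(-S \circ \pi) = -d_{d\mu}(S \circ \pi)$, so the two pulled-back $1$-forms coincide and the exactness relation descends to $M$. The same pullback computation applied to $\eta$ in place of $\lambda$ (only the first term survives) shows $\pi^{\ast}\gr_{\lcs}(\varphi)^{\ast}(\eta \boxtimes \eta) = \pi^{\ast}\eta$, hence $\gr_{\lcs}(\varphi)^{\ast}(\eta \boxtimes \eta) = \eta$, which is exactly the Lee form with respect to which the twisted differential above is taken. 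Since $\gr_{\lcs}(\varphi)$ is already known to be a Lagrangian embedding, this identifies it as an exact Lagrangian embedding of action $-S$. I expect no conceptual obstacle; the only point requiring care is the bookkeeping of conformal factors through the cover, in particular using Corollary \ref{corollary: symplectic lift} to convert $\mu \circ \bar{\varphi} - \mu$ into $g \circ \pi$ so that the weight $e^{\mu \circ \pr_1 - \mu \circ \pr_2}$ collapses to precisely $e^{-g}$.
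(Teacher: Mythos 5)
Your proposal is correct and follows essentially the same route as the paper: both pull back the representative $1$-form $-(\pi \circ \pr_1)^{\ast}\lambda + e^{\mu \circ \pr_1 - \mu \circ \pr_2}(\pi \circ \pr_2)^{\ast}\lambda$ along $\gr(\bar{\varphi})$, use Corollary \ref{corollary: symplectic lift} to collapse the weight to $e^{-g \circ \pi}$, and then apply the exactness relation $\lambda - e^{-g}\varphi^{\ast}\lambda = d_{\eta}S$ to identify the result with $d_{\pi^{\ast}\eta}(-S \circ \pi)$. Your explicit remarks on the injectivity of $\pi^{\ast}$ and on $\gr_{\lcs}(\varphi)^{\ast}(\eta \boxtimes \eta) = \eta$ only make explicit points the paper leaves implicit.
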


\begin{proof}
Let $\bar{\varphi}$ be the lift of $\varphi$
to the symplectic cover.
Then
\[
\gr (\bar{\varphi})^{\ast}
( - (\pi \circ \pr_1)^{\ast} \lambda
+ e^{\mu \circ \pr_1 - \mu \circ \pr_2} (\pi \circ \pr_2)^{\ast}\lambda)
= - \pi^{\ast} \lambda
+ e^{\mu - \mu \circ \bar{\varphi}} \, \bar{\varphi}^{\ast} \pi^{\ast} \lambda
\]
\[
= - \pi^{\ast} \lambda + e^{-g \circ \pi} \, \pi^{\ast} \varphi^{\ast} \lambda
= - \pi^{\ast} \lambda + e^{-g \circ \pi} \, \pi^{\ast} \big(e^g (\lambda - d_{\eta}S)\big)
= d_{\pi^{\ast} \eta} (- S \circ \pi) \,,
\]
thus $\gr_{\lcs} (\varphi)^{\ast} (\lambda \boxtimes \lambda)
= d_{\eta} (- S)
= d_{\gr_{\lcs}(\varphi)^{\ast} (\eta \boxtimes \eta) } (-S)$.
\end{proof}

\begin{lemma}\label{lemma: product of lcs diffeomorphisms}
Let $\varphi_1$ and $\varphi_2$
be divergence free lcs diffeomorphisms
a lcs manifold $\big( M , [(\eta, \omega)] \big)$,
and $\bar{\varphi}_1$ and $\bar{\varphi}_2$
their lifts to the symplectic cover
$(\bar{M}, [e^{-\mu} \pi^{\ast}\omega])$.
Then the map
\[
\varphi_1 \boxtimes \varphi_2:
M \boxtimes M \rightarrow M \boxtimes M \,,\;
[(x_1, x_2)] \mapsto \big[\big( \bar{\varphi}_1 (x_1) \,,\, \bar{\varphi}_2 (x_2) \big)\big]
\]
is well-defined,
and is a divergence free lcs diffeomorphism.
Moreover,
the following holds:
\begin{itemize}
\item[(i)] For every divergence free lcs diffeomorphism $\varphi$
we have
\[
\gr_{\lcs} (\varphi) = (\id \boxtimes\, \varphi) \circ \gr_{\lcs} (\id) \,.
\]
\item[(ii)] If $\{\varphi_t^{(1)}\}$ and $\{\varphi_t^{(2)}\}$
are lcs Hamiltonian isotopies
with Hamiltonian functions $H_t^{(1)}$ and $H_t^{(2)}$
with respect to $(\eta, \omega)$,
then $\big\{ \varphi_t^{(1)} \boxtimes \varphi_t^{(2)} \big\}$
is a lcs Hamiltonian isotopy
with Hamiltonian function
\[
H_t^{(1)} \boxplus H_t^{(2)} \big( [(x, y)] \big)
= - H_t^{(1)} \big(\pi(x)\big) + e^{\mu(x) - \mu (y)} \, H_t^{(2)} \big(\pi(y)\big)
\]
with respect to $(\eta \boxtimes \eta, \omega \boxtimes \omega)$.
\end{itemize}
\end{lemma}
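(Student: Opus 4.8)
The plan is to carry everything up to the symplectic cover $(\bar M \times \bar M, [-\pr_1^{\ast}\bar\omega + \pr_2^{\ast}\bar\omega])$ of $M \boxtimes M$, where $\varphi_1 \boxtimes \varphi_2$ lifts to the ordinary product $\bar\varphi_1 \times \bar\varphi_2$ of the symplectic lifts, and then to descend the resulting statements via Corollary \ref{corollary: symplectic lift} and Propositions \ref{proposition: lift of lcs diffeo to symplectic cover} and \ref{proposition: lift of Hamiltonian isotopy to symplectic cover}. Throughout I fix a primitive $\mu$ of $\pi^{\ast}\eta$ and use that, by Corollary \ref{corollary: symplectic lift}, each lift satisfies $\mu \circ \bar\varphi_i - \mu = g_i \circ \pi$, where $g_i$ is the conformal factor of $\varphi_i$.

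For well-definedness I would check that $\bar\varphi_1 \times \bar\varphi_2$ descends through the equivalence relation \eqref{equation: equivalence relation lcs product}: if $(x_1, x_2) \sim (y_1, y_2)$, then $\pi(x_i) = \pi(y_i)$ gives $g_i(\pi(x_i)) = g_i(\pi(y_i))$, hence $\mu(\bar\varphi_i(x_i)) - \mu(\bar\varphi_i(y_i)) = \mu(x_i) - \mu(y_i)$, so the defining equality $\mu(x_1) - \mu(y_1) = \mu(x_2) - \mu(y_2)$ propagates to the images. Since each $\bar\varphi_i$ preserves $\bar\omega$, the product $\bar\varphi_1 \times \bar\varphi_2$ is a symplectomorphism of $(\bar M \times \bar M, -\pr_1^{\ast}\bar\omega + \pr_2^{\ast}\bar\omega)$ commuting with $\pi \boxtimes \pi$; Proposition \ref{proposition: lift of lcs diffeo to symplectic cover} then shows $\varphi_1 \boxtimes \varphi_2$ is a lcs diffeomorphism, and the existence of this symplectomorphism lift makes it divergence free by definition.

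Part (i) is an immediate unwinding of the definitions: the lift of $\id$ is $\id$, so for $x \in \bar M$ one has $\gr_{\lcs}(\id)\big(\pi(x)\big) = [(x,x)]$, and applying $\id \boxtimes \varphi$, whose lift is $\id \times \bar\varphi$, yields $[(x, \bar\varphi(x))] = \gr_{\lcs}(\varphi)\big(\pi(x)\big)$.

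The computational heart is part (ii). By Proposition \ref{proposition: lift of Hamiltonian isotopy to symplectic cover} applied to $M$, each $\{\bar\varphi_t^{(i)}\}$ is the Hamiltonian flow on $(\bar M, \bar\omega)$ of $\bar H_t^{(i)} = e^{-\mu}(H_t^{(i)} \circ \pi)$. A direct contraction then shows that the product flow $\{\bar\varphi_t^{(1)} \times \bar\varphi_t^{(2)}\}$ is Hamiltonian for $\Omega = -\pr_1^{\ast}\bar\omega + \pr_2^{\ast}\bar\omega$ with Hamiltonian $-\bar H_t^{(1)} \circ \pr_1 + \bar H_t^{(2)} \circ \pr_2$, the minus sign coming from the $-\pr_1^{\ast}\bar\omega$ summand. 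Noting that $\mu \circ \pr_1$ is a primitive of $(\pi \boxtimes \pi)^{\ast}(\eta \boxtimes \eta)$ and that $\Omega = e^{-\mu \circ \pr_1}(\pi \boxtimes \pi)^{\ast}(\omega \boxtimes \omega)$, I would finally apply Proposition \ref{proposition: lift of Hamiltonian isotopy to symplectic cover} to $M \boxtimes M$ to conclude that $\{\varphi_t^{(1)} \boxtimes \varphi_t^{(2)}\}$ is lcs Hamiltonian with Hamiltonian function determined by $H_t^{(1)} \boxplus H_t^{(2)} \circ (\pi \boxtimes \pi) = e^{\mu \circ \pr_1}\big(-\bar H_t^{(1)} \circ \pr_1 + \bar H_t^{(2)} \circ \pr_2\big)$. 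Substituting $\bar H_t^{(i)} = e^{-\mu}(H_t^{(i)} \circ \pi)$ and evaluating at $(x,y)$ collapses the exponentials to the stated formula $-H_t^{(1)}(\pi(x)) + e^{\mu(x) - \mu(y)} H_t^{(2)}(\pi(y))$. The main obstacle is purely bookkeeping: one must track the three distinct roles of $\mu$ (the primitive on each factor, the normalizing factor $e^{-\mu}$ relating $\bar\omega$ to $\pi^{\ast}\omega$, and the conversion factor $e^{\mu \circ \pr_1}$ between the Hamiltonians upstairs and downstairs) and observe that the final expression is genuinely invariant under \eqref{equation: equivalence relation lcs product}, which is automatic once it is realized as $H_t^{(1)} \boxplus H_t^{(2)} \circ (\pi \boxtimes \pi)$.
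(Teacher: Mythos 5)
Your proposal is correct and follows essentially the same route as the paper's own proof: well-definedness via the identity $\mu \circ \bar{\varphi}_i - \mu = g_i \circ \pi$ from Corollary \ref{corollary: symplectic lift}, divergence-freeness from the covering symplectomorphism $\bar{\varphi}_1 \times \bar{\varphi}_2$, part (i) by unwinding the lifts, and part (ii) by applying Proposition \ref{proposition: lift of Hamiltonian isotopy to symplectic cover} twice, on $M$ and then on $M \boxtimes M$ with primitive $\mu \circ \pr_1$, exactly as the paper does.
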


\begin{proof}
If $(x_1, x_2) \sim (y_1, y_2)$ then
$\big( \bar{\varphi}_1 (x_1) , \bar{\varphi}_2 (x_2) \big)
\sim \big( \bar{\varphi}_1 (y_1) , \bar{\varphi}_2 (y_2) \big)$. 
Indeed, by Corollary \ref{corollary: symplectic lift}
we have
$\mu \big( \bar{\varphi}_j (x_j) \big) = \mu (x_j) + g \big( \pi (x_j) \big)$
and $\mu \big( \bar{\varphi}_j (y_j) \big) = \mu (y_j) + g_j \big( \pi (y_j) \big)$.
This shows that $\varphi_1 \boxtimes \varphi_2$
is well-defined.
Since it is covered by the symplectomorphism
$\bar{\varphi}_1 \times \bar{\varphi}_2$ of the symplectic cover,
it is divergence free.

Statement (i) follows from the fact that
$\gr (\bar{\varphi}) = (\id \times \bar{\varphi}) \circ \gr (\id)$.

For (ii),
by Proposition \ref{proposition: lift of Hamiltonian isotopy to symplectic cover}
the lifts $\{\bar{\varphi}_t^{(j)}\}$ are Hamiltonian isotopies
with Hamiltonian functions
$\bar{H}_t^{(j)} = e^{-\mu} (H_t^{(j)} \circ \pi)$.
Thus $\{ \bar{\varphi}_t^{(1)} \times \bar{\varphi}_t^{(2)} \}$
is a Hamiltonian isotopy of $\bar{M} \times \bar{M}$
with Hamiltonian function
\[
(x, y) \mapsto
- e^{-\mu(x)} H_t^{(1)} \big(\pi(x)\big) + e^{-\mu(y)} H_t^{(2)} \big(\pi(y)\big)
= e^{-\mu(x)} \Big( - H_t^{(1)} \big(\pi(x)\big)
+ e^{\mu(x) - \mu(y)} H_t^{(2)} \big(\pi(y)\big) \Big) \,.
\]
By Proposition \ref{proposition: lift of Hamiltonian isotopy to symplectic cover}
we conclude that $\big\{ \varphi_t^{(1)} \boxtimes \varphi_t^{(2)} \big\}$
is a lcs Hamiltonian isotopy with Hamiltonian function
\[
[(x, y)] \mapsto
- H_t^{(1)} \big(\pi(x)\big) + e^{\mu(x) - \mu (y)} \, H_t^{(2)} \big(\pi(y)\big) \,.
\]
\end{proof}

The contact graph of a contactomorphism $\phi$
of a contact manifold $\big( Y , \xi = \ker (\alpha) \big)$
with respect to $\alpha$
is the Legendrian embedding
\[
\gr (\phi): Y \rightarrow Y \times Y \times \mathbb{R} \,,\;
x \mapsto \big( x, \phi(x), g(x) \big)
\]
into the contact product
$\big(Y \times Y \times \mathbb{R} \,,\,
\ker (e^{-\rho} \pr_2^{\ast}\alpha - \pr_1^{\ast}\alpha)\big)$,
where $g$ is the conformal factor of $\phi$
with respect to $\alpha$.
Recall from Example \ref{example: lift of maps to symplectization}
that the lift of $\phi$ to the locally conformal symplectization
$\big( S^1 \times Y \,,\, [(-d\theta, d_{-d\theta} \alpha)] \big)$
is the lcs diffeomorphism
$\widetilde{\phi}: S^1 \times Y \rightarrow S^1 \times Y$
defined by
$\widetilde{\phi} (\theta, p) = \big( \theta - g(p) , \phi(p) \big)$.
This lcs diffeomorphism is divergence free,
since it is covered by the symplectomorphism
of the symplectic cover
$\big(\mathbb{R} \times Y \,,\, [d(e^{\theta}\alpha)]\big)$
defined by the same formula.
Moreover,
we have the following relation
between the contact graph of $\phi$
and the lcs graph of its lift $\widetilde{\phi}$.

\begin{prop}\label{proposition: relation graphs lcs contact}
Let $\phi$ be a contactomorphism
of a contact manifold $\big( Y , \xi = \ker (\alpha) \big)$,
and $\widetilde{\phi}$ its lift
to the locally conformal symplectization
$\big( S^1 \times Y \,,\, [(-d\theta, d_{-d\theta} \alpha)] \big)$. 
Consider the contact graph
$\gr (\phi): Y \rightarrow Y \times Y \times \mathbb{R}$
of $\phi$,
and its lift
\[
\widetilde{\gr(\phi)}:
S^1 \times Y \rightarrow S^1 \times (Y \times Y \times \mathbb{R})
\]
to the locally conformal symplectization of the contact product.
Then
\[
\widetilde{\gr(\phi)} = I \circ \gr_{\lcs} (\widetilde{\phi}) \,,
\]
where $I: (S^1 \times Y) \boxtimes (S^1 \times Y)
\rightarrow S^1 \times (Y \times Y \times \mathbb{R})$
is the lcs diffeomorphism
defined in Proposition \ref{proposition: lcs product}.
\end{prop}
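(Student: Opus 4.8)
The plan is to prove the identity by a direct computation, tracing each side through the explicit coordinate descriptions that are already available. Both $\widetilde{\gr(\phi)}$ and $I \circ \gr_{\lcs}(\widetilde{\phi})$ are maps from $S^1 \times Y$ to $S^1 \times (Y \times Y \times \mathbb{R})$, so it suffices to evaluate each at an arbitrary point $([\theta], p)$ and check that the outputs agree. I would set up coordinates $(\theta_1, p_1, \theta_2, p_2)$ on $(\mathbb{R} \times Y) \times (\mathbb{R} \times Y)$ as in the identification of $(S^1 \times Y) \boxtimes (S^1 \times Y)$ with $(\mathbb{R} \times Y \times \mathbb{R} \times Y)/\sim$ used in Proposition \ref{proposition: lcs product}.

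For the left-hand side, I would unwind the lift construction of Example \ref{example: lift of Legendrians to symplectization} applied to the Legendrian $\gr(\phi): Y \to Y \times Y \times \mathbb{R}$, $x \mapsto (x, \phi(x), g(x))$, where $g$ is the conformal factor of $\phi$ with respect to $\alpha$. This gives immediately $\widetilde{\gr(\phi)}([\theta], p) = ([\theta], p, \phi(p), g(p))$.

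For the right-hand side, I would first use the remark preceding the statement, which records that the lift $\overline{\widetilde{\phi}}$ of $\widetilde{\phi}$ to the symplectic cover $\mathbb{R} \times Y$ is again given by $(\theta, p) \mapsto (\theta - g(p), \phi(p))$. Its symplectic graph $\gr(\overline{\widetilde{\phi}})$ then sends $(\theta, p)$ to the point with coordinates $(\theta_1, p_1, \theta_2, p_2) = (\theta, p, \theta - g(p), \phi(p))$, so that after descending through the projection $\pi \boxtimes \pi$ we obtain $\gr_{\lcs}(\widetilde{\phi})([\theta], p) = [(\theta, p, \theta - g(p), \phi(p))]$. Applying the formula $I([(\theta_1, p_1, \theta_2, p_2)]) = ([\theta_1], p_1, p_2, \theta_1 - \theta_2)$ of Proposition \ref{proposition: lcs product} yields
\[
I \circ \gr_{\lcs}(\widetilde{\phi})([\theta], p)
= \big( [\theta], p, \phi(p), \theta - (\theta - g(p)) \big)
= \big( [\theta], p, \phi(p), g(p) \big),
\]
which is exactly $\widetilde{\gr(\phi)}([\theta], p)$, establishing the claimed equality.

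The computation is routine once the definitions are assembled; the only point demanding care is bookkeeping with the equivalence relation $\sim$ defining the lcs product. Concretely, one must confirm that the representative $(\theta, p, \theta - g(p), \phi(p))$ is well adapted to $I$: replacing $\theta$ by $\theta + 1$ shifts both $\theta_1$ and $\theta_2$ by $1$ simultaneously, and since $I$ depends on the first slot only through $[\theta_1]$ and otherwise only through the difference $\theta_1 - \theta_2$, the output is unchanged. This is precisely the compatibility guaranteed by Corollary \ref{corollary: symplectic lift} (that $\mu(\overline{\widetilde{\phi}}(x)) - \mu(x)$ descends), so no genuine obstacle arises and the identity reduces to the cancellation $\theta - (\theta - g(p)) = g(p)$ displayed above.
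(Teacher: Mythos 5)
Your proposal is correct and follows essentially the same argument as the paper: both compute $\gr_{\lcs}(\widetilde{\phi})([\theta],p) = [(\theta, p, \theta - g(p), \phi(p))]$ via the explicit symplectic-cover lift of $\widetilde{\phi}$, apply the formula for $I$, and observe the cancellation $\theta - (\theta - g(p)) = g(p)$ matching $\widetilde{\gr(\phi)}([\theta],p) = ([\theta], p, \phi(p), g(p))$. Your extra well-definedness check against the equivalence relation $\sim$ is harmless bookkeeping that the paper leaves implicit.
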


\begin{proof}
We have
\[
\gr_{\lcs} (\widetilde{\phi}) \, ([\theta], x)
= \big[ \big( \theta, x, \theta - g(x) , \phi(x)\big) \big] \,,
\]
and so
\[
I \circ \gr_{\lcs} (\widetilde{\phi}) \, ([\theta], x)
= \big( [\theta], x, \phi(x), g(x) \big)
= \widetilde{\gr(\phi)} \, ([\theta], x) \,.
\]
\end{proof}

\section{Essential Lee chords and translated points}\label{section: Lee chords and translated points}

Let $p_1$ and $p_2$ be two points
of a lcs manifold $\big( M, [(\eta, \omega)] \big)$.
A Lee chord with respect to $(\eta, \omega)$
from $p_1$ to $p_2$
is a pair $(T, \gamma)$ with $T$ a real number
and $\gamma: [0, 1] \rightarrow M$
a curve such that $\gamma(0) = p_1$,
$\gamma(1) = p_2$
and $\dot{\gamma} (t) = T \, R_{\omega} \big(\gamma(t)\big)$
for all $t$. 
We say that $T$ is the time-shift
of the Lee chord $(T, \gamma)$.
Observe that either $R_{\omega} \big(\gamma(t)\big) = 0$ for all $t$
or $R_{\omega} \big(\gamma(t)\big) \neq 0$ for all $t$.
In the second case the time-shift $T$
is determined by $\gamma$,
and we simply say that $\gamma$ is a Lee chord
from $p_1$ to $p_2$ of time-shift $T$.
If $R_{\omega} \big(\gamma(t)\big) = 0$ for all $t$
then $\gamma$ is the constant curve
$\gamma(t) = p_1$
and, for any other $T'$, the pair
$(T', \gamma)$ is also a Lee chord from $p_1$ to $p_2 = p_1$.
It is pertinent to regard $(T, \gamma)$ and $(T', \gamma)$
as different Lee chords,
indeed in the first case we regard $\gamma$
as a (constant) orbit of the flow $\{\varphi^{\omega}_{Tt}\}_{t \in [0,1]}$
while in the second case we regard it as a (constant) orbit
of the flow $\{\varphi^{\omega}_{T't}\}_{t \in [0,1]}$.
Although $\varphi^{\omega}_{Tt} (p_1) = \varphi^{\omega}_{T't} (p_1) = p_1$ for all $t$,
the two flows do not necessarily coincide
on a neighborhood of $p_1$.
In particular,
their differentials at $p_1$
might be different
(this will play a role for instance
in the definition of transverse Lee chords
between Lagrangian submanifolds below).

Let now $L_1$ and $L_2$ be two Lagrangian submanifolds
of $\big(M, [(\eta, \omega)]\big)$.
A Lee chord with respect to $(\eta, \omega)$
from $L_1$ to $L_2$
is a Lee chord $(T, \gamma)$
from $\gamma(0) \in L_1$ to $\gamma(1) \in L_2$.
We say that it is transverse if
\[
(\varphi_T^{\omega})_{\ast} \big( T_{\gamma(0)}L_1 \big) + T_{\gamma(1)}L_2
= T_{\gamma(1)} M \,.
\]
If $L_1$ and $L_2$ are connected exact Lagrangian submanifolds
of a Liouville lcs manifold $\big(M, [(\eta, \omega, \lambda)]\big)$
and if the pullback of $\eta$ by the inclusion of $L_1$ and $L_2$ into $M$
is not exact
then we define the (total) action
with respect to $(\eta, \omega, \lambda)$
of a Lee chord $(T, \gamma)$
from $L_1$ to $L_2$
by
\[
\mathcal{A}_{L_1, L_2} (T, \gamma)
= S_{L_1} \big(\gamma(0)\big) - S_{L_2} \big(\gamma(1)\big) + \int_{\gamma} \lambda \,,
\]
where $S_{L_1}$ and $S_{L_2}$ are the actions of $L_1$ and $L_2$
with respect to $(\eta, \omega, \lambda)$.
We say that $S_{L_1} \big(\gamma(0)\big) - S_{L_2} \big(\gamma(1)\big)$
and $\int_{\gamma} \lambda$ are respectively
the Hamiltonian and Lee actions
of the Lee chord $(T, \gamma)$.
We say that the Lee chord $(T, \gamma)$ from $L_1$ to $L_2$
is essential if the action $\mathcal{A}_{L_1, L_2} (T, \gamma)$
is equal to the time-shift $T$.

\begin{example}\label{example: action Lee chors symplectic}
Let $L_1$ and $L_2$ be Lagrangian submanifolds
of a conformal symplectic manifold
$\big(M, [\omega]\big) = \big(M, [(0, \omega)]\big)$.
Since the Lee flow $\{\varphi_t^{\omega}\}$ with respect to $(0, \omega)$
is the identity,
$(T, \gamma)$ is a Lee chord from $L_1$ to $L_2$
with respect to $(0, \omega)$
if and only if $\gamma$ is a constant curve
at an intersection point of $L_1$ and $L_2$.
In this case there is no reason
to distinguish between Lee chords
$(T, \gamma)$ and $(T', \gamma)$ for $T \neq T'$,
so we omit the time-shift from the notation.
If $L_1$ and $L_2$ are exact Lagrangian submanifolds
of a conformal Liouville manifold
$\big(M, [(\omega, \lambda)]\big) = \big(M, [(0, \omega, \lambda)]\big)$,
then the Lee action with respect to $(0, \omega, \lambda)$
of any Lee chord vanishes.
The Hamiltonian action is not well-defined,
because $S_{L_1}$ and $S_{L_2}$ are only defined
up to the addition of a constant.
However,
if we restrict to the set $\mathcal{L}_{p_0}$
of Lagrangian submanifolds that contain a certain point $p_0$
then we can normalize the action $S_L$
of any $L \in \mathcal{L}_{p_0}$
by asking that it vanishes at $p_0$.
For $L_1$ and $L_2$ in $\mathcal{L}_{p_0}$,
the total action of a Lee chord $\gamma$ from $L_1$ to $L_2$,
i.e.\ an intersection at $\gamma(t) \equiv p$ of $L_1$ and $L_2$,
is equal to the Hamiltonian action $S_{L_1} (p) - S_{L_2} (p)$
and coincides with the usual (normalized) action of $p$
as a Lagrangian intersection.
\end{example}

\begin{example}\label{example: Lee chords lifts of Legendrians}
Let $\Lambda_1$ and $\Lambda_2$ be Legendrian submanifolds
of a contact manifold $\big( Y , \xi = \ker (\alpha) \big)$,
and consider their lifts
$S^1 \times \Lambda_1$ and $S^1 \times \Lambda_2$
to the locally conformal symplectization
$\big( S^1 \times Y \,,\, [( -d\theta \,,\, d_{-d\theta}\alpha )] \big)$.
Since the Lee vector field
with respect to $( -d\theta \,,\, d_{-d\theta}\alpha )$
never vanishes,
the time-shift of any Lee chord $(T, \gamma)$
is determined by $\gamma$
and so we omit it from the notation.
The Lee chords with respect to $( -d\theta \,,\, d_{-d\theta}\alpha )$
from $S^1 \times \Lambda_1$ to $S^1 \times \Lambda_2$
are of the form $\gamma = (\theta, \gamma_Y)$
with $\theta \in S^1$
and $\gamma_Y$ a Reeb chord with respect to $\alpha$
from $\Lambda_1$ to $\Lambda_2$,
i.e.\ a curve $\gamma_Y: [0,1] \rightarrow M$
such that $\gamma_Y (0) \in \Lambda_1$,
$\gamma_Y (1) \in \Lambda_2$
and $\dot{\gamma}_Y(t) = T R_{\alpha} \big(\gamma_Y(t)\big)$ for all $t$.
By Example \ref{example: lift of Legendrians to symplectization},
the Lagrangian submanifolds $S^1 \times \Lambda_1$ and $S^1 \times \Lambda_2$
have action zero
with respect to $(-d\theta \,,\, d_{-d\theta}\alpha \,,\, \alpha)$.
Thus,
the Hamiltonian action
with respect to $(-d\theta \,,\, d_{-d\theta}\alpha \,,\, \alpha)$
of any Lee chord from $S^1 \times \Lambda_1$ to $S^1 \times \Lambda_2$
vanishes,
and so the total action is given by the Lee action,
which coincides with the contact action $\int_{\gamma_Y}\alpha$
of the Reeb chord $\gamma_Y$.
Since $\lambda (R_{\omega}) = \alpha (R_{\alpha}) = 1$,
the time-shift of $\gamma$ is equal to the Lee action,
hence to the total action.
In particular,
all the Lee chords from $S^1 \times \Lambda_1$ to $S^1 \times \Lambda_2$
are essential.
\end{example}

\begin{example}\label{example: action cotangent bundle}
Let $L_1$ and $L_2$ be Lagrangian submanifolds
of a twisted cotangent bundle
$T_{\beta}^{\ast}B$.
A Lee chord
with respect to $(\pi^{\ast}\beta \,,\, d_{\pi^{\ast}\beta} \lambda)$
from $L_1$ to $L_2$
is a pair $(T, \gamma)$
with $\gamma: [0, 1] \rightarrow T_{\beta}^{\ast}B$
of the form
\[
\gamma(t) = \sigma_q + t \, T \, \beta(q)
\]
with $\gamma (0) = \sigma_q \in L_1$
and $\gamma (1) = \sigma_q + T \, \beta(q) \in L_2$.
Suppose now that the Lagrangian submanifolds $L_1$ and $L_2$
are connected and exact,
and that the pullback of $\pi^{\ast}\beta$
by the inclusions of $L_1$ and $L_2$ into $T_{\beta}^{\ast}B$
are not exact.
The (total) action
with respect to $(\pi^{\ast}\beta \,,\, d_{\pi^{\ast}\beta} \lambda, \lambda)$
is then equal to the Hamiltonian action,
indeed the Lee action vanishes
(since $\lambda$ vanishes
on vectors tangent to the fibres).
The time-shift is in general not equal to the total action
(nor to the Lee action).
\end{example}

In the next lemma and in the rest of the article,
the Lee chords between Lagrangian submanifolds
of a twisted cotangent bundle $T^{\ast}_{\beta}B$
are meant to be Lee chords
with respect to $(\pi^{\ast}\beta \,,\, d_{\pi^{\ast}\beta} \lambda)$,
and all the actions are with respect to 
$(\pi^{\ast}\beta \,,\, d_{\pi^{\ast}\beta} \lambda, \lambda)$.
Recall from Example \ref{example: Lagrangians in twisted cotangent bundle}
that for any function $f: B \rightarrow \mathbb{R}$
we denote by $L_{f, \beta}$
the image of the Lagrangian embedding
$d_{\beta}f: B \rightarrow T_{\beta}^{\ast}B$.
It is an exact Lagrangian submanifold
of action $S_{f, \beta}$ given by
$S_{f, \beta} \big( d_{\beta}f (q) \big) = f(q)$.

\begin{lemma}\label{lemma: bijection critical points f}
For any function $f: B \rightarrow \mathbb{R}$,
there is a bijection between the critical points of $f$
and the essential Lee chords from the Lagrangian submanifold
$L_{f, \beta}$ of $T_{\beta}^{\ast}B$
to the zero section.
The action (hence the time-shift)
of any such Lee chord
is equal to the critical value
of the corresponding critical point of $f$.
Moreover,
the bijection sends non-degenerate critical points
to transverse Lee chords.
\end{lemma}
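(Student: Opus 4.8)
The plan is to make the description of Lee chords from Example~\ref{example: action cotangent bundle} fully explicit and then read off all three assertions by direct substitution. First I would parametrize a Lee chord $(T,\gamma)$ from $L_{f,\beta}$ to the zero section $L_0$ by its starting point: since $\gamma(0)\in L_{f,\beta}$ it equals $d_\beta f(q)=df(q)-f(q)\beta(q)$ for a unique $q\in B$, and by the formula for the Lee flow on $T^\ast_\beta B$ from Example~\ref{example: Lee flow on twisted cotangent bundle} the requirement $\gamma(1)=d_\beta f(q)+T\beta(q)\in L_0$ forces the covector $df(q)+\big(T-f(q)\big)\beta(q)$ to vanish. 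Next I would compute the action: by Example~\ref{example: Lagrangians in twisted cotangent bundle} the action of $L_{f,\beta}$ at $\gamma(0)$ is $f(q)$, the zero section has action zero, and by Example~\ref{example: action cotangent bundle} the Lee action $\int_\gamma\lambda$ vanishes (the chord moves purely along a fibre, where $\lambda$ is zero); hence $\mathcal{A}_{L_{f,\beta},L_0}(T,\gamma)=f(q)$. Imposing the essential condition $T=\mathcal{A}=f(q)$ in the endpoint equation collapses it to $df(q)=0$. This yields the bijection: a critical point $q$ produces the essential Lee chord with $T=f(q)$ and conversely, and the common value of action and time-shift is the critical value $f(q)$, which settles the first two claims.

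For the transversality statement I would work in local coordinates $(q,p)$ with $\beta=\sum_i b_i\,dq_i$, so that the Lee flow is $\varphi_T^\omega(q,p)=(q,p+T\beta(q))$ and $L_{f,\beta}$ is the graph $p_i=\partial_i f-f b_i$. At a critical point $q_0$ a tangent frame of $L_{f,\beta}$ is $v_j=\partial_{q_j}+\sum_i(\partial_j\partial_i f-f\,\partial_j b_i)\,\partial_{p_i}$, using $\partial_j f(q_0)=0$. Pushing forward by $(\varphi_T^\omega)_\ast$, which sends $\partial_{q_j}\mapsto\partial_{q_j}+T\sum_i\partial_j b_i\,\partial_{p_i}$ and fixes each $\partial_{p_i}$, gives $(\varphi_T^\omega)_\ast v_j=\partial_{q_j}+\sum_i\big[(T-f(q_0))\,\partial_j b_i+\partial_j\partial_i f\big]\,\partial_{p_i}$. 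The point is that the $\partial_j b_i$ terms carry the coefficient $T-f(q_0)$, which vanishes precisely because the chord is essential, leaving the frame $\partial_{q_j}+\sum_i(\Hess f)_{ji}\,\partial_{p_i}$. Since $T_{\gamma(1)}L_0$ is spanned by the $\partial_{q_j}$, the transversality condition $(\varphi_T^\omega)_\ast\big(T_{\gamma(0)}L_{f,\beta}\big)+T_{\gamma(1)}L_0=T_{\gamma(1)}(T^\ast_\beta B)$ holds if and only if the vectors $\sum_i(\Hess f)_{ji}\,\partial_{p_i}$ span the fibre, that is, if and only if $\Hess_{q_0} f$ is nondegenerate, which is exactly nondegeneracy of $q_0$.

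The one point needing care is the locus where $\beta(q)=0$, where the Lee vector field vanishes and $\gamma$ is the constant curve; there a Lee chord exists for \emph{every} time-shift, so I must check that exactly one of them (the one with $T=f(q)$) is essential and that the transversality computation is unaffected. This is where the paper's convention is used: $(T,\gamma)$ and $(T',\gamma)$ are genuinely distinct chords with possibly different linearizations, and even for a constant $\gamma$ the flow $\varphi_T^\omega$ is not the identity, so its differential still carries the $\partial_j b_i$ terms and the same cancellation and Hessian criterion apply verbatim. I expect this bookkeeping around the zeros of $\beta$ to be the only genuinely delicate part; everything else is a direct substitution.
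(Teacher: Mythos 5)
Your proposal is correct and follows essentially the same route as the paper's proof: solving the endpoint equation $df(q)+(T-f(q))\beta(q)=0$ together with the action computation $\mathcal{A}_{L_{f,\beta},L_0}(T,\gamma)=f(q)$ (Lee action zero, Hamiltonian action $f(q)$) and imposing $T=f(q)$ collapses everything to $df(q)=0$, exactly as in the paper, and your local-coordinate transversality computation hinges on the same cancellation the paper uses, namely that the $\partial_j b_i$ terms enter with coefficient $T-f(q_0)$, which vanishes precisely for essential chords, leaving the Hessian criterion. Your explicit bookkeeping at the zeros of $\beta$ — only the chord with $T=f(q)$ is essential, and the differential of $\varphi_T^{\omega}$ still carries the $d\beta$ terms even when $\gamma$ is constant — is consistent with the paper's convention distinguishing $(T,\gamma)$ from $(T',\gamma)$, so nothing is missing.
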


\begin{proof}
Suppose that $q \in B$ is a critical point of $f$.
Then
\[
d_{\beta}f (q) = df(q) - f(q) \beta(q)
= -f(q) \beta(q) \,,
\]
thus $\big( f(q), \gamma_q \big)$
with
\[
\gamma_q: [0, 1] \rightarrow T_{\beta}^{\ast}B \,,\;
\gamma_q (t) = - f(q) \, \beta(q) + t f(q) \, \beta(q)
\]
is a Lee chord from the point
$- f(q) \, \beta(q)$ of $L_{f, \beta}$
to the point $0_q$ of the zero section.
The action
\[
S_{f, \beta} \big(\gamma(0)\big)
- S_{0, \beta}\big(\gamma(1)\big) + \int_{\gamma_q} \lambda 
= S_{f, \beta} \big(\gamma(0)\big) = f(q)
\]
of the Lee chord $(f(q),\gamma_q)$
is equal to the time-shift,
and to the critical value
of the critical point $q$.
In particular,
the Lee chord $(f(q),\gamma_q)$ is essential.
The map
\begin{equation}\label{equation: bijection critical points f}
q \mapsto \big(f(q),\gamma_q\big)    
\end{equation}
from the set of critical points of $f$
to the set of essential Lee chords
from $L_{f, \beta}$ to the zero section
is injective by definition.
We now show that it is also surjective.
Suppose that $(T, \gamma)$ with
\[
\gamma: [0, 1] \rightarrow T_{\beta}^{\ast}B \,,\;
\gamma (t) = d_{\beta}f (q) + t \, T \, \beta(q)
\]
is an essential Lee chord from $L_{f, \beta}$ to the zero section.
Since the time-shift $T$ is equal to the action $f(q)$,
we have
$$
0_q = \gamma(1) = df(q) - f(q)\beta(q) + T\beta(q) = df(q) \,.
$$ 
Thus $q$ is a critical point of $f$,
and $\gamma = \gamma_q$.
This shows that the map
\eqref{equation: bijection critical points f}
is surjective,
hence bijective.

Finally we prove that a critical point $q$ is non-degenerate
if and only if the Lee chord
$\big(f(q),\gamma_q\big)$
is transverse.
By definition,
$\big(f(q),\gamma_q\big)$ is not transverse
if and only if
\[
(\varphi_{f(q)}^{\omega})_{\ast} \big(T_{d_{\beta}f(q)} L_{f, \beta} \big)
+ T_{0_q} 0_B \neq
T_{0_q} \, T^{\ast}B \,,
\]
thus if and only if
$(\varphi_{f(q)}^{\omega})_{\ast} \big(T_{d_{\beta}f(q)} L_{f, \beta} \big)
\cap T_{0_q} 0_B \neq \{0\}$,
and so if and only if there is $Y \in T_qB$ such that
$\big(\varphi_{f(q)}^{\omega} \circ d_{\beta}f\big)_{\ast} \, (Y) \in T_{0_q}0_B$.
Write $Y = \dot{\delta}(t)$
for a path $\delta: (-\varepsilon, \varepsilon) \to B$
with $\delta (0) = q$.
Then
\begin{align*}
\big(\varphi^\omega_{f(q)} \circ d_\beta f\big)_*(Y) = &
\left. \frac{d}{dt} \right\lvert_{t=0} \;
\varphi^\omega_{f(q)} \circ d_\beta f \, \big(\delta(t)\big) \\
= & \left. \frac{d}{dt} \right\lvert_{t=0} \;
df \big(\delta(t)\big) + \Big( f(q) - f \big(\delta(t)\big) \Big) \, \beta \big(\delta(t)\big) \\
= & \; Y + d^2f \, (Y, \;\cdot\;) \,,
\end{align*}
where the symbol $+$ in the last equality
refers to the splitting of $T_{0_q}T^*B$
into horizontal and vertical subspaces,
and where we have used that $q$ is a critical point of $f$.
This shows that $\big(f(q),\gamma_q\big)$ is not transverse
if and only if the Hessian of $f$ at $q$ is degenerate,
thus if and only if $q$ is a degenerate critical point of $f$.
\end{proof}

We will see in Section \ref{section: gf}
that a similar bijection holds
for all Lagrangian submanifolds of $T_{\beta}^{\ast}B$
Hamiltonian isotopic to the zero section
and their generating functions.
We now show that exact strict lcs diffeomorphisms
send essential Lee chords to essential Lee chords.

\begin{prop}\label{proposition: action invariant by strict exact}
Let $\psi: M_1 \rightarrow M_2$ be an exact lcs diffeomorphism
between two Liouville lcs manifolds
$\big( M_1, [(\eta_1, \omega_1, \lambda_1)] \big)$
and $\big( M_2, [(\eta_2, \omega_2, \lambda_2)] \big)$
that is strict with respect to
$(\eta_1, \omega_1)$ and $(\eta_2, \omega_2)$.
Let $L_1^{(j)}$ and $L_2^{(j)}$
be connected exact Lagrangian submanifolds
of $\big( M_j, [(\eta_j, \omega_j, \lambda_j)] \big)$
with $\psi (L_1^{(1)}) = L_1^{(2)}$ and $\psi (L_2^{(1)}) = L_2^{(2)}$,
and suppose that the pullback of $\eta_j$
by the inclusion of $L_i^{(j)}$ into $M_j$ is not exact.
Let $(T, \gamma)$ be a Lee chord with respect to $(\eta_1, \omega_1)$
from $L_1^{(1)}$ to $L_2^{(1)}$.
Then $( T, \psi \circ \gamma )$ is a Lee chord
with respect to $(\eta_2, \omega_2)$
from $L_1^{(2)}$ to $L_2^{(2)}$,
and the action of $( T, \psi \circ \gamma )$
with respect to $(\eta_2, \omega_2, \lambda_2)$
is equal to the action of $(T, \gamma)$
with respect to $(\eta_1, \omega_1, \lambda_1)$.
In particular,
$\psi$ sends essential Lee chords from $L_1^{(1)}$ to $L_2^{(1)}$
to essential Lee chords from $L_1^{(2)}$ to $L_2^{(2)}$.
\end{prop}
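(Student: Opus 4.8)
The plan is to reduce everything to two observations: that a strict lcs diffeomorphism intertwines the two Lee flows, and that the action $S_\psi$ of $\psi$ (in the sense of the action of an exact lcs diffeomorphism) controls precisely how the Lagrangian actions and the Liouville integral transform under $\psi$.

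First I would check that $(T, \psi \circ \gamma)$ is a Lee chord from $L_1^{(2)}$ to $L_2^{(2)}$ with the same time-shift. Since $\psi$ is strict we have $\psi^\ast \omega_2 = \omega_1$ and $\psi^\ast \eta_2 = \eta_1$, and a one-line computation with the defining relation $\omega_i(R_{\omega_i}, \cdot) = \eta_i$ gives $\psi_\ast R_{\omega_1} = R_{\omega_2}$. Consequently $\frac{d}{dt}(\psi\circ\gamma)(t) = \psi_\ast \dot\gamma(t) = T\,R_{\omega_2}\big((\psi\circ\gamma)(t)\big)$, and the endpoints lie in $\psi(L_1^{(1)}) = L_1^{(2)}$ and $\psi(L_2^{(1)}) = L_2^{(2)}$. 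This part is purely formal.

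Next I would compare the actions. Strictness forces the conformal factor of $\psi$ to vanish, so the exactness of $\psi$ reads $\psi^\ast \lambda_2 = \lambda_1 - d_{\eta_1} S_\psi$, where $S_\psi$ is the action of $\psi$ with respect to $(\eta_1, \omega_1, \lambda_1)$ and $(\eta_2, \omega_2, \lambda_2)$. Pulling back the defining relation $i_2^\ast \lambda_2 = d_{i_2^\ast \eta_2} S_{L_i^{(2)}}$ along the restriction of $\psi$ and using $\psi^\ast \eta_2 = \eta_1$, I find that $S_{L_i^{(2)}}\circ\psi$ and $S_{L_i^{(1)}} - S_\psi$ have the same twisted differential on $L_i^{(1)}$ (with respect to the pullback of $\eta_1$). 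Since $L_i^{(1)}$ is connected and this pullback is not exact, Remark \ref{remark: unique solution} yields the pointwise identity $S_{L_i^{(2)}}\circ\psi = S_{L_i^{(1)}} - S_\psi$ on $L_i^{(1)}$ for $i = 1, 2$.

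The key computation is the transformation of the Liouville integral. Writing $\int_{\psi\circ\gamma}\lambda_2 = \int_\gamma \psi^\ast\lambda_2 = \int_\gamma \lambda_1 - \int_\gamma d_{\eta_1} S_\psi$ and expanding $d_{\eta_1} S_\psi = dS_\psi - S_\psi\,\eta_1$, the crucial point is that $\eta_1(R_{\omega_1}) = \omega_1(R_{\omega_1}, R_{\omega_1}) = 0$, so the twisting term integrates to zero along the Lee chord and only the exact part survives: $\int_\gamma d_{\eta_1} S_\psi = S_\psi(\gamma(1)) - S_\psi(\gamma(0))$. Substituting this together with the two identities of the previous step into $\mathcal{A}_{L_1^{(2)}, L_2^{(2)}}(T, \psi\circ\gamma)$, all four occurrences of $S_\psi$ cancel in pairs and I am left with $\mathcal{A}_{L_1^{(1)}, L_2^{(1)}}(T, \gamma)$. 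Since the time-shift is preserved, essential chords go to essential chords. I expect the main difficulty to be bookkeeping rather than conceptual: the one genuinely load-bearing observation is $\eta_1(R_{\omega_1}) = 0$, which is what makes the twisted integral collapse to its boundary values so that the $S_\psi$ contributions cancel exactly against those coming from the Lagrangian actions.
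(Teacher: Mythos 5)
Your proof is correct and follows essentially the same route as the paper's: both hinge on the observation that $\eta_1(R_{\omega_1}) = 0$ collapses $\int_\gamma d_{\eta_1}S_\psi$ to its boundary values, combined with the relation $S_{L_i^{(1)}} = S_{L_i^{(2)}}\circ\psi + S_\psi$ on $L_i^{(1)}$. Your only addition is to justify that relation explicitly via Remark \ref{remark: unique solution}, which the paper simply asserts.
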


\begin{proof}
The fact that $( T, \psi \circ \gamma )$ is a Lee chord
with respect to $(\eta_2, \omega_2)$
from $L_1^{(2)}$ to $L_2^{(2)}$
just follows from the fact that $\psi^{\ast}\omega_2 = \omega_1$
and $\psi^{\ast}\eta_2 = \eta_1$
(since $\psi$ is strict).
Let $S_{\psi}$ be the action of $\psi$
with respect to $(\eta_1, \omega_1, \lambda_1)$,
thus $\psi^{\ast}\lambda_2 = \lambda_1 - d_{\eta_1} S_{\psi}$.
Then
\[
\int_{\gamma} \lambda_1
= \int_{\gamma} \psi^{\ast}\lambda_2 + \int_{\gamma} d_{\eta_1}S_{\psi}
= \int_{\psi \circ \gamma} \lambda_2 + \int_{\gamma} dS_{\psi}
= \int_{\psi \circ \gamma} \lambda_2 + S_{\psi} \big(\gamma(1)\big) - S_{\psi} \big(\gamma(0)\big) \,,
\]
where in the second equality
we have used that $\int_{\gamma} S_{\psi} \eta_1 = 0$,
because $\dot{\gamma} (t) = T \, R_{\omega_1} \big(\gamma(t)\big)$
and $\eta_1 (R_{\omega_1}) = 0$.
Moreover,
denoting by $S_{L_i^{(j)}}$ the action of $L_i^{(j)}$
with respect to $(\eta_j, \omega_j, \lambda_j)$,
we have
\[
S_{L_j^{(1)}} (p) = S_{L_j^{(2)}} \big(\psi(p)\big) + S_{\psi} (p)
\]
for all $p \in L_j^{(1)}$.
We thus conclude that
\[
\mathcal{A}_{L_1^{(1)}, L_2^{(1)}} (T, \gamma)
= S_{L_1^{(1)}} \big(\gamma(0)\big) - S_{L_2^{(1)}} \big(\gamma(1)\big) + \int_{\gamma}\lambda_1
\]
\[
= S_{L_1^{(2)}} \big( \psi \circ \gamma \, (0) \big)
- S_{L_2^{(2)}} \big( \psi \circ \gamma \, (1) \big)
+ \int_{\psi \circ \gamma}\lambda_2
= \mathcal{A}_{L_1^{(2)}, L_2^{(2)}} \big( T, \psi \circ \gamma\big) \,.
\]
\end{proof}

\begin{remark}\label{remark: AK}
Abouzaid and Kragh \cite{AK}
proved that
if $L$ is a compact exact Lagrangian submanifold
of the cotangent bundle $T^{\ast}B$
of a compact manifold $B$
then the restriction to $L$
of the projection $T^{\ast}B \rightarrow B$
is a simple homotopy equivalence.
This result implies that
if $\Lambda$ is a compact Legendrian submanifold of $J^1B$
that has no Reeb chords
(and so the projection of $\Lambda$ to $T^{\ast}B$
is an exact embedded Lagrangian submanifold)
then the restriction to $\Lambda$
of the projection $J^1B \rightarrow B$
is a simple homotopy equivalence.
A similar result also holds for compact exact Lagrangian submanifolds
of twisted cotangent bundles
that have no essential Lee chords.
Indeed,
let $L$ be an exact Lagrangian submanifold of $T_{\beta}^{\ast}B$,
with $i^{\ast}\lambda = d_{i^{\ast}(\pi^{\ast}\beta)} S$
for $S: L \rightarrow \mathbb{R}$,
where $i: L \hookrightarrow T^{\ast}_{\beta}B$ denotes the inclusion.
A Lee chord of $L$ is a pair $(T, \gamma)$
with $T \in \mathbb{R}$
and $\gamma: [0,1] \rightarrow T^{\ast}_{\beta}B$
a curve of the form $\gamma (t) = \sigma_q + tT \beta(q)$
with $\gamma(0)$ and $\gamma(1)$ in $L$.
We say that the Lee chord $(T, \gamma)$ is essential
if its action
\[
\mathcal{A}_L (T, \gamma)
= S \big(\gamma(0)\big) - S \big(\gamma(1)\big) + \int_{\gamma}\lambda
= S \big(\gamma(0)\big) - S \big(\gamma(1)\big)
\]
is equal to the time-shift $T$.
Consider the Legendrian submanifold
$\widetilde{L} = \{\, \big( \sigma_q , S(\sigma_q) \big) \,,\, \sigma_q \in L \,\}$
of $J^1_{\beta}B$,
and the Legendrian submanifold $U_{\beta} (\widetilde{L})$ of $J^1B$,
where $U_{\beta}$ is the untwisting map \eqref{equation: contactomorphism 1 jet}.
The map $(T, \gamma) \mapsto (T, \gamma')$
with
\[
\gamma': [0, 1] \rightarrow J^1B \,,\;
\gamma'(t) = \Big( \gamma(0) + S \big(\gamma(0)\big) \, \beta \big( \pi \circ \gamma (0) \big) \,,\,
S \big(\gamma(0)\big) - T + tT \Big)
\]
is then a bijection
between the essential Lee chords of $L$
and the Reeb chords of $U_{\beta} (\widetilde{L})$.
Thus,
if $L$ is compact and has no essential Lee chords
then $U_{\beta} (\widetilde{L})$ is a compact Legendrian submanifold of $J^1B$
that has no Reeb chords.
We thus conclude that the restriction to $U_{\beta} (\widetilde{L})$
of the projection $J^1B \rightarrow B$
is a simple homotopy equivalence,
and so the restriction to $L$
of the projection $T^{\ast}_{\beta}B \rightarrow B$
is also a simple homotopy equivalence.
This observation should be compared
to a similar result of Currier \cite[Theorem 1.3]{Currier}
for essential Liouville chords.
\end{remark}

Let $\varphi$ be a lcs diffeomorphism
of a lcs manifold $\big(M, \, [(\eta, \omega)]\big)$,
with conformal factor $g$ with respect to $(\eta, \omega)$.
We say that a point $p$ of $M$
is a translated point of $\varphi$
with respect to $(\eta, \omega)$
if $p$ and $\varphi(p)$ are in the same orbit
of the Lee flow of $(\eta, \omega)$
and $g(p) = 0$.
If there is only one Lee chord $(T, \gamma)$
from $\varphi(p)$ to $p$
then we say that the translated point $p$ of $\varphi$
has time-shift $T$.

\begin{example}\label{example: translated points symplectic}
Let $\varphi$ be a symplectomorphism
of a conformal symplectic manifold $(M, [\omega]) = \big(M, [(0, \omega)]\big)$. 
Since the conformal factor of $\varphi$ is zero
and the Lee flow of $(0, \omega)$ is the identity,
the translated points of $\varphi$ with respect to $(0, \omega)$
are the fixed points of $\varphi$.
More generally,
since the Lee flow of $(d\mu, e^{\mu} \omega)$
is the Hamiltonian flow of $e^{-\mu}$ with respect to $\omega$,
the translated points of $\varphi$
with respect to $(d\mu \,,\, e^{\mu}\omega)$
are the leafwise intersections of $\varphi$
with respect to $e^{-\mu}$,
i.e. the points $p$ such that $p$ and $\varphi(p)$
belong to the same orbit
of the Hamiltonian flow of $e^{-\mu}$. 
On the other hand,
if $\varphi$ is a conformal symplectomorphism
with $\varphi^{\ast} \omega = e^c \omega$
for $c \neq 0$
then $\varphi$ has no translated points
with respect to $(0, \omega)$,
because the conformal factor never vanishes.
Moreover,
$\varphi$ also has no translated points
with respect to any other representative
$(d\mu \,,\, e^{\mu}\omega)$.
Indeed,
since the conformal factor of $\varphi$
with respect to $(d\mu \,,\, e^{\mu}\omega)$
is $\mu \circ \varphi - \mu + c$,
if $p$ is a translated point of $\varphi$
with respect to $(d\mu \,,\, e^{\mu}\omega)$
then $p$ and $\varphi(p)$ are in the same Lee orbit
and $\mu \big(\varphi(p)\big) - \mu (p) = -c$.
But this is impossible,
because the Lee flow of $(d\mu \,,\, e^{\mu}\omega)$
preserves $\mu$.
\end{example}

\begin{example}\label{example: translated points contact}
Recall that a point $p$
of a contact manifold $\big(Y , \xi = \ker (\alpha)\big)$
is said to be a translated point of a contactomorphism $\phi$ with respect to $\alpha$
if $p$ and $\phi(p)$ are in the same orbit
of the Reeb flow of $\alpha$
and the conformal factor of $\phi$ with respect to $\alpha$
vanishes at $p$.
Let $\varphi = \widetilde{\phi}$ be the lift of $\phi$
to the locally conformal symplectization
$\big( S^1 \times Y \,,\, [( -d\theta \,,\, d_{-d\theta}\alpha )] \big)$.
Then $(\theta, p) \in S^1 \times Y$
is a translated point of ${\varphi}$
with respect to $( -d\theta \,,\, d_{-d\theta}\alpha )$
if and only if $p$ is a translated point
of $\phi$ with respect to $\alpha$.
In this case,
the Lee chords from ${\varphi} (\theta, p)$ to $(\theta, p)$
are of the form $\gamma = (\theta, \gamma_Y)$
with $\gamma_Y$ a Reeb chord from $\phi (p)$ to $p$.
\end{example}

Suppose now that $\varphi$ is a lcs Hamiltonian diffeomorphism
of a connected Liouville lcs manifold $\big(M , [(\eta, \omega, \lambda)]\big)$
with $\eta$ not exact.
Let $p$ be a translated point of $\varphi$,
and $(T, \gamma)$ a Lee chord from $\varphi(p)$ to $p$.
We define the (total) action
with respect to $(\eta, \omega, \lambda)$
of $(T, \gamma)$ by
\[
\mathcal{A}_{\varphi} (T, \gamma) = - S_{\varphi}(p) + \int_{\gamma} \lambda \,,
\]
where $S_{\varphi}$ is the action of $\varphi$
with respect to $(\eta, \omega, \lambda)$.
We say that $- S_{\varphi}(p)$ and $\int_{\gamma} \lambda$
are respectively the Hamiltonian and Lee actions of $(T, \gamma)$.
If there is only one Lee chord $(T, \gamma)$
from $\varphi (p)$ to $p$
then we say that $\mathcal{A}_\varphi (\gamma)$,
$- S_{\varphi}(p)$ and $\int_{\gamma} \lambda $
are the total, Hamiltonian and Lee actions
of the translated point $p$.
The Hamiltonian action always just depends
on the translated point $p$,
and not on the Lee chord from $\varphi (p)$ to $p$.
By Proposition \ref{proposition: Hamiltonian vs exact},
it can also be written as
\[
- S_{\varphi} (p) = - \int_0^1 e^{-g_t(p)} \Big( H_t \big(\varphi_t(p)\big)
- \lambda \big( X_{H_t} \big(\varphi_t(p)\big) \big) \Big) \, dt \,,
\]
where $\{\varphi_t\}_{t \in [0, 1]}$ is any lcs Hamiltonian isotopy
with $\varphi_1 = \varphi$
and $g_t$ and $H_t$ its conformal factors
and Hamiltonian function
with respect to $(\eta, \omega)$.
We say that a Lee chord for a translated point is essential
if it has time-shift equal to the action,
and that a translated point is essential
if it admits an essential Lee chord.

\begin{example}\label{example: action translated point symplectic}
Let $\varphi$ be a Hamiltonian symplectomorphism
of a conformal Liouville manifold
$\big(M, [(\omega, \lambda)] \big)$
$= \big(M, [(0, \omega, \lambda)]\big)$.
The action with respect to $(0, \omega, \lambda)$
of a translated (hence fixed) point $p$ of $\varphi$
is not well-defined,
because $S_{\varphi}$ is only defined up to the addition of a constant.
However,
if $\varphi$ is compactly supported
then we can normalize $S_{\varphi}$
by asking that it is also compactly supported.
The action of $p$,
which is equal to the Hamiltonian action $- S_{\varphi} (p)$,
coincides then with the usual (normalized) action of $p$
as a fixed point of $\varphi$.
\end{example}

\begin{example}
Let $\phi$ be a contactomorphism
of a contact manifold $\big(Y, \xi = \ker (\alpha)\big)$,
and $\varphi = \widetilde{\phi}$ its lift
to the locally conformal symplectization
$\big( S^1 \times Y \,,\, [( -d\theta \,,\, d_{-d\theta}\alpha \,,\, \alpha )] \big)$.
The Hamiltonian action of any translated point of ${\varphi}$ vanishes
because, by Example \ref{example: lift of maps to symplectization},
${\varphi}$ has action zero.
Thus,
for any translated point $(\theta, p)$ of ${\varphi}$,
the action of any Lee chord $\gamma = (\theta, \gamma_Y)$
from ${\varphi} (\theta, p) = \big(\theta, \phi(p)\big)$ to $(\theta, p)$
is equal to the Lee action,
which coincides with the action and time-shift
of the Reeb chord $\gamma_Y$ from $\phi(p)$ to $p$,
and to the time-shift of $\gamma$.
In particular,
all the Lee chords for the translated points of $\varphi$
are essential.
\end{example}

\begin{example}\label{example: action translated point in conformal symplectization}
Let $p$ be a translated point
of a lcs Hamiltonian diffeomorphism $\varphi$
of a locally conformal symplectization
$\big( S^1 \times Y \,,\, [( -d\theta \,,\, d_{-d\theta}\alpha \,,\, \alpha )] \big)$.
The time-shift of any Lee chord
from $\varphi(p)$ to $p$
is equal to the Lee action,
because $\lambda (R_{\omega}) = \alpha (R_{\alpha}) = 1$.
Thus, a Lee chord for $p$ is essential
if and only if $S_{\varphi} (p) = 0$.
\end{example}

We remark for completeness the following analogue
of Proposition \ref{proposition: action invariant by strict exact}
(even though we will not need it).

\begin{prop}\label{proposition: action invariant by conjugaiton by strict exact}
Let $\psi: M_1 \rightarrow M_2$ be an exact lcs diffeomorphism
between two Liouville lcs manifolds
$\big( M_1, [(\eta_1, \omega_1, \lambda_1)] \big)$
and $\big( M_2, [(\eta_2, \omega_2, \lambda_2)] \big)$
that is strict with respect to
$(\eta_1, \omega_1)$ and $(\eta_2, \omega_2)$,
and suppose that the Lee forms $\eta_1$ and $\eta_2$
are not exact.
Let $\varphi$ be a lcs Hamiltonian diffeomorphism
of $\big( M_1, [(\eta_1, \omega_1, \lambda_1)] \big)$,
$p$ a translated point of $\varphi$ with respect to $(\eta_1, \omega_1)$
and $(T, \gamma)$ a Lee chord for $p$.
Then $\psi(p)$ is a translated point of $\psi \circ \varphi \circ \psi^{-1}$
with respect to $(\eta_2, \omega_2)$,
and $(T, \psi \circ \gamma)$ is a Lee chord for $\psi(p)$
with
$$
\mathcal{A}_{\varphi} (T, \gamma)
= \mathcal{A}_{\psi \circ \varphi \circ \psi^{-1}} (T, \psi \circ \gamma) \,.
$$
In particular,
$p$ is an essential translated point of $\varphi$
if and only if $\psi (p)$ is an essential translated point
of $\psi \circ \varphi \circ \psi^{-1}$.
\end{prop}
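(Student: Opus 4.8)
The plan is to exploit that $\psi$, being strict, intertwines all the relevant structures. First I would check that a strict lcs diffeomorphism carries the Lee vector field of $(\eta_1, \omega_1)$ to that of $(\eta_2, \omega_2)$: from $\psi^{\ast}\omega_2 = \omega_1$ and $\psi^{\ast}\eta_2 = \eta_1$ one computes $\omega_2(\psi_{\ast} R_{\omega_1}, \psi_{\ast} Y) = \eta_2(\psi_{\ast} Y)$ for all $Y$, hence $\psi_{\ast} R_{\omega_1} = R_{\omega_2}$ and therefore $\psi \circ \varphi_t^{\omega_1} = \varphi_t^{\omega_2} \circ \psi$. Writing $\Phi = \psi \circ \varphi \circ \psi^{-1}$, I would also record via Lemma \ref{lemma: Hamiltonian conjugation} that $\Phi$ is a lcs Hamiltonian diffeomorphism of $M_2$ (so that its action is defined), and compute $\Phi^{\ast}\omega_2 = (\psi^{-1})^{\ast}\varphi^{\ast}\psi^{\ast}\omega_2 = (e^g \circ \psi^{-1})\,\omega_2$, giving conformal factor $g_{\Phi} = g \circ \psi^{-1}$.

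With these facts, parts (i) and (ii) become essentially formal. Since $g(p) = 0$, the conformal factor $g_{\Phi}$ vanishes at $\psi(p)$; and as the Lee flows are intertwined, $\psi(p)$ and $\Phi(\psi(p)) = \psi(\varphi(p))$ lie on a common Lee orbit of $(\eta_2, \omega_2)$, so $\psi(p)$ is a translated point of $\Phi$. Setting $\gamma' = \psi \circ \gamma$ and differentiating, $\dot{\gamma'}(t) = \psi_{\ast}(T\,R_{\omega_1}(\gamma(t))) = T\,R_{\omega_2}(\gamma'(t))$, with $\gamma'(0) = \Phi(\psi(p))$ and $\gamma'(1) = \psi(p)$, so $(T, \psi\circ\gamma)$ is a Lee chord for $\psi(p)$, with the same time-shift $T$.

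The heart is the action identity, which I would split into its Lee and Hamiltonian parts. Using exactness of $\psi$ in strict form, $\psi^{\ast}\lambda_2 = \lambda_1 - d_{\eta_1} S_{\psi}$, together with $\eta_1(R_{\omega_1}) = 0$ along $\gamma$, the same change of variables as in Proposition \ref{proposition: action invariant by strict exact} yields $\int_{\psi\circ\gamma}\lambda_2 = \int_{\gamma}\lambda_1 - S_{\psi}(p) + S_{\psi}(\varphi(p))$. For the Hamiltonian part I would establish a composition formula for actions: substituting $\psi^{\ast}\lambda_2 = \lambda_1 - d_{\eta_1}S_{\psi}$ and $\varphi^{\ast}\lambda_1 = e^{g}(\lambda_1 - d_{\eta_1}S_{\varphi})$ into $\Phi^{\ast}\lambda_2 = (\psi^{-1})^{\ast}\varphi^{\ast}\psi^{\ast}\lambda_2$ and repeatedly invoking the identity $d_{\eta + df}\sigma = e^{f} d_{\eta}(e^{-f}\sigma)$, one obtains $S_{\Phi} = \big(S_{\varphi} + e^{-g}(S_{\psi}\circ\varphi) - S_{\psi}\big)\circ \psi^{-1}$ (uniquely, since $\eta_2$ is not exact). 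Evaluating at $\psi(p)$ and using once more $g(p) = 0$ collapses the factor $e^{-g(p)}$ to $1$, giving $S_{\Phi}(\psi(p)) = S_{\varphi}(p) + S_{\psi}(\varphi(p)) - S_{\psi}(p)$.

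Combining the two computations, the terms $S_{\psi}(\varphi(p))$ and $S_{\psi}(p)$ cancel and I would conclude $\mathcal{A}_{\Phi}(T, \psi\circ\gamma) = -S_{\Phi}(\psi(p)) + \int_{\psi\circ\gamma}\lambda_2 = -S_{\varphi}(p) + \int_{\gamma}\lambda_1 = \mathcal{A}_{\varphi}(T, \gamma)$. As the time-shift $T$ is unchanged, essentialness (equality of action and time-shift) is preserved, which proves the final assertion. I expect the main obstacle to be the composition formula for $S_{\Phi}$: the bookkeeping with the Lichnerowicz--de Rham differential under conjugation is where sign and exponential errors creep in, and the crucial simplification — that $e^{-g}$ evaluates to $1$ precisely at a translated point — must be tracked carefully so that the Hamiltonian action depends only on $p$ as expected.
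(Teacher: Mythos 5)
Your proposal is correct and takes essentially the same route as the paper's proof: strictness yields the translated-point and Lee-chord claims, the change of variables of Proposition \ref{proposition: action invariant by strict exact} gives $\int_{\psi\circ\gamma}\lambda_2 = \int_{\gamma}\lambda_1 - S_{\psi}(p) + S_{\psi}\big(\varphi(p)\big)$, and the composition formula $S_{\psi\circ\varphi\circ\psi^{-1}} = \big(S_{\varphi} - S_{\psi} + e^{-g}\, S_{\psi}\circ\varphi\big)\circ\psi^{-1}$, evaluated at $\psi(p)$ with $g(p)=0$, produces exactly the cancellation the paper uses. Your explicit verifications (that $\psi_{\ast}R_{\omega_1} = R_{\omega_2}$, and the derivation of $S_{\psi\circ\varphi\circ\psi^{-1}}$ via $d_{\eta+df}\sigma = e^{f} d_{\eta}(e^{-f}\sigma)$) merely fill in steps the paper compresses into ``a direct calculation shows.''
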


\begin{proof}
The facts that $\psi(p)$ is a translated point
of $\psi \circ \varphi \circ \psi^{-1}$
with respect to $(\eta_2, \omega_2)$
and that $(T, \psi \circ \gamma)$ is a Lee chord for $\psi(p)$
just follow from the fact that $\psi^{\ast}\omega_2 = \omega_1$
and $\psi^{\ast}\eta_2 = \eta_1$
(since $\psi$ is strict).
Let $S_{\psi}$ be the action of $\psi$,
thus $\psi^{\ast}\lambda_2 = \lambda_1 - d_{\eta_1} S_{\psi}$.
As in the proof
of Proposition \ref{proposition: action invariant by strict exact},
\[
\int_{\gamma} \lambda_1
= \int_{\psi \circ \gamma} \lambda_2
+ S_{\psi} \big(\gamma(1)\big) - S_{\psi} \big(\gamma(0)\big)
= \int_{\psi \circ \gamma} \lambda_2 + S_{\psi} (p) - S_{\psi} \big(\varphi(p)\big) \,.
\]
Moreover,
a direct calculation shows that
$\psi \circ \varphi \circ \psi^{-1}$ has action
\[
S_{\psi \circ \varphi \circ \psi^{-1}}
= (S_{\varphi} - S_{\psi} + e^{-g} S_{\psi} \circ \varphi) \circ \psi^{-1} \,,
\]
where $S_{\varphi}$ and $g$ are the action and conformal factor of $\varphi$,
thus
\[
S_{\psi \circ \varphi \circ \psi^{-1}} \big( \psi(p) \big)
= S_{\varphi} (p) - S_{\psi} (p) + S_{\psi} \big(\varphi(p)\big) \,.
\]
We thus conclude that
\[
\mathcal{A}_{\varphi} (T, \gamma)
= - S_{\varphi} (p) + \int_{\gamma} \lambda_1
= - S_{\psi \circ \varphi \circ \psi^{-1}} \big( \psi(p) \big)
+ \int_{\psi \circ \gamma} \lambda_2
= \mathcal{A}_{\psi \circ \varphi \circ \psi^{-1}} (T, \psi \circ \gamma) \,.
\]
\end{proof}

Let $p$ be a translated point
of a lcs diffeomorphism $\varphi$
of a lcs manifold $\big( M , [(\eta, \omega)] \big)$,
and $(T, \gamma)$ a Lee chord
with respect to $(\eta, \omega)$ from $\varphi (p)$ to $p$.
We say that $(T, \gamma)$ is a transverse Lee chord
for the translated point $p$
if there is no non-zero vector $Y \in T_pM$
such that $(\varphi_T^{\omega} \circ \varphi)_{\ast} \, (Y) = Y$
and $dg (Y) = 0$,
where $g$ is the conformal factor of $\varphi$.
If there is only one Lee chord $(T, \gamma)$ from $\varphi (p)$ to $p$
then we say that the translated point $p$ is non-degenerate
if $(T, \gamma)$ is a transverse Lee chord.

\begin{prop}\label{proposition: bijection translated points Lee chords graph}
Let $\varphi$ be a divergence free lcs diffeomorphism
of a lcs manifold $\big( M , [(\eta, \omega)] \big)$.
Then $p$ is a translated point
of $\varphi$ with respect to $(\eta, \omega)$
if and only if there is a Lee chord in $M \boxtimes M$
with respect to $(\eta \boxtimes \eta, \omega \boxtimes \omega)$
from $\gr_{\lcs} (\varphi) (p)$
to $\gr_{\lcs} (\id) \big(\varphi(p)\big)$.
In this case,
there is a bijection between
the Lee chords in $M$ from $\varphi (p)$ to $p$
and the Lee chords in $M \boxtimes M$
from $\gr_{\lcs} (\varphi) (p)$
to $\gr_{\lcs} (\id) \big(\varphi(p)\big)$.
This bijection preserves the time-shift,
and sends transverse Lee chords for the translated point $p$
to transverse Lee chords from $\gr_{\lcs} (\varphi)$
to $\gr_{\lcs} (\id)$.
For Liouville lcs manifolds
and lcs Hamiltonian diffeomorphisms,
the bijection preserves moreover
the Hamiltonian and Lee actions,
and thus sends essential Lee chords for $p$
to essential Lee chords from $\gr_{\lcs} (\varphi) (p)$
to $\gr_{\lcs} (\id) \big(\varphi(p)\big)$.
\end{prop}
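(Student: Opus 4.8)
The plan is to reduce everything to the symplectic cover $\bar{M} \times \bar{M}$ of $M \boxtimes M$, along which all the relevant objects are explicit. I fix a lift $x \in \bar{M}$ of $p$ and a primitive $\mu$ of $\pi^{\ast}\eta$, recall from Corollary \ref{corollary: symplectic lift} that the symplectic lift $\bar{\varphi}$ of $\varphi$ satisfies $\mu \circ \bar{\varphi} - \mu = g \circ \pi$ (with $g$ the conformal factor), and recall from Lemmas \ref{lemma: Lee flow symplectic cover} and \ref{lemma: Lee flow lcs product} that the Lee flow $\{\varphi_t^{\omega}\}$ lifts to $\{\bar{\varphi}_t^{\,\omega}\}$, which preserves $\mu$, and that the Lee flow on $M \boxtimes M$ lifts to $(x_1, x_2) \mapsto \big(\bar{\varphi}_{-t}^{\,\omega}(x_1), x_2\big)$. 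Since $\pi \boxtimes \pi$ is a covering map, all the data in the statement (chords, tangent spaces, actions) can be read off upstairs.

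For existence and the bijection, I first observe that a candidate Lee chord of time-shift $T$ starting at $\gr_{\lcs}(\varphi)(p) = [(x, \bar{\varphi}(x))]$ lifts to $t \mapsto \big(\bar{\varphi}_{-Tt}^{\,\omega}(x), \bar{\varphi}(x)\big)$ and ends at $[(\bar{\varphi}_{-T}^{\,\omega}(x), \bar{\varphi}(x))]$. Using the equivalence relation \eqref{equation: equivalence relation lcs product}, this endpoint equals $\gr_{\lcs}(\id)\big(\varphi(p)\big) = [(\bar{\varphi}(x), \bar{\varphi}(x))]$ exactly when $\pi\big(\bar{\varphi}_{-T}^{\,\omega}(x)\big) = \varphi(p)$, i.e.\ $\varphi_T^{\omega}\big(\varphi(p)\big) = p$ (the condition that $p$ and $\varphi(p)$ lie on the same Lee orbit), and $\mu\big(\bar{\varphi}_{-T}^{\,\omega}(x)\big) = \mu\big(\bar{\varphi}(x)\big)$, which by preservation of $\mu$ and Corollary \ref{corollary: symplectic lift} reads $\mu(x) = \mu(x) + g(p)$, i.e.\ $g(p) = 0$. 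These are precisely the two defining conditions of a translated point, and by Lemma \ref{lemma: symplectic cover} they even force $\bar{\varphi}_{-T}^{\,\omega}(x) = \bar{\varphi}(x)$. Projecting by $\pi \boxtimes \pi$ and comparing with $\gamma(t) = \varphi_{Tt}^{\omega}\big(\varphi(p)\big)$ yields the bijection, preserving the time-shift (and matching constant chords with free time-shift on both sides when $R_{\omega}$ vanishes along the chord).

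For transversality, working upstairs at the common endpoint $(\bar{\varphi}(x), \bar{\varphi}(x))$, the pushed-forward tangent space of $\gr_{\lcs}(\varphi)$ is $\{(d\bar{\varphi}_{-T}^{\,\omega}\, v, d\bar{\varphi}\, v)\}$ and that of $\gr_{\lcs}(\id)$ is the diagonal; a short computation shows their sum is everything iff $v \mapsto d\bar{\varphi}_{-T}^{\,\omega} v - d\bar{\varphi}\, v$ is injective, i.e.\ iff $d(\bar{\varphi}_T^{\,\omega} \circ \bar{\varphi})_x$ has no nonzero fixed vector, and projecting by the local diffeomorphism $\pi$ this becomes the condition that $d(\varphi_T^{\omega} \circ \varphi)_p$ has no nonzero fixed vector. \textbf{The main subtlety will be reconciling this with the definition of a transverse Lee chord for the translated point}, which also demands $dg(Y) = 0$ for the forbidden fixed vectors $Y$. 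The resolution is that $\psi := \varphi_T^{\omega} \circ \varphi$ fixes $p$ and satisfies $\psi^{\ast}\eta = \eta + dg$ (since $\varphi_T^{\omega}$ preserves $\eta$ and $\varphi^{\ast}\eta = \eta + dg$); evaluating $(\psi^{\ast}\eta)_p = \eta_p + dg_p$ on a fixed vector $Y$ gives $\eta_p(Y) = \eta_p(Y) + dg_p(Y)$, so $dg_p(Y) = 0$ automatically. Hence the two transversality conditions coincide.

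Finally, for actions, when $\varphi$ is lcs Hamiltonian (hence exact, by Proposition \ref{proposition: Hamiltonian vs exact}), Lemma \ref{lemma: graph exact} gives that $\gr_{\lcs}(\varphi)$ has action $-S_{\varphi}$ and $\gr_{\lcs}(\id)$ action $0$, so the Hamiltonian action of the chord in $M \boxtimes M$ is $-S_{\varphi}(p) - 0$, matching the Hamiltonian action $-S_{\varphi}(p)$ of $p$. For the Lee action I integrate $\lambda \boxtimes \lambda$ along the lift $t \mapsto \big(\bar{\varphi}_{-Tt}^{\,\omega}(x), \bar{\varphi}(x)\big)$: since the second factor is constant, $(\pi \boxtimes \pi)^{\ast}(\lambda \boxtimes \lambda)$ restricts to $-(\pi \circ \pr_1)^{\ast}\lambda$ on this path, giving $\int_{\Gamma} \lambda \boxtimes \lambda = \int_0^1 T\lambda\big(R_{\omega}(\varphi_{-Tt}^{\omega}(p))\big)\,dt$, which after the reparametrization $s = 1 - t$ equals $\int_{\gamma} \lambda$. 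Thus both actions are preserved, and since the time-shift is preserved, essential chords correspond to essential chords.
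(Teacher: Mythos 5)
Your proof is correct, and its overall strategy is the same as the paper's: everything is reduced to the symplectic cover, using Corollary \ref{corollary: symplectic lift}, Lemma \ref{lemma: Lee flow symplectic cover}, Lemma \ref{lemma: Lee flow lcs product} for the chords, and Lemma \ref{lemma: graph exact} for the Hamiltonian actions; your computation with the equivalence relation \eqref{equation: equivalence relation lcs product} is just a more explicit version of the paper's factorization through Lee chords in $\bar{M}$ from $\bar{\varphi}(\bar{p})$ to $\bar{p}$. The one place where you genuinely diverge is the transversality step. The paper splits the upstairs fixed-vector condition $(\bar{\varphi}_T^{\,\omega}\circ\bar{\varphi})_{\ast}(X)=X$ into the projected condition $(\varphi_T^{\omega}\circ\varphi)_{\ast}\big(\pi_{\ast}(X)\big)=\pi_{\ast}(X)$ together with $d(\mu\circ\bar{\varphi})(X)=d\mu(X)$, and converts the latter into $dg\big(\pi_{\ast}(X)\big)=0$ via $\mu\circ\bar{\varphi}-\mu=g\circ\pi$, so that the two clauses in the definition of a transverse Lee chord for a translated point appear verbatim. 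You instead reduce directly (using that $\pi$ is a local diffeomorphism and that $\psi:=\varphi_T^{\omega}\circ\varphi$ fixes $p$) to the condition that $d\psi_p$ has no nonzero fixed vector, and then prove that the clause $dg(Y)=0$ is automatic by evaluating $\psi^{\ast}\eta=\eta+dg$ on a fixed vector at the fixed point. Both arguments are valid; yours has the added merit of showing that the $dg(Y)=0$ requirement in that definition is redundant whenever a Lee chord for a translated point is present. Your explicit matching of the constant chords with free time-shift, and the reparametrization $s=1-t$ for the Lee action (which the paper dismisses as immediate), are also correct.
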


\begin{proof}
By Lemma \ref{lemma: Lee flow lcs product},
the Lee flow on $M \boxtimes M$
with respect to $(\eta \boxtimes \eta, \omega \boxtimes \omega)$
is the projection of the isotopy
$(x_1, x_2) \mapsto
\big( \bar{\varphi}^{\, \omega}_{-t} (x_1), x_2 \big)$
on $\bar{M} \times \bar{M}$,
where $\{\bar{\varphi}^{\, \omega}_t\}$ is the lift to $\bar{M}$
of the Lee flow $\{\varphi^{\, \omega}_t\}$ on $M$
with respect to $(\eta, \omega)$. Thus,
since $\gr_{\lcs} (\varphi): M \rightarrow M \boxtimes M$
is the map induced by the symplectic graph
$\gr (\bar{\varphi}): \bar{M} \rightarrow \bar{M} \times \bar{M}$,
there can be a Lee chord
from $\gr_{\lcs} (\varphi) (p)$ to $\gr_{\lcs} (\id) (x)$
only if $x = \varphi(p)$.
Since moreover $\{\bar{\varphi}^{\, \omega}_t\}$
is the Lee flow on $\bar{M}$
with respect to $(\pi^{\ast}\eta, \pi^{\ast}\omega)$,
for every $p \in M$ and $\bar{p} \in \bar{M}$
with $\pi (\bar{p}) = p$
there is a bijection between the Lee chords in $M \boxtimes M$
from $\gr_{\lcs} (\varphi) (p)$ to $\gr_{\lcs} (\id) \big(\varphi(p)\big)$
and the Lee chords in $\bar{M}$
with respect to $(\pi^{\ast}\eta, \pi^{\ast}\omega)$
from $\bar{\varphi} (\bar{p})$ to $\bar{p}$.
By Example \ref{example: Lee flow symplectic},
the Lee flow on $\bar{M}$
with respect to $(\pi^{\ast}\eta, \pi^{\ast}\omega)$
preserves $\mu$.
Thus, by Corollary \ref{corollary: symplectic lift},
there is a Lee chord with respect to $(\pi^{\ast}\eta, \pi^{\ast}\omega)$
from $\bar{\varphi} (\bar{p})$ to $\bar{p}$
if and only if there is a Lee chord with respect to $(\eta, \omega)$
from $\varphi (p)$ to $p$ and $g(p) = 0$,
where $g$ is the conformal factor of $\varphi$,
and so if and only if $p$ is a translated point of $\varphi$.
In this case,
there is a bijection between the Lee chords
with respect to $(\eta, \omega)$
from $\varphi (p)$ to $p$
and the Lee chords with respect to $(\pi^{\ast}\eta, \pi^{\ast}\omega)$
from $\bar{\varphi} (\bar{p})$ to $\bar{p}$,
hence with the Lee chords
with respect to $(\eta \boxtimes \eta, \omega \boxtimes \omega)$
from $\gr_{\lcs} (\varphi) (p)$ to $\gr_{\lcs} (\id) \big(\varphi(p)\big)$.
This bijection preserves the time-shift.
Suppose now that $\varphi$ is a lcs Hamiltonian diffeomorphism
of a Liouville lcs manifold $\big( M, [(\eta, \omega, \lambda)] \big)$,
and consider the lcs product $\big( M \boxtimes M,
[(\eta \boxtimes \eta, \omega \boxtimes \omega, \lambda \boxtimes \lambda)] \big)$.
It is immediate to see that the above bijection preserves the Lee actions.
The fact that it also preserves the Hamiltonian actions
follows from Lemma \ref{lemma: graph exact}.

Finally,
a Lee chord $(T,\gamma)$ in $M \boxtimes M$
from $\gr_{\lcs} (\varphi)(p)$ to $\gr_{\lcs} (\id) (\varphi(p))$ is transverse
if and only if the  corresponding Lee chord in $\bar{M} \times \bar{M}$
from $\gr (\bar{\varphi}) (\bar{p})$ to $\gr (\id) (\bar{\varphi} (\bar{p}))$
is transverse for some (hence any) $\bar{p}$ in $\pi^{-1}(p)$. 
This happens if and only if there is no non-zero vector
tangent to $\gr (\bar{\varphi})$
at $\gr (\bar{\varphi})(\bar{p}) = (\bar{p}, \bar{\varphi}(\bar{p}))$
whose image by $(\bar{\varphi}^\omega_{-T} \times \id)_{\ast}$
is tangent to $\gr (\id)$
at $\gr (\id)(\bar{\varphi}(\bar{p})) = (\bar{\varphi}(\bar{p}), \bar{\varphi}(\bar{p}))$,
thus no non-zero vector $X \in T_{\bar{p}}\bar{M}$ such that
$$
(\bar{\varphi}^\omega_{-T} \times \id)_* (X, \bar{\varphi}_* (X))
\in T_{\bar{\varphi}(\bar{p})}\gr(\id) \,.
$$
This is equivalent to
$(\bar{\varphi}^\omega_{-T})_* (X) =  \bar{\varphi}_* (X)$,
and so to $( \bar{\varphi}_T^{\omega} \circ \bar{\varphi})_{\ast} \, (X) = X$.
But $(\bar{\varphi}_T^{\omega} \circ \bar{\varphi})_{\ast} \, (X) = X$
if and only if
$(\varphi_T^{\omega} \circ \varphi)_{\ast} \, \big(\pi_{\ast}(X)\big) = \pi_{\ast} (X)$
and $d (\mu \circ \bar{\varphi}) (X) = d\mu (X)$.
The second equality is equivalent
to $dg \big(\pi_{\ast} (X)\big) = 0$,
because $\mu \circ \bar{\varphi} - \mu = g \circ \pi$
by Corollary \ref{corollary: symplectic lift}.
We conclude that $(T,\gamma)$ is a transverse Lee chord
from $\gr_{\lcs} (\varphi)(p)$ to $\gr_{\lcs} (\id)(\varphi(p))$
if and only if there is no non-zero vector $Y \in T_pM$
such that $(\varphi_T^{\omega} \circ \varphi)_{\ast} \, (Y) = Y$
and $dg (Y) = 0$,
and so if and only if the corresponding Lee chord from $\varphi (p)$ to $p$
is a transverse Lee chord for the translated point $p$ of $\varphi$.
\end{proof}

\begin{remark}\label{remark: cf bijection CS}
In \cite[Theorem 1.5 (3)]{CS} it is shown that
for any lcs Hamiltonian diffeomorphism $\varphi$
of a lcs manifold $\big(M, [(\eta, \omega)]\big)$
there is a bijection between
$\gr_{\lcs} (\varphi) \cap \gr_{\lcs} (\id)$
and the set of fixed points $p$ of $\varphi$
such that $\int_{\varphi_t(p)} \eta = 0$
for some (hence any) lcs Hamiltonian isotopy
$\{\varphi_t\}_{t \in [0, 1]}$ with $\varphi_1 = \varphi$.
Let $\{\bar{\varphi}_t\}$ be the lift of $\{\varphi_t\}$
to the symplectic cover $(\bar{M}, [e^{-\mu} \pi^{\ast}\omega] )$.
For any $\bar{p} \in \bar{M}$ with $\pi (\bar{p}) = p$
we have
$$
\int_{\varphi_t(p)} \eta
= \int_{\bar{\varphi}_t(\bar{p})} d\mu 
= \mu \circ \bar{\varphi} (\bar{p}) - \mu (\bar{p}) \,.
$$
By Corollary \ref{corollary: symplectic lift},
the condition $\int_{\varphi_t(p)} \eta = 0$
is thus equivalent to the vanishing
of the conformal factor of $\varphi$ at $p$.
We conclude that the bijection in \cite[Theorem 1.5 (3)]{CS}
can be reformulated
as a bijection between
$\gr_{\lcs} (\varphi) \cap \gr_{\lcs} (\id)$
and the set of translated points of $\varphi$
that are also fixed points,
and is thus consistent with
Proposition \ref{proposition: bijection translated points Lee chords graph}.
\end{remark}

The next proposition
will be generalized in Section \ref{section: gf}
to the $\mathcal{C}^0$-small case,
to obtain Theorem \ref{theorem: translated points - intro} (i).

\begin{prop}\label{proposition: existence C1 small}
Let $\varphi$ be the time-$1$ map
of a sufficiently $\mathcal{C}^1$-small
lcs Hamiltonian isotopy $\{\varphi_t\}$
of a compact Liouville lcs manifold $\big( M , [(\eta, \omega, \lambda)]\big)$
with $\eta$ not exact.
Then $\varphi$ has at least as many essential translated points
with respect to any representative $(\eta, \omega, \lambda)$
as the minimal number of critical points
of a function on $M$.
Moreover,
if all the essential Lee chords
for the essential translated points are non-degenerate
then the number of essential translated points
is greater than or equal to
the minimal number of critical points
of a Morse function on $M$.
\end{prop}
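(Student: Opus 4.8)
The plan is to reduce the statement, via the lcs product and the Weinstein neighborhood theorem, to the elementary count of critical points supplied by Lemma \ref{lemma: bijection critical points f}. Since $\varphi$ is a lcs Hamiltonian diffeomorphism it is exact (Proposition \ref{proposition: Hamiltonian vs exact}) and divergence free, so by Lemma \ref{lemma: graph exact} its lcs graph $\gr_{\lcs}(\varphi)$ is an exact Lagrangian embedding of $M$ into the lcs product $\big(M \boxtimes M, [(\eta \boxtimes \eta, \omega \boxtimes \omega, \lambda \boxtimes \lambda)]\big)$, while $\gr_{\lcs}(\id)$ is exact with action zero. By Proposition \ref{proposition: bijection translated points Lee chords graph}, the essential translated points of $\varphi$ together with their essential Lee chords are in bijection with the essential Lee chords from $\gr_{\lcs}(\varphi)$ to $\gr_{\lcs}(\id)$, matching transverse with transverse. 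This reduces the problem to counting essential Lee chords between these two Lagrangians.

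Next I would apply the Weinstein theorem (Theorem \ref{theorem: Weinstein} and Remark \ref{remark: exact Weinstein}) to the compact exact Lagrangian $L := \gr_{\lcs}(\id)(M)$. Writing $\beta$ for the pullback of $\eta \boxtimes \eta$ to $L$, one checks from the covering description of the lcs product that, under the identification $L \cong M$ induced by $\gr_{\lcs}(\id)$, the form $\beta$ corresponds to $\eta$, which is not exact; this is precisely the hypothesis needed later. The theorem gives a strict, and (by Remark \ref{remark: exact Weinstein}, since $L$ is exact) exact, lcs diffeomorphism $\psi$ from a fibrewise starshaped neighborhood $\mathcal{U}$ of $L$ onto a neighborhood $\mathcal{V}$ of the zero section of $T^{\ast}_{\beta}L$, carrying $L$ to the zero section. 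Here the $\mathcal{C}^1$-smallness enters: if $\{\varphi_t\}$ is $\mathcal{C}^1$-small then $\gr_{\lcs}(\varphi)$ is $\mathcal{C}^1$-close to $\gr_{\lcs}(\id) = L$, hence contained in $\mathcal{U}$ and mapped by $\psi$ to an exact Lagrangian (Proposition \ref{proposition: image of exact by exact}) that is moreover a section of $T^{\ast}_{\beta}L$; by the last sentence of Example \ref{example: Lagrangians in twisted cotangent bundle} this section equals $L_{f, \beta}$ for a function $f : L \to \mathbb{R}$.

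Finally I would chain the bijections. As $\psi$ is a strict exact lcs diffeomorphism, Proposition \ref{proposition: action invariant by strict exact} (together with the fact that a strict lcs diffeomorphism intertwines the Lee flows and hence preserves transversality of Lee chords) identifies the essential Lee chords from $\gr_{\lcs}(\varphi)$ to $\gr_{\lcs}(\id)$ with the essential Lee chords from $L_{f, \beta}$ to the zero section, matching transverse with transverse. Lemma \ref{lemma: bijection critical points f} then puts the latter in bijection with the critical points of $f$, non-degenerate critical points corresponding to transverse chords. Composing all bijections, and noting that distinct critical points of $f$ produce Lee chords with distinct base points on $\gr_{\lcs}(\varphi)$ and hence distinct translated points, the number of essential translated points of $\varphi$ equals the number of critical points of $f$, which is at least the minimal number of critical points of a function on $M \cong L$. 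If in addition all essential Lee chords are non-degenerate, then all critical points of $f$ are non-degenerate, so $f$ is a Morse function and the count is at least the minimal number of critical points of a Morse function on $M$.

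I expect the main obstacle to be the control supplied by the $\mathcal{C}^1$-smallness assumption: one must verify that a $\mathcal{C}^1$-small lcs Hamiltonian isotopy forces $\gr_{\lcs}(\varphi)$ both to lie inside the fixed Weinstein neighborhood $\mathcal{U}$ and to be carried by $\psi$ onto a genuinely graphical Lagrangian $L_{f,\beta}$, and that no essential Lee chord between the graphs escapes $\mathcal{U}$, so that the local model in $T^{\ast}_{\beta}L$ accounts for all of them. The remaining steps are bookkeeping with the already-established propositions, but the passage to this local normal form and the attendant smallness estimates are where the real work lies — and are precisely what must be strengthened to handle the $\mathcal{C}^0$-small case in Section \ref{section: gf}.
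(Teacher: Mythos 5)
Your proposal is correct and follows essentially the same route as the paper: lcs graph in $M \boxtimes M$, Weinstein neighborhood of $\Delta = \gr_{\lcs}(\id)(M)$, identification of $\psi(L_{\varphi})$ with a twisted section $L_{f, \beta}$, and the chain of correspondences through Proposition \ref{proposition: bijection translated points Lee chords graph}, Proposition \ref{proposition: action invariant by strict exact} and Lemma \ref{lemma: bijection critical points f} (your one deviation --- deducing exactness of $\psi(L_{\varphi})$ directly from Lemma \ref{lemma: graph exact} and Proposition \ref{proposition: image of exact by exact} instead of the paper's conjugated Hamiltonian isotopy $\{\psi \circ \chi_t \circ \psi^{-1}\}$ and Corollary \ref{corollary: image of zero section is exact} --- is equally valid, and arguably more direct). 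One caveat: your claimed \emph{equality} between the number of essential translated points and the number of critical points of $f$ would require verifying that no essential Lee chord escapes $\mathcal{U}$, but this is immaterial here, since the statement only asserts a lower bound and the injection (with chords confined to $\mathcal{V}$ by fibrewise starshapedness, exactly as you note) already suffices.
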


\begin{proof}
Let $\Delta \subset M \boxtimes M$
be the image of $\gr_{\lcs} (\id): M \rightarrow M \boxtimes M$,
and denote by $\beta$ the pullback of $\eta \boxtimes \eta$
by the inclusion $\Delta \hookrightarrow M \boxtimes M$.
By Lemma \ref{lemma: graph exact},
$\Delta$ is an exact Lagrangian submanifold
of $M \boxtimes M$.
By Theorem \ref{theorem: Weinstein}
and Remark \ref{remark: exact Weinstein},
there are a neighborhood $\mathcal{U}$
of $\Delta$ in $M \boxtimes M$,
a neighborhood $\mathcal{V}$ of the zero section in $T_{\beta}^{\ast}\Delta$
and an exact lcs diffeomorphism $\psi: \mathcal{U} \rightarrow \mathcal{V}$
sending $\Delta$ to the zero section
that is strict  with respect to $(\eta \boxtimes \eta, \omega \boxtimes \omega)$
and $(\pi^{\ast}\beta, d_{\pi^{\ast}\beta} \lambda)$.
Since $\{\varphi_t\}$ is $\mathcal{C}^1$-small,
$L_{\varphi_t} := \im \big(\gr_{\lcs} (\varphi_t)\big)$
is contained in $\mathcal{U}$ for all $t$
(in fact, it would be enough here to assume
that $\{\varphi_t\}$ is sufficiently $\mathcal{C}^0$-small).
By Lemma \ref{lemma: product of lcs diffeomorphisms},
$L_{\varphi} := L_{\varphi_1}$ is the image of $\Delta$
by the time-$1$ map of a lcs Hamiltonian isotopy $\{\chi_t\}$.
Since $\chi_t$ sends $\Delta$ to $L_{\varphi_t}$
and $L_{\varphi_t}$ is contained in $\mathcal{U}$,
we can assume that $\{\chi_t\}$ is supported in $\mathcal{U}$.
By Lemma \ref{lemma: Hamiltonian conjugation},
the conjugation $\{\psi \circ \chi_t \circ \psi^{-1}\}$
is also a lcs Hamiltonian isotopy.
Since the time-$1$ map of this lcs Hamiltonian isotopy
sends the zero section to $\psi(L_{\varphi})$,
by Corollary \ref{corollary: image of zero section is exact}
we deduce that $\psi(L_{\varphi})$
is an exact Lagrangian submanifold of $T_{\beta}^{\ast}\Delta$.
By the $\mathcal{C}^1$-smallness assumption,
$\psi (L_{\varphi})$ is also a section of $T_{\beta}^{\ast}\Delta$,
and so, by Example \ref{example: Lagrangians in twisted cotangent bundle},
$\psi(L_{\varphi}) = L_{f, \beta}$
for some function $f$ on $\Delta$.
Since there are no closed Lee orbits in $\mathcal{U}$,
by Proposition \ref{proposition: bijection translated points Lee chords graph},
Lemma \ref{lemma: bijection critical points f}
and Proposition \ref{proposition: action invariant by strict exact}
we obtain an injection from the set of critical point of $f$
to the set of essential translated points of $\varphi$
with respect to $(\eta, \omega, \lambda)$.
Since $\Delta$ is diffeomorphic to $M$,
we conclude that $\varphi$
has at least as many essential translated points
with respect to $(\eta, \omega, \lambda)$
as the minimal number of critical points
of a function on $M$.
If all the essential Lee chords
for the essential translated points of $\varphi$
are transverse then,
by Proposition \ref{proposition: bijection translated points Lee chords graph}
and Lemma \ref{lemma: bijection critical points f},
all the critical points of $f$ are non-degenerate.
We thus conclude that in this case 
$\varphi$ has at least as many essential translated points
with respect to $(\eta, \omega, \lambda)$
as the minimal number of critical points
of a Morse function on $M$.
\end{proof}

\section{Generating functions for Lagrangian submanifolds of twisted cotangent bundles}\label{section: gf}

Before describing generating functions
for Lagrangian submanifolds of twisted cotangent bundles,
we recall the classical notions of generating functions
for Lagrangian and Legendrian submanifolds
of usual cotangent and $1$-jet bundles.
We follow \cite{Viterbo, Bhupal, San11},
and invite the reader to consult these sources
and the references therein for more details.

A function $F: E \rightarrow \mathbb{R}$
defined on the total space of a trivial vector bundle
$E = B \times \mathbb{R}^N \rightarrow B$
is a \emph{generating function}
if the differential $dF: E \rightarrow T^{\ast}E$
is transverse to the subbundle $N_E^{\ast}$ of $T^{\ast}E$
formed by the covectors that vanish on vertical vectors.
In this case, the space
$$
\Sigma_F = dF^{-1} \big( N_E^{\ast} \cap \im (dF) \big)
$$
of fibre critical points of $F$
is a submanifold of $E$
of dimension equal to the dimension of $B$.
The map $i_F: \Sigma_F \rightarrow T^{\ast}B$
that associates to $e = (q, \zeta) \in \Sigma_F$
the covector $i_F(e) \in T_q^{\ast}B$
defined by $i_F(e) (X) = dF (\hat{X})$,
where $\hat{X}$ is any vector in $T_eE$
projecting to $X$,
is an exact Lagrangian immersion,
with $i_F^{\,\ast} \, \lambda = d ( \left. F \right\lvert_{\Sigma_F} )$.
Its lift to the $1$-jet bundle $J^1B = T^{\ast}B \times \mathbb{R}$,
endowed with the contact structure $\xi = \ker (\lambda - dz)$,
is the Legendrian immersion
\[
j_F: \Sigma_F \rightarrow J^1B \,,\;
e \mapsto \big( i_F(e), F(e) \big) \,.
\]
If $i_F: \Sigma_F \rightarrow T^{\ast}B$ is an embedding
we say that $F$ is a generating function
for the Lagrangian submanifold $\im (i_F)$ of $T^{\ast}B$.
The map $i_F$ then induces a bijection
between the critical points of $F$
and the intersections of $\im (i_F)$ with the zero section.
Similarly, if $j_F: \Sigma_F \rightarrow J^1B$ is an embedding
we say that $F$ is a generating function
for the Legendrian submanifold $\im (j_F)$ of $J^1B$.
The map $j_F$ then induces a bijection
between the critical points of $F$
and the Reeb chords from the zero section to $\im (j_F)$.

Suppose now that $B$ is compact.
A generating function
$F: E = B \times \mathbb{R}^N \rightarrow \mathbb{R}$
is said to be \emph{quadratic at infinity}
if there exists a non-degenerate quadratic form $F_{\infty}$ on $E$,
i.e.\ a map $F_{\infty}: E \rightarrow \mathbb{R}$
whose restriction to every fibre
is a non-degenerate quadratic form,
such that
$\partial_v (F - F_{\infty}): E \rightarrow E^{\ast}$
is bounded,
where $\partial_v$ denotes the vertical derivative.
It has been proved by Sikorav \cite{Sikorav87}
that every Lagrangian submanifold of $T^{\ast}B$
Hamiltonian isotopic to the zero section
has a generating function quadratic at infinity,
and by Chaperon \cite{Chaperon}
and Chekanov \cite{Chekanov}
that every Legendrian submanifold of $J^1B$
contact isotopic to the zero section
has a generating function quadratic at infinity.
Moreover, we have the following uniqueness result.
If $F: B \times \mathbb{R}^N \rightarrow \mathbb{R}$
is a generating function quadratic at infinity
for a Legendrian submanifold $\Lambda$ of $J^1B$
then for any fibre preserving diffeomorphim $\Phi$ of $B \times \mathbb{R}^N$
and non-degenerate quadratic form $Q$ on $\mathbb{R}^{N'}$
the composition $F \circ \Phi: B \times \mathbb{R}^N \rightarrow \mathbb{R}$
and the stabilization
$F \oplus Q: B \times \mathbb{R}^N \times \mathbb{R}^{N'} \rightarrow \mathbb{R}$
are also generating functions quadratic at infinity for $\Lambda$.
We consider the equivalence relation
on the space of generating functions quadratic at infinity
generated by these two operations:
two generating functions quadratic at infinity $F_1$ and $F_2$
are equivalent if there are non-degenerate quadratic forms $Q_1$ and $Q_2$
and a fibre preserving diffeomorphism $\Phi$
such that $F_1 \oplus Q_1 = (F_2 \oplus Q_2) \circ \Phi$.
It has been proved by Viterbo and Th\'eret
\cite{Viterbo, Theret, Theret_thesis}
that if $\Lambda$ is a Legendrian submanifold of $J^1B$
contact isotopic to the zero section
then all the generating functions quadratic at infinity for $\Lambda$
are equivalent
(similarly,
all \emph{normalized} generating functions quadratic at infinity
for a Lagrangian submanifold of $T^{\ast}B$
Hamiltonian isotopic to the identity
are equivalent).
Finally,
we recall that the existence theorems
are special cases of the following results:
if $L$ is a Lagrangian submanifold of $T^{\ast}B$
that has a generating function quadratic at infinity $F$,
then for any Hamiltonian isotopy $\{\varphi_t\}_{t \in [0, 1]}$
of $T^{\ast}B$
there is a 1-parameter family $F_t$
of generating functions quadratic at infinity
for $\{\varphi_t (L)\}$
such that $F_0$ is a stabilization of $F$ \cite{Sikorav87},
and if $\Lambda$ is a Legendrian submanifold of $J^1B$
that has a generating function quadratic at infinity $F$,
then for any contact isotopy $\{\phi_t\}_{t \in [0, 1]}$ of $J^1B$
there is a 1-parameter family $F_t$
of generating functions quadratic at infinity
for $\{\phi_t (\Lambda)\}$
such that $F_0$ is a stabilization of $F$
\cite{Chaperon, Chekanov}.

Following Chantraine and Murphy \cite{CM},
we now describe the lcs analogues
of these notions and results.
Let $F: E \rightarrow \mathbb{R}$ be a function
defined on the total space
of a trivial vector bundle $E = B \times \mathbb{R}^N \rightarrow B$,
and $\beta$ a closed $1$-form on $B$.
Notice first that the twisted differential $d_{\beta}F: E \rightarrow T^{\ast}E$,
where we still denote by $\beta$ its pullback
by the projection $B \times \mathbb{R}^N \rightarrow B$,
is transverse to $N_E^{\ast}$
if and only if the usual differential $dF: E \rightarrow T^{\ast}E$
is transverse to $N_E^{\ast}$,
and that in this case
the space
\[
\Sigma_{F, \beta} = ( d_{\pi^{\ast}\beta}F )^{-1}
\big( N_E^{\ast} \cap \im (d_{\pi^{\ast}\beta}F) \big)
\]
of twisted fibre critical points
coincides with the space $\Sigma_F$
of usual fibre critical points.
Assume thus that $dF: E \rightarrow T^{\ast}E$ is transverse to $N_E^{\ast}$,
and so $\Sigma _F = \Sigma_{F, \beta}$
is a submanifold of $E$
of dimension equal to the dimension of $B$.
Consider the map
\[
i_{F, \beta}: \Sigma_F \rightarrow T^{\ast}B
\]
that associates to $e = (q, \zeta) \in \Sigma_F$
the covector $i_{F, \beta}(e) \in T_q^{\ast}B$
defined by
\[
i_{F, \beta}(e) (X)
= d_{\beta}F (\hat{X})
= dF (e) (\hat{X}) - F(e) \, \beta (q) (X) \,,
\]
where $\hat{X}$ is any vector in $T_eE$
projecting to $X$. Then $i_{F, \beta}$ is an exact Lagrangian immersion
into the twisted cotangent bundle $T_{\beta}^{\ast}B$,
of action $\left. F \right\lvert_{\Sigma_F}$.
Indeed, by the tautological property of $\lambda$
we have
\begin{equation}\label{equation: pullback of lambda}
i_{F, \beta}^{\ast} \, \lambda
= d_{\beta} \big( \left. F \right\lvert_{\Sigma_F} \big) \,.
\end{equation}
If $i_{F, \beta}: \Sigma_F \rightarrow T^{\ast}_{\beta}B$
is an embedding
we say that $F$ is a generating function
for the exact Lagrangian submanifold
$L_{F, \beta} := \im (i_{F, \beta})$
of $T_{\beta}^{\ast}B$.
The map $i_{F, \beta}$ then induces a bijection
between the twisted critical points of $F$
(the zeros of $d_{\beta}F$)
and the intersections of $L_{F, \beta}$
with the zero section.
Moreover we have the following bijection,
which generalizes Lemma \ref{lemma: bijection critical points f}.

\begin{lemma}\label{lemma: bijection critical points F}
Let $F: E \rightarrow \mathbb{R}$ be a generating function
for a Lagrangian submanifold $L$ of $T_{\beta}^{\ast}B$.
Then there is a bijection between the critical points of $F$
and the essential Lee chords from $L$ to the zero section.
The action (hence the time-shift) of any such Lee chord
is equal to the critical value
of the corresponding critical point of $F$.
Moreover,
the bijection sends non-degenerate critical points
to transverse Lee chords.
\end{lemma}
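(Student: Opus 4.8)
The plan is to follow the proof of Lemma \ref{lemma: bijection critical points f}, replacing the explicit section $d_{\beta}f$ by the embedding $i_{F, \beta}$ and using the relation \eqref{equation: pullback of lambda}, so that the action of $L = L_{F, \beta}$ along $i_{F, \beta}$ is $\left. F \right\lvert_{\Sigma_F}$, i.e. $S_L \big( i_{F, \beta}(e) \big) = F(e)$. Throughout I write $i_F(e)(X) = dF(e)(\hat X)$ for the untwisted covector, which is well defined on $\Sigma_F$ and satisfies $i_{F, \beta} = i_F - F\,\beta$.

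First I would build the forward map. If $e$ is a critical point of $F$ then $dF(e) = 0$, so $e \in \Sigma_F$, $i_F(e) = 0$ and hence $\sigma_q := i_{F, \beta}(e) = -F(e)\,\beta(q)$, where $q = \pi(e)$. Setting $T = F(e)$ and $\gamma_e(t) = \sigma_q + t\,T\,\beta(q)$, one has $\gamma_e(0) = \sigma_q \in L$ and $\gamma_e(1) = 0_q$, so $(F(e), \gamma_e)$ is a Lee chord from $L$ to the zero section. Its Lee action $\int_{\gamma_e}\lambda$ vanishes (Example \ref{example: action cotangent bundle}) and its Hamiltonian action is $S_L(\sigma_q) = \left. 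F \right\lvert_{\Sigma_F}(e) = F(e)$, so the total action equals the time-shift $T = F(e)$; the chord is therefore essential, with action (and time-shift) equal to the critical value, as claimed.

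Next I would verify bijectivity. Injectivity is immediate, since $i_{F, \beta}$ is an embedding and $e$ is recovered from $\gamma_e(0) = \sigma_q$ as $i_{F, \beta}^{-1}(\sigma_q)$. For surjectivity, let $(T, \gamma)$ be an essential Lee chord from $L$ to the zero section, with $\gamma(t) = \sigma_q + t\,T\,\beta(q)$, $\sigma_q = i_{F, \beta}(e) \in L$ and $\gamma(1) = 0_q$; the last condition forces $\sigma_q = -T\,\beta(q)$. The crucial point is that membership in $\Sigma_F$ encodes only the vanishing of the vertical derivative of $F$ at $e$, and it is the essentiality hypothesis that promotes this to a genuine critical point. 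Indeed essentiality gives $T = \mathcal{A}_{L, 0_B}(T, \gamma) = S_L(\sigma_q) = F(e)$, whence $\sigma_q = -F(e)\,\beta(q)$; comparing with $\sigma_q = i_F(e) - F(e)\,\beta(q)$ yields $i_F(e) = 0$, i.e. the base derivative of $F$ vanishes at $e$. Together with $e \in \Sigma_F$ this gives $dF(e) = 0$, so $e$ is a critical point and $\gamma = \gamma_e$.

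Finally, for the non-degeneracy statement I would again exploit $i_{F, \beta} = i_F - F\,\beta$. Fixing $T = F(e)$ and composing with the Lee flow of Example \ref{example: Lee flow on twisted cotangent bundle}, the map $\Psi := \varphi_T^{\omega} \circ i_{F, \beta} \colon \Sigma_F \to T^{\ast}B$ is $\Psi(e') = i_F(e') + \big(F(e) - F(e')\big)\,\beta(\pi(e'))$. Since the correction term vanishes to first order at $e$ (its differential is proportional to $dF(e) = 0$), one obtains $(\varphi_T^{\omega})_{\ast}\big(T_{\sigma_q}L\big) = T_{0_q}\,\im(i_F)$. Hence $(F(e), \gamma_e)$ is a transverse Lee chord if and only if $\im(i_F)$ is transverse to the zero section at $0_q$, which is the classical ($\beta = 0$) correspondence between non-degenerate critical points of a generating function and transverse intersections of the generated Lagrangian with the zero section (proved, exactly as in Lemma \ref{lemma: bijection critical points f}, by a local Hessian computation; see \cite{Viterbo}), i.e. it is equivalent to $\Hess_e F$ being non-degenerate. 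I expect this last reconciliation of the twisted Lagrangian $L$ with the untwisted immersion $\im(i_F)$ to be the main obstacle: everything hinges on the observation that the time-$T$ Lee flow carries $T_{\sigma_q}L$ exactly onto $T_{0_q}\im(i_F)$ because the discrepancy between the two parametrizations is governed by $dF$ and therefore disappears precisely at the critical points.
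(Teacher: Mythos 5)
Your proof is correct, and for the bijection itself it is the same argument as the paper's: the action of $L = \im(i_{F,\beta})$ is $S\big(i_{F,\beta}(e)\big) = F(e)$ by \eqref{equation: pullback of lambda}, critical points give essential chords $\big(F(e), \gamma_e\big)$ with action equal to the critical value, injectivity comes from $i_{F,\beta}$ being an embedding, and surjectivity comes from exactly the observation you isolate --- essentiality forces $T = F(e)$, the endpoint condition then kills the horizontal derivative $i_F(e)$, and membership in $\Sigma_F$ supplies the vertical vanishing, so $dF(e)=0$.

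The one place you genuinely depart from the paper is the transversality step. The paper proves it by a direct computation in local coordinates: it writes out $(i_{F,\beta})_{\ast}(Y)$, observes that applying $(\varphi_{F(e)}^{\omega})_{\ast}$ cancels the term $-F(e)\, d\beta \cdot Y_x$, and then combines the resulting condition with the tangency condition $Y \in T_e\Sigma_F$ to conclude that failure of transversality is exactly degeneracy of $\Hess_e(F)$. You instead argue abstractly: writing $\Psi = \varphi_{F(e)}^{\omega} \circ i_{F,\beta}$ as $i_F$ plus the correction $\big(F(e) - F(e')\big)\beta\big(\pi(e')\big)$, you note that the correction vanishes to first order at $e$ precisely because $dF(e)=0$, so $\Psi_{\ast} = (i_F)_{\ast}$ at $e$, and transversality of the Lee chord reduces to the classical untwisted statement that non-degenerate critical points of a generating function correspond to transverse intersections of $\im(i_F)$ with the zero section. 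This is valid (your first-order cancellation is exactly the same cancellation the paper performs in coordinates, packaged coordinate-freely), and it buys a conceptual reduction to the $\beta = 0$ case at the cost of invoking the classical correspondence as a black box --- whose proof is, of course, the same Hessian computation the paper writes out, now combining the fiber-transversality condition with $Y \in T_e\Sigma_F$ to see that $\Hess_e(F)\cdot Y = 0$. One purely notational caveat: since $i_F$ is a priori only an immersion, your ``$T_{0_q}\im(i_F)$'' should be read as $\im(d_e i_F)$, but nothing in the argument uses embeddedness of $\im(i_F)$.
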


\begin{proof}
By definition,
$L$ is the image of the exact Lagrangian embedding
$i_{F, \beta}: \Sigma_F \rightarrow T^{\ast}_{\beta}B$.
By \eqref{equation: pullback of lambda},
it has action
\[
S: L \rightarrow \mathbb{R} \,,\;
S \big( i_{F, \beta} (e) \big) = F (e) \,.
\]
Suppose that $e = (q, \zeta) \in \Sigma_F$ is a critical point of $F$.
Then
\[
i_{F, \beta} (e) = - F(e) \, \beta (q) \,,
\]
thus $\big( F(e), \gamma_e \big)$ with
\[
\gamma_e: [0, 1] \rightarrow T_{\beta}^{\ast}B \,,\;
\gamma_e(t) = - F(e) \, \beta (q) + t \, F(e) \, \beta (q)
\]
is a Lee chord from the point $- F(e) \, \beta (q)$ of $L$
to the point $0_q$ of the zero section.
The action
\[
S \big(\gamma(0)\big) - S \big(\gamma(1)\big) + \int_{\gamma_e} \lambda
= S \big(\gamma(0)\big) = F(e)
\]
of the Lee chord $\big( F(e), \gamma_e \big)$
is equal to the time-shift,
and to the critical value of the critical point $e$.
In particular,
the Lee chord $\big( F(e), \gamma_e \big)$ is essential.
The map
\begin{equation}\label{equation: bijection critical point F}
e \mapsto \big( F(e), \gamma_e \big)    
\end{equation}
from the set of critical points of $F$
to the set of essential Lee chords
from $L$ to the zero section
is injective,
because $i_{F, \beta}$ is an embedding.
We now show that it is also surjective.
Suppose that $(T, \gamma)$ with
\[
\gamma: [0, 1] \mapsto T_{\beta}^{\ast}B \,,\;
\gamma (t) = \sigma_q + t \, T \, \beta (q)
\]
is an essential Lee chord from $L$ to the zero section.
Since $\sigma_q \in L$,
there is $e = (q, \zeta) \in \Sigma_F$
with $i_{F, \beta} (e) = \sigma_q$.
Thus
\[
dF (e) (\hat{X})
= \sigma_q (X) + F(e) \beta(q) (X)
\]
for all $X \in T_qB$ and $\hat{X} \in T_eE$ projecting to $X$.
Since $(T, \gamma)$ is essential,
the time-shift $T$
is equal to the action $F(e)$.
Since moreover $\gamma(1) = \sigma_q + T \beta(q)$ belongs to the zero section,
we thus have $dF(e) (\hat{X}) = 0$,
and so $e$ is a critical point of $F$.
Moreover, $\sigma_q = - T \beta(q) = - F(e) \beta(q)$
and so $(T, \gamma) = \big( F(e), \gamma_e \big)$.
This shows that the map \eqref{equation: bijection critical point F}
is surjective,
hence a bijection.

Finally,
we show that a critical point $e$ is non-degenerate
if and only if the Lee chord $\big( F(e), \gamma_e \big)$ is transverse.
By definition,
the Lee chord $\big( F(e), \gamma_e \big)$ is not transverse
if and only if
\[
(\varphi_{F(e)}^{\omega})_{\ast} \big(T_{i_{F, \beta}(e)} L \big)
+ T_{0_q} 0_B \neq T_{0_q} \, T^{\ast}B \,,
\]
thus if and only if
$(\varphi_{F(e)}^{\omega})_{\ast} \big(T_{i_{F, \beta}(e)} L \big)
\cap T_{0_q} 0_B \neq \{0\}$,
and so
if and only if there is $Y \in T_e \Sigma_F$
such that
\begin{equation}\label{equation: transversality}
(\varphi_{F(e)}^{\omega} \circ i_{F, \beta})_{\ast} \, (Y) \in T_{0_q} 0_B \,.
\end{equation}
Write $Y = \dot{\delta}(t)$
for a path $\delta: (-\varepsilon, \varepsilon) \to \Sigma_F$
with $\delta (0) = e$.
Set $\delta_o = \pr_1 \circ \,\delta$,
where $\pr_1 : \Sigma_F \subset E = B \times \mathbb{R}^N \to B$
is the projection to the first factor.
Observe that $N_E^*$ is canonically isomorphic
to the pullback bundle $\pr_1^{\,\ast}\, T^*B$,
and denote by $p: N_E^* \to T^*B$ the natural projection.
We can then write $i_{F,\beta} (e) = p \, (d_\beta F(e))$ and
\begin{align*}
(\varphi_{F(e)}^{\omega} \circ i_{F, \beta})_{\ast} \, (Y)
&=  \left. \frac{d}{dt} \right \lvert_{t=0}
p \, \Big( d_\beta F \big(\delta(t)\big) \Big) + F(e) \, \beta \big(\delta_o(t)\big) \\
&= \left.\frac{d}{dt} \right \lvert_{t=0}
p \, \Big( dF \big(\delta(t)\big) \Big)
+ \Big(F(e) - F \big(\delta(t)\big)\Big) \, \beta \big(\delta_o(t)\big) \,.
\end{align*}
We observe that $\delta_1 (t) := p \, \left(dF(\delta(t))\right)$
and $\delta_2 (t) := \big( F(e) - F(\delta(t)) \big) \, \beta(\delta_o(t))$
are two paths in $T^*B$ with $\delta_1 (0) = \delta_2(0) = 0_q$.
Therefore,
their tangent vectors at $t = 0$
both belong to $T_{0_q} T^*B$.
The latter vector space admits a canonical decomposition
as the direct sum of $T_q B$ -- the ``horizontal" part -- and $T^*_qB$ -- the ``vertical" part.
The decompositions of $\left. \frac{d}{dt} \right\lvert_{t=0} \delta_1(t)$
and $\left. \frac{d}{dt} \right\lvert_{t=0} \delta_2(t)$
into horizontal and vertical parts are
$$
\left.\frac{d}{dt}\right\lvert_{t=0} \delta_1 (t)
= p_{*} \big( Y + d^2F (Y, \,\cdot\,) \big)
= {\pr_1}_{*} (Y) + p_{\ast} \big( d^2F(Y, \,\cdot\,) \big)
$$
and
$$
\left.\frac{d}{dt}\right\lvert_{t=0} \delta_2 (t)
= {\pr_1}_{*} (Y) + \left.\frac{d}{dt}\right\lvert_{t=0}
\Big( F(e) - F \big(\delta(q)\big) \Big) \, \beta(q)
= {\pr_1}_{*} (Y) + 0_q \,,
$$
where we have used that $e$ is a critical point of $F$.
The condition \eqref{equation: transversality}
is equivalent to asking that the sum of the vertical parts
of $\left. \frac{d}{dt} \right\lvert_{t=0} \delta_1(t)$
and $\left. \frac{d}{dt} \right\lvert_{t=0} \delta_2(t)$
vanishes.
Since this sum
is $p_{\ast} \big(d^2F (Y, \,\cdot\,)\big)$,
we conclude that 
the Lee chord $\big( F(e), \gamma_e \big)$ is not transverse
if and only if there is $Y \in T_e\Sigma_F$
such that $p_{\ast} \big(d^2F (Y, \,\cdot\,)\big)$ vanishes,
hence $d^2F (Y, \,\cdot\,)$ vanishes,
and so if and only if $e$ is a degenerate critical point of $F$.
\end{proof}

By \eqref{equation: pullback of lambda},
the lift of $i_{F, \beta}: \Sigma_F \rightarrow T_{\beta}^{\ast}B$
to the contactization $J_{\beta}^1B$
is the Legendrian immersion
\[
\widetilde{i_{F, \beta}}: \Sigma_F \rightarrow J_{\beta}^1B \,,\;
\widetilde{i_{F, \beta}}(e) = \big( i_{F, \beta}(e) \,,\, F(e) \big) \,.
\]
We have
\[
U_{\beta} \circ \widetilde{i_{F, \beta}} = j_F \,,
\]
where $U_{\beta}$ is the untwisting map \eqref{equation: contactomorphism 1 jet}
from $J^1_{\beta}B$ to $J^1B$
and
\[
j_F: \Sigma_F \rightarrow J^1B \,,\;
j_F(e) = \big( i_F(e), F(e) \big)
\]
the Legendrian immersion generated (in the usual sense) by $F$.
This implies the following result.

\begin{prop}\label{proposition: relation gf 1 jet}
A function $F: B \times \mathbb{R}^N \rightarrow \mathbb{R}$
is a generating function for a Lagrangian submanifold $L$ of $T_{\beta}^{\ast}B$
if and only if it is a generating function for the Legendrian submanifold
$U_{\beta} (\widetilde{L})$ of $J^1B$,
where $\widetilde{L}$ is the lift of $L$
to the contactization $J_{\beta}^1B$
and $U_{\beta}$ the untwisting map \eqref{equation: contactomorphism 1 jet}
from $J^1_{\beta}B$ to $J^1B$.
\end{prop}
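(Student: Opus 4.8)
The plan is to obtain the proposition as a formal consequence of the identity $U_{\beta} \circ \widetilde{i_{F, \beta}} = j_F$ established just above the statement, using that $U_{\beta}$ is a diffeomorphism (indeed a strict contactomorphism, by Example \ref{example: twisted 1-jet}). First I would record that the two notions of generating function share the \emph{same} defining condition: being a generating function for a submanifold of $T^{\ast}_{\beta}B$ and being one for a submanifold of $J^1B$ both require $dF$ to be transverse to $N_E^{\ast}$, and under this hypothesis $\Sigma_{F, \beta} = \Sigma_F$ is one and the same fibre-critical submanifold, carrying the three maps $i_{F, \beta}$, $\widetilde{i_{F, \beta}}$ and $j_F$. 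Hence the whole content of the proposition reduces to comparing the two ``embedding with prescribed image'' conditions, namely that $i_{F, \beta}$ is an embedding onto $L$ versus that $j_F$ is an embedding onto $U_{\beta}(\widetilde{L})$.

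For the forward implication I would begin from $F$ generating $L$, so $i_{F, \beta}$ is an embedding with $\im(i_{F, \beta}) = L$. By \eqref{equation: pullback of lambda} this exhibits $L$ as an exact Lagrangian of action $S$ determined by $S\big(i_{F, \beta}(e)\big) = F(e)$, so that the contactization lift of $L$ is precisely $\widetilde{L} = \im(\widetilde{i_{F, \beta}})$, since $\widetilde{i_{F, \beta}}(e) = \big(i_{F, \beta}(e), F(e)\big) = \big(i_{F, \beta}(e), S(i_{F, \beta}(e))\big)$. Applying the diffeomorphism $U_{\beta}$ and invoking the displayed identity then gives $U_{\beta}(\widetilde{L}) = \im(U_{\beta} \circ \widetilde{i_{F, \beta}}) = \im(j_F)$; moreover $\widetilde{i_{F, \beta}}$ is an embedding, since it factors as $i_{F, \beta}$ followed by the graph map $\sigma_q \mapsto \big(\sigma_q, S(\sigma_q)\big)$ over $L$, and therefore so is $j_F = U_{\beta} \circ \widetilde{i_{F, \beta}}$. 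This is exactly the assertion that $F$ generates $U_{\beta}(\widetilde{L})$.

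For the converse I would run the same chain backwards: from $j_F$ an embedding with $\im(j_F) = U_{\beta}(\widetilde{L})$, the identity yields $\widetilde{i_{F, \beta}} = U_{\beta}^{-1} \circ j_F$, an embedding whose image is $U_{\beta}^{-1}\big(U_{\beta}(\widetilde{L})\big) = \widetilde{L}$; projecting to the base then recovers $\im(i_{F, \beta}) = L$ as sets. The only genuinely non-formal step, and the one I expect to be the main obstacle, is upgrading this set-theoretic equality to the statement that $i_{F, \beta}$ is an \emph{embedding} onto $L$ rather than merely the Lagrangian immersion built in the setup, because in general the base projection of an embedded submanifold need not be embedded. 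I would resolve this by exploiting the structure of the lift: since $\widetilde{L}$ is the contactization lift of $L$, it is the graph of the action function $S$ over $L$, so the base projection $J^1_{\beta}B \to T^{\ast}B$ restricts to a diffeomorphism $\widetilde{L} \to L$. As $i_{F, \beta}$ factors as $\Sigma_F \xrightarrow{\widetilde{i_{F, \beta}}} \widetilde{L} \to L$, a composition of two diffeomorphisms onto their images, it is an embedding with image $L$, completing the argument. I would finally note that neither connectedness of the base nor uniqueness of $S$ is needed here: any choice of action defining $\widetilde{L}$ makes it a graph over $L$, which is all the argument uses.
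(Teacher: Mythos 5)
Your proof is correct and takes essentially the same route as the paper, which states the proposition as an immediate consequence of the identity $U_{\beta} \circ \widetilde{i_{F, \beta}} = j_F$ together with the facts that $\Sigma_{F,\beta} = \Sigma_F$ and that $U_{\beta}$ is a strict contactomorphism — exactly the reduction you set up. The one detail you add beyond what the paper makes explicit, namely upgrading the set-theoretic equality $\im (i_{F, \beta}) = L$ in the converse direction to the embedding condition by using that $\widetilde{L}$ is a graph over $L$ (so the projection $\widetilde{L} \rightarrow L$ is a diffeomorphism and $i_{F, \beta}$ factors as $\widetilde{i_{F, \beta}}$ followed by it), is a correct filling-in of a step the paper leaves implicit.
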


Using Proposition \ref{proposition: relation gf 1 jet}
(and still following \cite{CM}),
we can deduce the following existence and uniqueness results
from the corresponding ones on $1$-jet bundles.

\begin{thm}\label{theorem: gf}
Let $B$ be a compact manifold,
and $\beta$ a closed $1$-form on $B$.
\begin{itemize}
\item[(i)] If $L$ is a Lagrangian submanifold of $T^{\ast}_{\beta} B$
that is lcs Hamiltonian isotopic to the zero section
then $L$ has a generating function quadratic at infinity,
unique up to equivalence.
\item[(ii)] If $L$ is a Lagrangian submanifold of $T^{\ast}_{\beta} B$
that has a generating function quadratic at infinity $F$
then for any lcs Hamiltonian isotopy
$\{\varphi_t\}_{t \in [0,1]}$ of $T^{\ast}_{\beta} B$
there is a $1$-parameter family $F_t: E \rightarrow \mathbb{R}$
of generating functions quadratic at infinity for $\{\varphi_t(L)\}$
such that $F_0$ is a stabilization of $F$.
\end{itemize}
\end{thm}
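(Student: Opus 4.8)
The plan is to transport everything, via Proposition \ref{proposition: relation gf 1 jet}, to the setting of Legendrian submanifolds of $J^1B$, where the existence and uniqueness theorems of Chaperon, Chekanov, Viterbo and Th\'eret recalled above apply verbatim. The only real work is to check that the passage $L \mapsto U_{\beta}(\widetilde{L})$ turns lcs Hamiltonian isotopies of $T^{\ast}_{\beta}B$ into contact isotopies of $J^1B$, compatibly with images.

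First I would set up the dictionary. The untwisting map $U_{\beta}: J^1_{\beta}B \to J^1B$ of Example \ref{example: twisted 1-jet} is a strict contactomorphism sending the zero section to the zero section. Given an lcs Hamiltonian isotopy $\{\varphi_t\}$ of $T^{\ast}_{\beta}B$, its lift $\{\widetilde{\varphi_t}\}$ to the contactization $J^1_{\beta}B$ is a contact isotopy by Proposition \ref{proposition: lift of lcs Hamiltonian isotopy to the contactization}, and hence $\{\phi_t := U_{\beta} \circ \widetilde{\varphi_t} \circ U_{\beta}^{-1}\}$ is a contact isotopy of $J^1B$. The key point to verify is the compatibility of the contactization lift with images: for any exact Lagrangian embedding $i$ of action $S_0$ and exact lcs diffeomorphism $\varphi$ of conformal factor $g$ and action $S_{\varphi}$, one has $\widetilde{\varphi} \circ \widetilde{i} = \widetilde{\varphi \circ i}$. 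This is a direct computation from the definitions of the two lifts: $\widetilde{\varphi}(\widetilde{i}(q)) = (\varphi(i(q)), e^{g(i(q))}(S_0(q) - S_{\varphi}(i(q))))$, and using $\varphi^{\ast}\lambda = e^g(\lambda - d_{\eta}S_{\varphi})$ and $i^{\ast}\lambda = d_{i^{\ast}\eta}S_0$ one checks that $e^{g \circ i}(S_0 - S_{\varphi}\circ i)$ is precisely the action of the exact Lagrangian embedding $\varphi \circ i$ (Proposition \ref{proposition: image of exact by exact}). Consequently $\phi_t(U_{\beta}(\widetilde{L})) = U_{\beta}(\widetilde{\varphi_t(L)})$ for every $t$.

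For (i), since $L$ is lcs Hamiltonian isotopic to the zero section it is exact by Corollary \ref{corollary: image of zero section is exact}, so its lift $\widetilde{L}$ to $J^1_{\beta}B$ is defined. Taking $\{\varphi_t\}$ with $\varphi_0 = \id$ and $\varphi_1(0_B) = L$, the associated contact isotopy $\{\phi_t\}$ of $J^1B$ satisfies $\phi_0 = \id$ and, by the compatibility above together with the fact that $U_{\beta}$ preserves the zero section, $\phi_1(0_{J^1B}) = U_{\beta}(\widetilde{L})$. Thus $U_{\beta}(\widetilde{L})$ is contact isotopic to the zero section of $J^1B$. By the theorem of Chaperon and Chekanov it has a generating function quadratic at infinity $F$, and by Viterbo and Th\'eret any two such are equivalent. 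Since by Proposition \ref{proposition: relation gf 1 jet} a function generates $L$ if and only if it generates $U_{\beta}(\widetilde{L})$, and since the quadratic-at-infinity condition and the equivalence relation are intrinsic to $F$, existence and uniqueness up to equivalence transfer immediately to $L$.

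For (ii), a generating function quadratic at infinity $F$ for $L$ is, by Proposition \ref{proposition: relation gf 1 jet}, a generating function quadratic at infinity for $U_{\beta}(\widetilde{L})$ (here $L$ is exact, with action $\left. F \right\lvert_{\Sigma_F}$ by \eqref{equation: pullback of lambda}, so $\widetilde{L}$ is defined). Applying the parametric existence result of Chaperon and Chekanov to the contact isotopy $\{\phi_t\}$ of $J^1B$ produces a $1$-parameter family $F_t$ of generating functions quadratic at infinity for $\{\phi_t(U_{\beta}(\widetilde{L}))\}$ with $F_0$ a stabilization of $F$. By the compatibility $\phi_t(U_{\beta}(\widetilde{L})) = U_{\beta}(\widetilde{\varphi_t(L)})$, a final application of Proposition \ref{proposition: relation gf 1 jet} shows that each $F_t$ is a generating function for $\varphi_t(L)$, as desired. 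The main obstacle is thus the bookkeeping of the image-compatibility of lifts to the contactization; once that identity is established, both parts are formal consequences of the $1$-jet bundle theory.
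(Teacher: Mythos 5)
Your proposal is correct and takes essentially the same route as the paper's proof: transport $L$ to the Legendrian $U_{\beta}(\widetilde{L})$ in $J^1B$, apply the Chaperon--Chekanov existence and Viterbo--Th\'eret uniqueness theorems there, and transfer back via Proposition \ref{proposition: relation gf 1 jet}, using the conjugated contact isotopy $\{U_{\beta} \circ \widetilde{\varphi_t} \circ U_{\beta}^{-1}\}$ for the parametric statement. Your explicit verification of $\widetilde{\varphi} \circ \widetilde{i} = \widetilde{\varphi \circ i}$ (the compatibility of contactization lifts with images) correctly fills in the identity $\widetilde{\varphi_t}(\widetilde{L}) = \widetilde{\varphi_t(L)}$, which the paper asserts without computation.
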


\begin{proof}
Suppose that $L$ is a Lagrangian submanifold of $T^{\ast}_{\beta} B$
that is lcs Hamiltonian isotopic to the zero section.
By Corollary \ref{corollary: image of zero section is exact},
$L$ is exact.
Let $\widetilde{L}$ be its lift to $J_{\beta}^1B$,
and consider the image $U_{\beta} (\widetilde{L})$
by the untwisting map $U_{\beta}: J_{\beta}^1B \rightarrow J^1B$.
Then $U_{\beta} (\widetilde{L})$ is a Legendrian submanifold of $J^1B$,
contact isotopic to the zero section.
Indeed,
it is the image of the zero section
by the time-$1$ map of the contact isotopy
$\{ U_{\beta} \circ \widetilde{\varphi_t} \circ U_{\beta}^{-1} \}$,
where $\{\widetilde{\varphi_t}\}$
is the lift to $J^1_{\beta}B$
of the lcs Hamiltonian isotopy $\{\varphi_t\}$
whose time-$1$ map sends the zero section to $L$.
By \cite{Chaperon, Chekanov, Viterbo, Theret},
$U_{\beta} (\widetilde{L})$ has a generating function quadratic at infinity,
unique up to equivalence.
By Proposition \ref{proposition: relation gf 1 jet},
we thus conclude that $L$ has a generating function quadratic at infinity,
unique up equivalence.
This proves (i).

Suppose now that $L$ is a Lagrangian submanifold
of $T^{\ast}_{\beta} B$
that has a generating function quadratic at infinity $F$,
and that $\{\varphi_t\}_{t \in [0,1]}$
is a lcs Hamiltonian isotopy of $T^{\ast}_{\beta} B$.
Consider the lifts $\widetilde{L}$ and $\{\widetilde{\varphi_t}\}$
of $L$ and $\{\varphi_t\}$ to $J^1_{\beta}B$.
Then $\widetilde{\varphi_t} (\widetilde{L}) = \widetilde{\varphi_t(L)}$,
thus
\[
U_{\beta} \circ \widetilde{\varphi_t} \circ U_{\beta}^{-1}
\big( U_{\beta} (\widetilde{L}) \big)
= U_{\beta} (\widetilde{\varphi_t(L)}) \,.
\]
By Proposition \ref{proposition: relation gf 1 jet},
$F$ is a generating functions quadratic at infinity
for $U_{\beta} (\widetilde{L})$.
By \cite{Chaperon, Chekanov},
there is a $1$-parameter family
of generating functions quadratic at infinity
$F_t: E \rightarrow \mathbb{R}$
for $\{ U_{\beta} \big( \widetilde{\varphi_t (L)} \big) \}$
such that $F_0$ is a stabilization of $F$.
By Proposition \ref{proposition: relation gf 1 jet},
$F_t$ is then also a $1$-parameter family
of generating functions quadratic at infinity
for $\{\varphi_t(L)\}$.
This proves (ii).
\end{proof}

We can now prove Theorem \ref{theorem: lcs Arnold conjecture intro}
and Theorem \ref{theorem: translated points - intro} (i).

\begin{proof}[Proof of Theorem \ref{theorem: lcs Arnold conjecture intro}]
Let $B$ be a connected compact manifold,
$\beta$ a closed non-exact $1$-form on $B$,
and $L$ a Lagrangian submanifold
of the twisted cotangent bundle $T^{\ast}_{\beta}B$
that is lcs Hamiltonian isotopic to the zero section.
By Theorem \ref{theorem: gf} (i),
$L$ has a generating function quadratic at infinity
$F: B \times \mathbb{R}^N \rightarrow \mathbb{R}$,
and by Lemma \ref{lemma: bijection critical points F}
there is a bijection between the (non-degenerate) critical points of $F$
and the (transverse) essential Lee chords
from $L$ to the zero section.
Since the number of critical points
of a function quadratic at infinity over $B$
is greater than or equal to the cup-length of $B$,
and greater than or equal to the sum of the Betti numbers of $B$
if all the critical points are non-degenerate,
we conclude that
the number of essential Lee chords
from $L$ to the zero section
is greater than or equal to the cup-length of $B$,
and greater than or equal to the sum of the Betti numbers of $B$
if all essential Lee chords are transverse.
\end{proof}

\begin{proof}[Proof of Theorem \ref{theorem: translated points - intro} (i)]
Let $\varphi$ be the time-$1$ map
of a sufficiently $\mathcal{C}^0$-small
lcs Hamiltonian isotopy $\{\varphi_t\}$
of a compact Liouville lcs manifold $\big( M , [(\eta, \omega, \lambda)]\big)$
with $\eta$ not exact.
We need to show that the number of essential translated points of $\varphi$
with respect to any representative $(\eta, \omega)$
is greater than or equal to the cup-length of $M$,
and greater than or equal to the sum of the Betti numbers of $M$
if all the essential Lee chords for the essential translated points
are non-degenerate.
As in the proof of Proposition \ref{proposition: existence C1 small},
$L_{\varphi} = \im \big(\gr_{\lcs} (\varphi) \big)$ is contained
in a Weinstein neighborhood of $\Delta = \im \big(\gr_{\lcs} (\id) \big)$
in $M \boxtimes M$,
and its image by the strict exact lcs diffeomorphism
given by Theorem \ref{theorem: Weinstein}
and Remark \ref{remark: exact Weinstein}
is a Lagrangian submanifold of $T_{\beta}^{\ast} \Delta$,
where $\beta$ is the pullback of $\eta \boxtimes \eta$
by the inclusion $\Delta \hookrightarrow M \boxtimes M$,
lcs Hamiltonian isotopic to the zero section.
We thus conclude using Theorem \ref{theorem: lcs Arnold conjecture intro},
Proposition \ref{proposition: action invariant by strict exact}
and Proposition \ref{proposition: bijection translated points Lee chords graph}.
\end{proof}

\begin{remark}
The conclusions of Theorem \ref{theorem: translated points - intro} (i)
also hold (with the same proof,
without using Proposition \ref{proposition: action invariant by strict exact})
for translated points of the time-$1$ map
of a sufficiently $\mathcal{C}^0$-small
lcs Hamiltonian isotopy
of any (not necessarily Liouville) compact lcs manifold.
However,
the estimates are less pertinent in this case
because translated points are generically not isolated.
\end{remark}

\section{Generating functions for lcs diffeomorphisms of $S^1 \times \mathbb{R}^{2n+1}$
and $S^1 \times \mathbb{R}^{2n} \times S^1$}\label{section: lcs product etc}

In \cite{Viterbo, Bhupal, San11},
Viterbo, Bhupal and the third author
associate generating functions quadratic at infinity
to compactly supported Hamiltonian diffeomorphisms of $\mathbb{R}^{2n}$
and to compactly supported contactomorphisms
of $\mathbb{R}^{2n+1}$ and $\mathbb{R}^{2n} \times S^1$
contact isotopic to the identity.
In this section we describe a lcs analogue of these constructions,
to associate generating functions quadratic at infinity
to compactly supported lcs Hamiltonian diffeomorphisms
of $S^1 \times \mathbb{R}^{2n+1}$ and $S^1 \times \mathbb{R}^{2n} \times S^1$.
We start by recalling the constructions
in the symplectic and contact cases.

Consider the Euclidean space $\mathbb{R}^{2n}$,
with coordinates $(x_1, y_1, \cdots, x_n, y_n)$,
endowed with the standard symplectic form
$\omega_0 = \sum_{j=1}^n dx_j \wedge dy_j$.
The map
\[
\tau_{\symp}: \overline{\mathbb{R}^{2n}} \times \mathbb{R}^{2n}
\rightarrow T^{\ast} \mathbb{R}^{2n}
\]
defined by
\[
\tau_{\symp} \, (x, y, X, Y) =
\Big(\frac{ x + X}{2} \,,\,
\frac{ y + Y}{2}
\,,\, y -  Y \,,\, X - x\Big)
\]
is a symplectomorphism
between the symplectic product of $\mathbb{R}^{2n}$ 
and the cotangent bundle $T^{\ast} \mathbb{R}^{2n}$,
which sends the graph of the identity
to the zero section.
Let $\varphi$ be a symplectomorphism of $\mathbb{R}^{2n}$,
and consider its graph
\[
\gr (\varphi): \mathbb{R}^{2n} \rightarrow
\mathbb{R}^{2n} \times \mathbb{R}^{2n}  \,,\;
\gr(\varphi)(p) = \big( p, \varphi(p) \big)
\]
and the Lagrangian submanifold
$L_{\varphi} := \im \big( \tau_{\symp} \circ \gr(\varphi) \big)$
of $T^{\ast}\mathbb{R}^{2n}$.
If $F: \mathbb{R}^{2n} \times \mathbb{R}^N \rightarrow \mathbb{R}$
is a generating function for $L_{\varphi}$
then the critical points of $F$
are in bijection with the fixed points of $\varphi$,
with critical value given by the action
of the corresponding fixed point.
Suppose now that $\varphi$
is a compactly supported Hamiltonian symplectomorphism\footnote{
By a compactly supported Hamiltonian symplectomorphism
we always mean a symplectomorphism
that is the time-$1$ map of a compactly supported Hamiltonian isotopy.}.
In order to apply the existence and uniqueness theorems
for generating functions quadratic at infinity,
the Lagrangian submanifold $L_{\varphi}$
of $T^{\ast}\mathbb{R}^{2n}$
is compactified to a Lagrangian submanifold
$\overline{L_{\varphi}}$ of $T^{\ast} S^{2n}$
as follows.
Let $\{\varphi_t\}_{t \in [0,1]}$
be a compactly supported Hamiltonian isotopy with $\varphi_1 = \varphi$,
and consider the compactly supported Hamiltonian isotopy $\{\Psi_{\varphi_t}\}$
of $T^{\ast} \mathbb{R}^{2n}$
that is the conjugation by $\tau_{\symp}$
of the compactly supported Hamiltonian isotopy
$\{ \id \times \varphi_t \}$
of $\overline{\mathbb{R}^{2n}} \times \mathbb{R}^{2n}$.
Then $L_{\varphi}$ is the image of the zero section by $\Psi_{\varphi_1}$.
Since $L_{\varphi}$ coincides with the zero section
outside a compact set,
it extends to a Lagrangian submanifold
$\overline{L_{\varphi}}$ of $T^{\ast} S^{2n}$.
Moreover,
$\{\Psi_{\varphi_t}\}$ extends to a Hamiltonian isotopy of $T^{\ast}S^{2n}$
whose time-$1$ map sends the zero section to $\overline{L_{\varphi}}$.
Thus,
$\overline{L_{\varphi}}$ is a Lagrangian submanifold of $T^{\ast} S^{2n}$
Hamiltonian isotopic to the zero section.
We say that $\overline{F}: S^{2n} \times \mathbb{R}^N \rightarrow \mathbb{R}$
is a generating function for $\varphi$
if it is a generating function for $\overline{L_{\varphi}}$.
The induced function $F: \mathbb{R}^{2n} \times \mathbb{R}^N \rightarrow \mathbb{R}$
(which has the same critical values as $\overline{F}$)
is then a generating function for $L_{\varphi}$,
thus the set of critical values of $\overline{F}$
coincides with the action spectrum of $\varphi$.
By \cite{Sikorav87, Viterbo, Theret}
any compactly supported Hamiltonian symplectomorphism of $\mathbb{R}^{2n}$
has a generating function quadratic at infinity,
unique up to fibre preserving diffeomorphism and stabilization.

Consider now the Euclidean space $\mathbb{R}^{2n+1}$,
with coordinates $(x_1, y_1, \cdots, x_n, y_n, z)$,
endowed with the contact structure $\xi_0$
given by the kernel of the contact form
$\alpha_0 = \sum_{j=1}^n \frac{x_idy_j - y_jdx_j}{2} - dz$,
and the quotient $\mathbb{R}^{2n} \times S^1$,
where $S^1 = \mathbb{R}/\mathbb{Z}$,
with the induced contact form and contact structure,
still denoted by $\alpha_0$ and $\xi_0$.
The map
\[
\tau_{\cont}: \mathbb{R}^{2n+1} \times \mathbb{R}^{2n+1} \times \mathbb{R}
\rightarrow J^1 \mathbb{R}^{2n+1}
\]
defined by
\begin{gather*}
\tau_{\cont} \, (x, y, z, X, Y, Z, \rho) =
\\
\Big(\frac{ x + e^{-\frac{\rho}{2}}X}{2} \,,\,
\frac{ y + e^{-\frac{\rho}{2}}Y}{2}
\,,\, Z \,,
y - e^{-\frac{\rho}{2}} Y \,,\,
e^{-\frac{\rho}{2}} X - x\,,\,
1 - e^{-\rho} \,,\,
Z - z 
+ \frac{e^{-\frac{\rho}{2}} (yX - xY)}{2}\Big)
\end{gather*}
is a strict contactomorphism\footnote{
Several identifications
of the contact product of $\mathbb{R}^{2n+1}$
with $J^1\mathbb{R}^{2n+1}$
are used in the literature
\cite{Bhupal, San10, San11, FSZ},
and different choices are made
for the contact forms on $\mathbb{R}^{2n+1}$
and on the contact product
(cf.\ footnote 1).
In all these references everything could have been done
without any inconvenience
using the choices of the present article.}
between the contact product
of $\mathbb{R}^{2n+1}$ with respect to $\alpha_0$
and the 1-jet bundle $J^1\mathbb{R}^{2n+1}$,
which sends the contact graph of the identity
to the zero section.
Let $\phi$ be a contactomorphism of $\mathbb{R}^{2n+1}$
with conformal factor $g$ with respect to $\alpha_0$,
and consider its contact graph
\[
\gr (\phi): \mathbb{R}^{2n+1} \rightarrow
\mathbb{R}^{2n+1} \times \mathbb{R}^{2n+1} \times \mathbb{R} \,,\;
\gr(\phi)(p) = \big( p, \phi(p), g(p) \big)
\]
and the Legendrian submanifold
$\Lambda_{\phi} := \im \big( \tau_{\cont} \circ \gr(\phi) \big)$
of $J^1\mathbb{R}^{2n+1}$.
If $F: \mathbb{R}^{2n+1} \times \mathbb{R}^N \rightarrow \mathbb{R}$
is a generating function for $\Lambda_{\phi}$
then the critical points of $F$
are in bijection with the translated points of $\phi$,
with critical value given by the action
of the corresponding translated point.
Suppose now that $\phi$
is a compactly supported contactomorphism
contact isotopic to the identity\footnote{
By a compactly supported contactomorphism
contact isotopic to the identity
we always mean a contactomorphism
that is isotopic to the identity
through a compactly supported contact isotopy.}.
In order to apply the existence and uniqueness theorems
for generating functions quadratic at infinity,
the Legendrian submanifold $\Lambda_{\phi}$
of $J^1\mathbb{R}^{2n+1}$
is compactified to a Legendrian submanifold
$\overline{\Lambda_{\phi}}$ of $J^1S^{2n+1}$
as follows.
Let $\{\phi_t\}_{t \in [0,1]}$
be a compactly supported contact isotopy with $\phi_1 = \phi$
and conformal factors $g_t$,
and consider the compactly supported contact isotopy
$\{\Psi_{\phi_t}\}$ of $J^1 \mathbb{R}^{2n+1}$
that is the conjugation by $\tau_{\cont}$
of the compactly supported contact isotopy
$\{\Phi_t\}$ of $\mathbb{R}^{2n+1} \times \mathbb{R}^{2n+1} \times \mathbb{R}$
defined by
\[
\Phi_t (p, P, \rho)
= \big( p \,,\, \phi_t (P) \,,\, \rho + g_t (P) \big) \,.
\]
Then $\Lambda_{\phi}$ is the image of the zero section by $\Psi_{\phi_1}$.
Since $\Lambda_{\phi}$ coincides with the zero section
outside a compact set,
it extends to a Legendrian submanifold
$\overline{\Lambda_{\phi}}$ of $J^1 S^{2n+1}$.
Moreover,
$\{ \Psi_{\phi_t} \}$ extends to a contact isotopy of $J^1S^{2n+1}$
whose time-$1$ map sends the zero section to $\overline{\Lambda_{\phi}}$.
Thus, $\overline{\Lambda_{\phi}}$ is a Legendrian submanifold of $J^1S^{2n+1}$
contact isotopic to the zero section.
We say that $\overline{F}: S^{2n+1} \times \mathbb{R}^N \rightarrow \mathbb{R}$
is a generating function for $\phi$
if it is a generating function for $\overline{\Lambda_{\phi}}$.
The induced function
$F: \mathbb{R}^{2n+1} \times \mathbb{R}^N \rightarrow \mathbb{R}$
(which has the same critical values as $\overline{F}$)
is then a generating function for $\Lambda_{\phi}$,
thus the set of critical values of $\overline{F}$
coincides with the action spectrum of $\phi$.
Similarly,
if $\phi$ a compactly supported contactomorphism
of $\mathbb{R}^{2n} \times S^1$
contact isotopic to the identity
then the Legendrian submanifold $\Lambda_{\phi_{\mathbb{R}}}$
of $J^1\mathbb{R}^{2n+1}$
associated to the lift $\phi_{\mathbb{R}}$
of $\phi$ to $\mathbb{R}^{2n+1}$
(the contactomorphism of $\mathbb{R}^{2n+1}$
that projects to $\phi$
and is the identity over the complement of the support of $\phi$)
extends to a Legendrian submanifold of $J^1 (S^{2n} \times \mathbb{R})$
contact isotopic to the zero section.
The last descends to a Legendrian submanifold
$\overline{\Lambda_{\phi}} := \overline{\Lambda_{\phi_{\mathbb{R}}}}$
of $J^1 (S^{2n} \times S^1)$
contact isotopic to the zero section,
indeed
\[
\tau_{\cont} (x, y, z+1, X, Y, Z+1, \rho)
= \tau_{\cont} (x, y, z, X, Y, Z, \rho) + (0, 0, 1, 0, 0, 0, 0) \,.
\]
We say that
$\overline{F}: S^{2n} \times S^1 \times \mathbb{R}^N \rightarrow \mathbb{R}$
is a generating function for $\phi$
if it is a generating function for $\overline{\Lambda_{\phi}}$.
The induced function
$F: \mathbb{R}^{2n+1} \times \mathbb{R}^N \rightarrow \mathbb{R}$
(which has the same critical values as $\overline{F}$)
is then a generating function for $\Lambda_{\phi_{\mathbb{R}}}$,
thus the set of critical values of $\overline{F}$
coincides with the action spectrum of $\phi$
(which is defined to be the action spectrum of $\phi_{\mathbb{R}}$).

By \cite{Chaperon, Chekanov, Viterbo, Theret}
any compactly supported contactomorphism
of $\mathbb{R}^{2n+1}$ or $\mathbb{R}^{2n} \times S^1$
contact isotopic to the identity
has a generating function quadratic at infinity,
unique up to fibre preserving diffeomorphism and stabilization.

In the lcs case,
the analogue of the identifications $\tau_{\symp}$ and $\tau_{\cont}$
is obtained as follows.
By Lemma \ref{lemma: identification twisted cotangent bundle with conformal symplectization}
and Proposition \ref{proposition: lcs product}
we have strict lcs diffeomorphisms
\begin{equation}\label{equation: identification jet}
T^{\ast}_{-d\theta} (S^1 \times \mathbb{R}^{2n+1})
\rightarrow S^1 \times J^1 \mathbb{R}^{2n+1}
\end{equation}
and 
\begin{equation}\label{equation: identification product}
(S^1 \times \mathbb{R}^{2n+1}) \boxtimes (S^1 \times \mathbb{R}^{2n+1})
\rightarrow S^1 \times ( \mathbb{R}^{2n+1} \times \mathbb{R}^{2n+1} \times \mathbb{R} ) \,.
\end{equation}
By Example \ref{example: lift of maps to symplectization},
the lift 
\[
\widetilde{\tau_{\cont}}:
S^1 \times (\R^{2n+1} \times \R^{2n+1} \times \R) \to S^1 \times J^1\R^{2n+1} \,,\;
(\theta, p) \mapsto (\theta, \tau_{\cont}(p))
\]
of the strict contactomorphism
$\tau_{\cont}:
\mathbb{R}^{2n+1} \times \mathbb{R}^{2n+1} \times \mathbb{R}
\rightarrow J^1 \mathbb{R}^{2n+1}$
to the locally conformal symplectizations
is a strict lcs diffeomorphism.
Composing it with the strict lcs diffeomorphisms
\eqref{equation: identification jet} and \eqref{equation: identification product}
we obtain a strict lcs diffeomorphism
\begin{equation}\label{equation: tau}
\tau: (S^1 \times \mathbb{R}^{2n+1}) \boxtimes (S^1 \times \mathbb{R}^{2n+1})
\rightarrow T^{\ast}_{-d\theta} (S^1 \times \mathbb{R}^{2n+1}) \,,
\end{equation}
defined by
\begin{gather*}
\tau \big( [(\theta, x, y, z, \Theta, X, Y, Z)] \big) =
\\
\Big([\theta], \frac{ x + e^{\frac{\Theta - \theta}{2}} X}{2} \,,\,
\frac{ y + e^{\frac{\Theta - \theta}{2}} Y}{2}
\,,\, Z \,,\,
Z - z
+ \frac{e^{\frac{ \Theta - \theta}{2}} (yX - xY)}{2} \,,\,
y - e^{\frac{\Theta - \theta}{2}} Y \,,\,
e^{\frac{\Theta - \theta}{2}} X - x \,,\,
1 - e^{\Theta - \theta} \Big) \,.
\end{gather*}
Denote by $\lambda_{S^1 \times \mathbb{R}^{2n+1}}$
the tautological $1$-form on the cotangent bundle of $S^1 \times \mathbb{R}^{2n+1}$.
A direct calculation shows that
\begin{equation}\label{tau*lambda}
\tau^{\ast} \, \lambda_{S^1 \times \mathbb{R}^{2n+1}}
= \alpha_0 \boxtimes \alpha_0
- d_{-d\theta} \Big( z - Z + \frac{e^{\frac{\Theta - \theta}{2}} (xY - yX) }{2} \Big) \,.
\end{equation}
The lcs diffeomorphism $\tau$ is thus exact,
with action
\[
S_{\tau} = z - Z + \frac{e^{\frac{\Theta - \theta}{2}} (xY - yX) }{2} \,.
\]

Consider now a lcs Hamiltonian diffeomorphism $\varphi$
of $S^1 \times \mathbb{R}^{2n+1}$.
Its lcs graph is the Lagrangian embedding
\[
\gr_{\lcs} (\varphi): S^1 \times \mathbb{R}^{2n+1} \rightarrow
(S^1 \times \mathbb{R}^{2n+1}) \boxtimes (S^1 \times \mathbb{R}^{2n+1})
\]
defined by
\[
\gr_{\lcs} (\varphi) ([\theta], x, y, z)
= [(\theta, x, y, z, \bar{\varphi} (\theta, x, y, z))] \,,
\]
where $\bar{\varphi}$ is the lift of $\varphi$
to the symplectic cover $\mathbb{R} \times \mathbb{R}^{2n+1}$.
We define
\begin{equation*}
\Gamma_{\varphi}: S^1 \times \mathbb{R}^{2n+1} \rightarrow
T^{\ast}_{-d\theta} (S^1 \times \mathbb{R}^{2n+1})
\end{equation*}
to be the composition of $\gr_{\lcs}(\varphi)$ with $\tau$,
thus
$$
\Gamma_{\varphi} ([\theta], x, y, z) =
$$
\begin{equation*}
\Big( [\theta] \,,\, \frac{ x + e^{\frac{\bar{\varphi}_{\theta} - \theta}{2}} \bar{\varphi}_x}{2} \,,\,
\frac{ y + e^{\frac{\bar{\varphi}_{\theta} - \theta}{2}} \bar{\varphi}_y}{2}
\,,\, \bar{\varphi}_{z} \,,\, \bar{\varphi}_{z} - z
+ \frac{e^{\frac{ \bar{\varphi}_{\theta} - \theta}{2}}
(y\bar{\varphi}_x - x\bar{\varphi}_y)}{2} \,,\,
y - e^{\frac{\bar{\varphi}_{\theta} - \theta}{2}} \bar{\varphi}_y \,,\,
e^{\frac{\bar{\varphi}_{\theta} - \theta}{2}} \bar{\varphi}_x - x \,,\,
1 - e^{\bar{\varphi}_{\theta} - \theta} \Big) \,,    
\end{equation*}
where $\bar{\varphi}_{\theta}$, $\bar{\varphi}_x$, $\bar{\varphi}_y$ and $\bar{\varphi}_z$
are the components of $\bar{\varphi}$,
evaluated at the point $(\theta, x, y, z)$.
By Corollary \ref{corollary: symplectic lift},
\begin{equation}\label{equation: corollary symplectic lift Euclidean}
\theta - \bar{\varphi}_{\theta} (\theta, x, y, z)
= g ([\theta], x, y, z) \,,
\end{equation}
where $g$ is the conformal factor of $\varphi$.
Moreover, 
$\bar{\varphi}_x = \varphi_x$, $\bar{\varphi}_y = \varphi_y$
and $\bar{\varphi}_z = \varphi_z$,
where $\varphi_{\theta}$, $\varphi_x$, $\varphi_y$, $\varphi_z$
denote the components of $\varphi$.
Thus
\begin{equation}\label{equation: formula for associated Lagrangian}
\Gamma_{\varphi} ([\theta], x, y, z) =
\end{equation}
\[
\Big( [\theta] \,,\, \frac{ x + e^{-\frac{1}{2}g} \varphi_x}{2} \,,\,
\frac{ y + e^{-\frac{1}{2}g} \varphi_y}{2}
\,,\, \varphi_{z} \,,\, 
\varphi_z - z + \frac{e^{-\frac{1}{2}g} (y\varphi_x - x\varphi_y)}{2} \,,\,
y - e^{-\frac{1}{2}g} \varphi_y \,,\,
e^{-\frac{1}{2}g} \varphi_x - x \,,\,
1 - e^{-g} \Big) \,.
\]
The relation \eqref{tau*lambda}
and Lemma \ref{lemma: graph exact} give
\begin{equation}\label{equation: action associated Lagrangian embedding}
\Gamma_{\varphi}^{\;\ast} \, \lambda_{S^1 \times \mathbb{R}^{2n+1}}
= d_{-d\theta} \Big( \varphi_z - z
+ \frac{ e^{-\frac{1}{2}g} (y \varphi_x - x \varphi_y) }{2} - S_{\varphi} \Big) \,,
\end{equation}
where $S_{\varphi}$ is the action of $\varphi$.
The Lagrangian embedding $\Gamma_{\varphi}$ is thus exact,
with action
$$
S_{\Gamma_{\varphi}} = \varphi_z - z
+ \frac{ e^{-\frac{1}{2}g} (y \varphi_x - x \varphi_y)}{2}  - S_{\varphi} \,.   
$$
Its image
\[
L_{\varphi} = \im (\Gamma_{\varphi})
\]
is an exact Lagrangian submanifold
of $T^{\ast}_{-d\theta} (S^1 \times \mathbb{R}^{2n+1})$.

\begin{prop}\label{proposition: bijection Lee chords translated points}
Let $\varphi$ be lcs Hamiltonian diffeomorphism of $S^1 \times \mathbb{R}^{2n+1}$,
and consider the associated Lagrangian submanifold
$L_{\varphi}$ of $T^{\ast}_{-d\theta} (S^1 \times \mathbb{R}^{2n+1})$.
Then there is a bijection between
the translated points of $\varphi$
and the Lee chords from $L_{\varphi}$ to the zero section.
Moreover,
this bijection sends non-degenerate translated points
to transverse Lee chords,
and essential translated points to essential Lee chords.
\end{prop}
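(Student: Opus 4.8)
The plan is to exhibit the bijection as a composition of two bijections already established in the excerpt. Write $M = S^1 \times \mathbb{R}^{2n+1}$ and $\Delta = \im\big(\gr_{\lcs}(\id)\big) \subset M \boxtimes M$. By construction $L_\varphi = \im(\Gamma_\varphi) = \tau\big(\im(\gr_{\lcs}(\varphi))\big)$, where $\tau$ is the strict exact lcs diffeomorphism~\eqref{equation: tau}. Setting $\varphi = \id$ in~\eqref{equation: formula for associated Lagrangian} gives $\Gamma_{\id}([\theta],x,y,z) = ([\theta],x,y,z,0,0,0,0)$, so $\tau$ carries $\Delta$ onto the zero section of $T^*_{-d\theta}M$. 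Thus it suffices to chain together a bijection between translated points of $\varphi$ and Lee chords from $\im(\gr_{\lcs}(\varphi))$ to $\Delta$, and the transport of the latter chords through $\tau$.

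For the first bijection I would invoke Proposition~\ref{proposition: bijection translated points Lee chords graph}. Since $\varphi$ is lcs Hamiltonian it is divergence free, so that proposition applies: a point $p$ is a translated point of $\varphi$ precisely when there is a Lee chord from $\gr_{\lcs}(\varphi)(p)$ to $\gr_{\lcs}(\id)(\varphi(p))$, and in that case the Lee chords in $M$ from $\varphi(p)$ to $p$ correspond bijectively to such chords, preserving time-shift, transversality, and the Hamiltonian and Lee actions. On $M$ the Lee field is $-\partial_z$, which never vanishes and whose flow translates along the non-compact $z$-direction; hence each translated point carries a unique Lee chord from $\varphi(p)$ to $p$, and by the constraint in Proposition~\ref{proposition: bijection translated points Lee chords graph} every Lee chord from $\im(\gr_{\lcs}(\varphi))$ to $\Delta$ comes from a (necessarily translated) point. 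This identifies translated points of $\varphi$ with Lee chords from $\im(\gr_{\lcs}(\varphi))$ to $\Delta$, compatibly with time-shift, transversality and total action.

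For the second bijection I would apply Proposition~\ref{proposition: action invariant by strict exact} to $\tau$. Both $\im(\gr_{\lcs}(\varphi))$ and $\Delta$ are connected exact Lagrangians (Lemma~\ref{lemma: graph exact}, using that $\varphi$ is exact by Proposition~\ref{proposition: Hamiltonian vs exact} and that $\id$ is exact), and a computation via $\pr_1 \circ \gr(\cdot) = \id$ shows that the pullback of $\eta \boxtimes \eta$ to each of them equals $-d\theta$, which is not exact; the same holds for $L_\varphi$ and the zero section because $\tau$ is strict. The hypotheses of Proposition~\ref{proposition: action invariant by strict exact} are therefore met, so $\tau$ maps Lee chords from $\im(\gr_{\lcs}(\varphi))$ to $\Delta$ bijectively onto Lee chords from $L_\varphi$ to the zero section, preserving the total action and hence sending essential chords to essential chords. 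Finally, as $\tau$ is strict its differential conjugates the Lee flow on $M \boxtimes M$ to the Lee flow on $T^*_{-d\theta}M$; since $\tau_*$ is moreover a linear isomorphism on each tangent space carrying the relevant tangent spaces to one another, applying $\tau_*$ to the transversality identity shows that $(T,\gamma)$ is transverse if and only if $(T,\tau\circ\gamma)$ is.

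Composing the two bijections yields the bijection of the proposition, sending non-degenerate translated points to transverse Lee chords and essential translated points to essential Lee chords. I expect no serious obstacle: the proof is an assembly of Propositions~\ref{proposition: bijection translated points Lee chords graph} and~\ref{proposition: action invariant by strict exact}. The only points requiring genuine care are the identification $\tau(\Delta) = $ zero section (read off the explicit formula for $\Gamma_{\id}$) and the verification that $\tau$ preserves transversality, which follows from its strictness.
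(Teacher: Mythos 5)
Your proposal is correct and follows essentially the same route as the paper, whose proof is exactly the composition of Proposition~\ref{proposition: bijection translated points Lee chords graph} with Proposition~\ref{proposition: action invariant by strict exact} applied to the strict exact lcs diffeomorphism $\tau$, using that there are no closed Lee orbits in $S^1 \times \mathbb{R}^{2n+1}$ and $T^{\ast}_{-d\theta}(S^1 \times \mathbb{R}^{2n+1})$. In fact you spell out details the paper leaves implicit (that $\tau(\Delta)$ is the zero section, the non-exactness hypotheses, uniqueness of Lee chords, and the transport of transversality through the strict map), all of which check out.
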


\begin{proof}
Since there are no closed Lee orbits
in $S^1 \times \mathbb{R}^{2n+1}$
and $T^{\ast}_{-d\theta} (S^1 \times \mathbb{R}^{2n+1})$
and since $\tau$ is a strict exact lcs diffeomorphism,
by Propositions \ref{proposition: action invariant by strict exact}
and \ref{proposition: bijection translated points Lee chords graph}
there is a bijection between the translated points of $\varphi$
and the Lee chords from $L_{\varphi}$ to the zero section,
sending non-degenerate translated points to transverse Lee chords
and essential translated points to essential Lee chords.
\end{proof}

We say that
$F: S^1 \times \mathbb{R}^{2n+1} \times \mathbb{R}^N \rightarrow \mathbb{R}$
is a generating function for a lcs Hamiltonian diffeomorphism
$\varphi$ of $S^1 \times \mathbb{R}^{2n+1}$
if it is a generating function
for the associated Lagrangian submanifold
$L_{\varphi}$ of $T^{\ast}_{-d\theta} (S^1 \times \mathbb{R}^{2n+1})$.
Then $i_{F, -d\theta}: \Sigma_F \rightarrow
T_{-d\theta}^{\ast} (S^1 \times \mathbb{R}^{2n+1})$
induces a diffeomorphism between $\Sigma_F$ and $L_{\varphi}$.
We consider the composition
\[
\begin{tikzcd}
{\Phi_F:\, S^1 \times \mathbb{R}^{2n+1} } \arrow[r, "\Gamma_{\varphi}"]
& L_{\varphi} \arrow[r] &[3ex] \Sigma_F
\end{tikzcd}
\]
of the inverse of this diffeomorphism with $\Gamma_{\varphi}$.

\begin{lemma}\label{lemma: bijection critical points explicit}
For $(p, \zeta) \in \Sigma_F$
and $([\theta], x, y, z) = (\Phi_F)^{-1} (p, \zeta)$
we have
\[
p = \Big( [\theta] \,,\, \frac{x + e^{-\frac{1}{2}g} \varphi_x }{2} \,,\,
\frac{y + e^{-\frac{1}{2}g} \varphi_y }{2} \,,\, \varphi_z \Big)
\]
and
\[
\begin{cases}
d_{p_{\theta}} F \, (p, \zeta)
= \varphi_z - z + \frac{ e^{- \frac{1}{2}g} (y \varphi_x - x \varphi_y) }{2} - F(p, \zeta)
= S_{\varphi} ([\theta], x, y, z) \\
d_{p_x} F \, (p, \zeta) = y - e^{-\frac{1}{2}g} \varphi_y \\
d_{p_y} F \, (p, \zeta) = e^{-\frac{1}{2}g} \varphi_x - x \\
d_{p_z} F \, (p, \zeta) = 1 - e^{-g} \,,
\end{cases}
\]
where $g$, $\varphi_x$, $\varphi_y$ and $\varphi_z$
are evaluated at $([\theta], x, y, z)$.
\end{lemma}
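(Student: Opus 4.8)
The plan is to unwind the definition $\Phi_F = (i_{F,-d\theta})^{-1} \circ \Gamma_{\varphi}$ of Section \ref{section: lcs product etc}, so that the hypothesis $([\theta],x,y,z) = (\Phi_F)^{-1}(p,\zeta)$ becomes exactly the identity of covectors
$i_{F,-d\theta}(p,\zeta) = \Gamma_{\varphi}([\theta],x,y,z)$
in $T^{\ast}_{-d\theta}(S^1 \times \mathbb{R}^{2n+1})$. Everything then reduces to computing both sides of this identity and comparing base points and fibre components.

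First I would compute the left-hand side from the definition of the generating map recalled in Section \ref{section: gf}. Since $(p,\zeta) \in \Sigma_F$ is a fibre critical point, the vertical derivatives of $F$ vanish there, so for any $X$ tangent to the base and any lift $\hat{X}$ one has $dF(p,\zeta)(\hat{X}) = \sum_{\bullet} \frac{\partial F}{\partial p_{\bullet}}(p,\zeta)\, X_{\bullet}$. The only nonroutine feature is the $-d\theta$-twisting built into $i_{F,-d\theta}(e)(X) = dF(e)(\hat{X}) - F(e)\,(-d\theta)(X)$: this adds $+F(p,\zeta)$ to the $\theta$-component while leaving the $x$-, $y$- and $z$-components equal to $\frac{\partial F}{\partial p_x}$, $\frac{\partial F}{\partial p_y}$, $\frac{\partial F}{\partial p_z}$. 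Hence $i_{F,-d\theta}(p,\zeta)$ is the covector at $p$ with components $\big(\frac{\partial F}{\partial p_\theta}+F,\ \frac{\partial F}{\partial p_x},\ \frac{\partial F}{\partial p_y},\ \frac{\partial F}{\partial p_z}\big)$.

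Next I would read off the right-hand side from the explicit formula \eqref{equation: formula for associated Lagrangian} for $\Gamma_{\varphi}$. Matching base points immediately yields $p = \big([\theta],\, \frac{x+e^{-g/2}\varphi_x}{2},\, \frac{y+e^{-g/2}\varphi_y}{2},\, \varphi_z\big)$, the first displayed formula of the lemma. Matching the fibre components against the last four slots of \eqref{equation: formula for associated Lagrangian} gives $\frac{\partial F}{\partial p_x} = y - e^{-g/2}\varphi_y$, $\frac{\partial F}{\partial p_y} = e^{-g/2}\varphi_x - x$, $\frac{\partial F}{\partial p_z} = 1 - e^{-g}$, and, in the $\theta$-slot, $\frac{\partial F}{\partial p_\theta} + F = \varphi_z - z + \frac{e^{-g/2}(y\varphi_x - x\varphi_y)}{2}$, which is exactly the first equality on the first line of the system together with the three remaining lines.

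Finally I would identify the leftover term $F(p,\zeta)$. By \eqref{equation: pullback of lambda} the value $F(p,\zeta)$ is the action of $L_{\varphi}$ at $i_{F,-d\theta}(p,\zeta)$, while by \eqref{equation: action associated Lagrangian embedding} the action of $L_{\varphi} = \im(\Gamma_{\varphi})$ at the same point $\Gamma_{\varphi}([\theta],x,y,z)$ equals $S_{\Gamma_{\varphi}}([\theta],x,y,z) = \varphi_z - z + \frac{e^{-g/2}(y\varphi_x - x\varphi_y)}{2} - S_{\varphi}$. Since the pullback of $-d\theta$ to $L_{\varphi}$ is not exact, Remark \ref{remark: unique solution} guarantees that the action is uniquely defined, so the two descriptions agree and $F(p,\zeta) = \varphi_z - z + \frac{e^{-g/2}(y\varphi_x - x\varphi_y)}{2} - S_{\varphi}$. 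Substituting this into the $\theta$-equation collapses the middle expression and yields $\frac{\partial F}{\partial p_\theta} = S_{\varphi}([\theta],x,y,z)$, the second equality on the first line. The main point requiring care is precisely this last identification — checking that the two notions of action on $L_{\varphi}$ (via the generating function and via $\Gamma_{\varphi}$) coincide — which is exactly where the non-exactness of $-d\theta$ enters; the rest is bookkeeping of the coordinate conventions and of the twisting contribution to the $\theta$-component.
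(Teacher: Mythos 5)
Your proposal is correct and follows the same route as the paper's proof: you unwind $\Phi_F$ into the covector identity $i_{F,-d\theta}(p,\zeta)=\Gamma_{\varphi}([\theta],x,y,z)$, read off the base point and fibre components from \eqref{equation: formula for associated Lagrangian} together with $i_{F,-d\theta}(p,\zeta)=\big(p,\frac{\partial F}{\partial p}(p,\zeta)+F(p,\zeta)\,d\theta\big)$, and then obtain the $S_{\varphi}$-equality by matching the two $d_{-d\theta}$-primitives from \eqref{equation: pullback of lambda} and \eqref{equation: action associated Lagrangian embedding} via the uniqueness in Remark \ref{remark: unique solution}. This is exactly the paper's argument, including the correct identification of the non-exactness of the (pulled-back) Lee form as the point where uniqueness of the action enters.
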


\begin{proof}
By assumption we have
\[
\Gamma_{\varphi} ([\theta], x, y, z) = i_{F, -d\theta} \, (p, \zeta) \,,
\]
thus all the equalities
except the one involving $S_{\varphi}$
follow from the equality
\[
i_{F, -d\theta} \, (p, \zeta) =
\Big( p \,,\, \frac{\partial F}{\partial p} (p, \zeta) + F(p, \zeta) \, d\theta \Big)
\]
and formula \eqref{equation: formula for associated Lagrangian}.
For the equality involving $S_{\varphi}$,
using \eqref{equation: action associated Lagrangian embedding}
and \eqref{equation: pullback of lambda}
we have
\[
d_{-d\theta}
\Big( \varphi_z - z + \frac{ e^{- \frac{1}{2}g} (y \varphi_x - x \varphi_y) }{2} - S_{\varphi} \Big)
= \Gamma_{\varphi}^{\ast} \, \lambda_{S^1 \times \mathbb{R}^{2n+1}}
\]
\[
= ( i_{F, -d\theta} \circ \Phi_F )^{\ast} \, \lambda_{S^1 \times \mathbb{R}^{2n+1}}
= \Phi_F^{\ast} \big( d_{-d\theta} (\left. F \right\lvert_{\Sigma_F}) \big)
= d_{-d\theta} (\left. F \right\lvert_{\Sigma_F} \circ \Phi_F) \,.
\]
By Remark \ref{remark: unique solution} we thus have
\[
\varphi_z - z + \frac{ e^{- \frac{1}{2}g} (y \varphi_x - x \varphi_y) }{2}
- S_{\varphi} ([\theta], x, y, z)
= F(p, \zeta) \,,
\]
which is equivalent to the desired equality.
\end{proof}

Using Lemma \ref{lemma: bijection critical points explicit}
we obtain the following result.

\begin{prop}\label{proposition: bijection critical points F translated point varphi}
A point $e \in \Sigma_F$ is a (non-degenerate) critical point of $F$
if and only if $(\Phi_F)^{-1} (e)$ is an essential
(non-degenerate) translated point of $\varphi$.
In this case,
the action (hence the time-shift)
of the translated point $\Phi_F^{-1} (e)$ of $\varphi$
is equal to the critical value of the critical point $e$ of $F$.
\end{prop}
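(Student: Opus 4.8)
The plan is to extract everything directly from the explicit identities of Lemma~\ref{lemma: bijection critical points explicit}, which reduce the statement to solving four scalar equations. First I would fix $e = (p, \zeta) \in \Sigma_F$ and write $([\theta], x, y, z) = (\Phi_F)^{-1}(e)$. Since $e$ already lies in $\Sigma_F$ its vertical derivative vanishes, so $e$ is a critical point of $F$ if and only if the four base-direction partial derivatives $d_{p_\theta}F$, $d_{p_x}F$, $d_{p_y}F$ and $d_{p_z}F$ all vanish at $e$. By Lemma~\ref{lemma: bijection critical points explicit} these are respectively $S_{\varphi}([\theta],x,y,z)$, $y - e^{-\frac{1}{2}g}\varphi_y$, $e^{-\frac{1}{2}g}\varphi_x - x$ and $1 - e^{-g}$, with $g, \varphi_x, \varphi_y, \varphi_z$ evaluated at $([\theta],x,y,z)$.

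Next I would solve this system. Vanishing of $1 - e^{-g}$ forces $g = 0$; feeding this back, vanishing of $d_{p_x}F$ and $d_{p_y}F$ gives $\varphi_y = y$ and $\varphi_x = x$. Because the Lee flow of $(-d\theta, d_{-d\theta}\alpha_0)$ on $S^1 \times \mathbb{R}^{2n+1}$ is $\{\id \times \varphi_t^{\alpha_0}\}$ and the Reeb flow of $\alpha_0$ translates only the $z$-coordinate, these conditions---together with $g = 0$, which by \eqref{equation: corollary symplectic lift Euclidean} is equivalent to $\bar\varphi_\theta = \theta$---say exactly that $([\theta],x,y,z)$ and its image under $\varphi$ lie on one Lee orbit and that the conformal factor vanishes there; that is, $([\theta],x,y,z)$ is a translated point of $\varphi$. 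Finally, vanishing of $d_{p_\theta}F = S_{\varphi}$ is, by Example~\ref{example: action translated point in conformal symplectization}, precisely the condition that this translated point be essential. This establishes the desired bijection between critical points of $F$ and essential translated points of $\varphi$ through $(\Phi_F)^{-1}$.

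For the critical value I would note that at such a point the equalities $g = 0$, $\varphi_x = x$, $\varphi_y = y$ make the term $y\varphi_x - x\varphi_y$ vanish while $S_{\varphi} = 0$, so the first identity of Lemma~\ref{lemma: bijection critical points explicit} collapses to $F(e) = \varphi_z - z$. Since the Lee flow translates $z$, the quantity $\varphi_z - z$ is exactly the time-shift of the unique Lee chord from $\varphi(p)$ to $p$, which for an essential translated point equals its action; hence the critical value $F(e)$ equals both the action and the time-shift, as claimed.

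It remains to match non-degeneracy on the two sides. Rather than computing the Hessian of $F$ directly, I would compose the two bijections already available: Lemma~\ref{lemma: bijection critical points F} sends non-degenerate critical points of $F$ to transverse essential Lee chords from $L_{\varphi}$ to the zero section, and Proposition~\ref{proposition: bijection Lee chords translated points} sends transverse Lee chords to non-degenerate translated points of $\varphi$. The explicit descriptions above identify this composite with $(\Phi_F)^{-1}$, so the non-degeneracy statement follows at once. The hard part of the write-up will be the bookkeeping: checking that the composite of these two earlier correspondences genuinely coincides with $(\Phi_F)^{-1}$, and keeping the sign conventions for the time-shift consistent, since the Lee vector field $R_{\omega}$ lifts $R_{\alpha_0} = -\partial_z$ and this sign is precisely what makes $\varphi_z - z$ (rather than its negative) come out as the time-shift. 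Once these are fixed, the proof is a direct substitution into Lemma~\ref{lemma: bijection critical points explicit}.
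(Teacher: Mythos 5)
Your proposal is correct and follows essentially the same route as the paper's proof: the first part (the bijection with essential translated points and the identification of critical value with action and time-shift) is read off from Lemma~\ref{lemma: bijection critical points explicit} together with Example~\ref{example: action translated point in conformal symplectization}, and the non-degeneracy statement is obtained by observing that $(\Phi_F)^{-1}$ coincides with the composite of the bijections of Lemma~\ref{lemma: bijection critical points F} and Proposition~\ref{proposition: bijection Lee chords translated points}. You merely spell out explicitly (solving the four scalar equations, and the sign bookkeeping for $R_{\alpha_0} = -\partial_z$) what the paper compresses into one sentence, which is a faithful expansion rather than a different argument.
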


\begin{proof}
The relations in Lemma \ref{lemma: bijection critical points explicit}
and Example \ref{example: action translated point in conformal symplectization}
imply that $e \in \Sigma_F$ is a critical point of $F$
if and only if $(\Phi_F)^{-1} (e)$ is an essential translated point of $\varphi$,
and that the critical value $F(e)$
is equal to the action (hence the time-shift)
of the translated point $(\Phi_F)^{-1} (e)$.
The bijection $e \mapsto (\Phi_F)^{-1} (e)$
between the critical points of $F$
and the essential translated points of $\varphi$
coincides with the one induced by
Lemma \ref{lemma: bijection critical points F}
and Proposition \ref{proposition: bijection Lee chords translated points},
and thus sends non-degenerate critical points
to non-degenerate translated points.
\end{proof}

Lemma \ref{lemma: bijection critical points explicit}
is also used to obtain the following result.

\begin{prop}\label{proposition: difference function}
Let $\varphi_1$ and $\varphi_2$ be lcs Hamiltonian diffeomorphisms
of $S^1 \times \mathbb{R}^{2n+1}$,
with generating functions 
$F_1: S^1 \times \mathbb{R}^{2n+1} \times \mathbb{R}^{N_1} \rightarrow \mathbb{R}$
and $F_2: S^1 \times \mathbb{R}^{2n+1} \times \mathbb{R}^{N_2} \rightarrow \mathbb{R}$
respectively.
Then there is a bijection between
the critical points of the difference function
\[
F_1 - F_2:
S^1 \times \mathbb{R}^{2n+1} \times \mathbb{R}^{N_1} \times \mathbb{R}^{N_2}
\rightarrow \mathbb{R} \,,\;
(F_1 - F_2) (p, \zeta_1, \zeta_2)
= F_1 (p, \zeta_1) - F_2 (p, \zeta_2)
\]
and the essential translated points of $\varphi_2^{-1} \circ \varphi_1$.
Moreover,
the critical value of a critical point of $F_1 - F_2$
is equal to the action (hence the time-shift)
of the corresponding essential translated point
of $\varphi_2^{-1} \circ \varphi_1$.
\end{prop}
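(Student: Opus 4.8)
The plan is to reduce the statement to Lemma~\ref{lemma: bijection critical points explicit}, applied simultaneously to $F_1$ and $F_2$. A point $(p, \zeta_1, \zeta_2)$ is a critical point of $F_1 - F_2$ exactly when $(p, \zeta_1) \in \Sigma_{F_1}$, $(p, \zeta_2) \in \Sigma_{F_2}$ and $\frac{\partial F_1}{\partial p}(p, \zeta_1) = \frac{\partial F_2}{\partial p}(p, \zeta_2)$. Writing $a = (\Phi_{F_1})^{-1}(p, \zeta_1)$ and $b = (\Phi_{F_2})^{-1}(p, \zeta_2)$ and substituting into the explicit formulas of Lemma~\ref{lemma: bijection critical points explicit} for each factor, I would first solve the resulting system. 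The $p_z$-momentum equation gives $g_1(a) = g_2(b)$; combining the $p_x$- and $p_y$-momentum equations with the $x$- and $y$-components of the common base point $p$ gives $x_a = x_b$, $y_a = y_b$ and $(\varphi_1)_x(a) = (\varphi_2)_x(b)$, $(\varphi_1)_y(a) = (\varphi_2)_y(b)$; the $z$- and $\theta$-components of $p$ give $(\varphi_1)_z(a) = (\varphi_2)_z(b)$ and $[\theta_a] = [\theta_b]$; and the $p_\theta$-momentum equation gives $S_{\varphi_1}(a) = S_{\varphi_2}(b)$. Here $g_1, g_2$ denote the conformal factors of $\varphi_1, \varphi_2$.

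Next I would read off the geometric meaning of these relations. Since the Lee flow on $S^1 \times \mathbb{R}^{2n+1}$ translates only the $z$-coordinate, the identities $[\theta_a] = [\theta_b]$, $x_a = x_b$, $y_a = y_b$ say that $a$ and $b$ lie on one Lee orbit, with (unique) time-shift $T = z_b - z_a$. Using $(\varphi_1)_\theta(a) = [\theta_a - g_1(a)]$ from \eqref{equation: corollary symplectic lift Euclidean}, together with $g_1(a) = g_2(b)$ and the matched $x$-, $y$-, $z$-components, I would check that $\varphi_1(a) = \varphi_2(b)$, so that $\psi := \varphi_2^{-1} \circ \varphi_1$ satisfies $\psi(a) = b$. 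As the conformal factor of $\psi$ is $g_\psi = g_1 - g_2 \circ \psi$, we get $g_\psi(a) = g_1(a) - g_2(b) = 0$; hence $a$ is a translated point of $\psi$ with Lee chord $(T, \gamma)$ from $\psi(a)$ to $a$.

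The delicate point, which I expect to be the main obstacle, is essentiality together with the identification of actions. By Example~\ref{example: action translated point in conformal symplectization} the translated point $a$ is essential if and only if $S_\psi(a) = 0$. The key input is the composition formula $S_\psi = S_{\varphi_1} - e^{-g_\psi}\,(S_{\varphi_2} \circ \psi)$, which I would obtain by pulling back the defining relation $\lambda - e^{-g}\varphi^{\ast}\lambda = d_{\eta}S_\varphi$ through $\psi = \varphi_2^{-1} \circ \varphi_1$ and invoking the uniqueness in Remark~\ref{remark: unique solution}. Evaluated at $a$, where $g_\psi(a) = 0$ and $\psi(a) = b$, it reads $S_\psi(a) = S_{\varphi_1}(a) - S_{\varphi_2}(b)$, which vanishes precisely by the $p_\theta$-momentum equation $S_{\varphi_1}(a) = S_{\varphi_2}(b)$. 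Thus the critical points of $F_1 - F_2$ match the \emph{essential} translated points of $\psi$. Substituting the matched identities into the formula for $F_i$ from Lemma~\ref{lemma: bijection critical points explicit}, every term cancels except the $z$-terms, yielding $(F_1 - F_2)(p, \zeta_1, \zeta_2) = z_b - z_a = T$; since the chord is essential, this time-shift equals the action of the translated point, giving the desired equality of critical value and action.

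Finally I would run the argument in reverse to get a genuine bijection. Starting from an essential translated point $a$ of $\psi$ and setting $b = \psi(a)$, the defining properties give $\varphi_1(a) = \varphi_2(b)$, $g_1(a) = g_2(b)$ (from $g_\psi(a) = 0$) and $S_{\varphi_1}(a) = S_{\varphi_2}(b)$ (from $S_\psi(a) = 0$ and the composition formula). Feeding $a$ into $\Phi_{F_1}$ and $b$ into $\Phi_{F_2}$, Lemma~\ref{lemma: bijection critical points explicit} then places the two images over a common base point $p$ with $\frac{\partial F_1}{\partial p} = \frac{\partial F_2}{\partial p}$, producing a critical point of $F_1 - F_2$. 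Injectivity is immediate because $\Phi_{F_1}, \Phi_{F_2}$ are bijections and $b = \psi(a)$ is determined by $a$, so this inverts the correspondence and completes the proof.
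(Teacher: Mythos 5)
Your proposal is correct and follows essentially the same route as the paper's proof: both directions are extracted from the explicit relations of Lemma~\ref{lemma: bijection critical points explicit} applied to $F_1$ and $F_2$, combined with \eqref{equation: corollary symplectic lift Euclidean}, the composition formulas $g_\psi = g_1 - g_2\circ\psi$ and $S_\psi = S_{\varphi_1} - e^{-g_\psi}\,S_{\varphi_2}\circ\psi$, and the essentiality criterion $S_\psi = 0$ from Example~\ref{example: action translated point in conformal symplectization}. The only cosmetic difference is that you derive the action composition formula explicitly (the paper simply invokes it), so nothing further is needed.
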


\begin{proof}
Suppose that $(p, \zeta_1, \zeta_2)$
is a critical point of $F_1 - F_2$.
Then 
$\frac{\partial F_1}{\partial p} (p, \zeta_1)
= \frac{\partial F_2}{\partial p} (p, \zeta_2)$ and $\frac{\partial F_1}{\partial \zeta_1} (p, \zeta_1)
= \frac{\partial F_2}{\partial \zeta_2} (p, \zeta_2) = 0$.
In particular, $(p, \zeta_1)$ and $(p, \zeta_2)$
are fibre critical points of $F_1$ and $F_2$ respectively.
Let
$$
([\theta_j], x_j, y_j, z_j) = (\Phi_{F_j})^{-1} (p, \zeta_j) \,.
$$
The relations in Lemma \ref{lemma: bijection critical points explicit}
and \eqref{equation: corollary symplectic lift Euclidean}
imply that $g_1 ([\theta_1], x_1, y_1, z_1) = g_2 ([\theta_2], x_2, y_2, z_2)$,
$\varphi_1 ([\theta_1], x_1, y_1, z_1) = \varphi_2 ([\theta_2], x_2, y_2, z_2)$,
$[\theta_1] = [\theta_2]$,
$x_1 = x_2$, $y_1 = y_2$
and $z_2 - z_1 = F_1 (p, \zeta_1) - F_2 (p, \zeta_2)$.
We thus see that
\[
([\theta_2], x_2, y_2, z_2)
= \varphi_2^{-1} \circ \varphi_1 \, ([\theta_1], x_1, y_1, z_1) \,,
\]
that there is a Lee chord
from $([\theta_1], x_1, y_1, z_1)$ to $([\theta_2], x_2, y_2, z_2)$
of time-shift equal to the critical value
$(F_1 - F_2) (p, \zeta_1, \zeta_2)$,
and that the conformal factor $- g_2 \circ \varphi_2^{-1} \circ \varphi_1 + g_1$
of $\varphi_2^{-1} \circ \varphi_1$ vanishes at $([\theta_1], x_1, y_1, z_1)$.
We thus conclude that $([\theta_1], x_1, y_1, z_1)$
is a translated point of $\varphi_2^{-1} \circ \varphi_1$
of time-shift equal to $(F_1 - F_2) (p, \zeta_1, \zeta_2)$.
Moreover,
Lemma \ref{lemma: bijection critical points explicit}
also gives
\[
S_{\varphi_1} ([\theta_1], x_1, y_1, z_1) = S_{\varphi_2} ([\theta_2], x_2, y_2, z_2) \,.
\]
Since the action of $\varphi_2^{-1} \circ \varphi_1$
is $S_{\varphi_1}
- e^{g_2 \circ \varphi_2^{-1} \circ \varphi_1 - g_1} \,
S_{\varphi_2} \circ \varphi_2^{-1} \circ \varphi_1$,
we conclude that the translated point $([\theta_1], x_1, y_1, z_1)$
of $\varphi_2^{-1} \circ \varphi_1$
has Hamiltonian action zero
and so, by Example \ref{example: action translated point in conformal symplectization},
is an essential translated point of $\varphi_2^{-1} \circ \varphi_1$.

Conversely,
suppose that $([\theta], x, y, z)$
is an essential translated point of $\varphi_2^{-1} \circ \varphi_1$.
Set $([\theta_1], x_1, y_1, z_1) = ([\theta], x, y, z)$
and $([\theta_2], x_2, y_2, z_2) = \varphi_2^{-1} \circ \varphi_1 ([\theta], x, y, z)$.
Then $g_1 ([\theta_1], x_1, y_1, z_1) = g_2 ([\theta_2], x_2, y_2, z_2)$,
$[\theta_1] = [\theta_2]$,
$x_1 = x_2$, $y_1 = y_2$,
$\varphi_1 ([\theta_1], x_1, y_1, z_1) = \varphi_2 ([\theta_2], x_2, y_2, z_2)$
and
$$
S_{\varphi_1} ([\theta_1], x_1, y_1, z_1) = S_{\varphi_2} ([\theta_2], x_2, y_2, z_2) \,.
$$
Let $(p_j, \zeta_j) = \Phi_{F_j} ([\theta_j], x_j, y_j, z_j)$.
The relations in Lemma \ref{lemma: bijection critical points explicit}
imply that $p_1 = p_2 =: p$
and $\frac{\partial F_1}{\partial p} (p, \zeta_1)
= \frac{\partial F_2}{\partial p} (p, \zeta_2)$,
thus $(p, \zeta_1, \zeta_2)$
is a critical point of $F_1 - F_2$,
and that the critical value
$(F_1 - F_2) (p, \zeta_1, \zeta_2) = F_1 (p, \zeta_1) - F_2 (p, \zeta_2)$
is equal to the time-shift $z_2 - z_1$.
\end{proof}

Similarly to the symplectic and contact cases,
in order to apply the existence and uniqueness results
for generating functions quadratic at infinity
(Theorem \ref{theorem: gf})
we need to compactify the Lagrangian submanifold
associated to a compactly supported lcs Hamiltonian diffeomorphism\footnote{
By a compactly supported lcs Hamiltonian diffeomorphism
we always mean the time-$1$ map
of a compactly supported lcs Hamiltonian isotopy.}
$\varphi$ of $S^1 \times \mathbb{R}^{2n+1}$
to a Lagrangian submanifold of the twisted cotangent bundle of a compact manifold.
This can be done as follows.
Let $\{\varphi_t\}$ be a compactly supported lcs Hamiltonian isotopy
with $\varphi_1 = \varphi$,
and consider the compactly supported lcs Hamiltonian isotopy $\{ \Psi_{\varphi_t} \}$
that is the conjugation by $\tau$
of the compactly supported lcs Hamiltonian isotopy $\{ \id \boxtimes\, \varphi_t \}$
of $(S^1 \times \mathbb{R}^{2n+1}) \boxtimes (S^1 \times \mathbb{R}^{2n+1})$
given by Lemma \ref{lemma: product of lcs diffeomorphisms}.
Then $L_{\varphi}$ is the image of the zero section by $\Psi_{\varphi_1}$.
Since $L_{\varphi}$ coincides with the zero section
outside a compact set,
it extends to a Lagrangian submanifold
$\overline{L_{\varphi}}$ of $T_{-d\theta}^{\ast} (S^1 \times S^{2n+1})$.
Moreover,
$\{\Psi_{\varphi_t}\}$ extends to a lcs Hamiltonian isotopy
of $T_{-d\theta}^{\ast} (S^1 \times S^{2n+1})$
whose time-$1$ map sends the zero section to $\overline{L_{\varphi}}$.
Thus,
$\overline{L_{\varphi}}$ is a Lagrangian submanifold
of $T_{-d\theta}^{\ast} (S^1 \times S^{2n+1})$
lcs Hamiltonian isotopic to the zero section.
We say that
$\overline{F}: S^1 \times S^{2n+1} \times \mathbb{R}^N \rightarrow \mathbb{R}$
is a generating function for $\varphi$
if it is a generating function for $\overline{L_{\varphi}}$.

Consider now a compactly supported lcs Hamiltonian diffeomorphism
$\varphi$ of $S^1 \times \mathbb{R}^{2n} \times S^1$.
Denote by $\varphi_{\mathbb{R}}$
the lift of $\varphi$ to $S^1 \times \mathbb{R}^{2n+1}$,
i.e.\ the lcs Hamiltonian diffeomorphism of $S^1 \times \mathbb{R}^{2n+1}$
that projects to $\varphi$
and is the identity over the complement of the support of $\varphi$.
We say that
$F: S^1 \times \mathbb{R}^{2n+1} \times \mathbb{R}^N \rightarrow \mathbb{R}$
is a generating function for $\varphi$
if it is a generating function for the Lagrangian submanifold
$L_{\varphi_{\mathbb{R}}}$ of $T_{-d\theta}^{\ast} (S^1 \times \mathbb{R}^{2n+1})$.
By similar arguments as before,
the Lagrangian submanifold $L_{\varphi_{\mathbb{R}}}$
of $T_{-d\theta}^{\ast} (S^1 \times \mathbb{R}^{2n+1})$
extends to a Lagrangian submanifold
of $T_{-d\theta}^{\ast} (S^1 \times S^{2n} \times \mathbb{R})$
lcs Hamiltonian isotopic to the zero section.
Moreover,
the last descends to a Lagrangian submanifold $\overline{L_{\varphi}}$
of $T_{-d\theta}^{\ast} (S^1 \times S^{2n} \times S^1)$
lcs Hamiltonian isotopic to the zero section,
indeed
\[
\tau \big( [(\theta, x, y, z+1, \Theta, X, Y, Z+1)] \big)
= \tau \big( [(\theta, x, y, z, \Theta, X, Y, Z)] \big)
+ (0, 0, 0, 1, 0, 0, 0, 0) \,.
\]
We say that
$\overline{F}: S^1 \times S^{2n} \times S^1 \times \mathbb{R}^N \rightarrow \mathbb{R}$
is a generating function for $\varphi$
if it is a generating function for $\overline{L_{\varphi}}$.

Applying Theorem \ref{theorem: gf}
we obtain the following result.

\begin{prop}\label{proposition: gf lcs diffeomorphisms}
Let $\varphi$ be a compactly supported lcs Hamiltonian diffeomorphism
of $S^1 \times \mathbb{R}^{2n+1}$ or $S^1 \times \mathbb{R}^{2n} \times S^1$.
Then $\varphi$ has a generating function quadratic at infinity
$\overline{F}: S^1 \times B \times \mathbb{R}^N \rightarrow \mathbb{R}$,
where $B$ is either $S^{2n+1}$ or $S^{2n} \times S^1$,
which is unique up to equivalence.
\end{prop}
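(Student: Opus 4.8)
The plan is to reduce the statement directly to Theorem \ref{theorem: gf}, the existence and uniqueness theorem for generating functions quadratic at infinity on twisted cotangent bundles, by feeding it the compactified Lagrangian submanifolds $\overline{L_{\varphi}}$ built in the discussion preceding the proposition. The key observation is that once $\overline{L_{\varphi}}$ has been exhibited as a Lagrangian submanifold of a twisted cotangent bundle over a \emph{compact} base that is lcs Hamiltonian isotopic to the zero section, both the existence and the uniqueness up to equivalence of a generating function quadratic at infinity for it become immediate consequences of Theorem \ref{theorem: gf} (i).

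First I would recall, for a compactly supported lcs Hamiltonian diffeomorphism $\varphi$ of $S^1 \times \mathbb{R}^{2n+1}$, that the Lagrangian submanifold $L_{\varphi} = \im (\Gamma_{\varphi})$ of $T^{\ast}_{-d\theta}(S^1 \times \mathbb{R}^{2n+1})$ agrees with the zero section outside a compact set and hence compactifies to a Lagrangian submanifold $\overline{L_{\varphi}}$ of $T^{\ast}_{-d\theta}(S^1 \times S^{2n+1})$; moreover, conjugating the isotopy $\{\id \boxtimes\, \varphi_t\}$ of Lemma \ref{lemma: product of lcs diffeomorphisms} by the strict lcs diffeomorphism $\tau$ yields a lcs Hamiltonian isotopy of $T^{\ast}_{-d\theta}(S^1 \times S^{2n+1})$ carrying the zero section to $\overline{L_{\varphi}}$, so that $\overline{L_{\varphi}}$ is lcs Hamiltonian isotopic to the zero section. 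In the case $S^1 \times \mathbb{R}^{2n} \times S^1$ I would argue through the lift $\varphi_{\mathbb{R}}$, note that $L_{\varphi_{\mathbb{R}}}$ first extends to $T^{\ast}_{-d\theta}(S^1 \times S^{2n} \times \mathbb{R})$ and then descends, along the periodicity relation satisfied by $\tau$ in the $z$-coordinate, to a Lagrangian submanifold $\overline{L_{\varphi}}$ of $T^{\ast}_{-d\theta}(S^1 \times S^{2n} \times S^1)$, again lcs Hamiltonian isotopic to the zero section. All of this is carried out in the text above the statement; the only point to record is that the base $S^1 \times B$, with $B$ equal to $S^{2n+1}$ or $S^{2n} \times S^1$, is compact and that $-d\theta$ is a closed $1$-form on it, so that the hypotheses of Theorem \ref{theorem: gf} are met.

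With these points in place I would apply Theorem \ref{theorem: gf} (i) with compact base $S^1 \times B$ and twisting form $\beta = -d\theta$ to the Lagrangian submanifold $\overline{L_{\varphi}}$, obtaining a generating function quadratic at infinity $\overline{F} : (S^1 \times B) \times \mathbb{R}^N \to \mathbb{R}$, unique up to equivalence. By the definition of a generating function for $\varphi$ recorded just above the proposition, namely a generating function for $\overline{L_{\varphi}}$, this $\overline{F}$ is precisely a generating function quadratic at infinity for $\varphi$, and its uniqueness up to equivalence is inherited verbatim. I do not expect a substantial obstacle here: the genuine work lies in the construction of the identification $\tau$ and in the compactification argument performed before the statement, after which the proposition follows formally from Theorem \ref{theorem: gf}.
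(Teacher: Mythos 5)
Your proposal is correct and coincides with the paper's own argument: the compactifications $\overline{L_{\varphi}}$ (via $\tau$, the extension of $\{\Psi_{\varphi_t}\}$, and, for $S^1 \times \mathbb{R}^{2n} \times S^1$, the lift $\varphi_{\mathbb{R}}$ together with the periodicity of $\tau$ in the $z$-coordinate) are constructed in the text immediately preceding the statement, and the paper then deduces the proposition exactly as you do, by applying Theorem \ref{theorem: gf} (i) to $\overline{L_{\varphi}}$ over the compact base $S^1 \times B$ with twisting form $-d\theta$. Nothing is missing; existence and uniqueness up to equivalence are inherited verbatim, as you say.
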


We define the action spectrum
of a lcs Hamiltonian diffeomorphism of $S^1 \times \mathbb{R}^{2n+1}$
to be the set of actions (hence time-shifts)
of its essential translated points.
The translated points of a lcs Hamiltonian diffeomorphisms
$\varphi$ of $S^1 \times \mathbb{R}^{2n} \times S^1$
do not have a well-defined time-shift,
because the Lee flow is periodic
and so for any translated point $p$ of $\varphi$
there is a $\mathbb{Z}$-family
of Lee chords from $\varphi (p)$ to $p$.
However,
if $\varphi$ is compactly supported
then it has a unique lift $\varphi_{\mathbb{R}}$
that is the identity over the complement of the support of $\varphi$.
Any point $\bar{p}$ of $S^1 \times \mathbb{R}^{2n+1}$
projecting to $p$
is a translated point of $\varphi_{\mathbb{R}}$.
Moreover,
since $\varphi_{\mathbb{R}}$ is equivariant by translation by $1$
in the Lee direction,
all the points projecting to $p$
have the same time-shift $T$
(as translated points of $\varphi_{\mathbb{R}}$).
We say that $T$ is the time-shift
of the translated point $p$ of $\varphi$,
and we define the action spectrum of $\varphi$
to be the set of actions (hence time-shifts)
of its essential translated points
(in other words,
the action spectrum of $\varphi_{\mathbb{R}}$).

\begin{prop}\label{proposition: action spectrum}
Let $\varphi$, $\varphi_1$ and $\varphi_2$
be compactly supported lcs Hamiltonian diffeomorphisms
of $S^1 \times \mathbb{R}^{2n+1}$ or $S^1 \times \mathbb{R}^{2n} \times S^1$,
and $\overline{F}: S^1 \times B \times \mathbb{R}^N \rightarrow \mathbb{R}$,
$\overline{F_1}: S^1 \times B \times \mathbb{R}^{N_1} \rightarrow \mathbb{R}$
and $\overline{F_2}: S^1 \times B \times \mathbb{R}^{N_2} \rightarrow \mathbb{R}$
generating functions for $\varphi$, $\varphi_1$ and $\varphi_2$ respectively,
where $B$ is either $S^{2n+1}$ or $S^{2n} \times S^1$.
Then
\begin{itemize}
\item[(i)] The action spectrum of $\varphi$
is equal to the set of critical values of $\overline{F}$.
\item[(ii)] The action spectrum of $\varphi_2^{-1} \circ \varphi_1$
is equal to the set of critical values of the difference function
\[
\overline{F_1} - \overline{F_2}:
S^1 \times B \times \mathbb{R}^{N_1} \times \mathbb{R}^{N_2}
\rightarrow \mathbb{R} \,,\;
(\overline{F_1} - \overline{F_2})
(\theta, p, \zeta_1, \zeta_2)
= \overline{F_1} (\theta, p, \zeta_1) - \overline{F_2} (\theta, p, \zeta_2) \,.
\]
\end{itemize}
\end{prop}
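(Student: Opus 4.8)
The plan is to reduce both statements to the bijections between critical points of the \emph{non-compact} generating functions and essential translated points that were already established in Propositions \ref{proposition: bijection critical points F translated point varphi} and \ref{proposition: difference function}, and then to bridge the gap to the compactified generating functions $\overline{F}$ by checking that the compactification introduces no new critical values.

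For (i), I would first recall that, by construction, the generating function $\overline{F}$ for $\varphi$ induces a function $F$ on $S^1 \times \mathbb{R}^{2n+1} \times \mathbb{R}^N$ which is a generating function for the non-compact Lagrangian $L_{\varphi}$ (respectively $L_{\varphi_{\mathbb{R}}}$, in the $S^1 \times \mathbb{R}^{2n} \times S^1$ case). Proposition \ref{proposition: bijection critical points F translated point varphi} then gives a bijection between the critical points of $F$ and the essential translated points of $\varphi$ (respectively $\varphi_{\mathbb{R}}$), under which the critical value equals the action, hence the time-shift. By the definition of the action spectrum, this identifies the set of critical values of $F$ with the action spectrum of $\varphi$.

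It then remains to show that $F$ and $\overline{F}$ have the same critical values. Here I would use that $\overline{L_{\varphi}}$ coincides with the zero section outside a compact set. Over the region where $\overline{L_{\varphi}}$ is the zero section, the restriction of $\overline{F}$ to its fibre critical set is constantly zero: indeed by \eqref{equation: pullback of lambda} this restriction is an action of the generated Lagrangian, and since the zero section has vanishing action and $-d\theta$ is closed but not exact on $S^1 \times S^{2n+1}$ (respectively $S^1 \times S^{2n} \times S^1$), Remark \ref{remark: unique solution} forces it to vanish there. Thus the only critical value coming from the region at infinity is $0$, and $0$ always lies in the action spectrum, being realised by the essential translated points lying outside the support of $\varphi$. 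Over the complementary compact region $\overline{F}$ agrees with $F$, so the two share the same critical points and values. This yields the desired equality of critical value sets and completes (i), with the same argument applying in both sub-cases $B = S^{2n+1}$ and $B = S^{2n} \times S^1$.

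For (ii), I would first note that $\varphi_2^{-1} \circ \varphi_1$ is again a compactly supported lcs Hamiltonian diffeomorphism by Lemma \ref{lemma: composition law Hamiltonians}. Proposition \ref{proposition: difference function}, applied to the induced non-compact functions $F_1$ and $F_2$, provides a bijection between the critical points of $F_1 - F_2$ and the essential translated points of $\varphi_2^{-1} \circ \varphi_1$, matching critical values with actions (hence time-shifts); thus the critical values of $F_1 - F_2$ form the action spectrum of $\varphi_2^{-1} \circ \varphi_1$. Finally, $\overline{F_1} - \overline{F_2}$ and $F_1 - F_2$ have the same critical values by the identical ``critical points over the region at infinity contribute only the value $0$'' argument used in (i). The step I expect to require the most care is precisely this equality between the critical values of the compact and non-compact (difference) generating functions: one must verify that the critical points of the compactified function over the added region at infinity contribute exactly the value $0$, which is present in the action spectrum in any case, and that no spurious critical values are created. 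Everything else is a direct application of the bijections developed in Section \ref{section: lcs product etc}.
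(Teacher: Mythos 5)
Your proposal is correct and follows essentially the same route as the paper: the paper's proof likewise passes to the induced functions $F$ and $F_1 - F_2$ on the non-compact base, notes that these have the same critical values as $\overline{F}$ and $\overline{F_1} - \overline{F_2}$, and concludes via Propositions \ref{proposition: bijection critical points F translated point varphi} and \ref{proposition: difference function}. The only difference is that you spell out why the region at infinity contributes only the critical value $0$ (which lies in the action spectrum in any case), a point the paper leaves implicit.
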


\begin{proof}
If $\overline{F}: S^1 \times S^{2n+1} \times \mathbb{R}^N \rightarrow \mathbb{R}$
is a generating function for a compactly supported
lcs Hamiltonian diffeomorphism $\varphi$ of $S^1 \times \mathbb{R}^{2n+1}$
then the induced function
$F: S^1 \times \mathbb{R}^{2n+1} \times \mathbb{R}^N \rightarrow \mathbb{R}$
is a generating function for $L_{\varphi}$.
Similarly,
if $\overline{F}: S^1 \times S^{2n} \times S^1 \times \mathbb{R}^N \rightarrow \mathbb{R}$
is a generating function for a compactly supported
lcs Hamiltonian diffeomorphism $\varphi$ of $S^1 \times \mathbb{R}^{2n} \times S^1$
then the induced function
$F: S^1 \times \mathbb{R}^{2n+1} \times \mathbb{R}^N \rightarrow \mathbb{R}$
is a generating function for $L_{\varphi_{\mathbb{R}}}$.
In both cases,
$F$ and $\overline{F}$ have the same critical values.
Thus (i) and (ii) follow
from Propositions \ref{proposition: bijection critical points F translated point varphi}
and \ref{proposition: difference function}.
\end{proof}

For compactly supported lcs Hamiltonian diffeomorphisms $\varphi_1$ and $\varphi_2$
of $S^1 \times \mathbb{R}^{2n+1}$ or $S^1 \times \mathbb{R}^{2n} \times S^1$,
we pose $\varphi_1 \leq \varphi_2$
if $\varphi_2 \circ \varphi_1^{-1}$ is the time-1 map
of a non-negative compactly supported lcs Hamiltonian isotopy.
We will prove in Section \ref{section: spectral invariants}
that $\leq$ is a partial order.
One of the main ingredients
will be the next proposition.

\begin{prop}\label{proposition: partial order and gf}
Let $\varphi_1$ and $\varphi_2$
be compactly supported lcs Hamiltonian diffeomorphisms
of $S^1 \times \mathbb{R}^{2n+1}$ or $S^1 \times \mathbb{R}^{2n} \times S^1$
with $\varphi_1 \leq \varphi_2$.
Then there are
generating functions quadratic at infinity
$\overline{F_1}, \overline{F_2}: S^1 \times B \times \mathbb{R}^N \rightarrow \mathbb{R}$
for $\varphi_1$ and $\varphi_2$ respectively,
where $B$ is either $S^{2n+1}$ or $S^{2n} \times S^1$,
with $\overline{F_1} \leq \overline{F_2}$.
\end{prop}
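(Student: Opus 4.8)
The plan is to realize $\varphi_1$ and $\varphi_2$ as the two ends of a single \emph{non-negative} lcs Hamiltonian isotopy of Lagrangian submanifolds, and then to combine the existence theorem for generating functions quadratic at infinity with a monotonicity property of the resulting family. Since $\varphi_1 \leq \varphi_2$, I would first fix a non-negative compactly supported lcs Hamiltonian isotopy $\{\psi_s\}_{s\in[0,1]}$ with $\psi_0 = \id$ and $\psi_1 = \varphi_2\circ\varphi_1^{-1}$, with Hamiltonian $K_s \geq 0$, and set $\chi_s = \psi_s\circ\varphi_1$, so $\chi_0 = \varphi_1$ and $\chi_1 = \varphi_2$. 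Writing $\Delta = \im(\gr_{\lcs}(\id))$ and using Lemma \ref{lemma: product of lcs diffeomorphisms}(i), the conjugated isotopy $\Psi_s := \tau\circ(\id\boxtimes\psi_s)\circ\tau^{-1}$, with $\tau$ the strict exact lcs diffeomorphism of \eqref{equation: tau}, moves $L_{\varphi_1}$ to $L_{\chi_s} = \tau\big((\id\boxtimes(\psi_s\circ\varphi_1))\Delta\big)$, hence $L_{\varphi_1}$ to $L_{\varphi_2}$ at $s=1$. By Lemma \ref{lemma: product of lcs diffeomorphisms}(ii) the isotopy $\{\id\boxtimes\psi_s\}$ has Hamiltonian $0\boxplus K_s$, which is manifestly non-negative; since $\tau$ is strict (its conformal factor vanishes), Lemma \ref{lemma: Hamiltonian conjugation} shows that $\{\Psi_s\}$ is again a non-negative lcs Hamiltonian isotopy. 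These computations carry over verbatim to the compactifications, so $\{\overline{\Psi_s}\}$ is a non-negative lcs Hamiltonian isotopy of $T^*_{-d\theta}(S^1\times B)$ carrying $\overline{L_{\varphi_1}}$ to $\overline{L_{\chi_s}}$, with $\overline{L_{\chi_1}} = \overline{L_{\varphi_2}}$. (In the $S^1\times\mathbb{R}^{2n}\times S^1$ case one works with the lifts to $S^1\times\mathbb{R}^{2n+1}$, whose associated Lagrangians and non-negative isotopies are treated identically.)

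Next I would apply Theorem \ref{theorem: gf}(ii) to the isotopy $\{\overline{\Psi_s}\}$, starting from a generating function quadratic at infinity for $\overline{L_{\varphi_1}}$. This produces a smooth one-parameter family $\overline{G_s}\colon S^1\times B\times\mathbb{R}^N\to\mathbb{R}$ of generating functions quadratic at infinity for $\overline{L_{\chi_s}}$ with $\overline{G_0}$ a stabilization of a generating function for $\varphi_1$. Setting $\overline{F_1} = \overline{G_0}$ and $\overline{F_2} = \overline{G_1}$ then gives generating functions for $\varphi_1$ and $\varphi_2$ on a common domain, and it remains only to prove the pointwise inequality $\overline{F_1}\le\overline{F_2}$, which will follow once $\overline{G_s}$ is shown to be monotone in $s$.

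The heart of the argument, and the main obstacle, is exactly this pointwise monotonicity: Theorem \ref{theorem: gf}(ii) only asserts existence of the family, with no control on $\partial_s\overline{G_s}$ away from the fibre critical set. To obtain it I would pass to the $1$-jet bundle. By Proposition \ref{proposition: relation gf 1 jet} together with Lemma \ref{lemma: untwisting of lift}, a generating function for $\overline{L_{\chi_s}}$ is \emph{literally the same function} as a generating function for the Legendrian $U_{-d\theta}\big(\widetilde{\overline{L_{\chi_s}}}\big)$ of $J^1(S^1\times B)$; moreover, lifting $\{\overline{\Psi_s}\}$ to the contactization by Proposition \ref{proposition: lift of lcs Hamiltonian isotopy to the contactization} and conjugating by the strict untwisting map yields a contact isotopy of $J^1(S^1\times B)$ whose Hamiltonian is again non-negative, since both the lift and the strict conjugation preserve the sign of the Hamiltonian. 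This reduces the claim to the contact statement that the generating function family attached to the image of the zero section under a non-negative contact isotopy is pointwise monotone.

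That contact statement is the genuine content, established as in \cite{San10, San11} from the explicit Chaperon–Sikorav construction of the family, via a derivative identity expressing $\partial_s\overline{G_s}$ \emph{pointwise} in terms of the non-negative generating contact Hamiltonian evaluated on the lifted front $\big(q,\partial_q\overline{G_s},\overline{G_s}\big)$; the sign of this Hamiltonian, dictated by the co-orientation conventions fixed for $\alpha_0$ and for $\lambda - dz$, forces $\overline{G_s}$ to be monotone in the direction giving $\overline{F_1}\le\overline{F_2}$. The delicate point is precisely that this identity must hold at all points of the domain, not merely at the fibre critical points where it would only control the critical values (equivalently, the spectral data); securing the everywhere-statement is what requires opening up the explicit generating-function-family construction rather than invoking the black-box existence result, and it is where I expect the real work to lie.
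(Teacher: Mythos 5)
Your proposal is correct and follows essentially the same route as the paper: a non-negative isotopy generated from $\varphi_2\circ\varphi_1^{-1}$, conjugation of $\{\id\boxtimes\psi_s\}$ by the strict exact diffeomorphism $\tau$, lifting to the contactization and conjugating by the untwisting map $U_{-d\theta}$ to reduce to a non-negative contact isotopy of $J^1(S^1\times B)$, and then the pointwise-monotone family of generating functions built from the Chaperon transition functions --- this is exactly the paper's Lemma \ref{lemma: with transition functions}, whose proof rests on the derivative identity for $\frac{dG_t}{dt}$ in terms of the non-negative contact Hamiltonian that you isolate as the heart of the matter. Your initial detour through the black-box Theorem \ref{theorem: gf}(ii) is superfluous (as you yourself observe, it gives no control away from the fibre critical set), and the paper goes directly to the transition-function construction instead, but this does not affect the correctness of your argument.
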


Before proving Proposition \ref{proposition: partial order and gf}
we recall from \cite[Theorem 3]{Chaperon}
and \cite[Section III.2]{Theret_thesis}
the following construction
(see also \cite[Proposition 3.4]{San11},
\cite[Proposition 3.4]{Giroux}
and \cite[Lemma 2.10]{FSZ}).
The \textit{transition function}
of a $\mathcal{C}^1$-small contactomorphism $\phi$
of a $1$-jet bundle $J^1 \mathbb{R}^m$
is the function $G: J^1 \mathbb{R}^m \rightarrow \mathbb{R}$
defined  by
\[
G (q, p, z) = g_{p, z} (q) - f_{p, z} (q) \,,
\]
where, for every $p$ and $z$,
$f_{p, z} (q) = z + pq$
and $g_{p, z}$ is the function such that
$\im (j^1 g_{p, z}) = \phi \big( \im (j^1 f_{p, z}) \big)$.
In particular,
$G = 0$ if and only if $\phi$ is the identity.
Moreover,
we have the following facts.

\begin{itemize}

\item If $\phi$ is a $\mathcal{C}^1$-small
contactomorphism of $J^1\mathbb{R}^m$
with transition function $G: J^1\mathbb{R}^m \rightarrow \mathbb{R}$
and $\Lambda$ a Legendrian submanifold of $J^1 \mathbb{R}^m$
with generating function
$F: \mathbb{R}^m \times \mathbb{R}^N \rightarrow \mathbb{R}$
then the function
\[
G \,\sharp\, F:
\mathbb{R}^m \times
\big((\mathbb{R}^{m})^{\ast} \times \mathbb{R}^{m} \times \mathbb{R}^N\big)
\rightarrow \mathbb{R} \,,\;
\]
\[
(\underline{q}; p, q, \zeta) \mapsto
G \big( \underline{q}, p, F (\underline{q} + q, \zeta) - p (\underline{q} + q) \big)
+ F (\underline{q} + q, \zeta) - pq
\]
is a generating function for $\phi (\Lambda)$.

\item Denote the coordinates in $J^1\mathbb{R}^m$
by $(q_1, \cdots, q_m, p_1, \cdots, p_m, z)$.
If $\phi$ is a $\mathcal{C}^1$-small
contactomorphism of $J^1\mathbb{R}^m$
that is equivariant by translation by $1$
in the $q_j$-direction
then the transition function
$G: J^1\mathbb{R}^m \rightarrow \mathbb{R}$ of $\phi$
satisfies
\[
G (q_1, \cdots, q_j + 1, \cdots, q_m, p_1, \cdots, p_m, z - p_j)
= G (q_1, \cdots, q_m, p_1, \cdots, p_m, z) \,.
\]
Thus, if moreover
$F: \mathbb{R}^m \times \mathbb{R}^N \rightarrow \mathbb{R}$
is a function that is invariant by translation by $1$ in the $q_j$-direction
then $G \, \sharp \, F$ is invariant  by translation by $1$
in the $\underline{q}_j$-direction.
This implies the following.
We say that $F: \mathbb{R}^{m_1+m_2} \times \mathbb{R}^N \rightarrow \mathbb{R}$
is a generating function for a Legendrian submanifold $\Lambda$
of $J^1 (\mathbb{R}^{m_1} \times (S^1)^{m_2})$
if it is a generating function
for the preimage of $\Lambda$ by the covering map
$J^1 \mathbb{R}^{m_1 + m_2} \rightarrow J^1 (\mathbb{R}^{m_1} \times (S^1)^{m_2})$.
Let $\Lambda$ be a Legendrian submanifold
of $J^1 (\mathbb{R}^{m_1} \times (S^1)^{m_2})$
with generating function $F: \mathbb{R}^{m_1 + m_2}\times \R^N \rightarrow \mathbb{R}$,
$\{\phi_t\}$ a $\mathcal{C}^1$-small
contact isotopy of $J^1 (\mathbb{R}^{m_1} \times (S^1)^{m_2})$
and $G_t: J^1 \mathbb{R}^{m_1 + m_2} \rightarrow \mathbb{R}$
the transition functions of the lift of $\{\phi_t\}$
to $J^1 \mathbb{R}^{m_1 + m_2}$.
Then $G_t \,\sharp\, F$ is a $1$-parameter family
of generating functions for $\{\phi_t(\Lambda)\}$.

\item Let $\{\phi_t\}$ be a $\mathcal{C}^1$-small contact isotopy of $J^1\mathbb{R}^m$
with Hamiltonian function $H_t$ and transition functions $G_t$.
For every $p$ and $z$,
denote by $\{ (\phi_t)_{p,z} \}$ the isotopy of $\mathbb{R}^m$
defined by
\[
(\phi_t)_{p,z} (q) = \pi \circ \phi_t \circ j^1f_{p,z} (q) \,,
\]
where $\pi: J^1\mathbb{R}^m \rightarrow \mathbb{R}^m$
is the projection.
Then
\[
\left. \frac{dG_t}{dt} \right\lvert_{t = t_0} \big( (\phi_t)_{p,z} (q) , p, z \big)
= H_{t_0} \circ \phi_{t_0} \circ j^1f_{p,z} (q) \,.
\]

\item Let $\Lambda$ be a Legendrian submanifold
of $J^1 (\mathbb{R}^{m_1} \times (S^1)^{m_2})$
that coincides with the zero section
outside a compact set of $\mathbb{R}^{m_1} \times (S^1)^{m_2}$,
and $F: \mathbb{R}^{m_1 + m_2} \times \mathbb{R}^N \rightarrow \mathbb{R}$
a generating function for $\Lambda$.
We say that $F$ is a special generating function quadratic at infinity for $\Lambda$
if there is a non-degenerate quadratic form $F_{\infty}$ on $\mathbb{R}^N$
such that $F (x_1, \cdots, x_{m_1}, \theta_1, \cdots, \theta_{m_2}, \zeta) = F_{\infty} (\zeta)$
for $(x_1, \cdots, x_{m_1})$ outside a compact set.
If $F: \mathbb{R}^{m_1 + m_2} \times \mathbb{R}^N \rightarrow \mathbb{R}$
is a special generating function quadratic at infinity for $\Lambda$
and $\{\phi_t\}_{t \in [0, 1]}$ a $\mathcal{C}^1$-small
compactly supported contact isotopy of $J^1 (\mathbb{R}^{m_1} \times (S^1)^{m_2})$
whose lift to $J^1 \mathbb{R}^{m_1 + m_2}$
has transition functions $G_t: J^1 \mathbb{R}^{m_1 + m_2} \rightarrow \mathbb{R}$,
then there is a fibre preserving diffeomorphism $\Phi$
such that $F_t := (G_t \,\sharp\, F) \circ \Phi$
is a $1$-parameter family of special generating functions quadratic at infinity
for $\{\phi_t(\Lambda)\}$
with $F_0$ a stabilization of $F$.
If $\lVert \phi_t \rVert_{\mathcal{C}^1} \leq \epsilon$ for all $t$
then $\lVert G_t \rVert_{\mathcal{C}^0} \leq \epsilon$ for all $t$
and so, since $G_0 = 0$,
$F_1$ and $F_0$ are generating functions quadratic at infinity
for $\phi_1 (\Lambda)$ and $\Lambda$ respectively
with $\lVert F_1 - F_0 \rVert_{\mathcal{C}^0} \leq \epsilon$.

\end{itemize}

In particular the above facts
imply the following results.

\begin{lemma}\label{lemma: with transition functions}
Let $\Lambda_1$ and $\Lambda_2$ be Legendrian submanifolds
of $J^1 (\mathbb{R}^{m_1} \times (S^1)^{m_2})$
that are the image of the zero section
by a compactly supported contact isotopy
and such that $\Lambda_2$ is the image of $\Lambda_1$
by the time-$1$ map of a compactly supported
non-negative contact isotopy.
Let $\overline{\Lambda_1}$ and $\overline{\Lambda_2}$
be the induced Legendrian submanifolds
of $J^1 \big(S^{m_1} \times (S^1)^{m_2}\big)$.
Then there are generating functions quadratic at infinity
$\overline{F_1}, \overline{F_2}:
S^{m_1} \times (S^1)^{m_2} \times \mathbb{R}^N \rightarrow \mathbb{R}$
for $\overline{\Lambda_1}$ and $\overline{\Lambda_2}$
with $\overline{F_1} \leq \overline{F_2}$.
\end{lemma}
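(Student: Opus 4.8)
The plan is to transport a special generating function of $\Lambda_1$ along the non-negative contact isotopy carrying $\Lambda_1$ to $\Lambda_2$, using the transition-function construction recalled above, and to read off the ordering $\overline{F_1}\le\overline{F_2}$ directly from the non-negativity of the generating Hamiltonian.

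First I would fix the data. Let $\{\phi_t\}_{t\in[0,1]}$ be a compactly supported \emph{non-negative} contact isotopy of $J^1(\mathbb{R}^{m_1}\times(S^1)^{m_2})$ with $\phi_0=\id$ and $\phi_1(\Lambda_1)=\Lambda_2$, with Hamiltonian $H_t\ge 0$, and lift it to a $1$-equivariant isotopy of $J^1\mathbb{R}^{m_1+m_2}$. Since $\Lambda_1$ is the image of the zero section by a compactly supported contact isotopy, the existence results of Chaperon, Chekanov, Viterbo and Th\'eret provide a special generating function quadratic at infinity $F\colon\mathbb{R}^{m_1+m_2}\times\mathbb{R}^{N_0}\to\mathbb{R}$ for the lift of $\Lambda_1$, invariant under the $(S^1)^{m_2}$-translations. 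Subdividing $[0,1]$ so that each piece is $\mathcal{C}^1$-small and iterating the last of the displayed facts, I obtain a continuous family $F_t$ of special generating functions quadratic at infinity for $\{\phi_t(\Lambda_1)\}$, all defined on a common space $\mathbb{R}^{m_1+m_2}\times\mathbb{R}^N$, with $F_0$ a stabilization of $F$ and $F_1$ generating $\Lambda_2$.

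The heart of the argument is that non-negativity of $H_t$ forces $t\mapsto F_t$ to be non-decreasing. On a $\mathcal{C}^1$-small piece one has $F_t=(G_t\sharp F)\circ\Phi$ for transition functions $G_t$ and a fixed fibre-preserving diffeomorphism $\Phi$; since only $G_t$ depends on $t$ and the third entry fed into $G_t$ is $t$-independent,
\[
\frac{d}{dt}(G_t\sharp F)(\underline{q};p,q,\zeta)
=\frac{\partial G_t}{\partial t}\big(\underline{q},p,\,F(\underline{q}+q,\zeta)-p(\underline{q}+q)\big).
\]
By the derivative formula, for each $p,z$ the value $\frac{\partial G_t}{\partial t}\big((\phi_t)_{p,z}(q),p,z\big)=H_t\circ\phi_t\circ j^1 f_{p,z}(q)$ is non-negative; as $q\mapsto(\phi_t)_{p,z}(q)$ is a diffeomorphism of $\mathbb{R}^{m_1+m_2}$, this shows $\frac{\partial G_t}{\partial t}\ge 0$ at \emph{every} point, hence $\frac{d}{dt}F_t\ge 0$. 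Integrating over each piece and stabilizing by a common non-degenerate quadratic form at the junctions (which adds the same function to both sides and so preserves pointwise inequalities), I conclude $F_0\le F_1$ on the common space.

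Finally, $F_0$ and $F_1$ are special and agree with the same quadratic form $F_\infty$ outside a compact set in the $\mathbb{R}^{m_1}$-directions, so they descend to generating functions quadratic at infinity $\overline{F_1},\overline{F_2}\colon S^{m_1}\times(S^1)^{m_2}\times\mathbb{R}^N\to\mathbb{R}$ for $\overline{\Lambda_1}$ and $\overline{\Lambda_2}$; the inequality $F_0\le F_1$, which persists at infinity since both sides equal $F_\infty$ there, descends to $\overline{F_1}\le\overline{F_2}$. The main obstacle is the bookkeeping in the second step: subdividing for $\mathcal{C}^1$-smallness while keeping all the $F_t$ on a single fibre space and arranging the stabilizations so that the chain of pointwise inequalities closes up. The positivity mechanism itself is entirely contained in the derivative formula.
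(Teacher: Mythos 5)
Your proposal is correct and is exactly the argument the paper intends: the lemma is stated as a direct consequence of the four bullet-point facts on transition functions, and your write-up assembles them in the standard way (subdivide into $\mathcal{C}^1$-small pieces, derive $\partial G_t/\partial t \geq 0$ everywhere from the derivative formula plus surjectivity of $q \mapsto (\phi_t)_{p,z}(q)$, integrate to get monotonicity of $F_t$, chain the pieces through common stabilizations, and descend via the special quadratic-at-infinity structure). The handling of the two delicate points — the $t$-independence of $\Phi$ and of the arguments fed into $G_t$, and the preservation of pointwise inequalities under stabilization at the junctions — matches the intended proof, so nothing is missing.
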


\begin{lemma}\label{lemma: with transition functions small}
Let $\phi$ be a compactly supported contactomorphism
of $J^1 \big( \mathbb{R}^{m_1} \times (S^1)^{m_2} \big)$
with $\lVert \phi \rVert_{\mathcal{C}^1} \leq \epsilon$
sufficiently small,
and $\Lambda$ a Legendrian submanifold
of $J^1 \big( \mathbb{R}^{m_1} \times (S^1)^{m_2} \big)$
that is the image of the zero section
by a compactly supported contact isotopy.
Let $\overline{\phi}$ and $\overline{\Lambda}$
be the induced contactomorphism and Legendrian submanifold
of $J^1 \big( S^{m_1} \times (S^1)^{m_2} \big)$.
Then there are generating functions quadratic at infinity
$\overline{F_0}, \overline{F_1}:
S^{m_1} \times (S^1)^{m_2} \times \mathbb{R}^N \rightarrow \mathbb{R}$
for $\overline{\Lambda}$ and $\overline{\phi} (\overline{\Lambda})$
with $\lVert \overline{F_1} - \overline{F_0} \rVert_{\mathcal{C}^0} \leq \epsilon$.
\end{lemma}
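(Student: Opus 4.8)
The plan is to reduce this statement to the last of the four facts recalled above, which already produces, from a \emph{special} generating function quadratic at infinity for a Legendrian $\Lambda$ and a $\mathcal{C}^1$-small compactly supported contact isotopy $\{\phi_t\}$, a stabilization $F_0$ of that function (a generating function for $\Lambda$) together with a generating function $F_1$ for $\phi_1(\Lambda)$, with $\lVert F_1 - F_0 \rVert_{\mathcal{C}^0}$ bounded by $\sup_t \lVert G_t \rVert_{\mathcal{C}^0}$. Thus two things remain to be arranged: first, a special generating function quadratic at infinity for $\Lambda$; and second, a realization of the single contactomorphism $\phi$ as the time-$1$ map of a $\mathcal{C}^1$-small compactly supported contact isotopy whose transition functions have $\mathcal{C}^0$-norm at most $\epsilon$.

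For the first point I would start from the zero section, for which $F \equiv 0$ (with no fibre variable) is tautologically a special generating function quadratic at infinity. Since $\Lambda$ is the image of the zero section by a compactly supported contact isotopy, I would write that isotopy as a concatenation of $\mathcal{C}^1$-small compactly supported pieces and apply the $G \,\sharp\, F$ operation recalled above to each piece in turn. Each application preserves both speciality and quadraticity at infinity, so the output is a special generating function quadratic at infinity $F$ for $\Lambda$ (equivalently, a generating function quadratic at infinity $\overline{F_0}$ for $\overline{\Lambda}$ after descent).

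For the second point I would let $G$ denote the transition function of the lift of $\phi$ to $J^1 \mathbb{R}^{m_1 + m_2}$ and define $\phi_t$ to be the contactomorphism whose transition function is $t G$. Because $\phi$ is compactly supported and $\mathcal{C}^1$-small, $G$ is compactly supported and, by the $\mathcal{C}^0$--$\mathcal{C}^1$ estimate in the last fact above, $\lVert G \rVert_{\mathcal{C}^0} \leq \lVert \phi \rVert_{\mathcal{C}^1} \leq \epsilon$. Hence $t G$ is $\mathcal{C}^1$-small for every $t$, the family $\{\phi_t\}_{t \in [0,1]}$ is a $\mathcal{C}^1$-small compactly supported contact isotopy from the identity to $\phi$, and its transition functions are exactly $G_t = t G$, so that $\lVert G_t \rVert_{\mathcal{C}^0} = t \lVert G \rVert_{\mathcal{C}^0} \leq \epsilon$ for all $t$.

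With these in place I would apply the last of the four facts to $F$ and $\{\phi_t\}$, obtaining a common fibre preserving diffeomorphism $\Phi$ and special generating functions quadratic at infinity $F_0 = (G_0 \,\sharp\, F) \circ \Phi$ for $\Lambda$ and $F_1 = (G_1 \,\sharp\, F) \circ \Phi$ for $\phi(\Lambda)$, with $\lVert F_1 - F_0 \rVert_{\mathcal{C}^0} \leq \sup_t \lVert G_t \rVert_{\mathcal{C}^0} \leq \epsilon$ (the difference $(G_1 \,\sharp\, F) - (G_0 \,\sharp\, F)$ being $G_1$ evaluated along the fibre critical locus, since $G_0 = 0$). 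Being special, $F_0$ and $F_1$ descend to generating functions quadratic at infinity $\overline{F_0}$, $\overline{F_1}$ for $\overline{\Lambda}$ and $\overline{\phi}(\overline{\Lambda}) = \overline{\phi(\Lambda)}$ on $S^{m_1} \times (S^1)^{m_2}$, and since the $\mathcal{C}^0$-norm is a supremum it is unchanged under descent, giving $\lVert \overline{F_1} - \overline{F_0} \rVert_{\mathcal{C}^0} \leq \epsilon$. I expect the only delicate point to be the second step: checking that $t \mapsto \phi_{tG}$ is a genuine $\mathcal{C}^1$-small contact isotopy with transition functions precisely $t G$, so that the final $\mathcal{C}^0$-estimate is inherited directly from $\lVert G \rVert_{\mathcal{C}^0} \leq \epsilon$ with no uncontrolled constant, as the sharp bound in the statement requires.
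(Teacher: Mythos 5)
Your proposal is correct and follows essentially the route the paper intends: the paper derives this lemma directly from the four recalled Chaperon--Th\'eret facts, and your argument --- producing a special generating function quadratic at infinity for $\Lambda$ by iterating the $G \,\sharp\, F$ construction along the isotopy from the zero section, joining $\phi$ to the identity through the $\mathcal{C}^1$-small compactly supported isotopy with linearly interpolated transition functions $tG$, applying the last fact with a common fibre preserving diffeomorphism $\Phi$, and descending to $S^{m_1} \times (S^1)^{m_2}$ via speciality --- is exactly the intended fleshing-out. The one point you rightly flag, namely that a sufficiently $\mathcal{C}^1$-small contactomorphism is determined by, and recoverable from, its transition function so that $t \mapsto \phi_{tG}$ is a genuine contact isotopy with $G_t = tG$, is the standard Chaperon--Th\'eret correspondence the paper implicitly invokes, and it can in any case be bypassed: any compactly supported contact isotopy with $\lVert \phi_t \rVert_{\mathcal{C}^1} \leq \epsilon$ for all $t$ ending at $\phi$ suffices, since the last recalled fact itself yields $\lVert G_t \rVert_{\mathcal{C}^0} \leq \epsilon$ and hence the bound $\lVert \overline{F_1} - \overline{F_0} \rVert_{\mathcal{C}^0} \leq \epsilon$.
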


Using Lemma \ref{lemma: with transition functions}
we can now prove Proposition \ref{proposition: partial order and gf}.

\begin{proof}[Proof of Proposition \ref{proposition: partial order and gf}]
Suppose first that $\varphi_1$ and $\varphi_2$
are compactly supported lcs Hamiltonian diffeomorphisms
of $S^1 \times \mathbb{R}^{2n+1}$.
Let $\{\psi_t\}$ be a compactly supported lcs Hamiltonian isotopy
with $\psi_1 = \varphi_2 \circ \varphi_1^{-1}$
and with Hamiltonian function $H_t \geq 0$,
and consider the compactly supported
lcs Hamiltonian isotopy $\{ \Psi_{\psi_t} \}$
of $T^{\ast}_{-d\theta} (S^1 \times \mathbb{R}^{2n+1})$
that is the conjugation by $\tau$
by the compactly supported lcs Hamiltonian isotopy $\{ \id \boxtimes \psi_t \}$
of $(S^1 \times \mathbb{R}^{2n+1}) \boxtimes (S^1 \times \mathbb{R}^{2n+1})$.
Then $L_{\varphi_2} = \Psi_{\psi_1} (L_{\varphi_1})$.
Moreover,
$\{\Psi_{\psi_t}\}$ is a non-negative lcs Hamiltonian isotopy.
Indeed, 
by Lemma \ref{lemma: product of lcs diffeomorphisms}
the Hamiltonian function of $\{\id \boxtimes\, \psi_t\}$ is
\[
0 \boxplus H_t: \big[\big( (\theta_1, p_1) , (\theta_2, p_2) \big)\big]
\mapsto e^{\theta_2 - \theta_1} \, H_t (p_2)
\]
and so, by Lemma \ref{lemma: Hamiltonian conjugation},
the Hamiltonian function of $\{\Psi_{\psi_t}\}$
is $(0 \boxplus H_t) \circ \tau^{-1}$,
which is non-negative
because so is $H_t$.
By Proposition \ref{proposition: lift of lcs Hamiltonian isotopy to the contactization},
the lift of $\{\Psi_{\psi_t}\}$ to the contactization
$J^1_{-d\theta} (S^1 \times \mathbb{R}^{2n+1})$
is a non-negative compactly supported contact isotopy $\{\widetilde{\Psi_{\psi_t}}\}$.
The conjugation
$\{ U_{-d\theta} \circ \widetilde{\Psi_{\psi_t}} \circ (U_{-d\theta})^{-1} \}$
by the untwisting map \eqref{equation: contactomorphism 1 jet}
is thus a non-negative compactly supported contact isotopy of $J^1 (S^1 \times \mathbb{R}^{2n+1})$,
which sends $U_{-d\theta} (\widetilde{L_{\varphi_1}})$
to $U_{-d\theta} (\widetilde{L_{\varphi_2}})$.
By Lemma \ref{lemma: with transition functions},
there are generating functions quadratic at infinity
$\overline{F_1}, \overline{F_2}:
S^1 \times S^{2n+1} \times \mathbb{R}^N \rightarrow \mathbb{R}$
for the compactifications in $J^1 (S^1 \times S^{2n+1})$
of $U_{-d\theta} (\widetilde{L_{\varphi_1}})$ and
$U_{-d\theta} (\widetilde{L_{\varphi_2}})$
with $\overline{F_1} \leq \overline{F_2}$.
Since these compactifications are
$U_{-d\theta} (\widetilde{\overline{L_{\varphi_1}}})$ and
$U_{-d\theta} (\widetilde{\overline{L_{\varphi_2}}})$,
by Proposition \ref{proposition: relation gf 1 jet}
we conclude that $\overline{F_1}$ and $\overline{F_2}$
are generating functions quadratic at infinity
for $\overline{L_{\varphi_1}}$ and $\overline{L_{\varphi_2}}$,
hence for $\varphi_1$ and $\varphi_2$,
with $\overline{F_1} \leq \overline{F_2}$.
If $\varphi_1$ and $\varphi_2$
are compactly supported lcs Hamiltonian diffeomorphisms
of $S^1 \times \mathbb{R}^{2n} \times S^1$
a similar proof applies by considering their lifts
to $S^1 \times \mathbb{R}^{2n+1}$.
\end{proof}

Using Lemma \ref{lemma: with transition functions small}
we also obtain the following result,
which is needed later.

\begin{lemma}\label{lemma: gf of close lcs diffeomorphisms}
Let $\varphi$ be a compactly supported lcs Hamiltonian diffeomorphism
of $S^1 \times \mathbb{R}^{2n+1}$ or $S^1 \times \mathbb{R}^{2n} \times S^1$.
For every $\epsilon > 0$ small enough
there is $\delta > 0$
such that if $\psi$ is a compactly supported
lcs Hamiltonian diffeomorphism
with $\lVert \varphi \circ \psi^{-1} \rVert_{\mathcal{C}^1} \leq \delta$
then there are generating functions quadratic at infinity
$\overline{F_{\varphi}}, \overline{F_{\psi}}:
S^1 \times B \times \mathbb{R}^N \rightarrow \mathbb{R}$
for $\varphi$ and $\psi$ respectively,
where $B$ is either $S^{2n+1}$ or $S^{2n} \times S^1$,
with
$\lVert \overline{F_{\varphi}} - \overline{F_{\psi}} \rVert_{\mathcal{C}^0}
\leq \epsilon$.
\end{lemma}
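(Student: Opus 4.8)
The plan is to realize $L_\varphi$ as the image of $L_\psi$ under a contactomorphism that is $\mathcal{C}^1$-close to the identity whenever $\chi := \varphi\circ\psi^{-1}$ is $\mathcal{C}^1$-close to the identity, and then to invoke Lemma \ref{lemma: with transition functions small}. I would first treat the case of $S^1\times\mathbb{R}^{2n+1}$, the quotient case following by passing to the lifts $\varphi_{\mathbb{R}},\psi_{\mathbb{R}},\chi_{\mathbb{R}}$ to $S^1\times\mathbb{R}^{2n+1}$ and using that $\tau$ is equivariant under $z\mapsto z+1$. Writing $\varphi=\chi\circ\psi$ and combining the functoriality of the lift to the symplectic cover (so that $\id\boxtimes(\chi\circ\psi)=(\id\boxtimes\chi)\circ(\id\boxtimes\psi)$) with Lemma \ref{lemma: product of lcs diffeomorphisms} (i), one gets $\gr_{\lcs}(\varphi)=(\id\boxtimes\chi)\circ\gr_{\lcs}(\psi)$, hence
\[
L_\varphi=\Psi_\chi(L_\psi), \qquad \Psi_\chi:=\tau\circ(\id\boxtimes\chi)\circ\tau^{-1},
\]
where $\tau$ is the strict exact lcs diffeomorphism \eqref{equation: tau}. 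By Lemma \ref{lemma: product of lcs diffeomorphisms} (ii) the map $\id\boxtimes\chi$ is a compactly supported lcs Hamiltonian diffeomorphism, so by Lemma \ref{lemma: Hamiltonian conjugation} and Proposition \ref{proposition: Hamiltonian vs exact} so is $\Psi_\chi$; in particular $\Psi_\chi$ is exact and equals the identity outside a compact set.

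Next I would pass to the contactization. Lifting $\Psi_\chi$ to an exact contactomorphism $\widetilde{\Psi_\chi}$ of the twisted $1$-jet bundle $J^1_{-d\theta}(S^1\times\mathbb{R}^{2n+1})$ and using that the contactization lift is functorial on exact Lagrangians, one obtains $\widetilde{\Psi_\chi}(\widetilde{L_\psi})=\widetilde{L_\varphi}$. Conjugating by the untwisting map $U_{-d\theta}$ of \eqref{equation: contactomorphism 1 jet} and setting
\[
\phi:=U_{-d\theta}\circ\widetilde{\Psi_\chi}\circ U_{-d\theta}^{-1}, \qquad \Lambda:=U_{-d\theta}(\widetilde{L_\psi}),
\]
a contactomorphism and a Legendrian submanifold of $J^1(S^1\times\mathbb{R}^{2n+1})$, one gets $\phi(\Lambda)=U_{-d\theta}(\widetilde{L_\varphi})$, with $\Lambda$ the image of the zero section by a compactly supported contact isotopy (since $L_\psi$ is lcs Hamiltonian isotopic to the zero section). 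As both $\Lambda$ and $\phi(\Lambda)$ coincide with the zero section outside a compact set of the base, they extend/descend to the compactifications $\overline{\Lambda}=U_{-d\theta}(\widetilde{\overline{L_\psi}})$ and $\overline{\phi}(\overline{\Lambda})=U_{-d\theta}(\widetilde{\overline{L_\varphi}})$ in $J^1(S^{2n+1}\times S^1)$.

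I would then apply Lemma \ref{lemma: with transition functions small} with $m_1=2n+1,\,m_2=1$ (respectively $m_1=2n,\,m_2=2$ in the quotient case): once $\|\phi\|_{\mathcal{C}^1}$ is small enough it yields generating functions quadratic at infinity $\overline{F_0},\overline{F_1}$ for $\overline{\Lambda}$ and $\overline{\phi}(\overline{\Lambda})$ with $\lVert\overline{F_1}-\overline{F_0}\rVert_{\mathcal{C}^0}\le\epsilon$. By Proposition \ref{proposition: relation gf 1 jet}, $\overline{F_0}$ and $\overline{F_1}$ are then generating functions quadratic at infinity for $\overline{L_\psi}$ and $\overline{L_\varphi}$, that is for $\psi$ and $\varphi$; setting $\overline{F_\psi}:=\overline{F_0}$ and $\overline{F_\varphi}:=\overline{F_1}$ finishes the argument.

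The main obstacle is the remaining quantitative step: choosing $\delta>0$ so that $\|\chi\|_{\mathcal{C}^1}\le\delta$ forces $\phi$ to be $\mathcal{C}^1$-small enough for Lemma \ref{lemma: with transition functions small} (equivalently, so that the transition function of $\phi$ is $\mathcal{C}^0$-small). Since $\tau$, the contactization lift and $U_{-d\theta}$ are fixed smooth operations sending $\chi=\id$ to $\phi=\id$, this is a $\mathcal{C}^1$-continuity statement, but it must be handled with care because $\phi$ is not compactly supported in the jet-fibre ($z$-)direction: there it acts by the fibrewise affine map $z\mapsto e^{g}(z-S)$, with $g$ and $S$ the conformal factor and action of $\Psi_\chi$, whose base-derivatives involve $dg$ and $dS$. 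The key point I would exploit is that $\Lambda$ and $\phi(\Lambda)$ have compact support in the base and lie in a bounded $z$-window, so only the restriction of $\phi$ to a fixed compact region is relevant, on which the $\mathcal{C}^0$-size of the transition function is controlled by the $\mathcal{C}^0$-size of $g$ and $S$, hence by $\|\chi\|_{\mathcal{C}^1}$. Making this estimate precise, using the explicit formula \eqref{equation: tau} and the compact support of $\chi$, is the delicate part of the proof.
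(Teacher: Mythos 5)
Your proposal follows the paper's proof essentially verbatim: the paper likewise writes $L_{\varphi} = \Psi_{\varphi \circ \psi^{-1}} (L_{\psi})$ with $\Psi_{\varphi \circ \psi^{-1}} = \tau \circ (\id \boxtimes (\varphi \circ \psi^{-1})) \circ \tau^{-1}$, conjugates the contactization lift by the untwisting map $U_{-d\theta}$, applies Lemma \ref{lemma: with transition functions small} to the resulting contactomorphism of $J^1 (S^1 \times \mathbb{R}^{2n+1})$, concludes via Proposition \ref{proposition: relation gf 1 jet}, and treats $S^1 \times \mathbb{R}^{2n} \times S^1$ by passing to lifts. The quantitative step you flag as delicate is handled in the paper no more explicitly than in your sketch: one simply chooses $\delta > 0$ so that $\lVert \phi \rVert_{\mathcal{C}^1} \leq \delta$ forces $\lVert U_{-d\theta} \circ \widetilde{\Psi_{\phi}} \circ (U_{-d\theta})^{-1} \rVert_{\mathcal{C}^1} \leq \epsilon$, applied along a compactly supported lcs Hamiltonian isotopy $\{\phi_t\}$ with $\phi_1 = \varphi \circ \psi^{-1}$ and $\lVert \phi_t \rVert_{\mathcal{C}^1} \leq \delta$ for all $t$.
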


\begin{proof}
Suppose first that $\varphi$ is
a compactly supported lcs Hamiltonian diffeomorphism
of $S^1 \times \mathbb{R}^{2n+1}$.
For every $\psi$ we have
$L_{\varphi} = \Psi_{\varphi \circ \psi^{-1}} (L_{\psi})$,
where $\Psi_{\varphi \circ \psi^{-1}}$ is the conjugation by $\tau$
of $\id \boxtimes (\varphi \circ \psi^{-1})$,
and so
$$
U_{-d\theta} (\widetilde{L_{\varphi}})
= U_{-d\theta} \circ \widetilde{\Psi_{\varphi \circ \psi^{-1}}} \circ (U_{-d\theta})^{-1}
\; \big(U_{-d\theta} (\widetilde{L_{\psi}})\big) \,,
$$
where $\widetilde{L_{\varphi}}$,
$\widetilde{L_{\psi}}$ and $\widetilde{\Psi_{\varphi \circ \psi^{-1}}}$
are the lifts of $L_{\varphi}$, $L_{\psi}$ and $\Psi_{\varphi \circ \psi^{-1}}$
to the contactization $J^1_{-d\theta} (S^1 \times \mathbb{R}^{2n+1})$
and $U_{-d\theta}$ is the untwisting map \eqref{equation: contactomorphism 1 jet}.
Take $\delta > 0$ such that if $\lVert \phi \rVert_{\mathcal{C}^1} \leq \delta$
then $\lVert U_{-d\theta} \circ \widetilde{\Psi_{\phi}} \circ (U_{-d\theta})^{-1} \rVert_{\mathcal{C}^1}
\leq \epsilon$.
Let $\psi$ be a compactly supported lcs Hamiltonian diffeomorphism
of $S^1 \times \mathbb{R}^{2n+1}$
with $\lVert \varphi \circ \psi^{-1} \rVert_{\mathcal{C}^1} \leq \delta$,
and let $\{ \phi_t \}_{t \in [0,1]}$
be a compactly supported lcs Hamiltonian isotopy
of $S^1 \times \mathbb{R}^{2n+1}$
with $\phi_1 = \varphi \circ \psi^{-1}$
and $\lVert \phi_t \rVert_{\mathcal{C}^1} \leq \delta$
for all $t$.
Then
$\lVert U_{-d\theta} \circ \widetilde{\Psi_{\phi_t}} \circ (U_{-d\theta})^{-1} \rVert_{\mathcal{C}^1}
\leq \epsilon$ for all $t$.
By Lemma \ref{lemma: with transition functions small},
there are generating functions quadratic at infinity
$\overline{F_{\varphi}}$ and $\overline{F_{\psi}}$
for the compactifications of $U_{-d\theta} (\widetilde{L_{\varphi}})$
and $U_{-d\theta} (\widetilde{L_{\psi}})$
with $\lVert \overline{F_{\varphi}} - \overline{F_{\psi}} \rVert_{\mathcal{C}^0}
\leq \epsilon$.
Since these compactifications are
$U_{-d\theta} (\widetilde{\overline{L_{\varphi}}})$
and $U_{-d\theta} (\widetilde{\overline{L_{\psi}}})$,
by Proposition \ref{proposition: relation gf 1 jet}
we deduce that $\overline{F_{\varphi}}$ and $\overline{F_{\psi}}$
are generating functions quadratic at infinity
for $\overline{L_{\varphi}}$ and $\overline{L_{\psi}}$,
hence of $\varphi$ and $\psi$.
If $\varphi$ is a compactly supported lcs Hamiltonian diffeomorphism
of $S^1 \times \mathbb{R}^{2n} \times S^1$
the proof is similar.
\end{proof}

We end this section by discussing the relation
with generating functions for contactomorphisms.
We start with the following lemma.

\begin{lemma}\label{lemma: relation contact case associated Lagrangian}
Let $\phi$ be a contactomorphism of $\mathbb{R}^{2n+1}$,
and $\widetilde{\phi}$ its lift to $S^1 \times \mathbb{R}^{2n+1}$.
Consider the Legendrian submanifold
$\Lambda_{\phi}$ of $J^1 \mathbb{R}^{2n+1}$,
and the exact Lagrangian submanifold $L_{\widetilde{\phi}}$
of $T^{\ast}_{-d\theta} (S^1 \times \mathbb{R}^{2n+1})$.
Let $\widetilde{L_{\widetilde{\phi}}}$
be the lift of $L_{\widetilde{\phi}}$
to the contactization $J^1_{-d\theta} (S^1 \times \mathbb{R}^{2n+1})$.
Then
\[
U_{-d\theta} \, \big( \, \widetilde{L_{\widetilde{\phi}}} \, \big)
= 0_{S^1} \times \Lambda_{\phi} \,,
\]
where $U_{-d\theta}$ is the untwisting map
\eqref{equation: contactomorphism 1 jet}
from $J^1_{-d\theta} (S^1 \times \mathbb{R}^{2n+1})$
to $J^1 (S^1 \times \mathbb{R}^{2n+1})$
and where we identify $J^1 (S^1 \times \mathbb{R}^{2n+1})$
with $T^{\ast}S^1 \times J^1 \mathbb{R}^{2n+1}$.
\end{lemma}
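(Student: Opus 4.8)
The plan is to recognize that, once the chain of identifications hidden in the definition of $\tau$ is unwound, the Lagrangian $L_{\widetilde{\phi}}$ is nothing but the lift $S^1 \times \Lambda_{\phi}$ of $\Lambda_{\phi}$ to the locally conformal symplectization, so that the statement reduces directly to Lemma \ref{lemma: untwisting of lift}. First I would recall from \eqref{equation: tau} that $\tau$ is the composite of the lcs diffeomorphism $I$ of \eqref{equation: identification product} (Proposition \ref{proposition: lcs product}), followed by the lift $\widetilde{\tau_{\cont}}$ of $\tau_{\cont}$ to the locally conformal symplectizations, followed by the inverse of the strict exact identification \eqref{equation: identification jet} of Lemma \ref{lemma: identification twisted cotangent bundle with conformal symplectization}. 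Since by definition
\[
\Gamma_{\widetilde{\phi}} = \tau \circ \gr_{\lcs}(\widetilde{\phi}),
\]
I would then apply Proposition \ref{proposition: relation graphs lcs contact}, which gives $I \circ \gr_{\lcs}(\widetilde{\phi}) = \widetilde{\gr(\phi)}$, the lift to the symplectization of the contact graph of $\phi$.

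Composing with $\widetilde{\tau_{\cont}}$ and using that $\im(\tau_{\cont} \circ \gr(\phi)) = \Lambda_{\phi}$ by definition of $\Lambda_{\phi}$, the image of $\widetilde{\tau_{\cont}} \circ \widetilde{\gr(\phi)}$ is exactly $S^1 \times \Lambda_{\phi} \subset S^1 \times J^1 \mathbb{R}^{2n+1}$. Transporting back through \eqref{equation: identification jet}, I conclude that under the identification of Lemma \ref{lemma: identification twisted cotangent bundle with conformal symplectization} the Lagrangian $L_{\widetilde{\phi}} = \im(\Gamma_{\widetilde{\phi}})$ coincides, as a subset of $T^{\ast}_{-d\theta}(S^1 \times \mathbb{R}^{2n+1})$, with $S^1 \times \Lambda_{\phi}$.

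The remaining point — and the one that will require the most care — is that this equality of subsets upgrades to an equality of contactization lifts, namely $\widetilde{L_{\widetilde{\phi}}} = \widetilde{S^1 \times \Lambda_{\phi}}$ inside $J^1_{-d\theta}(S^1 \times \mathbb{R}^{2n+1})$. Both are lifts of the same exact Lagrangian with respect to the same (tautological) Liouville form, so they can differ only through their action functions. Here I would invoke Remark \ref{remark: unique solution}: the Lagrangian $S^1 \times \Lambda_{\phi} \cong S^1 \times \mathbb{R}^{2n+1}$ is connected and the restriction of the Lee form $-d\theta$ to it is not exact, so its action is uniquely determined. Hence the two action functions coincide and the two contactization lifts agree.

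Finally I would apply Lemma \ref{lemma: untwisting of lift} with $B = \mathbb{R}^{2n+1}$ and $\Lambda = \Lambda_{\phi}$, which yields $U_{-d\theta}(\widetilde{S^1 \times \Lambda_{\phi}}) = 0_{S^1} \times \Lambda_{\phi}$; combined with $\widetilde{L_{\widetilde{\phi}}} = \widetilde{S^1 \times \Lambda_{\phi}}$ from the previous step this gives the claim. The only genuine subtlety is the action-matching; everything else is bookkeeping of the chain of strict exact identifications, which I would keep at the level of diagram-chasing rather than coordinates. As a consistency check one could instead verify the statement directly from the explicit formula \eqref{equation: formula for associated Lagrangian} for $\Gamma_{\widetilde{\phi}}$ together with the form of the lift $\widetilde{\phi}$, but the conceptual route above is cleaner and avoids the computation.
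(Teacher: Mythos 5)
Your proposal is correct and follows essentially the same route as the paper's proof, which likewise identifies $T^{\ast}_{-d\theta}(S^1 \times \mathbb{R}^{2n+1})$ with $S^1 \times J^1\mathbb{R}^{2n+1}$ via Lemma \ref{lemma: identification twisted cotangent bundle with conformal symplectization}, deduces $L_{\widetilde{\phi}} = S^1 \times \Lambda_{\phi}$ from Proposition \ref{proposition: relation graphs lcs contact}, and concludes with Lemma \ref{lemma: untwisting of lift}. Your extra action-matching step via Remark \ref{remark: unique solution} is correctly resolved and merely makes explicit what the paper leaves implicit, namely that the contactization lift of this exact Lagrangian is unambiguous because its action is unique.
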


\begin{proof}
Identify $T^{\ast}_{-d\theta} (S^1 \times \mathbb{R}^{2n+1})$
with $S^1 \times J^1 \mathbb{R}^{2n+1}$
by the strict lcs diffeomorphism
of Lemma \ref{lemma: identification twisted cotangent bundle with conformal symplectization}.
By Proposition \ref{proposition: relation graphs lcs contact}
we then have $L_{\widetilde{\phi}} = S^1 \times \Lambda_{\phi}$.
We thus conclude using Lemma \ref{lemma: untwisting of lift}.
\end{proof}

Using Lemma \ref{lemma: relation contact case associated Lagrangian}
we now obtain the following relation between generating functions.

\begin{prop}\label{proposition: relation gf contact}
Let $\phi$ be a compactly supported contactomorphism
of $\mathbb{R}^{2n+1}$ or $\mathbb{R}^{2n} \times S^1$
contact isotopic to the identity,
and $\widetilde{\phi}$ its lift to $S^1 \times \mathbb{R}^{2n+1}$
or $S^1 \times \mathbb{R}^{2n} \times S^1$ respectively.
Let $\overline{F}: B \times \mathbb{R}^N \rightarrow \mathbb{R}$,
where $B$ is either $S^{2n+1}$ or $S^{2n} \times S^1$,
be a generating function for $\phi$.
Then the function
$\overline{F} \circ \pr_2:
S^1 \times (B \times \mathbb{R}^N) \rightarrow \mathbb{R}$,
where $\pr_2$ is the projection to the second factor,
is a generating function for $\widetilde{\phi}$.
\end{prop}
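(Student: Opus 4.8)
The plan is to unwind both notions of ``generating function'' through the untwisting map and reduce everything to a single transparent observation: adjoining a free $S^1$-factor to the base, together with a function that is constant in the $S^1$-direction, takes a Legendrian $\overline{\Lambda_\phi}$ to the product $0_{S^1}\times\overline{\Lambda_\phi}$. Write $B'$ for the relevant compact manifold ($S^{2n+1}$ or $S^{2n}\times S^1$), so that $\overline{F}\colon B'\times\mathbb{R}^N\to\mathbb{R}$ generates $\overline{\Lambda_\phi}\subset J^1B'$ and $G:=\overline{F}\circ\pr_2$ is a function over the base $S^1\times B'$. By definition I must show that $G$ generates the Lagrangian $\overline{L_{\widetilde{\phi}}}\subset T^{\ast}_{-d\theta}(S^1\times B')$. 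First I would apply Proposition \ref{proposition: relation gf 1 jet}, with base $S^1\times B'$ and $\beta=-d\theta$, to replace this by the equivalent claim that $G$ generates, in the usual sense, the Legendrian $U_{-d\theta}\big(\widetilde{\overline{L_{\widetilde{\phi}}}}\big)\subset J^1(S^1\times B')$.

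The next step is to identify this Legendrian. Lemma \ref{lemma: relation contact case associated Lagrangian} gives, before compactification, the identity $U_{-d\theta}\big(\widetilde{L_{\widetilde{\phi}}}\big)=0_{S^1}\times\Lambda_\phi$ inside $J^1(S^1\times\mathbb{R}^{2n+1})\cong T^{\ast}S^1\times J^1\mathbb{R}^{2n+1}$. Since the lift to the contactization, the untwisting map and the compactification procedure all commute, and since both Legendrians coincide with the zero section outside a compact set, passing to compactifications should yield
\[
U_{-d\theta}\big(\widetilde{\overline{L_{\widetilde{\phi}}}}\big)=0_{S^1}\times\overline{\Lambda_\phi}
\]
inside $J^1(S^1\times B')\cong T^{\ast}S^1\times J^1B'$. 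It then remains to check the clean statement that $G=\overline{F}\circ\pr_2$ generates $0_{S^1}\times\overline{\Lambda_\phi}$. Because $G(\theta,q,\zeta)=\overline{F}(q,\zeta)$ is independent of $\theta$, its vertical derivative equals that of $\overline{F}$, so transversality of $dG$ to the subbundle of covectors vanishing on vertical vectors follows from that of $d\overline{F}$, and the fibre critical set is $\Sigma_G=S^1\times\Sigma_{\overline{F}}$. On $\Sigma_G$ the differential of $G$ annihilates $\partial/\partial\theta$ and restricts over $T_qB'$ to the base differential of $\overline{F}$, so the Legendrian lift is
\[
j_G(\theta,q,\zeta)=\big((\theta,0)\,,\,j_{\overline{F}}(q,\zeta)\big),
\]
whose image is exactly $0_{S^1}\times\im(j_{\overline{F}})=0_{S^1}\times\overline{\Lambda_\phi}$; as $j_{\overline{F}}$ is an embedding, so is $j_G$, which finishes the argument.

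The main obstacle I anticipate is purely bookkeeping rather than conceptual: making the compatibility of compactification with the lift-and-untwist operations rigorous, and handling the $\mathbb{R}^{2n}\times S^1$ case, where Lemma \ref{lemma: relation contact case associated Lagrangian} is only stated for $\mathbb{R}^{2n+1}$. There I would first apply the lemma to the lift $\phi_{\mathbb{R}}$ on $\mathbb{R}^{2n+1}$, whose induced lcs map on $S^1\times\mathbb{R}^{2n+1}$ is $\widetilde{\phi_{\mathbb{R}}}=\widetilde{\phi}_{\mathbb{R}}$, obtain the identity $U_{-d\theta}\big(\widetilde{L_{\widetilde{\phi_{\mathbb{R}}}}}\big)=0_{S^1}\times\Lambda_{\phi_{\mathbb{R}}}$, and then pass to the $\mathbb{Z}$-quotient in the $z$-direction and compactify, using the equivariance relations for $\tau_{\cont}$ and $\tau$ recorded in Section \ref{section: lcs product etc}. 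Everything else is the routine verification above, which I would keep to the two short computations displayed.
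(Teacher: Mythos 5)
Your proposal follows essentially the same route as the paper's proof: both reduce, via Proposition \ref{proposition: relation gf 1 jet}, to showing that $\overline{F} \circ \pr_2$ generates the Legendrian $0_{S^1} \times \overline{\Lambda_{\phi}} = U_{-d\theta}\big(\widetilde{\overline{L_{\widetilde{\phi}}}}\big)$ supplied by Lemma \ref{lemma: relation contact case associated Lagrangian}, and both handle the $\mathbb{R}^{2n} \times S^1$ case by applying that lemma to the lift $\phi_{\mathbb{R}}$ to $\mathbb{R}^{2n+1}$. Your explicit computation that a $\theta$-independent function generates the product Legendrian (with $\Sigma_G = S^1 \times \Sigma_{\overline{F}}$), and your flagging of the compatibility of compactification with the lift-and-untwist operations, are steps the paper treats as immediate, so your write-up is correct and, if anything, slightly more careful.
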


\begin{proof}
Suppose first that $\phi$ is a contactomorphism of $\mathbb{R}^{2n+1}$.
By definition,
$\overline{F}: S^{2n+1} \times \mathbb{R}^N \rightarrow \mathbb{R}$
is a generating function for the Legendrian submanifold
$\overline{\Lambda_{\phi}}$ of $J^1 S^{2n+1}$.
This implies that
$\overline{F} \circ \pr_2$
is a generating function for the Legendrian submanifold
$0_{S^1} \times \overline{\Lambda_{\phi}}$ of $J^1 (S^1 \times S^{2n+1})
\equiv T^{\ast}S^1 \times J^1 S^{2n+1}$.
Lemma \ref{lemma: relation contact case associated Lagrangian}
implies that $0_{S^1} \times \overline{\Lambda_{\phi}}
= U_{-d\theta} ( \widetilde{\overline{L_{\widetilde{\phi}}}})$.
By Proposition \ref{proposition: relation gf 1 jet}
we thus conclude that $\overline{F} \circ \pr_2$ is a generating function
for $\overline{L_{\widetilde{\phi}}}$,
hence for $\widetilde{\phi}$.
If $\phi$ is a contactomorphism of $\mathbb{R}^{2n} \times S^1$
the proof is similar,
by applying Lemma \ref{lemma: relation contact case associated Lagrangian}
to its lift to $\mathbb{R}^{2n+1}$.
\end{proof}

\section{Spectral selectors for compactly supported lcs
Hamiltonian diffeomorphisms of $S^1 \times \mathbb{R}^{2n+1}$
and $S^1 \times \mathbb{R}^{2n} \times S^1$}
\label{section: spectral invariants}

In this section we define spectral selectors
for compactly supported lcs
Hamiltonian diffeomorphisms of $S^1 \times \mathbb{R}^{2n+1}$
and $S^1 \times \mathbb{R}^{2n} \times S^1$.
Our construction is analogous to those
in \cite{Viterbo, Bhupal, San10, San11}
of spectral selectors for compactly supported
Hamiltonian diffeomorphisms of $\mathbb{R}^{2n}$
and for compactly supported contactomorphisms
of $\mathbb{R}^{2n+1}$ and $\mathbb{R}^{2n} \times S^1$
contact isotopic to the identity.

We first recall from \cite{Viterbo}
the definition of spectral selectors
for functions quadratic at infinity.
Let $B$ be a compact manifold
and $F: E = B \times \mathbb{R}^N \rightarrow \mathbb{R}$
a function quadratic at infinity.
Let $E^- \rightarrow B$ be the subbundle of $E \rightarrow B$
on which the quadratic form $F_{\infty}$
is negative definite,
and consider the Thom isomorphism
\[
T: H^{\ast}(B) \rightarrow H^{\ast + \ind(F_{\infty})} \big( D(E^-), S(E^-) \big) \,,
\]
where we denote by $D(E^-)$ and $S(E^-)$ the disc and sphere bundles
of the vector bundle $E^- \rightarrow B$.
We consider the sublevel sets
\[
E^{a} = \{\, e \in E \;\lvert\; F(e) \leq a \,\} \,,
\]
and we denote $E^{-\infty} = E^{a}$
for $a$ smaller than all the critical values of $F$.
We then consider the inclusion
\[
i_a: (E^{a}, E^{-\infty}) \hookrightarrow (E, E^{-\infty}) \,,
\]
and the induced homomorphism in cohomology
\[
i_a^{\ast}: H^{\ast} (E, E^{-\infty}) \rightarrow H^{\ast} (E^{a}, E^{-\infty}) \,.
\]
Finally we consider the composition
\[
i_a^{\,\ast} \,\circ\, \ex \,\circ\, T:
H^{\ast} (B) \rightarrow H^{\ast + \ind (F_{\infty})} (E^{a}, E^{-\infty}) \,,
\]
where
\[
\ex: H^{\ast} \big( D(E^-), S(E^-) \big) \rightarrow H^{\ast} (E, E^{-\infty})
\]
is the excision isomorphism.
For $a$ smaller than all the critical values of $F$
the homomorphism $i_a^{\,\ast} \,\circ\, \ex \,\circ\, T$ vanishes,
while for $a$ bigger than all the critical values of $F$
it is an isomorphism.
For any non-zero class $u$ in $H^{\ast}(B)$ we define
$$
c(u,F) = \inf \left\{\, a \in \mathbb{R} ~|~
i_a^{\, \ast} \,\circ\, \ex \,\circ\, T \, (u) \neq 0 \,\right\} \,.
$$
The properties in the following proposition
are proved in \cite{Viterbo}.
In (\ref{properties spectral sel gf: PD})
and (\ref{properties spectral sel gf: non-degeneracy}),
$[B]$ and $1$ denote respectively
the fundamental and unit classes in $H^{\ast}(B)$.
For (\ref{properties spectral sel gf: equivalence}),
recall that two functions $F_1$ and $F_2$ quadratic at infinity
are said to be equivalent
if there are non-degenerate quadratic forms $Q_1$ and $Q_2$
and a fibre preserving diffeomorphism $\Phi$
such that $F_1 \oplus Q_1 = (F_2 \oplus Q_2) \circ \Phi$.

\begin{prop}\label{proposition: properties Viterbo}
The spectral selectors $c (u, F)$ satisfy the following properties:
\renewcommand{\theenumi}{\roman{enumi}}
\begin{enumerate}
\item \label{properties spectral sel gf: spectrality} \emph{Spectrality:}
$c(u, F)$ is a critical value of $F$.
\item \label{properties spectral sel gf: continuity} \emph{Continuity:}
if $\lvert F_1 - F_2 \, \rvert_{\mathcal{C}^0} \leq \epsilon$
then
$$
\lvert \, c(u, F_1) - c(u, F_2) \, \rvert \leq \epsilon \,.
$$
\item \label{properties spectral sel gf: monotonicity} \emph{Monotonicity:}
if $F_1, F_2: B \times \mathbb{R}^{N} \rightarrow \mathbb{R}$
satisfy $F_1 \leq F_2$
then $c (u, F_1) \leq c (u, F_2)$.
\item \label{properties spectral sel gf: PD} \emph{Poincar\'e duality:}
$c([B], - F) = - c(1, F)$.
\item \label{properties spectral sel gf: non-degeneracy} \emph{Non-degeneracy:}
if $F: B \times \mathbb{R}^N \rightarrow \mathbb{R}$
is a generating function quadratic at infinity
such that $c([B], F) = c(1, F)$
then $F$ generates the zero section of $T^{\ast}B$.
\item \label{properties spectral sel gf: sum} \emph{Triangle inequality:}
let $F_1: B \times \mathbb{R}^{N_1} \rightarrow \mathbb{R}$
and $F_2: B \times \mathbb{R}^{N_2} \rightarrow \mathbb{R}$
be functions quadratic at infinity,
and consider the function quadratic at infinity
$F_1 + F_2: B \times \mathbb{R}^{N_1} \times \mathbb{R}^{N_2} \rightarrow \mathbb{R}$
defined by
\[
(F_1 + F_2) (q, \zeta_1, \zeta_2) = F_1 (q, \zeta_1) + F_2 (q, \zeta_2) \,.
\]
Then
\[
c (u \cup v, F_1 + F_2) \geq c (u, F_1) + c (v, F_2) \,.
\]
\item \label{properties spectral sel gf: equivalence} \emph{Equivalence:}
if $F_1$ and $F_2$ are equivalent
then $c (u, F_1) = c (u, F_2)$ for any $u$.
\end{enumerate}
\end{prop}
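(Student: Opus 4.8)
The plan is to verify all seven properties within the minimax framework of \cite{Viterbo}, where they are established; I will indicate how each reduces to the basic principle that the sublevel sets $E^a=\{F\le a\}$ change homotopy type only across critical values of $F$. Choosing a metric for which $F$ agrees with its quadratic part $F_\infty$ outside a compact set, the negative gradient flow is complete and preserves the fibrewise behaviour at infinity, so whenever $[a_1,a_2]$ contains no critical value it deformation retracts $E^{a_2}$ onto $E^{a_1}$ relative to $E^{-\infty}$. \emph{Spectrality} is then immediate: near a regular value the composition $i_a^{\,\ast}\circ\ex\circ T(u)$ is locally constant, so the jump that defines $c(u,F)$ can only occur at a critical value.

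I would next dispatch the three soft properties. For \emph{monotonicity}, $F_1\le F_2$ gives $\{F_2\le a\}\subseteq\{F_1\le a\}$ for every $a$, and naturality of $i_a^{\,\ast}\circ\ex\circ T$ under this inclusion of pairs shows that every detection level of $u$ for $F_2$ is also one for $F_1$, whence $c(u,F_1)\le c(u,F_2)$. \emph{Continuity} then follows from the translation identity $c(u,F+\mathrm{const})=c(u,F)+\mathrm{const}$ (the sublevel sets are merely shifted) applied to the chain $F_2-\epsilon\le F_1\le F_2+\epsilon$. \emph{Equivalence} splits into invariance under a fibre preserving diffeomorphism $\Phi$, which is a homeomorphism of the pairs $(E^a,E^{-\infty})$ intertwining every map in the definition, and invariance under stabilisation $F\mapsto F\oplus Q$, where $\ind(F_\infty)$ and the negative subbundle $E^-$ shift compatibly with the Thom isomorphism so that $c(u,F)$ is left unchanged.

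The two structural properties need slightly more. \emph{Poincar\'e duality} is obtained by pairing the Thom isomorphism of the negative subbundle of $F$ against that of the negative subbundle of $-F$ (the positive subbundle of $F$), using that sublevel sets of $-F$ are superlevel sets of $F$ together with the Lefschetz duality identifying $H^{\ast}(E^a,E^{-\infty})$ with the cohomology of the complementary superlevel set; tracing the classes $[B]$ and $1$ through this pairing yields $c([B],-F)=-c(1,F)$. For \emph{non-degeneracy} I would first record the general bound $c(1,F)\le c([B],F)$, which holds because detection of the top class forces detection of the unit ($\ex\circ T([B])$ being a cup multiple of $\ex\circ T(1)$); the difference $c([B],F)-c(1,F)$ is thus a nonnegative homological width, and its vanishing forces $F$ to have a single critical value attained along the whole base. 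For a generating function of a Lagrangian $L\subset T^{\ast}B$ this means $L$ is graphical and meets the zero section everywhere, hence equals the zero section.

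The main obstacle is the \emph{triangle inequality}, whose superadditive direction $c(u\cup v,F_1+F_2)\ge c(u,F_1)+c(v,F_2)$ is delicate. I would derive it from the multiplicativity of the Thom class under external Whitney sum, which expresses $\ex\circ T(u\cup v)$ as an external cup product of $\ex\circ T(u)$ and $\ex\circ T(v)$ over the common base, combined with a minimax comparison. The crucial point is the choice of sublevel-set containment: the naive inclusion $E_1^{a}\times_B E_2^{b}\subseteq(E_1+E_2)^{a+b}$ yields only the \emph{subadditive} estimate $c(u\cup v,F_1+F_2)\le c(u,F_1)+c(v,F_2)$, so the superadditive bound must instead be extracted by running the argument on the complementary superlevel sets and invoking \emph{Poincar\'e duality}, exactly as in \cite{Viterbo}. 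This is the step where the direction of the inequality has to be tracked with care.
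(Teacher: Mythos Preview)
The paper does not prove this proposition at all: immediately before the statement it says ``The properties in the following proposition are proved in \cite{Viterbo},'' and no argument is given. So there is nothing in the paper to compare your sketch against; what you have written is essentially an outline of Viterbo's original proofs, which is exactly what the paper defers to.

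Your sketch is broadly faithful to \cite{Viterbo}. Two small points where your wording drifts from the actual argument: for non-degeneracy~(v), the equality $c([B],F)=c(1,F)$ does not force $F$ to have a single critical value, nor does it make $L$ graphical; the Lusternik--Schnirelmann cup-length argument shows instead that the critical set at the common level carries the fundamental class of $B$, hence surjects onto $B$, so $L$ meets the zero section over every point and therefore coincides with it. For the triangle inequality~(vi), your identification of the delicate direction is correct, but the mechanism you describe (passing to superlevel sets via Poincar\'e duality) is not quite how \cite{Viterbo} proceeds; the proof there exploits the exact sequence of the triple together with the multiplicativity of the relative cup product, rather than a duality flip. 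Since you ultimately defer to \cite{Viterbo} for this step, as does the paper, this does not affect the correctness of your proposal.
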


Let now $\varphi$ be a compactly supported
lcs Hamiltonian diffeomorphism of $S^1 \times \mathbb{R}^{2n+1}$
or $S^1 \times \mathbb{R}^{2n} \times S^1$,
and $\overline{F}: S^1 \times B \times \mathbb{R}^N \rightarrow \mathbb{R}$,
where $B$ is either $S^{2n+1}$ or $S^{2n} \times S^1$,
a generating function quadratic at infinity for $\varphi$.
We define 
\[
c_+ (\varphi) = c ([S^1 \times B], \overline{F})
\quad \text{ and } \quad c_- (\varphi) = c (1, \overline{F}) \,,
\]
where $[S^1 \times B]$ and $1$
are the fundamental and unit classes in $H^{\ast} (S^1 \times B)$.
By Proposition \ref{proposition: properties Viterbo}
(\ref{properties spectral sel gf: equivalence})
and the uniqueness statement
in Proposition \ref{proposition: gf lcs diffeomorphisms}, the quantities 
$c_+ (\varphi)$ and $c_- (\varphi)$ are well-defined,
in the sense that they do not depend on the choice
of the generating function quadratic at infinity
$\overline{F}$ for $\varphi$.
Moreover,
by Proposition \ref{proposition: properties Viterbo}
(\ref{properties spectral sel gf: spectrality})
and Proposition \ref{proposition: action spectrum} (i)
they belong to the action spectrum of $\varphi$.
In particular,
$c_{\pm} (\id) = 0$.

Before discussing the other properties of $c_+$ and $c_-$
we prove the following lemma,
which will be the key ingredient
to obtain the invariance by conjugation property.

\begin{lemma}\label{lemma: translated points of conjugation}
\renewcommand{\theenumi}{\roman{enumi}} \
\begin{enumerate}
\item Let $\varphi$ and $\psi$ be  compactly supported
lcs Hamiltonian diffeomorphisms of $S^1 \times \mathbb{R}^{2n + 1}$.
Suppose that $p$ is an essential translated point of $\varphi$
of time-shift equal to zero.
Then $\psi(p)$ is an essential translated points of $\psi \circ \varphi \circ \psi^{-1}$
of time-shift equal to zero.
Moreover,
$\psi(p)$ is a non-degenerate translated point
of $\psi \circ \varphi \circ \psi^{-1}$
if and only if $p$ is a non-degenerate translated point of $\varphi$.

\item Let $\varphi$ and $\psi$ be compactly supported
lcs Hamiltonian diffeomorphisms of $S^1 \times \mathbb{R}^{2n} \times S^1$.
Suppose that $p$ is an essential translated point of $\varphi$
of time-shift equal to an integer $k$.
Then $\psi(p)$ is an essential translated points of $\psi \circ \varphi \circ \psi^{-1}$
of time-shift equal to $k$.
Moreover,
$\psi(p)$ is a non-degenerate translated point
of $\psi \circ \varphi \circ \psi^{-1}$
if and only if $p$ is a non-degenerate translated point of $\varphi$.
\end{enumerate}
\end{lemma}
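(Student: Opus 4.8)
The plan is to reduce both statements to the single observation that, on these two locally conformal symplectizations, an essential translated point of integer time-shift is nothing but a fixed point of $\varphi$ at which the conformal factor and the action both vanish, and then to check that these three conditions are preserved by conjugation.

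First I would unwind the hypotheses. On $S^1\times\mathbb{R}^{2n+1}$ the Lee flow $\{\varphi_t^{\omega}\}$ with respect to $(-d\theta,d_{-d\theta}\alpha_0)$ is $(\theta,x,y,z)\mapsto(\theta,x,y,z-t)$, and on $S^1\times\mathbb{R}^{2n}\times S^1$ it is given by the same formula with $z\in\mathbb{R}/\mathbb{Z}$; in particular $\varphi_T^{\omega}=\id$ exactly when $T=0$ in case (i) and when $T\in\mathbb{Z}$ in case (ii). Hence if $p$ is a translated point of $\varphi$ of time-shift $0$ (resp.\ $k\in\mathbb{Z}$), the Lee chord condition $\varphi_T^{\omega}(\varphi(p))=p$ becomes $\varphi(p)=p$, so $p$ is a fixed point of $\varphi$; the translated point condition gives $g_{\varphi}(p)=0$, where $g_{\varphi}$ denotes the conformal factor of $\varphi$; and by Example \ref{example: action translated point in conformal symplectization} the chord is essential precisely when $S_{\varphi}(p)=0$. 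In case (ii) the time-shift and the action are those of the canonical lift $\varphi_{\mathbb{R}}$ to $S^1\times\mathbb{R}^{2n+1}$, but since the action descends this still reads $S_{\varphi}(p)=0$.

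Next, writing $\chi=\psi\circ\varphi\circ\psi^{-1}$, which is again a compactly supported lcs Hamiltonian diffeomorphism by Lemma \ref{lemma: Hamiltonian conjugation} (and so exact with a well-defined action by Remark \ref{remark: unique solution}), I would verify the three conditions at $\psi(p)$. Since $\varphi(p)=p$ one has $\chi(\psi(p))=\psi(\varphi(p))=\psi(p)$, so $\psi(p)$ is a fixed point of $\chi$; as $\varphi_T^{\omega}=\id$ this already produces the Lee chord of the correct time-shift, and in case (ii) the equivariance of $\psi_{\mathbb{R}}$ under the integer Lee translations $\sigma_k=(\sigma_1)^{k}$ gives $\chi_{\mathbb{R}}(\psi_{\mathbb{R}}(\bar{p}))=\sigma_k(\psi_{\mathbb{R}}(\bar{p}))$, so the time-shift of $\psi(p)$ is again $k$. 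For the conformal factor, a short computation gives $g_{\chi}=(g_{\psi}\circ\varphi+g_{\varphi}-g_{\psi})\circ\psi^{-1}$, and evaluating at $\psi(p)$ with $\varphi(p)=p$ and $g_{\varphi}(p)=0$ makes the two $g_{\psi}$-terms cancel, so $g_{\chi}(\psi(p))=0$.

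The essential condition is the step requiring genuine computation, and is where the failure of $\psi$ to be strict prevents a direct appeal to Proposition \ref{proposition: action invariant by conjugaiton by strict exact}. The plan is to record the composition rule for actions of exact lcs diffeomorphisms, $S_{\varphi_1\circ\varphi_2}=S_{\varphi_2}+e^{-g_{\varphi_2}}\,(S_{\varphi_1}\circ\varphi_2)$ (the exact-diffeomorphism analogue of Lemma \ref{lemma: composition law Hamiltonians}, proved by the same kind of direct $d_{\eta}$-computation used in Proposition \ref{proposition: action invariant by conjugaiton by strict exact}, which in particular yields $S_{\psi^{-1}}=-(e^{g_{\psi}}S_{\psi})\circ\psi^{-1}$), and to apply it twice to $\chi=\psi\circ(\varphi\circ\psi^{-1})$. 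Evaluating the resulting expression for $S_{\chi}$ at $\psi(p)$ and using $\varphi(p)=p$, $g_{\varphi}(p)=0$ and $S_{\varphi}(p)=0$, the only surviving contributions are $\pm\,e^{g_{\psi}(p)}S_{\psi}(p)$, which cancel, so $S_{\chi}(\psi(p))=0$; by Example \ref{example: action translated point in conformal symplectization} this is exactly the essentiality of $\psi(p)$. I expect this cancellation to be the main obstacle, precisely because it is what genuinely uses the fixed-point reduction: for a translated point of non-integer time-shift the $g_{\psi}$ and $S_{\psi}$ contributions would not cancel, and conjugation by a non-strict $\psi$ would not preserve essentiality.

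Finally, for non-degeneracy I would use once more that $\varphi_T^{\omega}=\id$, so that the endomorphism governing transversality at $p$ is simply $\varphi_{\ast}|_p$, while at $\psi(p)$ it is $\chi_{\ast}|_{\psi(p)}=\psi_{\ast}|_p\circ\varphi_{\ast}|_p\circ(\psi_{\ast}|_p)^{-1}$. Thus $Y\mapsto\psi_{\ast}|_p(Y)$ is a linear isomorphism $T_pM\to T_{\psi(p)}M$ intertwining the fixed-vector conditions $\varphi_{\ast}(Y)=Y$ and $\chi_{\ast}(Y')=Y'$. It then remains to match the conformal-factor conditions: differentiating $g_{\chi}=(g_{\psi}\circ\varphi+g_{\varphi}-g_{\psi})\circ\psi^{-1}$ at $\psi(p)$ and restricting to vectors with $\varphi_{\ast}(Y)=Y$, the $g_{\psi}$-terms cancel again and one obtains $dg_{\chi}|_{\psi(p)}(\psi_{\ast}Y)=dg_{\varphi}|_p(Y)$. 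Hence there is a non-zero $Y$ with $\varphi_{\ast}(Y)=Y$ and $dg_{\varphi}(Y)=0$ if and only if there is a non-zero $Y'=\psi_{\ast}Y$ with $\chi_{\ast}(Y')=Y'$ and $dg_{\chi}(Y')=0$, which is exactly the assertion that $\psi(p)$ is non-degenerate if and only if $p$ is.
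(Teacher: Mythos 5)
Your proof is correct and follows essentially the same route as the paper: reduce the integer (resp.\ zero) time-shift to the fixed-point condition $\varphi(p)=p$ via periodicity of the Lee flow, compute the conformal factor and action of the conjugation and evaluate at $\psi(p)$, use equivariance of $\psi_{\mathbb{R}}$ under integer Lee translations in case (ii), and settle non-degeneracy by differentiating $g_{\chi}\circ\psi=g_{\psi}\circ\varphi+g_{\varphi}-g_{\psi}$ on $\varphi_{\ast}$-fixed vectors. The only (cosmetic) difference is that you derive $S_{\psi\circ\varphi\circ\psi^{-1}}$ from a general composition rule $S_{\varphi_1\circ\varphi_2}=S_{\varphi_2}+e^{-g_{\varphi_2}}(S_{\varphi_1}\circ\varphi_2)$ rather than asserting it directly, and your resulting formula $S_{\chi}=\bigl[e^{g_{\psi}}(S_{\varphi}-S_{\psi}+e^{-g_{\varphi}}\,S_{\psi}\circ\varphi)\bigr]\circ\psi^{-1}$ is consistent with Proposition \ref{proposition: action invariant by conjugaiton by strict exact} and corrects an immaterial overall sign in the formula displayed in the paper's proof.
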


\begin{proof}
We prove (ii),
the proof of (i) being similar.
Since the time-shift of $p$ is an integer,
we have $\varphi (p) = p$.
Let $g$ and $f$ be the conformal factors
of $\varphi$ and $\psi$ respectively.
Then the conformal factor of $\psi \circ \varphi \circ \psi^{-1}$ is
$(f \circ \varphi + g - f) \circ \psi^{-1}$.
In particular,
since $g(p) = 0$ and $\varphi (p) = p$
we have
\[
(f \circ \varphi + g - f ) \circ \psi^{-1}
\, \big(\psi(p)\big) = 0 \,.
\]
Let $\varphi_{\mathbb{R}}$ and $\psi_{\mathbb{R}}$
be the lifts of $\varphi$ and $\psi$ to $S^1 \times \mathbb{R}^{2n+1}$,
and $\bar{p}$ a point of $S^1 \times \mathbb{R}^{2n+1}$
projecting to $p$.
Then $\bar{p}$ is a translated point of $\varphi_{\mathbb{R}}$
of time-shift $k$,
in particular there is a Lee chord of time-shift $k$
from $\varphi_{\mathbb{R}} (\bar{p})$ to $\bar{p}$.
Since $k$ is an integer
and $\psi_{\mathbb{R}}$ is equivariant by translation by $1$
in the Lee direction,
this implies that there is a Lee chord
of time-shift $k$
between $\psi_{\mathbb{R}} \big(\varphi_{\mathbb{R}}(\bar{p})\big)
= (\psi_{\mathbb{R}} \circ \varphi_{\mathbb{R}} \circ \psi_{\mathbb{R}}^{-1})
\big(\psi_{\mathbb{R}}(\bar{p})\big)$
and $\psi_{\mathbb{R}}(\bar{p})$.
Since $\psi_{\mathbb{R}}(\bar{p})$ projects to $\psi(p)$
and $\psi_{\mathbb{R}} \circ \varphi_{\mathbb{R}} \circ \psi_{\mathbb{R}}^{-1}
= ( \psi \circ \varphi \circ \psi^{-1} )_{\mathbb{R}}$,
we conclude that $\psi(p)$ is a translated point
of $\psi \circ \varphi \circ \psi^{-1}$
of time-shift $k$.
Let now $S_{\varphi}$ be the action of $\varphi$
and $S_{\psi}$ the action of $\psi$.
Then the action of $\psi \circ \varphi \circ \psi^{-1}$ is
\[
S_{\psi \circ \varphi \circ \psi^{-1}} =
e^{f \circ \psi^{-1}} (S_{\psi} - S_{\varphi} - e^{- g} \, S_{\psi} \circ \varphi) \circ \psi^{-1} \,.
\]
Since $p$ is an essential translated point of $\varphi$,
by Example \ref{example: action translated point in conformal symplectization}
we have $S_{\varphi} (p) = 0$.
Since moreover $g(p) = 0$ and $\varphi(p) = p$,
we conclude that $S_{\psi \circ \varphi \circ \psi^{-1}} \big(\psi(p)\big) = 0$
and so, by Example \ref{example: action translated point in conformal symplectization},
$\psi(p)$ is an essential translated point
of $\psi \circ \varphi \circ \psi^{-1}$.
We now show that $\psi(p)$ is non-degenerate
if and only if so is $p$.
Suppose that $p$ is a degenerate translated point of $\varphi$,
thus there is a non-zero tangent vector $Y$ at $p$
such that $\varphi_{\ast}(Y) = Y$ and $dg(Y) = 0$.
Then
\[
(\psi \circ \varphi \circ \psi^{-1})_{\ast} \big( \psi_{\ast}(Y) \big)
= \psi_{\ast} (Y)
\]
and
\[
d \big( ( f \circ \varphi + g  - f) \circ \psi^{-1} \big)
\big( \psi_{\ast}(Y) \big)
= df \big( \varphi_{\ast}(Y) \big) + dg(Y) - df (Y)
= 0 \,,
\]
thus $\psi(p)$ is a degenerate translated point
of $\psi \circ \varphi \circ \psi^{-1}$.
The other implication is similar.
\end{proof}

We can now prove the following properties
of the spectral selectors $c_+$ and $c_-$.

\begin{prop}\label{proposition: properties of spectral invariants}
The spectral selectors $c_+$ and $c_-$
on $\Ham^c (S^1 \times \mathbb{R}^{2n+1})$
and $\Ham^c (S^1 \times \mathbb{R}^{2n} \times S^1)$
satisfy the following properties:
\renewcommand{\theenumi}{\roman{enumi}}
\begin{enumerate}
\item \label{properties spectral sel diffeomorphisms: continuity} \emph{Continuity:}
$c_+$ and $c_-$ are continuous with respect to the $\mathcal{C}^1$-topology.
\item \label{properties spectral sel diffeomorphisms: monotonicity} \emph{Monotonicity:}
if $\varphi_1 \leq \varphi_2$ then $c_+ (\varphi_1) \leq c_+ (\varphi_2)$
and $c_- (\varphi_1) \leq c_- (\varphi_2)$.
\item \label{properties spectral sel diffeomorphisms: PD} \emph{Poincar\'e duality:}
$c_- (\varphi) = - c_+ (\varphi ^{-1})$.
\item \label{properties spectral sel diffeomorphisms: positivity} \emph{Positivity:}
$c_+ (\varphi) \geq 0$ and $c_- (\varphi) \leq 0$.
\item \label{properties spectral sel diffeomorphisms: non-degeneracy} \emph{Non-degeneracy:}
$c_+ (\varphi) = c_- (\varphi) = 0$ if and only if $\varphi$ is the identity.
\item \label{properties spectral sel diffeomorphisms: triangle inequality} \emph{Triangle inequality:}
$c_+ (\varphi_1 \circ \varphi_2) \leq c_+ (\varphi_1) + c_+ (\varphi_2)$
and $c_- (\varphi_1 \circ \varphi_2) \geq c_- (\varphi_1) + c_- (\varphi_2)$.
\item \label{properties spectral sel diffeomorphisms: invariance by conjugation}
\emph{Invariance by conjugation:}
if $\varphi$ and $\psi$ are compactly supported
lcs Hamiltonian diffeomorphisms of $S^1 \times \mathbb{R}^{2n+1}$
then $c_+ (\varphi) = 0$ if and only if $c_+ (\psi \circ \varphi \circ \psi^{-1}) = 0$,
and similarly for $c_-$.
If $\varphi$ and $\psi$ are compactly supported lcs Hamiltonian diffeomorphisms
of $S^1 \times \mathbb{R}^{2n} \times S^1$
then $\lceil c_+ (\psi \circ \varphi \circ \psi^{-1})\rceil
= \lceil c_+ (\phi) \rceil$
and $\lfloor c_+ (\psi \circ \varphi \circ \psi^{-1}) \rfloor
= \lfloor c_+ (\phi) \rfloor$,
and similarly for $c_-$,
where $\lceil \,\cdot\, \rceil$ and $\lfloor \,\cdot\, \rfloor$
denote the ceiling and floor functions.
\end{enumerate}
\end{prop}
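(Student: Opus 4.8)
The plan is to transfer each property from the spectral selectors for functions quadratic at infinity (Proposition~\ref{proposition: properties Viterbo}) to the selectors $c_\pm$ via the generating function machinery developed above. The main tools are Proposition~\ref{proposition: gf lcs diffeomorphisms} (existence and uniqueness of generating functions quadratic at infinity), Proposition~\ref{proposition: action spectrum} (which identifies critical values of $\overline{F}$ and of difference functions with the action spectrum), and the partial-order and $\mathcal{C}^1$-closeness statements in Proposition~\ref{proposition: partial order and gf} and Lemma~\ref{lemma: gf of close lcs diffeomorphisms}.

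First I would treat the straightforward properties. Continuity (\ref{properties spectral sel diffeomorphisms: continuity}) follows from Lemma~\ref{lemma: gf of close lcs diffeomorphisms}, which furnishes $\mathcal{C}^0$-close generating functions for $\mathcal{C}^1$-close diffeomorphisms, combined with the continuity property of Proposition~\ref{proposition: properties Viterbo}~(\ref{properties spectral sel gf: continuity}). Monotonicity (\ref{properties spectral sel diffeomorphisms: monotonicity}) follows from Proposition~\ref{proposition: partial order and gf}: when $\varphi_1\leq\varphi_2$ we obtain generating functions with $\overline{F_1}\leq\overline{F_2}$, and then apply Proposition~\ref{proposition: properties Viterbo}~(\ref{properties spectral sel gf: monotonicity}). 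Poincar\'e duality (\ref{properties spectral sel diffeomorphisms: PD}) should follow by observing that a generating function for $\varphi^{-1}$ can be taken to be $-\overline{F}$ (up to equivalence), reducing to Proposition~\ref{proposition: properties Viterbo}~(\ref{properties spectral sel gf: PD}); this requires checking the relation between $L_\varphi$ and $L_{\varphi^{-1}}$ under $\tau$. Positivity (\ref{properties spectral sel diffeomorphisms: positivity}) follows since $c_+(\varphi)=c([S^1\times B],\overline{F})\geq c(1,\overline{F})=c_-(\varphi)$ always holds, together with duality applied to the identity, or directly from $c_{\pm}(\id)=0$ via monotonicity after writing $\varphi$ against the identity. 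The triangle inequality (\ref{properties spectral sel diffeomorphisms: triangle inequality}) follows from Proposition~\ref{proposition: properties Viterbo}~(\ref{properties spectral sel gf: sum}) applied to the difference-function description of composition in Proposition~\ref{proposition: action spectrum}~(ii), using the standard ``broken geodesics'' concatenation of generating functions.

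Next I would prove non-degeneracy (\ref{properties spectral sel diffeomorphisms: non-degeneracy}). If $c_+(\varphi)=c_-(\varphi)=0$ then $c([S^1\times B],\overline{F})=c(1,\overline{F})$, so by Proposition~\ref{proposition: properties Viterbo}~(\ref{properties spectral sel gf: non-degeneracy}) the Lagrangian $\overline{L_\varphi}$ is the zero section; hence $L_\varphi$ is the zero section, and since $\tau$ is a diffeomorphism sending the graph of the identity to the zero section, $\gr_{\lcs}(\varphi)=\gr_{\lcs}(\id)$, forcing $\varphi=\id$. The converse is immediate from $c_{\pm}(\id)=0$.

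The hard part will be invariance by conjugation (\ref{properties spectral sel diffeomorphisms: invariance by conjugation}), since conjugation does not in general respect the generating-function picture in a way that preserves the selector values exactly. Here the strategy is spectral rather than direct: by Proposition~\ref{proposition: action spectrum}~(i) the value $c_+(\varphi)$ lies in the action spectrum, i.e.\ it is the action/time-shift of an essential translated point, and Lemma~\ref{lemma: translated points of conjugation} shows that conjugation by $\psi$ sets up a bijection on essential translated points preserving the time-shift (in the $S^1\times\mathbb{R}^{2n+1}$ case, preserving the property of having time-shift zero; in the $S^1\times\mathbb{R}^{2n}\times S^1$ case, preserving integer time-shifts). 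The plan is to combine this spectral rigidity with continuity and the path-connectedness of the conjugating element to a homotopy argument: one connects $\psi$ to the identity through a compactly supported lcs Hamiltonian isotopy $\{\psi_s\}$, considers the continuous path $s\mapsto c_+(\psi_s\circ\varphi\circ\psi_s^{-1})$, notes it takes values in a discrete-modulo-$\mathbb{Z}$ set by spectrality and Lemma~\ref{lemma: translated points of conjugation}, and concludes that $\lceil c_+\rceil$ and $\lfloor c_+\rfloor$ (respectively the vanishing of $c_+$ in the $\mathbb{R}^{2n+1}$ case) are constant along the path. The delicate point will be ensuring the action spectrum along the conjugation path stays controlled enough for the continuous selector to remain trapped between consecutive integers, which is exactly what Lemma~\ref{lemma: translated points of conjugation} is designed to guarantee.
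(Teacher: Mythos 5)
Your outline matches the paper for continuity, monotonicity, and non-degeneracy, and your reading of conjugation-invariance via Lemma~\ref{lemma: translated points of conjugation} is essentially the paper's strategy (though note the selector does \emph{not} take values in a discrete set along the path: the actual mechanism is that if the continuous path $t \mapsto c_t$ ever hits an integer $k$ it freezes at $k$, because an essential translated point of integer time-shift persists under conjugation; and the paper still needs a Bhupal-style perturbation argument to handle the case where the relevant essential translated point is degenerate). But two of your steps have genuine gaps. First, positivity~(\ref{properties spectral sel diffeomorphisms: positivity}): your argument fails because monotonicity only applies to $\leq$-comparable elements, and an arbitrary compactly supported lcs Hamiltonian diffeomorphism is not comparable with the identity (its Hamiltonian has no sign), while the inequality $c_+(\varphi) \geq c_-(\varphi)$ carries no sign information at all. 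The paper's proof of $c_-(\varphi) \leq 0$ is a genuinely topological argument at infinity: since $\overline{L_{\varphi}}$ coincides with the zero section over a neighborhood of $S^1 \times \{p_{\infty}\}$ (resp.\ $S^1 \times \{p_{\infty}\} \times S^1$), zero is the only critical value of the restriction of $\overline{F}$ to the fibres over this set, and a commutative diagram relating the Thom isomorphism, excision and restriction shows $i_0^{\,\ast} \circ \ex \circ T\,(1) \neq 0$; then $c_+ \geq 0$ follows by duality.

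Second, your route to Poincar\'e duality~(\ref{properties spectral sel diffeomorphisms: PD}) --- that $-\overline{F}$ generates $\varphi^{-1}$ up to equivalence --- is exactly the step that breaks in this setting. In Viterbo's symplectic case the factor exchange is intertwined by the linear map $\tau_{\symp}$ with the fibrewise map $(q,p) \mapsto (q,-p)$, but under the lcs identification $\tau$ the exchange of the two factors of the lcs product involves the conformal rescalings $e^{(\Theta - \theta)/2}$ and does not correspond to $(q,p,z) \mapsto (q,-p,-z)$; Remark~\ref{remark: properties for contactomorphisms} shows the exact (un-floored) duality is delicately sensitive to the choice of $\tau$, and with the identification used in \cite{San10, San11} only $\lfloor c_- \rfloor = -\lceil c_+ \rceil$ holds. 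The paper instead proves $c(u, \overline{F_{\varphi^{-1}}}) = c(u, -\overline{F_{\varphi}})$ by a continuation argument: taking $1$-parameter families $\overline{F_t}$, $\overline{G_t}$ of generating functions for $\{\varphi_t\}$ and $\{\varphi_t \circ \varphi^{-1}\}$, observing via Proposition~\ref{proposition: action spectrum}~(ii) that the critical values of $\overline{G_t} - \overline{F_t}$ form the \emph{fixed} nowhere-dense action spectrum of $\varphi^{-1}$, so that $t \mapsto c(u, \overline{G_t} - \overline{F_t})$ is continuous with values in a nowhere-dense set, hence constant, and then identifying the endpoints up to equivalence. The same continuation step --- not a ``broken geodesics'' concatenation --- is what underlies the triangle inequality: $\overline{F_{\varphi_1 \circ \varphi_2}} - \overline{F_{\varphi_1}}$ is \emph{not} a generating function for $\varphi_2$, only its critical value set agrees with the spectrum of $\varphi_2$, and the equality of selectors must be propagated along the isotopy before Proposition~\ref{proposition: properties Viterbo}~(\ref{properties spectral sel gf: sum}) can be applied. (A last minor point: in non-degeneracy, Viterbo's criterion lives in the usual cotangent bundle, so one should pass through $U_{-d\theta}$ and $J^1(S^1 \times B)$ before concluding that $\overline{L_{\varphi}}$ is the zero section.)
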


\begin{proof}
Continuity (\ref{properties spectral sel diffeomorphisms: continuity})
follows from Proposition \ref{proposition: properties Viterbo}
(\ref{properties spectral sel gf: continuity})
and Lemma \ref{lemma: gf of close lcs diffeomorphisms},
and monotonicity (\ref{properties spectral sel diffeomorphisms: monotonicity})
from Proposition \ref{proposition: properties Viterbo}
(\ref{properties spectral sel gf: monotonicity})
and Proposition \ref{proposition: partial order and gf}.

Poincar\'e duality (\ref{properties spectral sel diffeomorphisms: PD})
can be seen as follows.
Let $\overline{F_{\varphi}}$ and $\overline{F_{\varphi^{-1}}}$
be generating functions quadratic at infinity
for $\varphi$ and $\varphi^{-1}$ respectively.
We first show that,
for any non-zero class $u$,
\begin{equation}\label{equation: in proof PD}
c (u, \overline{F_{\varphi^{-1}}}) = c (u, - \overline{F_{\varphi}}) \,.    
\end{equation}
To see this,
let $\{\varphi_t\}$ be a compactly supported lcs Hamiltonian isotopy
with $\varphi_1 = \varphi$,
let $\overline{F_t}$ and $\overline{G_t}$
be $1$-parameter families of generating functions quadratic at infinity
for $\{\varphi_t\}$ and $\{\varphi_t \circ \varphi^{-1}\}$ respectively,
and set
\[
c_t = c (u \,,\, \overline{G_t} - \overline{F_t}) \,.
\]
By Proposition \ref{proposition: action spectrum} (ii),
for every $t$ the set of critical values
of $\overline{G_t} - \overline{F_t}$
is equal to the action spectrum of $\varphi^{-1}$.
The functions $\overline{G_t} - \overline{F_t}$
have thus all the same set $S$ of critical values.
By Proposition \ref{proposition: properties Viterbo}
(\ref{properties spectral sel gf: spectrality}),
$c_t$ is in $S$ for all $t$.
By Proposition \ref{proposition: properties Viterbo}
(\ref{properties spectral sel gf: continuity}),
the map $t \mapsto c_t$ is continuous.
Since $S$ is nowhere dense
we conclude that $t \mapsto c_t$ is constant,
and so $c_0 = c (u, \overline{G_0} - \overline{F_0})$
is equal to $c_1 = c (u, \overline{G_1} - \overline{F_1})$.
But $\overline{G_0}$
is a generating function quadratic at infinity for $\varphi^{-1}$,
hence equivalent to $\overline{F_{\varphi^{-1}}}$,
and $\overline{F_0}$
is a generating function quadratic at infinity for the identity,
hence equivalent to the zero function.
Thus $\overline{G_0} - \overline{F_0}$
is equivalent to $\overline{F_{\varphi^{-1}}}$.
Similarly,
$\overline{G_1} - \overline{F_1}$
is equivalent to $- \overline{F_{\varphi}}$.
By Proposition \ref{proposition: properties Viterbo}
(\ref{properties spectral sel gf: equivalence}),
we thus obtain \eqref{equation: in proof PD}.
Using \eqref{equation: in proof PD}
and Proposition \ref{proposition: properties Viterbo} (\ref{properties spectral sel gf: PD})
we then conclude that
\[
c_-(\varphi) = c (1, \overline{F_{\varphi}})
= - c ([S^1 \times B], - \overline{F_{\varphi}})
= - c ([S^1 \times B], \overline{F_{\varphi^{-1}}})
= - c_+ (\varphi^{-1}) \,.
\]

By the Poincar\'e duality property
(\ref{properties spectral sel diffeomorphisms: PD}),
positivity (\ref{properties spectral sel diffeomorphisms: positivity})
follows if we prove that $c_- (\varphi) \leq 0$.
This can be seen as follows.
Let $\overline{F}: E = S^1 \times B \times \mathbb{R}^N \rightarrow \mathbb{R}$,
where $B$ is either $S^{2n+1}$ or $S^{2n} \times S^1$,
be a generating function quadratic at infinity for $\varphi$.
Since
\[
c_- (\varphi) = c (1, \overline{F})
= \inf \{\, a \in \mathbb{R} \;\lvert\; i_a^{\,\ast} \,\circ\, \ex \,\circ\, T \, (1) \neq 0 \,\} \,,
\]
we need to show that $i_0^{\,\ast} \,\circ\, \ex \,\circ\, T \, (1) \neq 0$.
Suppose first that $B$ is $S^{2n+1}$,
and denote by $p_{\infty} \in S^{2n+1}$
the point at infinity 
when seeing $S^{2n+1}$ as the $1$-point compactification of $\mathbb{R}^{2n+1}$.
Denote by $E_{S^1 \times \{p_{\infty}\}}$,
$E^0_{S^1 \times \{p_{\infty}\}}$ and $E^{-\infty}_{S^1 \times \{p_{\infty}\}}$
the preimages of $S^1 \times \{p_{\infty}\}$
by the projections of $E$, $E^0$ and $E^{-\infty}$
to $S^1 \times S^{2n+1}$.
Consider the commutative diagram
\[
\begin{tikzcd}
H^{\ast} (S^1 \times S^{2n+1})
\arrow{r}{\ex \circ T} \arrow{d}
& H^{\ast + \ind (\overline{F}_{\infty})} (E, E^{-\infty})
\arrow{r}{i_0} \arrow{d}
&  H^{\ast + \ind (\overline{F}_{\infty})} (E^0, E^{-\infty})
\arrow{d}\\
H^{\ast} (S^1 \times \{p_{\infty}\})
\arrow{r}{\ex \circ T}
& H^{\ast + \ind (\overline{F}_{\infty})}
\big(E_{S^1 \times \{p_{\infty}\}}, E_{S^1 \times \{p_{\infty}\}}^{-\infty}\big)
\arrow{r}{i_0}
&  H^{\ast + \ind (\overline{F}_{\infty})}
\big(E_{S^1 \times \{p_{\infty}\}}^0, E_{S^1 \times \{p_{\infty}\}}^{-\infty}\big)
\end{tikzcd}
\]
where the vertical arrows are the homomorphisms induced by the inclusions.
Since the image of
\[
i_{\overline{F}, -d\theta}: \Sigma_{\overline{F}}
\rightarrow T^{\ast}_{-d\theta} (S^1 \times S^{2n+1}) \,,\;
(\theta, p, \zeta) \mapsto
\Big( \theta \,,\, p \,,\,
\frac{\partial \overline{F}}{\partial \theta} (\theta, p, \zeta) + \overline{F} (\theta, p, \zeta)
\,,\, \frac{\partial \overline{F}}{\partial p} (\theta, p, \zeta) \Big)
\]
is $\overline{L_{\varphi}}$,
which by construction is equal to the zero section
on a neighborhood of $S^1 \times \{p_{\infty}\}$,
for every $\theta \in S^1$ and $\zeta \in \mathbb{R}^N$
the point $i_{\overline{F}, -d\theta} (\theta, p_{\infty}, \zeta)$
belongs to the zero section.
If $(\theta, p_{\infty}, \zeta)$ is a critical point of $\overline{F}$
it thus has critical value zero.
We conclude that zero is the only critical value
of $\left. \overline{F} \right\lvert_{E_{S^1 \times \{p_{\infty}\}}}$,
and so that the map
\[
i_0: H^{\ast + \ind (\overline{F}_{\infty})}
\big(E_{S^1 \times \{p_{\infty}\}}, E_{S^1 \times \{p_{\infty}\}}^{-\infty}\big)
\rightarrow H^{\ast + \ind (\overline{F}_{\infty})}
\big(E_{S^1 \times \{p_{\infty}\}}^0, E_{S^1 \times \{p_{\infty}\}}^{-\infty}\big)
\]
is an isomorphism.
In particular,
the image of the unit class of $H^{\ast} (S^1 \times \{p_{\infty}\})$
by
\[
i_0^{\,\ast} \,\circ\, \ex \,\circ\, T:
H^{\ast} (S^1 \times \{p_{\infty}\})
\rightarrow H^{\ast + \ind (\overline{F}_{\infty})}
\big(E_{S^1 \times \{p_{\infty}\}}^0, E_{S^1 \times \{p_{\infty}\}}^{-\infty}\big)
\]
is different than zero.
Since the first vertical arrow
sends the unit class of $H^{\ast} (S^1 \times S^{2n+1})$
to the unit class of $H^{\ast} (S^1 \times \{p_{\infty}\})$,
we conclude that the image
of the unit class of $H^{\ast} (S^1 \times S^{2n+1})$ by
\[
i_0^{\,\ast} \,\circ\, \ex \,\circ\, T:
H^{\ast} (S^1 \times S^{2n+1})
\rightarrow H^{\ast + \ind (\overline{F}_{\infty})}
(E^0, E^{-\infty})
\]
is different than zero,
as we wanted.
If $B$ is $S^{2n} \times S^1$
the proof is similar,
considering the restriction of $E$
to $S^1 \times \{p_{\infty}\} \times S^1$,
where $p_{\infty}$ is the point at infinity of $S^{2n}$.

We now prove non-degeneracy
(\ref{properties spectral sel diffeomorphisms: non-degeneracy}).
Suppose that $c_+ (\varphi) = c_- (\varphi) = 0$,
thus
\[
c ([S^1 \times B], \overline{F}) = c (1, \overline{F}) = 0
\]
for any generating function quadratic at infinity
$\overline{F}: S^1 \times B \times \mathbb{R}^N \rightarrow \mathbb{R}$
for $\varphi$.
By Proposition \ref{proposition: properties Viterbo}
(\ref{properties spectral sel gf: non-degeneracy}),
$\overline{F}$ generates the zero section of $T^{\ast} (S^1 \times B)$,
hence the graph in $J^1 (S^1 \times B)$
of the $1$-jet of a constant function.
But, by definition,
$\overline{F}$ generates the Lagrangian submanifold $\overline{L_{\varphi}}$
of $T^{\ast}_{-d\theta} (S^1 \times B)$
and so, by Proposition \ref{proposition: relation gf 1 jet},
the Legendrian submanifold $U_{-d\theta} (\widetilde{\overline{L_{\varphi}}})$
of $J^1 (S^1 \times B)$.
Thus,
$U_{-d\theta} (\widetilde{\overline{L_{\varphi}}})$
is the graph in $J^1 (S^1 \times B)$
of the $1$-jet of a constant function.
Since $U_{-d\theta} (\widetilde{\overline{L_{\varphi}}})$
intersects the zero section,
we conclude that it is the zero section.
But this implies that $\widetilde{\overline{L_{\varphi}}}$ is the zero section,
thus $\overline{L_{\varphi}}$ is the zero section
and so $\varphi$ is the identity.

By the Poincar\'e duality property
(\ref{properties spectral sel diffeomorphisms: PD}),
it is enough to prove the triangle inequality
(\ref{properties spectral sel diffeomorphisms: triangle inequality})
for $c_+$.
This can be done as follows.
Let $\overline{F_{\varphi_1}}$,
$\overline{F_{\varphi_2}}$ and $\overline{F_{\varphi_1 \circ \varphi_2}}$
be generating functions quadratic at infinity
for $\varphi_1$, $\varphi_2$ and $\varphi_1 \circ \varphi_2$ respectively.
We first show that,
for any non-zero class $u$,
\begin{equation}\label{equation: in proof triangle inequality}
c (u, \overline{F_{\varphi_2}})
= c (u, \overline{F_{\varphi_1 \circ \varphi_2}} - \overline{F_{\varphi_1}}) \,.
\end{equation}
To see this,
let $\{\varphi_t\}$ be a lcs Hamiltonian isotopy
with time-$1$ map $\varphi_1$,
let $\overline{F_t}$ and $\overline{G_t}$
be $1$-parameter families of generating functions quadratic at infinity
for $\{\varphi_t\}$ and $\{\varphi_t \circ \varphi_2\}$ respectively,
and set
\[
c_t = c (u \,,\, \overline{G_t} - \overline{F_t}) \,.
\]
By Proposition \ref{proposition: action spectrum} (ii),
for every $t$ the set of critical values of $\overline{G_t} - \overline{F_t}$
is equal to the action spectrum of $\varphi_2$.
The functions $\overline{G_t} - \overline{F_t}$
have thus all the same set $S$ of critical values.
By Proposition \ref{proposition: properties Viterbo}
(\ref{properties spectral sel gf: spectrality}),
$c_t$ is in $S$ for all $t$.
By Proposition \ref{proposition: properties Viterbo}
(\ref{properties spectral sel gf: continuity})
the map $t \mapsto c_t$ is continuous. 
Since $S$ is nowhere dense
we conclude that $t \mapsto c_t$ is constant,
and so $c_0 = c (u, \overline{G_0} - \overline{F_0})$
is equal to $c_1 = c (u, \overline{G_1} - \overline{F_1})$.
Since $\overline{G_0} - \overline{F_0}$
is equivalent to $\overline{F_{\varphi_2}}$
and $\overline{G_1} - \overline{F_1}$ is equivalent to 
$\overline{F_{\varphi_1 \circ \varphi_2}} - \overline{F_{\varphi_1}}$,
by Proposition \ref{proposition: properties Viterbo}
(\ref{properties spectral sel gf: equivalence})
we obtain \eqref{equation: in proof triangle inequality}.
Using \eqref{equation: in proof triangle inequality}
and Proposition \ref{proposition: properties Viterbo}
(\ref{properties spectral sel gf: PD})
and (\ref{properties spectral sel gf: sum})
we thus conclude that
\[
c_+(\varphi_2) = c ([S^1 \times B], \overline{F_{\varphi_2}})
= c ([S^1 \times B], \overline{F_{\varphi_1 \circ \varphi_2}} - \overline{F_{\varphi_1}})
\geq c ([S^1 \times B], \overline{F_{\varphi_1 \circ \varphi_2}}) + c (1, - \overline{F_{\varphi_1}})
\]
\[
= c ([S^1 \times B], \overline{F_{\varphi_1 \circ \varphi_2}})
- c ([S^1 \times B], \overline{F_{\varphi_1}})
= c_+ (\varphi_1 \circ \varphi_2) - c_+ (\varphi_1) \,.
\]

The proof of the invariance by conjugation property
(\ref{properties spectral sel diffeomorphisms: invariance by conjugation})
is analogous to the proof of \cite[Theorem 1.1 (i)]{Bhupal}
and \cite[Lemma 3.16]{San11}.
Suppose first that $\varphi$ and $\psi$
are compactly supported lcs Hamiltonian diffeomorphisms
of $S^1 \times \mathbb{R}^{2n} \times S^1$.
Let $\{\psi_t\}$ be a compactly supported lcs Hamiltonian isotopy
with $\psi_1 = \psi$,
let $\overline{F_t}: E \rightarrow \mathbb{R}$
a $1$-parameter family of generating functions quadratic at infinity
for $\{\psi_t \circ \varphi \circ \psi_t^{-1} \}$,
and set $c_t = c (u, \overline{F_t})$,
where $u$ is either the fundamental or the unit class of $S^1 \times S^{2n} \times S^1$.
The result follows if we prove that
if $c_{t_0} = k \in \mathbb{Z}$ for some $t_0$
then $c_t = k$ for all $t$.
Let $x_t$ be a path in $E$,
for $t$ in some subinterval of $[0, 1]$ containing $t_0$,
such that each $x_t$ is a critical point of $\overline{F_t}$
with critical value $c_t$.
Let $q_t$ be the corresponding path of essential translated points
of $\psi_t \circ \varphi \circ \psi_t^{-1}$.
Assume first that $q_{t_0}$ is a non-degenerate translated point
of $\psi_{t_0} \circ \varphi \circ \psi_{t_0}^{-1}$.
Since $q_0$ has time-shift $c_{t_0} = k \in \mathbb{Z}$,
by Lemma \ref{lemma: translated points of conjugation}
for every $t$ the point $\psi_t \circ \psi_{t_0}^{-1} (q_{t_0})$
is an essential non-degenerate translated point
of $\psi_t \circ \varphi \circ \psi_t^{-1}$
of time-shift $k$.
Let $y_t$ be the corresponding path of non-degenerate critical points
of $\overline{F_t}$ of critical value $k$.
Since $y_{t_0} = x_{t_0}$
we deduce that $x_t = y_t$ for all $t$,
and so $c_t = k$ for all t.
This ends the proof for $S^1 \times \mathbb{R}^{2n} \times S^1$
under the assumption that $q_{t_0}$
is a non-degenerate translated point
of $\psi_{t_0} \circ \varphi \circ \psi_{t_0}^{-1}$.
If $q_{t_0}$ is degenerate,
we obtain the result by an approximation argument as in \cite{Bhupal}.
For every $\epsilon > 0$
there is a generating function quadratic at infinity
$G: E \rightarrow \mathbb{R}$
such that $\lVert G - \overline{F_{t_0}} \rVert_{\mathcal{C}^{\infty}} < \epsilon$,
$G \equiv \overline{F_{t_0}}$ on an neighborhood
of $S^1 \times \{p_{\infty}\} \times S^1 \times \mathbb{R}^N$,
all the critical points of $G$ outside this neighborhood are non-degenerate
and $c (u, G) = c (u, \overline{F_{t_0}})$.
Since $\lVert G - \overline{F_{t_0}} \rVert_{\mathcal{C}^{\infty}} < \epsilon$,
$G$ is a generating function quadratic at infinity
for a compactly supported lcs Hamiltonian diffeomorphism $\phi_{t_0}$
of $S^1 \times \mathbb{R}^{2n} \times S^1$ with
$\lVert \phi_{t_0} \circ (\psi_{t_0} \circ \varphi \circ \psi_{t_0})^{-1} \rVert_{\mathcal{C}^{\infty}}
< \epsilon$.
Let $\phi_t = (\psi_t \circ \psi_{t_0}) \circ \phi_{t_0} \circ (\psi_t \circ \psi_{t_0})^{-1}$.
Then
$\lVert \phi_t \circ (\psi_t \circ \varphi \circ \psi_t)^{-1} \rVert_{\mathcal{C}^{\infty}}
< \epsilon$.
Let $G_t$ be a $1$-parameter family of generating functions quadratic at infinity
for $\{\phi_t\}$.
By the first part of the proof,
$c (u, G_t) = k$ for all $t$.
But $c (u, G_t)$ is equal to $c_+(\phi_t)$ or $c_- (\phi_t)$.
Since $\epsilon$ is arbitrary,
by continuity (\ref{properties spectral sel diffeomorphisms: continuity})
we thus obtain that $c_t = k$ for all $t$.
For compactly supported lcs Hamiltonian diffeomorphisms
of $S^1 \times \mathbb{R}^{2n+1}$
a similar argument works for $k = 0$,
showing that $c_+ (\varphi) = 0$
if and only if $c_+ (\psi \circ \varphi \circ \psi^{-1}) = 0$
and similarly for $c_-$.
\end{proof}

Let $\phi$ be a compactly supported
contactomorphism of $\mathbb{R}^{2n+1}$ or $\mathbb{R}^{2n} \times S^1$
contact isotopic to the identity,
and $\overline{F}: B \times \mathbb{R}^N \rightarrow \mathbb{R}$,
where $B$ is either $S^{2n+1}$ or $S^{2n} \times S^1$,
a generating function quadratic at infinity for $\phi$.
As in \cite{Bhupal, San10, San11},
we define $c_+ (\phi) = c ( [B], \overline{F} )$
and $c_- (\phi) = c ( 1, \overline{F} )$.
The spectral selectors $c_+$ and $c_-$
on $\Cont_0^c (\mathbb{R}^{2n+1})$
and $\Cont_0^c (\mathbb{R}^{2n} \times S^1)$
enjoy properties similar to those discussed above
for the spectral selectors $c_+$ and $c_-$
on $\Ham^c (S^1 \times \mathbb{R}^{2n+1})$
and $\Ham^c (S^1 \times \mathbb{R}^{2n} \times S^1)$.
Except for the analogues of (\ref{properties spectral sel diffeomorphisms: PD})
and (\ref{properties spectral sel diffeomorphisms: triangle inequality}),
these properties are proved in \cite{Bhupal, San10, San11}.
As discussed in the next remark,
the contact analogues of (\ref{properties spectral sel diffeomorphisms: PD})
and (\ref{properties spectral sel diffeomorphisms: triangle inequality})
are in fact stronger than the corresponding properties
proved in \cite{Bhupal, San10, San11}.

\begin{remark}\label{remark: properties for contactomorphisms}
Arguments as in the proof
of Proposition \ref{proposition: properties of spectral invariants}
(\ref{properties spectral sel diffeomorphisms: PD})
and (\ref{properties spectral sel diffeomorphisms: triangle inequality})
also apply
to the spectral selectors $c_{\pm}$
of compactly supported contactomorphisms
of $\mathbb{R}^{2n+1}$ and $\mathbb{R}^{2n} \times S^1$,
proving in particular that $c_-(\phi) = - c_+ (\phi^{-1})$,
$c_+ (\phi_1 \circ \phi_2) \leq c_+ (\phi_1) + c_+ (\phi_2)$
and $c_- (\phi_1 \circ \phi_2) \geq c_- (\phi_1) + c_- (\phi_2)$.
In \cite{San11} and \cite{San10}
only weaker versions of these properties are proved:
$\lfloor c_-(\phi) \rfloor = - \lceil c_+ (\phi^{-1}) \rceil$,
$\lceil c_+ (\phi_1 \circ \phi_2) \rceil \leq
\lceil c_+ (\phi_1) \rceil + \lceil c_+ (\phi_2) \rceil$
and $\lfloor c_- (\phi_1 \circ \phi_2) \rfloor
\geq \lfloor c_- (\phi_1) \rfloor + \lfloor c_- (\phi_2) \rfloor$.
It turns out that this difference is due to the fact that
in the present article we use a different formula
for the identification $\tau_{\cont}$
of the contact product of $\mathbb{R}^{2n+1}$
with $J^1\mathbb{R}^{2n+1}$.
If we would use the identification of \cite{San11, San10},
in the contact analogue of Proposition \ref{proposition: action spectrum} (ii)
we would obtain that
for any two compactly supported contactomorphisms
$\phi_1$ and $\phi_2$ of $\mathbb{R}^{2n+1}$ or $\mathbb{R}^{2n} \times S^1$
contact isotopic to the identity
and generating functions quadratic at infinity
$\overline{F_1}$ and $\overline{F_2}$
for $\phi_1$ and $\phi_2$,
the set of critical values
of $\overline{F_1} - \overline{F_2}$
is equal to the action spectrum of $\phi_1 \circ \phi_2^{-1}$,
not of $\phi_2^{-1} \circ \phi_1$.
Thus in the contact analogue of the proof
of Proposition \ref{proposition: properties of spectral invariants}
(\ref{properties spectral sel diffeomorphisms: PD})
the set of critical values of $\overline{G_t} - \overline{F_t}$
would not be equal to the action spectrum of $\varphi^{-1}$
but to the action spectrum of $\varphi_t \circ \varphi^{-1} \circ \varphi_t^{-1}$
and so we would need to integrate in the proof
also arguments as in the proof of 
Proposition \ref{proposition: properties of spectral invariants}
(\ref{properties spectral sel diffeomorphisms: invariance by conjugation}),
where the floor and ceiling functions
are necessary.
Something similar also happens
for the triangle inequality.
\end{remark}

We now discuss the relation between the spectral selectors
of lcs Hamiltonian diffeomorphisms
and those of contactomorphisms.

\begin{prop}\label{proposition: relation spectral invariants contactomorphisms}
Let $\phi$ be a compactly supported contactomorphism
of $\mathbb{R}^{2n+1}$ or $\mathbb{R}^{2n} \times S^1$
contact isotopic to the identity,
and $\widetilde{\phi}$ its lift to $S^1 \times \mathbb{R}^{2n+1}$
or $S^1 \times \mathbb{R}^{2n} \times S^1$.
Then $c_+ (\widetilde{\phi}) = c_+ (\phi)$
and $c_- (\widetilde{\phi}) = c_- (\phi)$.
\end{prop}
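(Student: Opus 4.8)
The plan is to start from a generating function quadratic at infinity for $\phi$ and use Proposition \ref{proposition: relation gf contact} to produce one for $\widetilde{\phi}$ that is simply pulled back along the projection $\pr_2$, so that the entire spectral selector construction for $\widetilde{\phi}$ splits as a product with $S^1$ and reduces to the one for $\phi$.

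First I would fix a generating function quadratic at infinity $\overline{F} \colon B \times \R^N \to \R$ for $\phi$, where $B$ is $S^{2n+1}$ or $S^{2n} \times S^1$. By Proposition \ref{proposition: relation gf contact} the function $G := \overline{F} \circ \pr_2 \colon S^1 \times B \times \R^N \to \R$ is a generating function for $\widetilde{\phi}$; it is quadratic at infinity, with fibrewise quadratic form at infinity $G_{\infty}(\theta, p, \zeta) = F_{\infty}(\zeta)$ equal to that of $\overline{F}$ (so $\ind(G_{\infty}) = \ind(F_{\infty})$, and the negative bundle of $G$ is the pullback along $\pr_2$ of that of $\overline{F}$). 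By the uniqueness statement in Proposition \ref{proposition: gf lcs diffeomorphisms} together with the equivalence property (Proposition \ref{proposition: properties Viterbo}~(\ref{properties spectral sel gf: equivalence})), it suffices to compute $c_{\pm}(\widetilde{\phi})$ using this particular $G$.

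The main step is to observe that, since $G$ does not depend on the $S^1$-coordinate $\theta$, every object entering the definition of $c(u, G)$ is the product with $S^1$ of the corresponding object for $\overline{F}$. Concretely, writing $E_{\overline{F}} = B \times \R^N$ and $E_G = S^1 \times E_{\overline{F}}$, the sublevel sets satisfy $E_G^{a} = S^1 \times E_{\overline{F}}^{a}$ and $E_G^{-\infty} = S^1 \times E_{\overline{F}}^{-\infty}$, the negative disc and sphere bundles are the products $S^1 \times D(E_{\overline{F}}^-)$ and $S^1 \times S(E_{\overline{F}}^-)$, and the Thom, excision and inclusion maps are each $\id_{S^1}$ times the corresponding map for $\overline{F}$. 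By the Künneth theorem (which is clean here, as $H^{\ast}(S^1)$ is free), the composition
\[
i_a^{\,\ast} \circ \ex \circ T \colon H^{\ast}(S^1 \times B)
\to H^{\ast + \ind(G_{\infty})}(E_G^{a}, E_G^{-\infty})
\]
factors as $\id_{H^{\ast}(S^1)} \otimes \big( i_a^{\,\ast} \circ \ex \circ T \big)_{\overline{F}}$ under the identification $H^{\ast}(S^1 \times B) \cong H^{\ast}(S^1) \otimes H^{\ast}(B)$. Hence for nonzero $\alpha \in H^{\ast}(S^1)$ and $\beta \in H^{\ast}(B)$ the image $(i_a^{\,\ast} \circ \ex \circ T)(\alpha \otimes \beta)$ is nonzero if and only if $(i_a^{\,\ast} \circ \ex \circ T)_{\overline{F}}(\beta)$ is, so that $c(\alpha \otimes \beta, G) = c(\beta, \overline{F})$.

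Finally I would specialize to the two relevant classes: the fundamental class of $S^1 \times B$ is $[S^1 \times B] = [S^1] \otimes [B]$ and the unit class is $1 = 1 \otimes 1$, so the displayed identity yields $c([S^1 \times B], G) = c([B], \overline{F})$ and $c(1, G) = c(1, \overline{F})$, that is $c_+(\widetilde{\phi}) = c_+(\phi)$ and $c_-(\widetilde{\phi}) = c_-(\phi)$. The case of $\mathbb{R}^{2n} \times S^1$ is handled identically, with $B = S^{2n} \times S^1$ and passing through the lifts exactly as in Proposition \ref{proposition: relation gf contact}. The main obstacle is the bookkeeping in the third paragraph: one must verify that the Thom isomorphism, the excision isomorphism and the inclusion-induced homomorphism are each natural with respect to the product decomposition, so that their composition is genuinely $\id_{S^1}$ tensored with the composition for $\overline{F}$. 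Everything else is then immediate.
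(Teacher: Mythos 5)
Your proof is correct, and its first half coincides with the paper's: both start from a generating function quadratic at infinity $\overline{F}$ for $\phi$ and use Proposition \ref{proposition: relation gf contact} to take $\overline{F} \circ \pr_2$ as the generating function for $\widetilde{\phi}$, with the uniqueness-up-to-equivalence statement justifying the computation with this particular choice. The cohomological step, however, is genuinely different. The paper proves only $c_-(\widetilde{\phi}) = c_-(\phi)$ directly, via a commutative diagram whose vertical arrows are pullbacks along the projection $S^1 \times B \to B$: since the unit class pulls back to the unit class and the induced map $H^{\ast}(E_B^a, E_B^{-\infty}) \to H^{\ast}(E^a, E^{-\infty})$ is injective, one gets $c(1, \overline{F} \circ \pr_2) = c(1, \overline{F})$; the statement for $c_+$ is then deduced from Poincar\'e duality on \emph{both} sides, namely Proposition \ref{proposition: properties of spectral invariants} (iii) for the lcs selectors and its contact analogue from Remark \ref{remark: properties for contactomorphisms}. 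The reason for this detour is exactly the point your K\"unneth factorization circumvents: the fundamental class $[B]$ pulls back under the projection to $1 \otimes [B]$, not to $[S^1] \otimes [B]$, so the simple pullback diagram cannot reach $c_+$ directly, whereas your identity $c(\alpha \otimes \beta, \overline{F} \circ \pr_2) = c(\beta, \overline{F})$ handles both classes at once. What the paper's route buys is lighter bookkeeping --- only injectivity of the projection pullback, no naturality checks for the Thom, excision and inclusion maps with respect to the product decomposition --- at the cost of invoking the duality properties; your route is self-contained at the level of the minimax machinery and yields a stronger statement. The verifications you flag do go through: $H^{\ast}(S^1)$ is free and finitely generated in each degree, so the relative K\"unneth isomorphism holds for the pairs $(E^a, E^{-\infty}) = S^1 \times (E_B^a, E_B^{-\infty})$, and the Thom class of the pullback bundle $S^1 \times E_B^-$ is the pullback of the Thom class of $E_B^-$, so the composition does factor as $\id_{H^{\ast}(S^1)} \otimes \bigl( i_a^{\,\ast} \circ \ex \circ T \bigr)_{\overline{F}}$ as you claim (one small correction of no consequence: the fibrewise quadratic form at infinity of $\overline{F}$ may depend on the base point, so the right formula is $G_{\infty} = \overline{F}_{\infty} \circ \pr_2$ rather than a single form in $\zeta$ alone).
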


\begin{proof}
By the Poincar\'e duality property,
Proposition \ref{proposition: properties of spectral invariants}
(\ref{properties spectral sel diffeomorphisms: PD}),
and the analogous property for the spectral selectors of contactomorphisms
(cf.\ Remark \ref{remark: properties for contactomorphisms}),
it is enough to prove that $c_- (\widetilde{\phi}) = c_- (\phi)$
for any compactly supported contactomorphism $\phi$
of $\mathbb{R}^{2n+1}$ or $\mathbb{R}^{2n} \times S^1$
contact isotopic to the identity.
Let $\overline{F}: B \times \mathbb{R}^N \rightarrow \mathbb{R}$
be a generating function quadratic at infinity for $\phi$,
where $B$ is either $S^{2n+1}$ or $S^{2n} \times S^1$.
By Proposition \ref{proposition: relation gf contact},
the function $\overline{F} \circ \pr_2:
S^1 \times (B \times \mathbb{R}^N) \rightarrow \mathbb{R}$,
where $\pr_2$ is the projection to the second factor,
is a generating function quadratic at infinity for $\widetilde{\phi}$.
Denote $E_B = B \times \mathbb{R}^N$ and $E = S^1 \times B \times \mathbb{R}^N$,
and similarly $E_B^{a} = \{ \overline{F} \leq a \}$
and $E^{a} = \{ \overline{F} \circ \pr_2 \leq a \}$.
Then $E = S^1 \times E_B$ and $E^{a} = S^1 \times E_B^{a}$.
Consider the commutative diagram
\[
\begin{tikzcd}
H^{\ast} (B)
\arrow{r}{\ex \circ T} \arrow{d}
& H^{\ast + \ind (\overline{F}_{\infty})} (E_B, E_B^{-\infty})
\arrow{r}{i_a} \arrow{d}
&  H^{\ast + \ind (\overline{F}_{\infty})} (E_B^a, E_B^{-\infty})
\arrow{d}\\
H^{\ast} (S^1 \times B)
\arrow{r}{\ex \circ T}
& H^{\ast + \ind (\overline{F}_{\infty})}
(E, E^{-\infty}) \arrow{r}{i_a}
&  H^{\ast + \ind (\overline{F}_{\infty})}
(E^a, E^{-\infty})
\end{tikzcd}
\]
where the vertical arrows are the homomorphisms
induced by the projection $S^1 \times B \rightarrow B$
and the induced projections on fibre bundles.
Since the first vertical arrow 
sends the unit class of $H^{\ast}(B)$
to the unit class of $H^{\ast} (S^1 \times B)$
and since the last vertical arrow is injective,
we deduce that the image of the unit class of $H^{\ast}(B)$
by $i_a^{\,\ast} \,\circ\, \ex \,\circ\, T$ is non-zero
if and only if the image of the unit class of $H^{\ast}(S^1 \times B)$
by $i_a^{\,\ast} \,\circ\, \ex \,\circ\, T$ is non-zero.
This implies that $c (1, \overline{F} \circ \pr_2) = c (1, \overline{F})$,
and so $c_- (\widetilde{\phi}) = c_- (\phi)$.
\end{proof}

Recall that the spectral selectors
for compactly supported Hamiltonian symplectomorphisms of $\mathbb{R}^{2n}$
are defined in \cite{Viterbo} by
\[
c_+ (\varphi) = c ([S^{2n}], \overline{F})
\quad \text{ and } \quad
c_- (\varphi) = c (1, \overline{F})
\]
for any generating function quadratic at infinity
$\overline{F}: S^{2n} \times \mathbb{R}^N \rightarrow \mathbb{R}$
for $\varphi$.
For any $\varphi$ we have
\[
c_{\pm} (\widetilde{\varphi}) = c_{\pm} (\varphi) \,,
\]
where $\widetilde{\varphi}$ denotes the lift of $\varphi$
to $\mathbb{R}^{2n+1}$ or $\mathbb{R}^{2n} \times S^1$.
Indeed, this follows from the fact that
the Legendrian submanifold $\Lambda_{\widetilde{\varphi}}$
of $J^1\mathbb{R}^{2n+1} \equiv T^{\ast}\mathbb{R} \times J^1\mathbb{R}^{2n}$
is equal to $0_{\mathbb{R}} \times \widetilde{L_{\varphi}}$,
where $\widetilde{L_{\varphi}}$ is the lift
of $L_{\varphi} \subset T^{\ast}\mathbb{R}^{2n}$
to $J^1\mathbb{R}^{2n}$
(cf. \cite[Proposition 3.18]{San11}).

As a first application of our spectral selectors
we obtain a proof of Theorem \ref{theorem: translated points - intro} (ii).

\begin{proof}[Proof of Theorem \ref{theorem: translated points - intro} (ii)]
Let $\varphi$ be a compactly supported lcs Hamiltonian diffeomorphisms
of $S^1 \times \mathbb{R}^{2n+1}$ or $S^1 \times \mathbb{R}^{2n} \times S^1$
with non-empty support.
We have to show that $\varphi$
has at least one essential translated point
in the interior of its support.
By Proposition \ref{proposition: properties of spectral invariants}
(\ref{properties spectral sel diffeomorphisms: non-degeneracy}),
either $c_+ (\varphi) \neq 0$ or $c_- (\varphi) \neq 0$.
But,
$c_+ (\varphi)$ and $c_- (\varphi)$
belong to the action spectrum of $\varphi$.
We thus conclude that $\varphi$ has at least one essential translated point
that has action (and time-shift) different from zero,
and thus belongs to the interior of the support of $\varphi$.
\end{proof}

\section{Bi-invariant partial orders on the Hamiltonian groups
of $S^1 \times \mathbb{R}^{2n+1}$ and $S^1 \times \mathbb{R}^{2n} \times S^1$,
and an integer-valued bi-invariant metric on the Hamiltonian group
of $S^1 \times \mathbb{R}^{2n} \times S^1$}
\label{section: order and metric}

Recall that for compactly supported lcs Hamiltonian diffeomorphisms
$\varphi_1$ and $\varphi_2$ of $S^1 \times \mathbb{R}^{2n+1}$
or $S^1 \times \mathbb{R}^{2n} \times S^1$
we have posed $\varphi_1 \leq \varphi_2$
if $\varphi_2 \circ \varphi_1^{-1}$ is the time-$1$ map
of a non-negative compactly supported lcs Hamiltonian isotopy.
We now show that $\leq$ is a bi-invariant partial order,
and so $S^1 \times \mathbb{R}^{2n+1}$ and $S^1 \times \mathbb{R}^{2n} \times S^1$
are strongly orderable.
We obtain this by comparing $\leq$
with a bi-invariant partial order $\preceq$
defined using the spectral selectors of the previous section,
which is the lcs analogue
of the partial orders on $\Ham^c (\mathbb{R}^{2n})$,
$\Cont_0^c (\mathbb{R}^{2n+1})$ and $\Cont_0^c (\mathbb{R}^{2n} \times S^1)$
defined in \cite{Viterbo},
\cite{Bhupal} and \cite{San11} respectively.

For compactly supported lcs Hamiltonian diffeomorphisms
$\varphi_1$ and $\varphi_2$ of $S^1 \times \mathbb{R}^{2n+1}$
or $S^1 \times \mathbb{R}^{2n} \times S^1$
we pose
\[
\varphi_1 \preceq \varphi_2 \quad \text{ if } \;
c_+ (\varphi_1 \circ \varphi_2^{-1}) = 0 \,.
\]

\begin{thm}\label{theorem: po 1}
The relation $\preceq$ on $\Ham^c (S^1 \times \mathbb{R}^{2n+1})$
or $\Ham^c (S^1 \times \mathbb{R}^{2n} \times S^1)$
is a bi-invariant partial order.
\end{thm}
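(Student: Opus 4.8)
The plan is to verify the four defining properties of a bi-invariant partial order---reflexivity, transitivity, anti-symmetry and bi-invariance---directly from the properties of the spectral selectors collected in Proposition \ref{proposition: properties of spectral invariants}, so that all the analytic content has already been absorbed there and what remains is purely formal. \emph{Reflexivity} is immediate: $\varphi \preceq \varphi$ means $c_+(\varphi \circ \varphi^{-1}) = c_+(\id) = 0$, which holds since $c_{\pm}(\id) = 0$. For \emph{transitivity}, assuming $\varphi_1 \preceq \varphi_2$ and $\varphi_2 \preceq \varphi_3$, i.e.\ $c_+(\varphi_1 \circ \varphi_2^{-1}) = c_+(\varphi_2 \circ \varphi_3^{-1}) = 0$, I would write
\[
\varphi_1 \circ \varphi_3^{-1}
= (\varphi_1 \circ \varphi_2^{-1}) \circ (\varphi_2 \circ \varphi_3^{-1})
\]
and combine the triangle inequality (Proposition \ref{proposition: properties of spectral invariants} (\ref{properties spectral sel diffeomorphisms: triangle inequality})) with positivity (\ref{properties spectral sel diffeomorphisms: positivity}) to get $0 \le c_+(\varphi_1 \circ \varphi_3^{-1}) \le c_+(\varphi_1 \circ \varphi_2^{-1}) + c_+(\varphi_2 \circ \varphi_3^{-1}) = 0$, hence $\varphi_1 \preceq \varphi_3$.

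\emph{Anti-symmetry} is where Poincar\'e duality and non-degeneracy come in. Assuming $\varphi_1 \preceq \varphi_2$ and $\varphi_2 \preceq \varphi_1$ and setting $\varphi = \varphi_1 \circ \varphi_2^{-1}$, the two hypotheses read $c_+(\varphi) = 0$ and $c_+(\varphi^{-1}) = 0$. By Poincar\'e duality (\ref{properties spectral sel diffeomorphisms: PD}) we then have $c_-(\varphi) = - c_+(\varphi^{-1}) = 0$, so both $c_+(\varphi) = 0$ and $c_-(\varphi) = 0$, and non-degeneracy (\ref{properties spectral sel diffeomorphisms: non-degeneracy}) forces $\varphi = \id$, i.e.\ $\varphi_1 = \varphi_2$.

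For \emph{bi-invariance}, the right-invariant half is automatic, since $(\varphi_1 \circ h) \circ (\varphi_2 \circ h)^{-1} = \varphi_1 \circ \varphi_2^{-1}$ leaves $c_+$ unchanged. Left invariance requires controlling $c_+\big( h \circ (\varphi_1 \circ \varphi_2^{-1}) \circ h^{-1} \big)$, which is precisely the content of the invariance-by-conjugation property (\ref{properties spectral sel diffeomorphisms: invariance by conjugation}): in the $S^1 \times \mathbb{R}^{2n+1}$ case it states directly that the vanishing of $c_+$ is preserved under conjugation, so $h\varphi_1 \preceq h\varphi_2$ follows at once. In the $S^1 \times \mathbb{R}^{2n} \times S^1$ case that property only asserts that the ceiling $\lceil c_+ \rceil$ is conjugation-invariant; here I would invoke positivity once more, noting that for $c_+ \ge 0$ one has $c_+(\cdot) = 0$ if and only if $\lceil c_+(\cdot) \rceil = 0$, so conjugation invariance of the ceiling still yields $c_+\big( h\varphi_1 h^{-1} \circ (h\varphi_2 h^{-1})^{-1}\big) = 0$ and hence $h\varphi_1 \preceq h\varphi_2$.

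The main (and essentially only) subtlety is this last point: the conjugation invariance available on $S^1 \times \mathbb{R}^{2n} \times S^1$ is genuinely weaker, holding only up to ceiling and floor, so left invariance must be phrased entirely through the vanishing of $c_+$, where the gap between $c_+$ and $\lceil c_+ \rceil$ closes thanks to positivity. Every other step is a short and direct consequence of Proposition \ref{proposition: properties of spectral invariants}.
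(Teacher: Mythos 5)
Your proof is correct and follows essentially the same route as the paper: reflexivity from $c_+(\id) = 0$, transitivity from positivity combined with the triangle inequality, anti-symmetry from Poincar\'e duality plus non-degeneracy, and bi-invariance from the trivial right-invariance together with the invariance-by-conjugation property. Your explicit observation that on $S^1 \times \mathbb{R}^{2n} \times S^1$ positivity closes the gap between $c_+$ and $\lceil c_+ \rceil$ --- so that conjugation-invariance of the ceiling alone suffices to propagate the vanishing of $c_+$ --- is in fact a point the paper's proof passes over silently, and it is exactly the right justification for that step.
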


\begin{proof}
Reflexivity follows from the fact that $c_+ (\id) = 0$.
Suppose that $\varphi_1 \preceq \varphi_2$ and $\varphi_2 \preceq \varphi_3$,
thus $c_+ (\varphi_1 \circ \varphi_2^{-1}) = c_+ (\varphi_2 \circ \varphi_3^{-1}) = 0$.
By Proposition \ref{proposition: properties of spectral invariants}
(\ref{properties spectral sel diffeomorphisms: positivity})
and (\ref{properties spectral sel diffeomorphisms: triangle inequality})
we then have
\[
0 \leq c_+ (\varphi_1 \circ \varphi_3^{-1})
\leq c_+ (\varphi_1 \circ \varphi_2^{-1}) + c_+ (\varphi_2 \circ \varphi_3^{-1}) = 0 \,,
\]
thus $c_+ (\varphi_1 \circ \varphi_3^{-1}) = 0$
and so $\varphi_1 \preceq \varphi_3$,
showing transitivity.
Suppose that $\varphi_1 \preceq \varphi_2$ and $\varphi_2 \preceq \varphi_1$,
thus $c_+ (\varphi_1 \circ \varphi_2^{-1}) = c_+ (\varphi_2 \circ \varphi_1^{-1}) = 0$.
By Proposition \ref{proposition: properties of spectral invariants}
(\ref{properties spectral sel diffeomorphisms: PD})
we then have
\[
c_- (\varphi_1 \circ \varphi_2^{-1})
= - c_+ (\varphi_2 \circ \varphi_1^{-1}) = 0 \,.
\]
It thus follows
from Proposition \ref{proposition: properties of spectral invariants}
(\ref{properties spectral sel diffeomorphisms: non-degeneracy})
that $\varphi_1 \circ \varphi_2^{-1}$ is the identity,
and so $\varphi_1 = \varphi_2$.
This proves anti-symmetry.
As for bi-invariance,
if $\varphi_1 \preceq \varphi_2$,
thus $c_+ (\varphi_1 \circ \varphi_2^{-1}) = 0$,
then for every $\psi$ we have
\[
c_+ \big( (\varphi_1 \circ \psi) \circ (\varphi_2 \circ \psi)^{-1}\big)
= c_+ (\varphi_1 \circ \varphi_2^{-1})= 0 \,,
\]
thus $\varphi_1 \circ \psi \preceq \varphi_2 \circ \psi$,
and, by Proposition \ref{proposition: properties of spectral invariants}
(\ref{properties spectral sel diffeomorphisms: invariance by conjugation}),
\[
c_+ \big( (\psi \circ \varphi_1) \circ (\psi \circ \varphi_2)^{-1}\big)
= c_+ \big( \psi \circ \varphi_1 \circ \varphi_2^{-1} \circ \psi^{-1} \big) = 0 \,,
\]
thus $\psi \circ \varphi_1 \preceq \psi \circ \varphi_2$.
\end{proof}

In order to deduce that the relation $\leq$
is also a partial order,
and so that $S^1 \times \mathbb{R}^{2n+1}$ and $S^1 \times \mathbb{R}^{2n} \times S^1$
are strongly orderable
(Theorem \ref{theorem: orderability - intro} (ii)),
we use the following comparison.

\begin{prop}\label{proposition: comparison po}
Let $\varphi_1$ and $\varphi_2$ be compactly supported
lcs Hamiltonian diffeomorphisms of $S^1 \times \mathbb{R}^{2n+1}$
or $S^1 \times \mathbb{R}^{2n} \times S^1$
with $\varphi_1 \leq \varphi_2$.
Then $\varphi_1 \preceq \varphi_2$.
\end{prop}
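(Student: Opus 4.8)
The statement to prove is Proposition~\ref{proposition: comparison po}: if $\varphi_1 \leq \varphi_2$ then $\varphi_1 \preceq \varphi_2$. Unwinding the definitions, the hypothesis $\varphi_1 \leq \varphi_2$ means that $\varphi_2 \circ \varphi_1^{-1}$ is the time-$1$ map of a non-negative compactly supported lcs Hamiltonian isotopy, and the conclusion $\varphi_1 \preceq \varphi_2$ means $c_+(\varphi_1 \circ \varphi_2^{-1}) = 0$. The plan is therefore to show that a non-negative lcs Hamiltonian diffeomorphism has vanishing $c_+$, and then to convert between $\varphi_2 \circ \varphi_1^{-1}$ and $\varphi_1 \circ \varphi_2^{-1}$ using the invariance properties of the spectral selectors.

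Let me analyze the situation in more detail.

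Let me set $\psi := \varphi_2 \circ \varphi_1^{-1}$, so by hypothesis $\psi$ is the time-$1$ map of a non-negative lcs Hamiltonian isotopy, i.e.\ $\mathrm{id} \leq \psi$. First I would establish the key special case: \emph{if $\mathrm{id} \leq \psi$ then $c_-(\psi) = 0$.} This follows directly from the monotonicity and positivity properties in Proposition~\ref{proposition: properties of spectral invariants}. Indeed, since the Lee Hamiltonian isotopy realizing $\mathrm{id} \leq \psi$ is non-negative, monotonicity (\ref{properties spectral sel diffeomorphisms: monotonicity}) gives $c_-(\mathrm{id}) \leq c_-(\psi)$, and since $c_-(\mathrm{id}) = 0$ we obtain $c_-(\psi) \geq 0$; combined with positivity (\ref{properties spectral sel diffeomorphisms: positivity}), which says $c_-(\psi) \leq 0$, this forces $c_-(\psi) = 0$.

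Now I would translate this into the desired statement about $c_+$. The relation $\mathrm{id} \leq \psi$ with $\psi = \varphi_2 \circ \varphi_1^{-1}$ gives $c_-(\varphi_2 \circ \varphi_1^{-1}) = 0$. I want $c_+(\varphi_1 \circ \varphi_2^{-1}) = 0$. The bridge is Poincar\'e duality (\ref{properties spectral sel diffeomorphisms: PD}), which reads $c_-(\varphi) = - c_+(\varphi^{-1})$. Applying it to $\varphi = \varphi_2 \circ \varphi_1^{-1}$, whose inverse is $\varphi_1 \circ \varphi_2^{-1}$, yields
\[
c_+(\varphi_1 \circ \varphi_2^{-1})
= - c_-(\varphi_2 \circ \varphi_1^{-1})
= 0,
\]
which is precisely $\varphi_1 \preceq \varphi_2$. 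This completes the argument.

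The proof is short and essentially formal once the properties of the spectral selectors are in hand, so there is no serious obstacle here; the real content has already been absorbed into Proposition~\ref{proposition: properties of spectral invariants} (in particular monotonicity, which depends on Proposition~\ref{proposition: partial order and gf}, and the nontrivial Poincar\'e duality property). The one point requiring a small amount of care is verifying that $c_-(\mathrm{id}) = 0$ and that the non-negative lcs Hamiltonian isotopy witnessing $\mathrm{id} \leq \psi$ indeed makes monotonicity directly applicable; both are immediate from the definitions and the remark that $c_\pm(\mathrm{id}) = 0$, established just after the definition of the spectral selectors. I would present the argument in the order above: first the special case via monotonicity and positivity, then the reduction to $c_+$ via Poincar\'e duality.
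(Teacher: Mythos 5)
Your proof is correct and follows exactly the paper's own argument: set $\psi = \varphi_2 \circ \varphi_1^{-1}$, use monotonicity together with positivity of $c_-$ to deduce $c_-(\psi) = 0$, then apply the Poincar\'e duality property $c_-(\varphi) = -c_+(\varphi^{-1})$ to conclude $c_+(\varphi_1 \circ \varphi_2^{-1}) = 0$. There is nothing to add; the two proofs coincide step for step.
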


\begin{proof}
Since $\varphi_1 \leq \varphi_2$,
$\varphi_2 \circ \varphi_1^{-1}$ is the time-$1$ map
of a compactly supported non-negative lcs Hamiltonian isotopy.
By definition this implies that $\id \leq \varphi_2 \circ \varphi_1^{-1}$,
and so by Proposition \ref{proposition: properties of spectral invariants}
(\ref{properties spectral sel diffeomorphisms: monotonicity})
and (\ref{properties spectral sel diffeomorphisms: positivity})
we have
\[
0 = c_- (\id) \leq c_- (\varphi_2 \circ \varphi_1^{-1}) \leq 0 \,,
\]
thus $c_- (\varphi_2 \circ \varphi_1^{-1}) = 0$.
By Proposition \ref{proposition: properties of spectral invariants}
(\ref{properties spectral sel diffeomorphisms: PD})
this implies that $c_+ (\varphi_1 \circ \varphi_2^{-1}) = 0$,
and so $\varphi_1 \preceq \varphi_2$.
\end{proof}

We can now prove 
that $S^1 \times \mathbb{R}^{2n+1}$ and $S^1 \times \mathbb{R}^{2n} \times S^1$
are strongly orderable.

\begin{proof}[Proof of Theorem \ref{theorem: orderability - intro} (ii)]
We have to show that the relation $\leq$ on $\Ham^c (S^1 \times \mathbb{R}^{2n+1})$
or $\Ham^c (S^1 \times \mathbb{R}^{2n} \times S^1)$
is a bi-invariant partial order.
Reflexivity follows from the fact that $\{\id\}$
has Hamiltonian function $H_t \equiv 0$.
Transitivity and bi-invariance follow
from Lemmas \ref{lemma: composition law Hamiltonians}
and \ref{lemma: Hamiltonian conjugation}.
As for anti-symmetry,
if $\varphi_1 \leq \varphi_2$ and $\varphi_2 \leq \varphi_1$
then by Proposition \ref{proposition: comparison po}
we also have $\varphi_1 \preceq \varphi_2$ and $\varphi_2 \preceq \varphi_1$.
But, by Theorem \ref{theorem: po 1},
$\preceq$ is anti-symmetric.
We thus conclude that $\varphi_1 = \varphi_2$.
\end{proof}

Recall that a bi-invariant metric on a group $G$
is a map $d: G \times G \rightarrow \mathbb{R}$
that satisfies the following properties:
\renewcommand{\theenumi}{\roman{enumi}}
\begin{enumerate}
\item \label{properties metric: positivity} \emph{Positivity:}
$d (g_1, g_2) \geq 0$,
and $d (g, g) = 0$.
\item \label{properties metric: non-degeneracy} \emph{Non-degeneracy:}
$d(g_1,g_2) = 0$ if and only if $g_1 = g_2$.
\item \label{properties metric: symmetry} \emph{Symmetry:}
$d (g_1, g_2) = d (g_2, g_1)$.
\item \label{properties metric: triangle inequality} \emph{Triangle inequality:}
$d (g_1, g_3) \leq  d (g_1, g_2) + d (g_2, g_3)$.
\item \label{properties metric: bi-invariance} \emph{Bi-invariance:}
$d (hg_1, hg_2) = d (g_1h, g_2h) = d (g_1, g_2)$.
\end{enumerate}

In \cite{San10} the third author defined
an integer valued bi-invariant metric $d_{\cont}$
on $\Cont_0^c (\mathbb{R}^{2n} \times S^1)$
by posing
\[
d_{\cont} (\phi_1, \phi_2)
= \lceil c_+ (\phi_1 \circ \phi_2^{-1}) \rceil
- \lfloor c_- (\phi_1 \circ \phi_2^{-1}) \rfloor \,,
\]
and proved that this metric is unbounded.
This metric is the contact analogue
of the Viterbo metric $d_{\symp}$ on $\Ham^c (\mathbb{R}^{2n})$,
which is defined by
\[
d_{\symp} (\varphi_1, \varphi_2)
= c_+ (\varphi_1 \circ \varphi_2^{-1}) - c_- (\varphi_1 \circ \varphi_2^{-1})
\]
and is also unbounded.
Similarly,
for compactly supported lcs Hamiltonian diffeomorphisms
$\varphi_1$ and $\varphi_2$ of $S^1 \times \mathbb{R}^{2n} \times S^1$
we define
\[
d (\varphi_1, \varphi_2)
= \lceil c_+ (\varphi_1 \circ \varphi_2^{-1}) \rceil
- \lfloor c_- (\varphi_1 \circ \varphi_2^{-1}) \rfloor \,.
\]

\begin{thm}\label{theorem: metric}
The map $d$ is an unbounded bi-invariant metric
on $\Ham^c (S^1 \times \mathbb{R}^{2n} \times S^1)$.
\end{thm}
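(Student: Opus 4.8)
The plan is to verify the five metric axioms from Proposition \ref{proposition: properties of spectral invariants}, relying on the fact that $c_+$ and $c_-$ enjoy continuity, Poincaré duality, positivity, non-degeneracy, the triangle inequality, and invariance by conjugation. Throughout I write $\psi = \varphi_1 \circ \varphi_2^{-1}$ and use repeatedly that $c_-(\psi) = -c_+(\psi^{-1})$ (Poincaré duality) together with $c_+(\psi) \geq 0 \geq c_-(\psi)$ (positivity).

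First I would establish positivity and non-degeneracy together. By positivity, $\lceil c_+(\psi) \rceil \geq 0$ and $-\lfloor c_-(\psi) \rfloor \geq 0$, so $d(\varphi_1, \varphi_2) \geq 0$; and $d(\varphi, \varphi) = \lceil c_+(\id) \rceil - \lfloor c_-(\id) \rfloor = 0$ since $c_{\pm}(\id) = 0$. For non-degeneracy, suppose $d(\varphi_1, \varphi_2) = 0$. Then both $\lceil c_+(\psi) \rceil = 0$ and $\lfloor c_-(\psi) \rfloor = 0$; combined with $c_+(\psi) \geq 0 \geq c_-(\psi)$ these force $0 \leq c_+(\psi) < 1$ and $-1 < c_-(\psi) \leq 0$. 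This is weaker than $c_+(\psi) = c_-(\psi) = 0$, so I cannot directly invoke the non-degeneracy property \ref{properties spectral sel diffeomorphisms: non-degeneracy}. The way around this is the invariance by conjugation property: the quantities $\lceil c_\pm(\psi) \rceil$ and $\lfloor c_\pm(\psi) \rfloor$ are conjugation-invariant, so I would argue that if $c_+(\psi)$ lies in $[0,1)$ for $\psi$ then the same holds for every conjugate, and then appeal to the unboundedness construction (mirroring the proof in \cite{San10}) to derive a contradiction unless $\psi = \id$. This is the step I expect to be the main obstacle, since it is exactly the place where the integer-valued metric differs from the real-valued symplectic Viterbo metric and requires the conjugation-invariance of the ceiling and floor.

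Next I would treat symmetry and bi-invariance, which are the routine steps. For symmetry, note $\varphi_2 \circ \varphi_1^{-1} = \psi^{-1}$, so by Poincaré duality $c_+(\psi^{-1}) = -c_-(\psi)$ and $c_-(\psi^{-1}) = -c_+(\psi)$; hence
\[
d(\varphi_2, \varphi_1)
= \lceil c_+(\psi^{-1}) \rceil - \lfloor c_-(\psi^{-1}) \rfloor
= \lceil -c_-(\psi) \rceil - \lfloor -c_+(\psi) \rfloor
= -\lfloor c_-(\psi) \rfloor + \lceil c_+(\psi) \rceil
= d(\varphi_1, \varphi_2),
\]
using $\lceil -x \rceil = -\lfloor x \rfloor$. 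For bi-invariance, right-invariance is immediate because $(\varphi_1 h) \circ (\varphi_2 h)^{-1} = \varphi_1 \circ \varphi_2^{-1} = \psi$, while left-invariance uses the conjugation property: $(h\varphi_1) \circ (h\varphi_2)^{-1} = h \psi h^{-1}$, and $\lceil c_+(h\psi h^{-1}) \rceil = \lceil c_+(\psi) \rceil$ and $\lfloor c_-(h\psi h^{-1}) \rfloor = \lfloor c_-(\psi) \rfloor$ by property \ref{properties spectral sel diffeomorphisms: invariance by conjugation}.

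Finally, for the triangle inequality I would set $\psi_{13} = \varphi_1 \circ \varphi_3^{-1}$, $\psi_{12} = \varphi_1 \circ \varphi_2^{-1}$, $\psi_{23} = \varphi_2 \circ \varphi_3^{-1}$, so $\psi_{13} = \psi_{12} \circ \psi_{23}$, and apply property \ref{properties spectral sel diffeomorphisms: triangle inequality} to get $c_+(\psi_{13}) \leq c_+(\psi_{12}) + c_+(\psi_{23})$ and $c_-(\psi_{13}) \geq c_-(\psi_{12}) + c_-(\psi_{23})$. Using the subadditivity of the ceiling ($\lceil a+b \rceil \leq \lceil a \rceil + \lceil b \rceil$) and superadditivity of the floor ($\lfloor a+b \rfloor \geq \lfloor a \rfloor + \lfloor b \rfloor$), these combine to yield $d(\varphi_1, \varphi_3) \leq d(\varphi_1, \varphi_2) + d(\varphi_2, \varphi_3)$. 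For unboundedness I would lift the unbounded contact metric $d_{\cont}$ on $\Cont_0^c(\mathbb{R}^{2n} \times S^1)$ of \cite{San10} via Proposition \ref{proposition: relation spectral invariants contactomorphisms}: since $c_\pm(\widetilde{\phi}) = c_\pm(\phi)$, the lift map is an isometric embedding of $(\Cont_0^c(\mathbb{R}^{2n} \times S^1), d_{\cont})$ into $(\Ham^c(S^1 \times \mathbb{R}^{2n} \times S^1), d)$, so unboundedness of $d_{\cont}$ transfers directly to $d$.
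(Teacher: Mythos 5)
There is a genuine gap in your non-degeneracy step, and it stems from a floor/ceiling mix-up. From $d(\varphi_1,\varphi_2)=0$ you correctly deduce (using positivity) that $\lceil c_+(\psi)\rceil = 0$ and $\lfloor c_-(\psi)\rfloor = 0$, but you then claim these only force $0 \leq c_+(\psi) < 1$ and $-1 < c_-(\psi) \leq 0$. That would be the conclusion if the \emph{floor} appeared on $c_+$ and the \emph{ceiling} on $c_-$; with the actual definition of $d$ the situation is rigid. Since $\lceil x \rceil \geq x$ for all $x$, the equality $\lceil c_+(\psi)\rceil = 0$ together with $c_+(\psi) \geq 0$ gives $0 = \lceil c_+(\psi)\rceil \geq c_+(\psi) \geq 0$, hence $c_+(\psi) = 0$; symmetrically, $\lfloor x \rfloor \leq x$ and $c_-(\psi) \leq 0$ give $c_-(\psi) = 0$. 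Non-degeneracy of the selectors, Proposition \ref{proposition: properties of spectral invariants} (\ref{properties spectral sel diffeomorphisms: non-degeneracy}), then yields $\psi = \id$ immediately --- and this is exactly the paper's argument. Consequently the detour you flag as ``the main obstacle'' --- invoking conjugation invariance of the ceilings and floors and then appealing to ``the unboundedness construction mirroring \cite{San10} to derive a contradiction unless $\psi = \id$'' --- is both unnecessary and, as written, not a proof: you never explain how an unboundedness statement about the metric could rule out a single non-trivial $\psi$ whose spectral invariants lie in $(-1,1)$, and no such mechanism is available. Replacing that paragraph with the two-line computation above closes the gap.

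The remaining steps are correct and coincide with the paper's proof: positivity from $c_+ \geq 0 \geq c_-$ and $c_{\pm}(\id)=0$; symmetry from Poincar\'e duality together with $\lceil -x \rceil = -\lfloor x \rfloor$; right-invariance trivially and left-invariance from Proposition \ref{proposition: properties of spectral invariants} (\ref{properties spectral sel diffeomorphisms: invariance by conjugation}); the triangle inequality from Proposition \ref{proposition: properties of spectral invariants} (\ref{properties spectral sel diffeomorphisms: triangle inequality}) combined with subadditivity of the ceiling and superadditivity of the floor; and unboundedness by observing, via Proposition \ref{proposition: relation spectral invariants contactomorphisms}, that lifting gives $d(\widetilde{\phi_1}, \widetilde{\phi_2}) = d_{\cont}(\phi_1,\phi_2)$, so the unboundedness of $d_{\cont}$ from \cite{San10} transfers to $d$.
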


\begin{proof}
Positivity follows from
Proposition \ref{proposition: properties of spectral invariants}
(\ref{properties spectral sel diffeomorphisms: positivity})
and the fact that $c_{\pm} (\id) = 0$.
Suppose that
\[
d (\varphi_1, \varphi_2)
= \lceil c_+ (\varphi_1 \circ \varphi_2^{-1}) \rceil
- \lfloor c_- (\varphi_1 \circ \varphi_2^{-1}) \rfloor = 0 \,.
\]
Since $c_+ (\varphi_1 \circ \varphi_2^{-1}) \geq 0$
and $c_- (\varphi_1 \circ \varphi_2^{-1}) \leq 0$,
this implies that
$c_+ (\varphi_1 \circ \varphi_2^{-1}) = c_- (\varphi_1 \circ \varphi_2^{-1}) = 0$ and,
by Proposition \ref{proposition: properties of spectral invariants}
(\ref{properties spectral sel diffeomorphisms: non-degeneracy}), that $\varphi_1 \circ \varphi_2^{-1}$ is the identity, or, equivalently, that  $\varphi_1 = \varphi_2$.
This proves non-degeneracy.
Symmetry, the triangle inequality and bi-invariance
follow from Proposition \ref{proposition: properties of spectral invariants}
(\ref{properties spectral sel diffeomorphisms: PD}),
(\ref{properties spectral sel diffeomorphisms: triangle inequality})
and (\ref{properties spectral sel diffeomorphisms: invariance by conjugation})
respectively.
Observe now that,
by Proposition \ref{proposition: relation spectral invariants contactomorphisms},
if $\phi_1$ and $\phi_2$ are compactly supported
contactomorphisms of $\mathbb{R}^{2n} \times S^1$
contact isotopic to the identity
and $\widetilde{\phi_1}$ and $\widetilde{\phi_2}$
their lifts to $S^1 \times \mathbb{R}^{2n} \times S^1$
then $d (\widetilde{\phi_1}, \widetilde{\phi_2})= d_{\cont} (\phi_1, \phi_2)$.
Since $d_{\cont}$ is unbounded,
we conclude $d$ is also unbounded.
\end{proof}

Recall that a partially ordered metric space
is a metric space $(Z, d)$
endowed with a partial order $\leq$
that is compatible with $d$,
in the sense that for all $a, b, c \in Z$
with $a \leq b \leq c$
we have $d (a, b) \leq d (a, c)$.

\begin{thm}\label{theorem: partiall ordered metric spaces}
$(\Ham^c (S^1 \times \mathbb{R}^{2n} \times S^1), d, \leq)$
and $(\Ham^c (S^1 \times \mathbb{R}^{2n} \times S^1), d, \preceq)$
are partially ordered metric spaces.
\end{thm}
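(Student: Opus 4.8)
The plan is to observe that all structural ingredients are already established, so that only \emph{compatibility} of each partial order with the metric needs to be checked. Indeed, $d$ is a bi-invariant metric on $\Ham^c(S^1 \times \mathbb{R}^{2n} \times S^1)$ by Theorem \ref{theorem: metric}, the relation $\preceq$ is a bi-invariant partial order by Theorem \ref{theorem: po 1}, and the relation $\leq$ is a bi-invariant partial order by Theorem \ref{theorem: orderability - intro} (ii). Thus it remains only to verify that $\varphi_1 \preceq \varphi_2 \preceq \varphi_3$ implies $d(\varphi_1, \varphi_2) \leq d(\varphi_1, \varphi_3)$, and likewise for $\leq$.

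I would first treat $\preceq$, since the case of $\leq$ reduces to it. Suppose $\varphi_1 \preceq \varphi_2 \preceq \varphi_3$, which by definition means $c_+(\varphi_1 \circ \varphi_2^{-1}) = c_+(\varphi_2 \circ \varphi_3^{-1}) = 0$, and by transitivity of $\preceq$ (Theorem \ref{theorem: po 1}) also $c_+(\varphi_1 \circ \varphi_3^{-1}) = 0$. Hence $\lceil c_+(\varphi_1 \circ \varphi_2^{-1}) \rceil = \lceil c_+(\varphi_1 \circ \varphi_3^{-1}) \rceil = 0$, so that $d(\varphi_1, \varphi_2) = -\lfloor c_-(\varphi_1 \circ \varphi_2^{-1}) \rfloor$ and $d(\varphi_1, \varphi_3) = -\lfloor c_-(\varphi_1 \circ \varphi_3^{-1}) \rfloor$. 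The compatibility inequality will then follow once I show $c_-(\varphi_1 \circ \varphi_3^{-1}) \leq c_-(\varphi_1 \circ \varphi_2^{-1})$, since the floor function is monotone.

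To obtain this comparison I would use the factorization $\varphi_1 \circ \varphi_2^{-1} = (\varphi_1 \circ \varphi_3^{-1}) \circ (\varphi_3 \circ \varphi_2^{-1})$ together with the triangle inequality for $c_-$ (Proposition \ref{proposition: properties of spectral invariants} (vi)), which gives
\[
c_-(\varphi_1 \circ \varphi_2^{-1})
\geq c_-(\varphi_1 \circ \varphi_3^{-1}) + c_-(\varphi_3 \circ \varphi_2^{-1}).
\]
By Poincar\'e duality (Proposition \ref{proposition: properties of spectral invariants} (iii)) the last term equals $-c_+(\varphi_2 \circ \varphi_3^{-1})$, which vanishes because $\varphi_2 \preceq \varphi_3$. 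Therefore $c_-(\varphi_1 \circ \varphi_2^{-1}) \geq c_-(\varphi_1 \circ \varphi_3^{-1})$, which is exactly the desired comparison, and passing to floors yields $d(\varphi_1, \varphi_2) \leq d(\varphi_1, \varphi_3)$; this proves compatibility of $\preceq$ with $d$.

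Finally, for $\leq$ I would invoke Proposition \ref{proposition: comparison po}: if $\varphi_1 \leq \varphi_2 \leq \varphi_3$, then $\varphi_1 \preceq \varphi_2$ and $\varphi_2 \preceq \varphi_3$, so the chain $\varphi_1 \preceq \varphi_2 \preceq \varphi_3$ holds and the compatibility already established gives $d(\varphi_1, \varphi_2) \leq d(\varphi_1, \varphi_3)$. I do not expect a genuine obstacle here, as the whole argument is bookkeeping of the already-proved properties of $c_\pm$. The only point requiring care is to choose the factorization $\varphi_1 \circ \varphi_2^{-1} = (\varphi_1 \circ \varphi_3^{-1}) \circ (\varphi_3 \circ \varphi_2^{-1})$ in the right order, so that the triangle inequality for $c_-$ and Poincar\'e duality combine in the correct direction; the integer rounding in the definition of $d$ is then harmless, since the floor preserves the inequality $c_-(\varphi_1 \circ \varphi_3^{-1}) \leq c_-(\varphi_1 \circ \varphi_2^{-1})$.
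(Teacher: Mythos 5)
Your proof is correct and follows essentially the same route as the paper: the same reduction via Proposition \ref{proposition: comparison po} to compatibility of $\preceq$, the same three vanishing identities $c_+(\varphi_1 \circ \varphi_2^{-1}) = c_+(\varphi_2 \circ \varphi_3^{-1}) = c_+(\varphi_1 \circ \varphi_3^{-1}) = 0$, and the same two properties from Proposition \ref{proposition: properties of spectral invariants} (Poincar\'e duality and the triangle inequality). The only difference is cosmetic: you apply the triangle inequality directly to $c_-$ on the factorization $\varphi_1 \circ \varphi_2^{-1} = (\varphi_1 \circ \varphi_3^{-1}) \circ (\varphi_3 \circ \varphi_2^{-1})$ and pass to floors, whereas the paper first converts $-\lfloor c_- \rfloor$ into $\lceil c_+ \rceil$ of the inverse via duality and applies the triangle inequality to $c_+$ — the two computations are mirror images of each other.
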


\begin{proof}
By Proposition \ref{proposition: comparison po},
it is enough to prove that $\preceq$ is compatible with $d$.
For this,
suppose that $\varphi_1 \preceq \varphi_2 \preceq \varphi_3$.
Then $c_+ (\varphi_1 \circ \varphi_2^{-1})
= c_+ (\varphi_2 \circ \varphi_3^{-1})
= c_
+ (\varphi_1 \circ \varphi_3^{-1}) = 0$.
Thus, using Proposition \ref{proposition: properties of spectral invariants}
(\ref{properties spectral sel diffeomorphisms: PD})
and (\ref{properties spectral sel diffeomorphisms: triangle inequality})
we get
\begin{eqnarray*}
d (\varphi_1, \varphi_2)
&=&\lceil c_+ (\varphi_1 \circ \varphi_2^{-1})\rceil
- \lfloor c_- (\varphi_1 \circ \varphi_2^{-1})\rfloor
= - \lfloor c_- (\varphi_1 \circ \varphi_2^{-1})\rfloor
= \lceil c_+ (\varphi_2 \circ \varphi_1^{-1})\rceil \\
&\leq& \lceil c_+(\varphi_2 \circ \varphi_3^{-1})\rceil
+ \lceil c_+(\varphi_3 \circ \varphi_1^{-1})\rceil
=\lceil c_+(\varphi_3 \circ \varphi_1^{-1})\rceil \\
&=& \lceil c_+ (\varphi_1 \circ \varphi_3^{-1})\rceil
+ \lceil c_+ (\varphi_3 \circ \varphi_1^{-1})\rceil
= \lceil c_+ (\varphi_1 \circ \varphi_3^{-1})\rceil
- \lfloor c_- (\varphi_1 \circ \varphi_3^{-1})\rfloor \\
&=& d(\varphi_1, \varphi_3) \,.
\end{eqnarray*}
\end{proof}

\begin{remark}\label{Rmk:Finemetrics}
By an argument of Polterovich,
which is written down in \cite[Proposition 5.1]{Sandon - SP},
the identity component of the group of compactly supported contactomorphisms
of any contact manifold
does not admit any bi-invariant metric 
taking values arbitrarily close to zero.
A similar argument also works in the lcs case
to show that the group of compactly supported lcs  diffeomorphisms
of any lcs manifold $\big( M , [(\eta, \omega)] \big)$
does not admit any bi-invariant metric
that takes values arbitrarily close to zero.
Indeed,
let $B$ be a Darboux ball in $M$
and let $\psi_1$ and $\psi_2$ be two lcs  diffeomorphisms
that are supported in $B$ and do not commute,
i.e. $[\psi_1,\psi_2] := \psi_1 \circ \psi_2 \circ \psi_1^{-1} \circ \psi_2^{-1} \neq \id$. 
Suppose by contradiction that $d$
is bi-invariant metric on the group of compactly supported lcs diffeomorphisms
of $\big( M , [(\eta, \omega)] \big)$
that takes values arbitrarily close to zero,
and consider the associated conjugation-invariant norm
$\lVert \,\cdot\, \rVert = d (\,\cdot\,,\id)$.
Let $\varphi$ be a compactly supported lcs  diffeomorphism
with $\lVert \varphi \rVert = \epsilon$ for $\epsilon$ arbitrarily small.
Since $\varphi$ is not the identity,
there is a small ball $B_{\varphi}$ in $M$
that is displaced by $\varphi$.
Since it is possible to squeeze any given domain
of $\big(\mathbb{R}^{2n}, [(0, \omega_0)] \big)$
into an arbitrarily small one
by the Liouville flow
(which is a lcs isotopy),
we can find a compactly supported lcs diffeomorphism $\theta$ of $M$
such that $\theta (B) \subset B_{\phi}$.
By Lemma \ref{lemma: Hamiltonian conjugation},
$\theta \circ \psi_1 \circ \theta^{-1}$
and $\theta \circ \psi_2 \circ \theta^{-1}$
are lcs  diffeomorphisms supported in $B_{\phi}$.
Since we assume that $\lVert \, \cdot\,\rVert$ is conjugation-invariant
and $[\psi_1,\psi_2] \neq \id$,
we have
$$
\lVert \, [\, \theta \circ \psi_1 \circ \theta^{-1} \,,\,
\theta \circ \psi_2 \circ \theta^{-1} \, ] \, \rVert
= \lVert \, \theta \circ [\psi_1,\psi_2] \circ \theta^{-1} \, \rVert 
= \lVert \, [\psi_1,\psi_2] \, \rVert \neq 0 \,.
$$
On the other hand,
we also have
$$
\lVert \, [\, \theta \circ \psi_1 \circ \theta^{-1} \,,\,
\theta \circ \psi_2 \circ \theta^{-1} \,] \,\rVert
\leq 4 \, \lVert \varphi \rVert
= 4 \epsilon \,.
$$
Indeed,
as in \cite[Proposition 5.1]{Sandon - SP},
this follows from the fact that if $a_1$ and $a_2$
are supported in a domain $\mathcal{U}$
and $b$ displaces $\mathcal{U}$
then for every conjugation-invariant norm $\lVert \, \cdot\,\rVert$
we have $\lVert [a_1,a_2]\rVert \leq 4 \, \lVert b\rVert$.
Since $\epsilon$ was chosen arbitrarily small,
we thus obtain a contradiction. As the Liouville flow is not Hamiltonian, it is unclear to us whether the group of compactly supported lcs Hamiltonian diffeomorphisms admits a bi-invariant metric that takes values arbitrarily close to zero. 
\end{remark}

\section{A lcs capacity for domains of $S^1 \times \mathbb{R}^{2n} \times S^1$,
and proof of lcs non-squeezing}\label{section: capacity}

We define the \emph{lcs capacity}
of an open and bounded domain $\mathcal{U}$
of $S^1 \times \mathbb{R}^{2n} \times S^1$
by
\[
c (\mathcal{U})
= \sup \{\, \lceil c_+ (\varphi) \rceil \;\lvert \; \varphi \in \Ham (\mathcal{U})\,\} \,,
\]
where $\Ham (\mathcal{U})$ denotes
the group of lcs diffeomorphisms of $S^1 \times \mathbb{R}^{2n} \times S^1$
that are the time-$1$ map
of a lcs Hamiltonian isotopy supported in $\mathcal{U}$.
By the following lemma,
$c (\mathcal{U})$ is a well-defined integer number.

\begin{lemma}\label{lemma: displacement}
For every compactly supported
lcs Hamiltonian diffeomorphism $\psi$
of $S^1 \times \mathbb{R}^{2n} \times S^1$
such that $\psi (\mathcal{U}) \cap \mathcal{U} = \emptyset$
we have
\[
c (\mathcal{U}) \leq d (\id, \psi) \,.
\]
\end{lemma}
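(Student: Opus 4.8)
The plan is to reduce the inequality $c(\mathcal U) \le d(\id,\psi)$ to a uniform bound $\lceil c_+(\varphi)\rceil \le d(\id,\psi)$ valid for every $\varphi \in \Ham(\mathcal U)$; taking the supremum over such $\varphi$ then yields the statement. Fix $\varphi \in \Ham(\mathcal U)$, so that $\varphi$ is the identity outside $\mathcal U$, and set $\Phi := \psi \circ \varphi^{-1} \circ \psi^{-1}$. Since $\Phi$ is supported in $\psi(\mathcal U)$ and $\varphi$ in $\mathcal U$, the displacement hypothesis $\psi(\mathcal U) \cap \mathcal U = \emptyset$ guarantees that $\varphi$ and $\Phi$ have disjoint supports; in particular they commute, and the commutator can be written as $[\varphi,\psi] = \varphi \circ \psi \circ \varphi^{-1} \circ \psi^{-1} = \varphi \circ \Phi$. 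All the maps involved are compactly supported lcs Hamiltonian diffeomorphisms (using Lemma \ref{lemma: Hamiltonian conjugation} for the conjugates), so the spectral selectors of Section \ref{section: spectral invariants} apply. This is the lcs analogue of the displacement argument for contactomorphisms underlying the capacities of \cite{San10, San11}.

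The upper bound comes from writing $[\varphi,\psi] = (\varphi\circ\psi\circ\varphi^{-1})\circ\psi^{-1}$ and applying, in order, the triangle inequality, the subadditivity of the ceiling function, the invariance by conjugation and the Poincar\'e duality of Proposition \ref{proposition: properties of spectral invariants}:
\begin{align*}
\lceil c_+([\varphi,\psi]) \rceil
&\le \lceil c_+(\varphi\circ\psi\circ\varphi^{-1}) \rceil + \lceil c_+(\psi^{-1}) \rceil \\
&= \lceil c_+(\psi) \rceil + \lceil -c_-(\psi) \rceil
= \lceil c_+(\psi) \rceil - \lfloor c_-(\psi) \rfloor
= d(\id,\psi),
\end{align*}
where in the last line I use $c_+(\psi^{-1}) = -c_-(\psi)$ and $\lceil -t\rceil = -\lfloor t\rfloor$, and the final equality is the evaluation of $d(\id,\psi)$ from its definition together with Poincar\'e duality.

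The lower bound is where the disjoint supports are used essentially, and it is the main obstacle. I would prove that for diffeomorphisms with disjoint supports one has $c_+(\varphi\circ\Phi) \ge c_+(\varphi)$ (in fact $c_+(\varphi\circ\Phi) = \max(c_+(\varphi),c_+(\Phi))$), so that by monotonicity of the ceiling $\lceil c_+([\varphi,\psi])\rceil = \lceil c_+(\varphi\circ\Phi)\rceil \ge \lceil c_+(\varphi)\rceil$. This cannot be read off from the abstract properties of $c_{\pm}$ alone and must be established at the level of generating functions: since $\varphi$ and $\Phi$ are supported over disjoint regions of the base $S^1\times B$, and each admits a special generating function quadratic at infinity equal to a fixed quadratic form $F_\infty$ away from its support region, one glues them into a single generating function $\overline{F_\varphi}\,\sharp\,\overline{F_\Phi}$ for $\varphi\circ\Phi$ that restricts to $\overline{F_\varphi}$ over $\mathcal U$, to $\overline{F_\Phi}$ over $\psi(\mathcal U)$, and to $F_\infty$ elsewhere. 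Its set of critical values is then the union of those of $\overline{F_\varphi}$ and $\overline{F_\Phi}$ together with $0$, and a min--max computation for the fundamental class $[S^1\times B]$, restricting the selector to the $\mathcal U$-part of the base, identifies $c_+(\varphi\circ\Phi)$ with the maximum of $c_+(\varphi)$, $c_+(\Phi)$ and $0$. The delicate point, which I expect to require the most care, is this localisation of the fundamental-class selector and the verification that the gluing respects the quadratic-at-infinity normalisation; the recurring device used elsewhere in Section \ref{section: spectral invariants} (a continuous, $\mathbb{Z}$-valued or spectrum-valued quantity along an isotopy with nowhere dense critical values, hence locally constant) may be used to reduce to a nondegenerate model.

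Combining the two bounds gives $\lceil c_+(\varphi)\rceil \le \lceil c_+([\varphi,\psi])\rceil \le d(\id,\psi)$ for every $\varphi\in\Ham(\mathcal U)$, and taking the supremum yields $c(\mathcal U)\le d(\id,\psi)$, as desired.
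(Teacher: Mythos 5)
Your upper bound is correct, but the lower bound is where the lemma's real content lies, and the step you flag as delicate is a genuine gap that the sketched gluing cannot close. Three concrete problems. First, the localization of the fundamental-class selector fails at the cohomological level: the class $[S^1\times B]$ restricts to zero on every proper open subset of the base, so ``restricting the selector to the $\mathcal{U}$-part of the base'' cannot detect $c_+(\varphi)$ --- this is in sharp contrast to the unit class, which is exactly why the localization in the positivity proof of Proposition \ref{proposition: properties of spectral invariants} (\ref{properties spectral sel diffeomorphisms: positivity}) works for $c_-$ but has no analogue for $c_+$. Second, the premise of the gluing is unfounded: by formula \eqref{equation: formula for associated Lagrangian}, the base region over which $L_\varphi$ deviates from the zero section consists of midpoint-type coordinates such as $\frac{1}{2}(x+e^{-g/2}\varphi_x)$, which lie in (roughly) the convex hull of $\mathcal{U}$ rather than in $\mathcal{U}$; convex hulls of disjoint sets can overlap, so $L_\varphi$ and $L_\Phi$ need not differ from the zero section over disjoint base regions, and moreover the special-GFQI normalization of Section \ref{section: lcs product etc} only forces $F = F_\infty$ outside a compact set in the noncompact base directions, not outside the support projection. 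Third, your real-valued identity $c_+(\varphi\circ\Phi)=\max\big(c_+(\varphi),c_+(\Phi),0\big)$ overshoots what is available in this setting: the only rigidity one can extract from the displacement hypothesis is at \emph{integer} spectral values, because the Lee flow on $S^1\times\mathbb{R}^{2n}\times S^1$ is $1$-periodic, so an essential translated point of integer time-shift is an honest fixed point --- and only fixed points can be confronted with $\psi(\mathcal{U})\cap\mathcal{U}=\emptyset$. This is precisely why $d$ and $c$ are built from ceilings and floors.

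The paper's proof uses exactly this integer rigidity, and avoids commutators altogether: for an isotopy $\{\varphi_t\}$ supported in $\mathcal{U}$ it shows that the continuous path $c_t = c_+(\psi\circ\varphi_t)$ cannot cross integers. If $c_{t_0}=k\in\mathbb{Z}$, the corresponding essential translated point $q_{t_0}$ is a fixed point of $\psi\circ\varphi_{t_0}$, hence forced outside $\mathcal{U}$ by the displacement hypothesis; there $\varphi_t(q_{t_0})=q_{t_0}$ for all $t$ and $\psi(q_{t_0})=q_{t_0}$, and one checks (conformal factors, actions, plus an approximation argument in the degenerate case) that $q_{t_0}$ persists as a non-degenerate essential translated point of $\psi\circ\varphi_t$ of action $k$ for all $t$, forcing $c_t\equiv k$. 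Hence $\lceil c_+(\psi\circ\varphi)\rceil = \lceil c_+(\psi)\rceil$, and the lemma follows by the same two-line triangle-inequality computation you performed. Note also that your commutator decomposition destroys the asymmetry this continuation argument needs: along any natural path from $\id$ to $\varphi\circ\Phi$, the integer-level fixed points located in $\psi(\mathcal{U})$ (or in $\mathcal{U}$) belong to the factor being deformed and do not persist, whereas for $\psi\circ\varphi_t$ the deformed factor is supported exactly where integer-level translated points are forbidden. If you want to salvage your skeleton, replace the gluing lemma by this integer-rigidity statement applied directly to $\psi\circ\varphi$ --- at which point the commutator becomes unnecessary.
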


\begin{proof}
We first prove that
\begin{equation}\label{equation: in proof displacement}
\lfloor c_+ (\psi \circ \varphi) \rfloor = \lfloor c_+ (\psi) \rfloor
\end{equation}
for every $\varphi \in \Ham (\mathcal{U})$.
Let $\{\varphi_t\}$ be a lcs Hamiltonian isotopy
supported in $\mathcal{U}$ with $\varphi_1 = \varphi$,
let $\overline{F_t}: E \rightarrow \mathbb{R}$ be a $1$-parameter family
of generating functions quadratic at infinity
for $\{ \psi \circ \varphi_t \}$,
and set
$$
c_t = c_+ (\psi \circ \varphi_t) = c ( [S^1 \times S^{2n} \times S^1], \overline{F_t} ) \,.
$$
The results follows if we prove that if $c_{t_0} = k \in \mathbb{Z}$ for some $t_0$ 
then $c_t = k$ for all $t$.
Let $x_t$ be a path in $E$,
for $t$ in a subinterval of $[0,1]$ containing $t_0$,
such that each $x_t$ is a critical point of $\overline{F_t}$
with critical value $c_t$.
Let $q_t$ be the corresponding path
of essential translated points of $\psi \circ \varphi_t$.
As in the proof of Proposition \ref{proposition: properties of spectral invariants}
(\ref{properties spectral sel diffeomorphisms: invariance by conjugation}),
without loss of generality we can assume that
$q_{t_0}$ is a non-degenerate essential translated point of $\psi \circ \varphi_{t_0}$.
Since $q_{t_0}$ has time-shift $k \in \mathbb{Z}$,
we have $\psi \circ \varphi_{t_0} (q_{t_0}) = q_{t_0}$.
Since $\varphi_{t_0}$ is supported in $\mathcal{U}$
and $\psi (\mathcal{U}) \cap \mathcal{U} = \emptyset$,
this implies that $q_{t_0} \notin \mathcal{U}$
and so $\varphi_t (q_{t_0}) = q_{t_0}$ for all $t$
and $\psi (q_{t_0}) = q_{t_0}$,
thus $\psi \circ \varphi_t (q_{t_0}) = q_{t_0}$ for all $t$.
Let $g_t$ and $f$ be the conformal factors
of $\varphi_t$ and $\psi$ respectively.
Then the conformal factor of $\psi \circ \varphi_t$
is $f \circ \varphi_t + g_t$.
Since $q_{t_0}$ is a translated point of $\psi \circ \varphi_{t_0}$
and since $g_t (q_{t_0}) = 0$ for all $t$
(because $q_{t_0}$ does not belong to the support of $\varphi_t$),
we have
\[
0 = (f \circ \varphi_{t_0} + g_{t_0}) (q_{t_0}) = f (q_{t_0}) \,,
\]
and so
\[
(f \circ \varphi_t + g_t) (q_{t_0}) = f(q_{t_0}) = 0
\]
for all $t$.
We conclude that $q_{t_0}$ is a translated point of $\psi \circ \varphi_t$
of time-shift $k$, for all $t$.
The action of $\psi \circ \varphi_t$
is $S_{\varphi_t} - e^{-g_t} S_{\psi} \circ \varphi_t$ \,.
Since $q_{t_0}$ is an essential translated point
of $\psi \circ \varphi_{t_0}$,
by Example \ref{example: action translated point in conformal symplectization}
we have
$$
0 = (S_{\varphi_{t_0}} - e^{-g_{t_0}} S_{\psi} \circ \varphi_{t_0}) (q_{t_0})
= S_{\varphi_{t_0}} (q_{t_0}) - S_{\psi} (q_{t_0}) \,.
$$
Since moreover $S_{\varphi_t} (q_{t_0}) = 0$ for all $t$
(by Remark \ref{remark: unique solution},
since $q_{t_0}$ does not belong to the support of $\varphi_t$
and the Lee form $-d\theta$ is not exact
on the complement of the support of $\varphi_t$),
we thus have
\[
(S_{\varphi_t} - e^{-g_t} S_{\psi} \circ \varphi_t) (q_{t_0}) = - S_{\psi} (q_{t_0}) = 0
\]
for all $t$.
By Example \ref{example: action translated point in conformal symplectization},
we conclude that $q_{t_0}$ is an essential translated point
of $\psi \circ \varphi_t$ for all $t$.
Moreover,
the fact that $q_{t_0}$ is a non-degenerate translated point of $\psi \circ \varphi_{t_0}$
implies that it is a non-degenerate translated point of $\psi \circ \varphi_t$ for all $t$
(using again that $q_{t_0}$ does not belong to the support of $\varphi_t$).
Let $y_t$ be the corresponding path of non-degenerate critical points of $\overline{F_t}$
of critical value $k$.
Since $y_{t_0} = x_{t_0}$,
we deduce that $y_t = x_t$ for all $t$,
and so $c_t = k$ for all $t$.
This proves \eqref{equation: in proof displacement}.
Using \eqref{equation: in proof displacement}
and Proposition \ref{proposition: properties of spectral invariants}
(\ref{properties spectral sel diffeomorphisms: PD})
and (\ref{properties spectral sel diffeomorphisms: triangle inequality})
we then obtain
\[
\lceil c_+ (\varphi) \rceil
\leq \lceil c_+ (\psi \circ \varphi) \rceil + \lceil c_+ (\psi^{-1}) \rceil
= \lceil c_+ (\psi) \rceil - \lfloor c_- (\psi) \rfloor
= d (\id, \psi)
\]
for every $\varphi \in \Ham (\mathcal{U})$,
and so $c(\mathcal{U}) \leq d (\id, \psi)$.
\end{proof}

We define the \emph{displacement energy}
of an open and bounded domain $\mathcal{U}$
of $S^1 \times \mathbb{R}^{2n} \times S^1$
by
\[
E (\mathcal{U}) = \inf \{\, d (\id, \psi) \;\lvert\;
\psi (\mathcal{U}) \cap \mathcal{U} \neq \emptyset \,\} \,.
\]
Lemma \ref{lemma: displacement} then also implies
the energy--capacity inequality
\[
c (\mathcal{U}) \leq E (\mathcal{U}) \,.
\]

\begin{prop}\label{proposition: properties lcs capacity}
The lcs capacity $c$ satisfies the following properties:
\renewcommand{\theenumi}{\roman{enumi}}
\begin{enumerate}
\item \label{properties capacity: monotonicity} \emph{Monotonicity:}
if $\mathcal{U}_1 \subset \mathcal{U}_2$
then $c (\mathcal{U}_1) \leq c (\mathcal{U}_2)$.
\item \label{properties capacity: lcs invariance} \emph{Lcs invariance:}
for any compactly supported lcs Hamiltonian diffeomorphism
$\psi$ of $S^1 \times \mathbb{R}^{2n} \times S^1$,
we have $c (\mathcal{U}) = c \big(\psi (\mathcal{U}) \big)$.
\end{enumerate}
\end{prop}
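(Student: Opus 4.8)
The plan is to deduce both statements directly from the definition
\[
c (\mathcal{U})
= \sup \{\, \lceil c_+ (\varphi) \rceil \;\lvert \; \varphi \in \Ham (\mathcal{U})\,\}
\]
together with the properties of the spectral selectors already collected in Proposition \ref{proposition: properties of spectral invariants}. The key point is that $\Ham (\mathcal{U})$ depends on $\mathcal{U}$ only through a support condition on the generating lcs Hamiltonian isotopies, and this condition behaves well both under inclusion of domains and under pushforward by a compactly supported lcs Hamiltonian diffeomorphism.

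For monotonicity (i) I would simply observe that if $\mathcal{U}_1 \subset \mathcal{U}_2$ then any lcs Hamiltonian isotopy supported in $\mathcal{U}_1$ is a fortiori supported in $\mathcal{U}_2$, whence $\Ham (\mathcal{U}_1) \subseteq \Ham (\mathcal{U}_2)$. The supremum defining $c (\mathcal{U}_1)$ is then taken over a subset of the one defining $c (\mathcal{U}_2)$, giving $c (\mathcal{U}_1) \leq c (\mathcal{U}_2)$ at once. This step is entirely formal.

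For lcs invariance (ii) I would first set up the conjugation bijection $\Ham (\mathcal{U}) \to \Ham \big(\psi (\mathcal{U})\big)$, $\varphi \mapsto \psi \circ \varphi \circ \psi^{-1}$. Given a lcs Hamiltonian isotopy $\{\varphi_t\}$ supported in $\mathcal{U}$ with $\varphi_1 = \varphi$, the conjugated isotopy $\{\psi \circ \varphi_t \circ \psi^{-1}\}$ is again a lcs Hamiltonian isotopy by Lemma \ref{lemma: Hamiltonian conjugation} (applied with $\psi^{-1}$ in the role of the conjugating diffeomorphism), and it is supported in $\psi (\mathcal{U})$: if $x \notin \psi (\mathcal{U})$ then $\psi^{-1}(x) \notin \mathcal{U}$, so $\varphi_t$ fixes $\psi^{-1}(x)$ and hence $\psi \circ \varphi_t \circ \psi^{-1}$ fixes $x$. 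Conjugation by $\psi^{-1}$ provides the inverse, so this is indeed a bijection.

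It then remains to invoke the invariance by conjugation property, Proposition \ref{proposition: properties of spectral invariants} (\ref{properties spectral sel diffeomorphisms: invariance by conjugation}), which for $S^1 \times \mathbb{R}^{2n} \times S^1$ gives $\lceil c_+ (\psi \circ \varphi \circ \psi^{-1}) \rceil = \lceil c_+ (\varphi) \rceil$. Combined with the bijection above, the sets $\{\, \lceil c_+ (\chi) \rceil \;\lvert\; \chi \in \Ham (\psi (\mathcal{U}))\,\}$ and $\{\, \lceil c_+ (\varphi) \rceil \;\lvert\; \varphi \in \Ham (\mathcal{U})\,\}$ coincide, and taking suprema yields $c \big(\psi (\mathcal{U})\big) = c (\mathcal{U})$. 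There is no serious obstacle; the only point requiring care is that it is exactly the ceiling $\lceil c_+ \rceil$ — not $c_+$ itself — that is conjugation invariant on $S^1 \times \mathbb{R}^{2n} \times S^1$, which is precisely why the capacity is defined using the ceiling function.
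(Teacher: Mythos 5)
Your proof is correct and follows essentially the same route as the paper: monotonicity via the inclusion $\Ham (\mathcal{U}_1) \subseteq \Ham (\mathcal{U}_2)$, and lcs invariance via the conjugation bijection $\varphi \mapsto \psi \circ \varphi \circ \psi^{-1}$ combined with Proposition \ref{proposition: properties of spectral invariants} (\ref{properties spectral sel diffeomorphisms: invariance by conjugation}). You merely spell out details (the support check for the conjugated isotopy, and the observation that it is $\lceil c_+ \rceil$ rather than $c_+$ that is conjugation-invariant on $S^1 \times \mathbb{R}^{2n} \times S^1$) that the paper leaves implicit.
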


\begin{proof}
Monotonicity holds by definition.
Lcs invariance follows from
Proposition \ref{proposition: properties of spectral invariants}
(\ref{properties spectral sel diffeomorphisms: invariance by conjugation}),
because $\varphi \in \Ham (\mathcal{U})$ if and only if
$\psi \circ \varphi \circ \psi^{-1} \in \Ham \big(\psi (\mathcal{U}) \big)$.
\end{proof}

As in \cite{Viterbo},
for any open and bounded domain $\mathcal{U}$ of $\mathbb{R}^{2n}$
we define
\[
c_{\symp} (\mathcal{U})
= \sup \{\, c_+ (\varphi) \;\lvert \; \varphi \in \Ham (\mathcal{U})\,\} \,,
\]
where $\Ham (\mathcal{U})$ denotes
the group of diffeomorphisms of $\mathbb{R}^{2n}$
that are the time-$1$ map
of a Hamiltonian isotopy supported in $\mathcal{U}$.
Similarly,
as in \cite{San11},
for any open and bounded domain $\mathcal{U}$ of $\mathbb{R}^{2n} \times S^1$
we define
\[
c_{\cont} (\mathcal{U})
= \sup \{\, \lceil c_+ (\phi) \rceil \;\lvert \; \phi \in \Cont (\mathcal{U})\,\} \,,
\]
where $\Cont (\mathcal{U})$ denotes
the group of diffeomorphisms of $\mathbb{R}^{2n} \times S^1$
that are the time-$1$ map
of a contact isotopy supported in $\mathcal{U}$.
For every open and bounded domain $\mathcal{U}$ of $\mathbb{R}^{2n}$
we then have
\begin{equation}\label{equation: comparison capacities symplectic and contact}
c_{\cont} (\mathcal{U} \times S^1) = \lceil c_{\symp} (\mathcal{U}) \rceil \,.
\end{equation}
We also have the following result.

\begin{prop}\label{proposition: comparison capacities lcs contact}
For every open and bounded domain
$\mathcal{U}$ of $\mathbb{R}^{2n} \times S^1$
we have
\[
c (S^1 \times \mathcal{U}) = c_{\cont} (\mathcal{U}) \,.
\]
\end{prop}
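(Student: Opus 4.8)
The plan is to prove the two inequalities $c_{\cont}(\mathcal{U})\le c(S^1\times\mathcal{U})$ and $c(S^1\times\mathcal{U})\le c_{\cont}(\mathcal{U})$ separately, using throughout that lifting a contactomorphism to the locally conformal symplectization preserves the spectral selectors $c_{\pm}$ (Proposition \ref{proposition: relation spectral invariants contactomorphisms}).

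First I would establish $c_{\cont}(\mathcal{U})\le c(S^1\times\mathcal{U})$, which is the easy direction. Given $\phi\in\Cont(\mathcal{U})$, realized as the time-$1$ map of a contact isotopy $\{\phi_t\}$ supported in $\mathcal{U}$ with conformal factors $g_t$, its lift $\{\widetilde{\phi_t}\}$ is a lcs Hamiltonian isotopy by Example \ref{Example:Lift of contact Hamiltonians}. Since $\widetilde{\phi_t}(\theta,p)=(\theta-g_t(p),\phi_t(p))$ is the identity wherever $\phi_t$ is, its support lies in $S^1\times\mathcal{U}$, so $\widetilde{\phi}\in\Ham(S^1\times\mathcal{U})$. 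By Proposition \ref{proposition: relation spectral invariants contactomorphisms} we then get $\lceil c_+(\phi)\rceil=\lceil c_+(\widetilde{\phi})\rceil\le c(S^1\times\mathcal{U})$, and taking the supremum over $\phi\in\Cont(\mathcal{U})$ yields the inequality.

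The harder direction is $c(S^1\times\mathcal{U})\le c_{\cont}(\mathcal{U})$, and I expect the main obstacle to be that a general lcs Hamiltonian isotopy supported in $S^1\times\mathcal{U}$ need \emph{not} be the lift of a contact isotopy: its Hamiltonian $H_t$ may genuinely depend on the $S^1$-coordinate $\theta$, so one cannot simply invoke Proposition \ref{proposition: relation spectral invariants contactomorphisms}. To overcome this I would dominate $H_t$ by a $\theta$-independent Hamiltonian. Let $\varphi\in\Ham(S^1\times\mathcal{U})$ be the time-$1$ map of an isotopy $\{\psi_t\}$ with Hamiltonian $H_t$ supported in $S^1\times\mathcal{U}$; then $\supp H_t$ is a compact subset of $S^1\times\mathcal{U}$, so the function $p\mapsto\max_{\theta}H_t(\theta,p)$ has compact support in the open set $\mathcal{U}$. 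Choose a smooth time-dependent contact Hamiltonian $K_t$ on $\mathbb{R}^{2n}\times S^1$, supported in $\mathcal{U}$, with $K_t(p)\ge H_t(\theta,p)$ for all $(\theta,p)$; its lift then satisfies $\widetilde{K_t}(\theta,p)=K_t(p)\ge H_t(\theta,p)$, and the associated contactomorphism $\phi$ lies in $\Cont(\mathcal{U})$ while its lift $\widetilde{\phi}$ is generated by $\{\widetilde{\phi_t}\}$.

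The key step is the monotonicity comparison: pointwise domination $H_t\le\widetilde{K_t}$ should imply $\varphi\le\widetilde{\phi}$. Indeed, a direct computation with Lemma \ref{lemma: composition law Hamiltonians} shows that $\{\psi_t^{-1}\circ\widetilde{\phi_t}\}$ has Hamiltonian $e^{-b_t}\big((\widetilde{K_t}-H_t)\circ\psi_t\big)\ge 0$, where $b_t$ is the conformal factor of $\{\psi_t\}$; the point is that the conformal factors enter only as the positive multiplicative factor $e^{-b_t}$ and so do not affect the sign. Since nonnegativity of a Hamiltonian is preserved under conjugation by a lcs diffeomorphism (Lemma \ref{lemma: Hamiltonian conjugation}, the conformal factor again contributing a positive factor $e^{-f}$), the conjugate $\widetilde{\phi}\circ\varphi^{-1}=\widetilde{\phi_1}\circ\psi_1^{-1}$ is also the time-$1$ map of a nonnegative lcs Hamiltonian isotopy, that is $\varphi\le\widetilde{\phi}$. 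Monotonicity of $c_+$ (Proposition \ref{proposition: properties of spectral invariants} (\ref{properties spectral sel diffeomorphisms: monotonicity})) together with Proposition \ref{proposition: relation spectral invariants contactomorphisms} then gives $c_+(\varphi)\le c_+(\widetilde{\phi})=c_+(\phi)$, whence $\lceil c_+(\varphi)\rceil\le\lceil c_+(\phi)\rceil\le c_{\cont}(\mathcal{U})$. Taking the supremum over $\varphi\in\Ham(S^1\times\mathcal{U})$ completes the proof.
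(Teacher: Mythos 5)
Your proof is correct and follows essentially the same route as the paper: the hard inequality is reduced, via monotonicity and Proposition \ref{proposition: relation spectral invariants contactomorphisms}, to producing for each $\varphi \in \Ham (S^1 \times \mathcal{U})$ a contactomorphism $\phi \in \Cont (\mathcal{U})$ with $\varphi \leq \widetilde{\phi}$, obtained by dominating the Hamiltonian by a $\theta$-independent one and applying the composition formulas of Lemma \ref{lemma: composition law Hamiltonians}. The only (harmless) difference is bookkeeping: the paper chooses $G \leq - e^{-h_t} \, H_t \circ \varphi_t$ and sets $\phi_t = \chi_t^{-1}$, so that the Hamiltonian of $\{\widetilde{\phi_t} \circ \varphi_t^{-1}\}$ is non-negative directly and in the order required by the definition of $\leq$, whereas you dominate $H_t$ from above, obtain non-negativity for $\{\psi_t^{-1} \circ \widetilde{\phi_t}\}$, and then fix the composition order by conjugating with $\widetilde{\phi_1}$ --- which is legitimate, since by Lemma \ref{lemma: Hamiltonian conjugation} conjugation multiplies the Hamiltonian by a positive factor and so preserves non-negativity.
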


\begin{proof}
The result follows from Proposition \ref{proposition: properties of spectral invariants}
(\ref{properties spectral sel diffeomorphisms: monotonicity})
and Proposition \ref{proposition: relation spectral invariants contactomorphisms}
if we prove that for every $\varphi \in \Ham (S^1 \times \mathcal{U})$
there is $\phi \in \Cont (\mathcal{U})$
such that $\varphi \leq \widetilde{\phi}$.
This can be seen as follows.
Let $\{\varphi_t\}$ be a lcs Hamiltonian isotopy
of $S^1 \times \mathbb{R}^{2n} \times S^1$
with $\varphi_1 = \varphi$
and with Hamiltonian function $H_t$
supported in $S^1 \times \mathcal{U}$.
Let $h_t$ be the conformal factors of $\{\varphi_t\}$,
and $G$ a function on $\mathbb{R}^{2n} \times S^1$
supported in $\mathcal{U}$ such that
\[
G (p) \leq - e^{-h_t(\theta,p)} \, H_t \circ \varphi_t (\theta, p)
\quad \text{ for all } p, \theta \text{ and } t \,.
\]
Let $\{\chi_t\}$ be the contact isotopy
with Hamiltonian function $G$,
and $\{\widetilde{\chi_t}\}$ its lift to $S^1 \times \mathbb{R}^{2n} \times S^1$.
By Example \ref{example: lift of maps to symplectization}
and Example \ref{Example:Lift of contact Hamiltonians},
$\{\widetilde{\chi_t}\}$ has conformal factor
$\widetilde{g_t} (\theta, p) = g_t(p)$
and Hamiltonian function $\widetilde{G} (\theta, p) = G(p)$.
Let $\{\phi_t\}$ be the contact isotopy defined by $\phi_t = \chi_t^{-1}$.
Then $\{\widetilde{\phi_t}\}$
has conformal factor $- \widetilde{g_t} \circ \widetilde{\chi_t}^{-1}$
and, by Lemma \ref{lemma: composition law Hamiltonians} (i),
Hamiltonian function $- e^{-\widetilde{g_t}} \, \widetilde{G} \circ \widetilde{\chi_t}$.
By Lemma \ref{lemma: composition law Hamiltonians},
$\{\widetilde{\phi_t} \circ \varphi_t^{-1}\}$
has thus Hamiltonian function
\[
- e^{-\widetilde{g_t}} \, \widetilde{G} \circ \widetilde{\chi_t}
+ e^{-\widetilde{g_t}} \, (-e^{-h_t} H_t \circ \varphi_t) \circ \widetilde{\chi_t}
= - e^{-\widetilde{g_t}} \, (\widetilde{G} + e^{-h_t} H_t \circ \varphi_t) \circ \widetilde{\chi_t}
\geq 0 \,.
\]
This shows that $\varphi_1 \leq \widetilde{\phi_1}$,
as we wanted.
\end{proof}

In particular,
by Proposition \ref{proposition: comparison capacities lcs contact}
and \eqref{equation: comparison capacities symplectic and contact}
we have
\begin{equation}\label{equation: capacities balls}
c (S^1 \times B^{2n} (R) \times S^1)
= c_{\cont} (B^{2n} (R) \times S^1)
= \lceil c_{\symp} (B^{2n} (R)) \rceil
= \lceil \pi R^2 \rceil \,.
\end{equation}

Using this we now obtain a proof
of the lcs non-squeezing theorem for integers.

\begin{proof}[Proof of Theorem \ref{theorem: main}]
Suppose that $\pi R_2^2 \leq k \leq \pi R_1^2$ for $k \in \mathbb{N}_0$.
We have to show that there is no lcs squeezing
of $S^1 \times B^{2n}(R_1) \times S^1$
into $S^1 \times B^{2n}(R_2) \times S^1$,
i.e.\ no lcs Hamiltonian isotopy $\{\varphi_t\}_{t \in [0,1]}$
of $S^1 \times \mathbb{R}^{2n} \times S^1$
such that
\[
\varphi_1 \big(\overline{S^1 \times B^{2n}(R_1) \times S^1}\big)
\subset S^1 \times B^{2n}(R_2) \times S^1 \,.
\]
Suppose by contradiction that
such a lcs squeezing exists.
We then also have a lcs squeezing
of a neighborhood of $S^1 \times B^{2n}(R_1) \times S^1$
into $S^1 \times B^{2n}(R_2) \times S^1$,
so without loss of generality
we can assume that $\pi R_2^2 \leq k < \pi R_1^2$.
By \eqref{equation: capacities balls}
and Proposition \ref{proposition: properties lcs capacity}
we have
\[
c \Big( \varphi_1 \big(S^1 \times B^{2n}(R_1) \times S^1\big) \Big)
= c \big(S^1 \times B^{2n}(R_1) \times S^1\big)
= \lceil \pi R_1^2 \rceil > k
\]
and
\[
c \Big( \varphi_1 \big(S^1 \times B^{2n}(R_1) \times S^1\big) \Big)
\leq c \big(S^1 \times B^{2n}(R_2) \times S^1\big)
= \lceil \pi R_2^2 \rceil \leq k\,.
\]
This contradiction finishes the proof.
\end{proof}

\section{The Hamiltonian group of $S^1 \times \mathbb{R}^{2n+1} \times S^1$
does not have the Rokhlin property}\label{section: Rokhlin}

Recall that a topological group $G$
is said to have the Rokhlin property
if there exists $g \in G$
whose conjugacy class $\{\, hgh^{-1} \;\lvert\; h \in G \,\}$
is dense in $G$.
This notion has been introduced
by Glasner and Weiss \cite{GW1, GW2}.
Serraille and Stojisavljevi\'c \cite{SS}
discovered a connection
between the Rokhlin property for groups
of contactomorphisms or contact homeomorphisms
and contact non-squeezing.
More precisely,
they proved that the closure of the identity component
of the group of compactly supported contactomorphisms of $\mathbb{R}^{2n+1}$
inside the group of compactly supported homeomorphisms
endowed with the $\mathcal{C}^0$-topology
(with respect to the Euclidean distance)
has the Rokhlin property,
while the closure of the identity component
of the group of compactly supported contactomorphisms
of $\mathbb{R}^{2n} \times S^1$
inside the group of compactly supported homeomorphisms
endowed with the $\mathcal{C}^0$-topology
(with respect to the distance on
$\mathbb{R}^{2n} \times S^1$ = $\mathbb{R}^{2n} \times \mathbb{R}/\mathbb{Z}$
induced by the Euclidean distance on $\mathbb{R}^{2n+1}$)
does not have the Rokhlin property.
The proof of the Rokhlin property for $\mathbb{R}^{2n+1}$
uses the fact that any ball in $\mathbb{R}^{2n+1}$
can be squeezed by a compactly supported contact isotopy
into an arbitrarily small ball.
On the other hand,
the proof of the fact that the Rokhlin property
does not hold in the case of $\mathbb{R}^{2n} \times S^1$
is based on the contact non-squeezing theorem
of Eliashberg, Kim and Polterovich.
In fact,
the same proof also shows that 
the identity component
of the group of compactly supported contactomorphisms
of $\mathbb{R}^{2n} \times S^1$,
endowed with the $\mathcal{C}^0$-topology,
does not have the Rokhlin property.
Similarly,
we obtain the following corollary
of Theorem \ref{theorem: main},
where $\Ham^c (S^1 \times \R^{2n} \times S^1)$
is endowed with the  $\mathcal{C}^0$-topology
with respect to the distance
induced by the Euclidean distance on $\mathbb{R}^{2n+2}$.

\begin{cor}\label{corollary: Rokhlin}
The topological group  $\Ham^c (S^1 \times \R^{2n} \times S^1)$
does not have the Rokhlin property. 
\end{cor}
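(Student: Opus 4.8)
The plan is to exhibit a single conjugation-invariant function on $G := \Ham^c(S^1 \times \mathbb{R}^{2n} \times S^1)$ that is lower semicontinuous for the $\mathcal{C}^0$-topology and unbounded, and to observe that no such function can exist on a group with the Rokhlin property. Indeed, suppose $\varphi_0 \in G$ had a $\mathcal{C}^0$-dense conjugacy class and let $\beta \colon G \to \mathbb{R}$ be conjugation-invariant and lower semicontinuous. Then $\beta$ is constant, say equal to $b$, on the conjugacy class of $\varphi_0$; since every $\psi \in G$ is a $\mathcal{C}^0$-limit of conjugates of $\varphi_0$, lower semicontinuity gives $\beta(\psi) \leq b$ for all $\psi$, so $\beta$ is bounded above by $b$. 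Thus it suffices to produce a conjugation-invariant, $\mathcal{C}^0$-lower semicontinuous, unbounded $\beta$.

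First I would take $\beta$ to be the displacement capacity
\[
\beta(\varphi) = \sup \{\, c(\mathcal{U}) \;\lvert\; \mathcal{U} \subset S^1 \times \mathbb{R}^{2n} \times S^1 \text{ open and bounded},\; \varphi(\overline{\mathcal{U}}) \cap \overline{\mathcal{U}} = \emptyset \,\} \,,
\]
where $c$ is the lcs capacity of Section \ref{section: capacity}. Conjugation-invariance is immediate from the lcs invariance of the capacity: if $\varphi$ displaces $\overline{\mathcal{U}}$ and $\chi \in G$, then $\chi \circ \varphi \circ \chi^{-1}$ displaces $\overline{\chi(\mathcal{U})}$, and $c(\chi(\mathcal{U})) = c(\mathcal{U})$ by Proposition \ref{proposition: properties lcs capacity} (ii), so $\beta(\chi \circ \varphi \circ \chi^{-1}) = \beta(\varphi)$. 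Lower semicontinuity comes from the fact that displacing a compact set is a $\mathcal{C}^0$-open condition: if $\varphi(\overline{\mathcal{U}}) \cap \overline{\mathcal{U}} = \emptyset$ then the two compact sets are a positive distance apart, so every $\varphi'$ that is $\mathcal{C}^0$-close enough to $\varphi$ still displaces $\overline{\mathcal{U}}$; hence each set $\{\beta \geq m\}$, being a union of such open conditions over domains with $c(\mathcal{U}) \geq m$, is $\mathcal{C}^0$-open, and since $c$ and therefore $\beta$ are integer-valued this is exactly lower semicontinuity. Note also that a compactly supported $\varphi$ can only displace domains contained in its support, so $\beta(\varphi)$ is finite for each fixed $\varphi$ by monotonicity (Proposition \ref{proposition: properties lcs capacity} (i)).

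The decisive point, and where non-squeezing enters, is the unboundedness of $\beta$ over $G$. For each $m \in \mathbb{N}$ choose $R$ with $\lceil \pi R^2 \rceil = m$ and set $\mathcal{U}_m = S^1 \times B^{2n}(R) \times S^1$; by \eqref{equation: capacities balls} --- which rests on Theorem \ref{theorem: main} --- we have $c(\mathcal{U}_m) = m$. The bounded ball $\overline{B^{2n}(R)}$ can be pushed off itself inside $\mathbb{R}^{2n}$ by a compactly supported Hamiltonian isotopy, whose induced contact isotopy of $\mathbb{R}^{2n} \times S^1$ displaces $B^{2n}(R) \times S^1$; lifting the latter to $S^1 \times \mathbb{R}^{2n} \times S^1$ as in Example \ref{Example:Lift of contact Hamiltonians} produces a compactly supported lcs Hamiltonian diffeomorphism $\varphi_m$ with $\varphi_m(\overline{\mathcal{U}_m}) \cap \overline{\mathcal{U}_m} = \emptyset$. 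Hence $\beta(\varphi_m) \geq c(\mathcal{U}_m) = m$, so $\sup_{G} \beta = \infty$. Combined with the first paragraph this contradicts the existence of a dense conjugacy class, proving the corollary.

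The main obstacle is conceptual rather than computational: the raw spectral quantity $\lceil c_+ \rceil$ is conjugation-invariant but only $\mathcal{C}^1$-continuous, so it cannot be fed directly into the Rokhlin obstruction, which lives in the $\mathcal{C}^0$-topology. Passing to the displacement capacity $\beta$ is exactly what reconciles the two: displacement of a compact domain is a genuinely $\mathcal{C}^0$-robust condition, while the non-squeezing theorem is what guarantees that the capacities of the displaced domains --- and hence $\beta$ --- grow without bound. The verification that lifting the $\mathbb{R}^{2n}$-displacement yields an honest compactly supported lcs Hamiltonian diffeomorphism, and the bookkeeping that $\beta$ is finite at each point but unbounded over the group, are the only routine points left to check.
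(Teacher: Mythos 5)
Your proof is correct, but it takes a genuinely different route from the paper. The paper follows \cite{SS} directly: assuming $g$ has a dense conjugacy class and support in $S^1 \times B(R) \times S^1$, it builds an $f \in \Ham^c(S^1 \times \mathbb{R}^{2n} \times S^1)$ without fixed points on $S^1 \times B(R+1) \times S^1$, invokes Theorem \ref{theorem: main} to rule out $S^1 \times B(R+1) \times S^1 \subset h_n(\supp(g))$ (since otherwise $h_n^{-1}$ would squeeze the larger quantized region into the smaller one), and derives a contradiction by passing to the $\mathcal{C}^0$-limit, which forces $f$ to have a fixed point in that region. You instead prove an abstract obstruction --- no group with a dense conjugacy class carries a conjugation-invariant, $\mathcal{C}^0$-lower-semicontinuous, unbounded function --- and instantiate it with the displacement capacity $\beta(\varphi) = \sup\{\, c(\mathcal{U}) \mid \varphi(\overline{\mathcal{U}}) \cap \overline{\mathcal{U}} = \emptyset \,\}$, whose invariance and finiteness come from Proposition \ref{proposition: properties lcs capacity}, whose lower semicontinuity comes from the $\mathcal{C}^0$-openness of displacing a compact set (together with integrality of $c$, so that $\{\beta \geq m\}$ is a union of open displacement conditions), and whose unboundedness comes from \eqref{equation: capacities balls} plus the lifted displacements of Example \ref{Example:Lift of contact Hamiltonians}. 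All of these verifications hold up, including the check that only domains inside $\supp(\varphi)$ can be displaced, which gives pointwise finiteness via monotonicity. What your approach buys is reusability and a clean separation of the soft part (lower semicontinuity, density) from the hard part (capacity growth); what the paper's approach buys is economy, using Theorem \ref{theorem: main} once, directly on supports, without introducing $\beta$. One small inaccuracy in attribution, harmless to the argument: you say \eqref{equation: capacities balls} ``rests on Theorem \ref{theorem: main}'', but in the paper the logical dependency runs the other way --- \eqref{equation: capacities balls} is established via Proposition \ref{proposition: comparison capacities lcs contact} and the contact and symplectic capacity computations, and Theorem \ref{theorem: main} is then deduced from it --- so your use of the equation involves no circularity.
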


\begin{proof}
We  proceed exactly as in \cite{SS}.
Suppose by contradiction that there exists
$g \in \Ham^c (S^1 \times \R^{2n} \times S^1)$
whose conjugacy class is dense
in $\Ham^c (S^1 \times \R^{2n} \times S^1)$.
Since $g$ has compact support,
we can assume that its support is contained in
$S^1 \times B(R) \times S^1$
for some $R$.
Choose $f \in \Ham^c (S^1 \times \R^{2n} \times S^1)$
such that $f(x) \neq x$ for all $x \in S^1 \times B(R+1) \times S^1$.
Such $f$ can be found
by considering the time-$1$ map of a lcs Hamiltonian isotopy
with Hamiltonian function that is equal to $\frac{1}{2}$
on $S^1 \times B(R+1) \times S^1$
and vanishes outside a larger set.
Since we assume that the conjugacy class of $g$
is dense in $\Ham^c (S^1 \times \R^{2n} \times S^1)$,
there exists a sequence $(h_n)$
in $\Ham^c( S^1 \times \R^{2n} \times S^1)$
such that $(h_n \circ g \circ h_n^{-1})$ converges to $f$.
Theorem \ref{theorem: main} implies that
\begin{equation}\label{equation: in RP}
S^1 \times B(R+1) \times S^1 \not\subset \supp(h_n \circ g \circ h_n^{-1}) = h_n(\supp(g))
\end{equation}
for all $n$.
Indeed,
if, on the contrary, $S^1 \times B(R+1) \times S^1 \subset h_n (\supp(g))$
then $S^1 \times B(R+1) \times S^1
\subset h_n (S^1 \times B(R) \times S^1)$,
and so $h_n^{-1}(S^1 \times B(R+1) \times S^1) \subset S^1 \times B(R) \times S^1$, contradicting Theorem \ref{theorem: main}.

Now \ref{equation: in RP} implies the existence of a sequence $(z_n)_{n \in \N}$ in $S^1 \times B(R+1) \times S^1$ such that $$z_n = h_n \circ g \circ h_n^{-1} (z_n).$$ 
Since $S^1 \times B(R+1) \times S^1$ is compact, the sequence $(z_n)$ admits a converging subsequence $(z_{n_k})_{k \in \N}$. Its limit $z \in S^1 \times B(R+1) \times S^1$ is necessarily a fixed point of $f$, contradicting the construction of the latter function. 

\end{proof}





\end{document}